\title{Reflexive dg categories in algebra and topology}
\author[Booth, Goodbody, Opper]{Matt Booth, Isambard Goodbody, Sebastian Opper}
\address{\mbox{Department of Mathematics,
Imperial College London,
SW7 2AZ, UK}\newline
\indent Heilbronn Institute for Mathematical Research,
Bristol, BS8 1UG, UK}
\email{matt.booth@imperial.ac.uk}
\address{School of Mathematics and Statistics,
University of Glasgow, 
G12 8QQ, UK}
\email{isambard.goodbody@glasgow.ac.uk}
\address{Department of Algebra,
Charles University,
Sokolovská 49/83, Praha 8, Czechia}
\email{opper@karlin.mff.cuni.cz }
\tikzset{
    clip even odd rule/.code={\pgfseteorule}, 
    invclip/.style={
        clip,insert path=
            [clip even odd rule]{
                [reset cm](-\maxdimen,-\maxdimen)rectangle(\maxdimen,\maxdimen)
            }
    }
}
\newcommand{\R}{{\mathrm{\normalfont\mathbb{R}}}}
\let\hom\relax\newcommand{\hom}{\mathrm{Hom}}
\DeclareMathOperator{\perf}{perf}
\newcommand{\pvd}{{\ensuremath{{\mathcal{D}_{\mathrm{fd}}}}}\kern 1pt}
\newcommand{\per}{{\ensuremath{{\mathcal{D}^{\perf}}}}\kern 1pt}
\newcommand{\thick}{{\ensuremath{\mathbf{thick}}}}
\newcommand{\dco}{\ensuremath{\mathcal{D}^\mathrm{co}}}
\newcommand{\hqf}{{\ensuremath{\mathbf{hqf}}}}
\newcommand{\fd}{{\ensuremath{\mathbf{fd}}}}
\newcommand{\rf}{{\ensuremath{\mathbf{ref}}}\kern 1pt}
\DeclareMathOperator{\ev}{ev}
\DeclareMathOperator{\coev}{coev}
\DeclareMathOperator{\Hom}{Hom}
\DeclareMathOperator{\rad}{rad}
\DeclareMathOperator{\Ext}{Ext}
\newcommand{\spec}{\operatorname{Spec}}
\newcommand{\into}{\hookrightarrow}
\newcommand{\Flow}{\operatorname{Flow}}
\newcommand{\Ob}[1]{\operatorname{Ob}(#1)}
\newcommand{\Fuk}{\operatorname{Fuk}}
\newcommand{\cA}{\mathcal{A}}
\newcommand{\cB}{\mathcal{B}}
\newcommand{\cF}{\mathcal{F}}
\newcommand{\cM}{\mathcal{M}}
\newcommand{\cT}{\mathcal{T}}
\newtheorem{thm}{Theorem}[subsection]
\newtheorem{Introthm}{Theorem}
\newtheorem{lem}[thm]{Lemma}
\newtheorem{prop}[thm]{Proposition}
\newtheorem{cor}[thm]{Corollary}
\theoremstyle{remark}
\newtheorem{ex}[thm]{Example}
\newtheorem{rmk}[thm]{Remark}
\newtheorem{defn}[thm]{Definition}
\newcommand{\op}{\ensuremath \mathrm{op}}
\newcommand{\cD}{\mathcal{D}}
\newcommand{\cC}{\mathcal{C}}
\newcommand{\changelocaltocdepth}[1]{%
  \addtocontents{toc}{\protect\setcounter{tocdepth}{#1}}%
  \setcounter{tocdepth}{#1}%
}
\begin{document}

\subjclass{18G35, 16E45, 18G80, 14A30, 13D09, 55U30}
\keywords{dg categories, reflexivity, derived completion, Koszul duality, semiorthogonal decomposition, derived Picard groups, autoequivalence groups, Hochschild cohomology, gentle algebras, simple-minded collections, Ginzburg dg algebras}

\begin{abstract}
Reflexive dg categories were introduced by Kuznetsov and Shinder to abstract the duality between bounded and perfect derived categories. In particular this duality relates their Hochschild cohomologies, autoequivalence groups, and semiorthogonal decompositions. We establish reflexivity in a variety of settings including {chain and cochain dg algebras of topological spaces, Ginzburg dg algebras, affine schemes, simple-minded collections, and Fukaya categories of cotangent bundles and surfaces as well as the closely related class of graded gentle algebras.} Our proofs are based on the interplay of reflexivity with gluings, derived completions, and Koszul duality. In particular we show that for certain (co)connective dg algebras, reflexivity is equivalent to derived completeness.
\end{abstract}
\maketitle
\changelocaltocdepth{1}
\tableofcontents
\section{Introduction}
Duality theorems are abundant in algebra and geometry. Reflexivity - recently introduced in \cite{KS} and with a precursor in \cite{bznp} - generalises the duality between bounded and perfect derived categories seen in \cite{Ballard, bznp, Chen}. Given a dg category $\cC$ over a field $k$, its \textbf{perfectly valued dg category} $\pvd(\cC)$ consists of the dg modules over $\cC$ which take values in the category of perfect complexes over $k$. For finite dimensional algebras and proper schemes, this construction permutes their perfect and bounded derived categories. The functor $\cC \mapsto \pvd(\cC)$ is contravariant, and a dg category $\cC$ is called \textbf{reflexive} if the natural functor $$\cC \rightarrow \pvd\big( \pvd(\cC)\big)$$ is a Morita equivalence. In \cite{KS} it was shown that for a reflexive dg category $\mathcal{C}$, there is an isomorphism between the triangulated autoequivalence groups of $\mathcal{D}^{\perf}(\mathcal{C})$ and of $\pvd(\mathcal{C})$ and a bijection between semiorthogonal decompositions. In \cite{Greflex}, reflexive dg categories are shown to be exactly the reflexive objects in the Morita homotopy category. As a consequence, the Hochschild cohomologies and derived Picard groups of $\mathcal{D}^{\perf}(\mathcal{C})$ and of $\pvd(\mathcal{C})$ are shown to coincide. Note that the analogous statement for Hochschild \textit{homology} is false \cite[Example 5.6]{Greflex}. The main families of dg categories currently known to be reflexive are
    \begin{itemize}
		\item the (perfect or bounded) derived categories of proper schemes;
		\item proper connective dg algebras;
		\item homologically smooth and proper dg categories;
        \item Fukaya categories of Milnor fibres of certain hypersurface singularities.
	\end{itemize}

    {In this paper, we give a very general `2-out-of-3' characterisation of reflexive dg algebras, and apply this criterion to a range of examples from representation theory, algebraic topology, and symplectic \& algebraic geometry. As a consequence, we extend the above list across various domains.} Important is that we work almost always over \textit{arbitrary} fields $k$, unlike the earlier \cite{bznp} and \cite{KS} who work over characteristic zero and perfect fields respectively.

	\subsection*{Topological spaces} \ 
	Associated to a topological space $X$ are two natural dg-$k$-algebras: the algebra of cochains $C^\bullet(X,k)$ and the algebra of chains on loops $C_\bullet(\Omega X,k)$. The following is a simplified version of the results of \Cref{Section: reflexivity for chains and cochains}:
	
\begin{Introthm}\label{Intro thm: reflexivity chains cochains}
Let $X$ be a path connected topological space and $k$ a field. Suppose that $H_i(\Omega X,k)$ is finite dimensional for all $i \geq 0$. Then both $C^\bullet(X,k)$ and $C_\bullet(\Omega X,k)$ are reflexive as long as either of the following conditions hold:
\begin{enumerate}
	\item $X$ is simply connected.
	\item $\pi_1(X)$ is a finite $p$-group, where $p=\operatorname{char} (k)> 0$. 
\end{enumerate}
Moreover, there are equivalences $$\begin{array}{ccc}\pvd(C^{\bullet}(X,k))\simeq \per(C_{\bullet} (\Omega X,k)) & \text{} & \per(C^{\bullet}(X,k))\simeq \pvd(C_{\bullet}( \Omega X,k)).\end{array}$$
\end{Introthm}We remark that the assumptions of the theorem are satisfied by finite simply connected CW complexes (in fact, $C^\bullet(X,k)$ is reflexive whenever $\oplus_{i\in \mathbb{N}}H_i(X,k)$ is finite dimensional) and classifying spaces of finite $p$-groups (where we moreover have $C_\bullet(\Omega BG,k) \simeq kG$). We give further examples from string topology, rational homotopy theory, $p$-compact groups, and symplectic geometry. Note that $\pvd(C_{\bullet}(\Omega X,k))$ can be interpreted as the category of $\infty$-local systems on $X$ with finite fibres. {For a general path connected $X$, we show that $\pvd(C^{\bullet}(X, k))$ is a derived completion of $C_{\bullet}(\Omega X, k)$ in the sense of \cite{efimovcompletion}, cf.~\Cref{dfdcochainslem}.}

	\subsection*{Ginzburg dg algebras and Calabi--Yau completions} \ 

 Ginzburg dg algebras are a class of Calabi--Yau dg algebras first introduced in \cite{ginzburg} and constructed from a quiver with superpotential. Under certain assumptions, \textit{all} complete exact Calabi--Yau dg algebras are Ginzburg dg algebras \cite{vdbsup}. In \cite{kellercyc}, Ginzburg dg algebras were interpreted as deformed $3$-Calabi--Yau completions. We prove the following (cf.\ \Cref{cycompref}, \Cref{ginzref}):

\begin{Introthm}\label{intro thm: ginzburgs}
Let $Q$ be a finite quiver and $k$ a field of characteristic zero.
\begin{enumerate}
    \item For all $n \geq 2$, the (completed, undeformed) $n$-Calabi--Yau completion $\hat{\Pi}_n(Q)$ is reflexive.
    \item Let $W$ be a superpotential on $Q$ such that the cycles appearing in $W$ have length at least $3$. Then the completed Ginzburg dg algebra $\hat{\Gamma}(Q,W)$ associated to the pair $(Q, W)$ is reflexive.
\end{enumerate} 
\end{Introthm}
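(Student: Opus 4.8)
The plan is to treat both parts uniformly via the Koszul-duality reflexivity criterion for connective dg algebras recorded in \Cref{thm: reflexivity via KD for conn. algs}, which reduces the reflexivity of a suitable connective dg algebra $A$ --- augmented over a finite-dimensional separable $k$-algebra $S$ and with locally proper Koszul dual $\RHom_A(S,S)$ --- to the condition that $A$ be derived complete along its augmentation ideal. Both $\hat\Pi_n(Q)$ and $\hat\Gamma(Q,W)$ are augmented over $S := k^{Q_0}$, by killing the closed two-sided ideal $\mathfrak{m}$ generated by the arrows; here $H^0$ of either algebra is a quotient of a complete path algebra, hence semiperfect, which is the mild weakening of the ``$H^0$ local'' hypothesis of \Cref{Intro thm: connective coconnective} that the body theorem allows. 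So the proof divides into: (a) connectivity of $A$ together with local properness of $\RHom_A(S,S)$; and (b) derived completeness of $A$. (The uncompleted $\Pi_n(Q)$ and $\Gamma(Q,W)$ are typically not derived complete, which is precisely why one must pass to the completion.)

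For (a): once $n \ge 2$, the Calabi--Yau completion $\Pi_n(kQ)=T_{kQ}(\Theta_{kQ}[n-1])$ adjoins generators only in non-positive degrees, and $\Gamma(Q,W)$ has generators in degrees $0,-1,-2$ only; termwise completion does not change the grading, so $\hat\Pi_n(Q)$ and $\hat\Gamma(Q,W)$ are connective. Since $\RHom(S,S)$ between the semisimple modules is computed from a resolution of $S$ by finitely generated projectives, and the arrow-adic completion of the path algebra of a finite quiver is flat when restricted to such modules, passing to the completion leaves it unchanged; it therefore suffices to bound $\Ext^*_{\Pi_n(Q)}(S,S)$ and $\Ext^*_{\Gamma(Q,W)}(S,S)$. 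For $\Pi_n(Q)$: as $kQ$ is hereditary we have $\Ext^{\ge 2}_{kQ}(S,S)=0$, and expressing $\Pi_n(Q)$ as a tensor algebra over $kQ$ on a bimodule of finite projective dimension shows $\Ext^*_{\Pi_n(Q)}(S,S)$ is concentrated in degrees between $0$ and $n$ and is finite-dimensional in each (a shifted dual copy of $\Ext^*_{kQ}(S,S)$ being appended around degree $n$). For $\Gamma(Q,W)$: I would invoke that $\hat\Gamma(Q,W)$ is exact $3$-Calabi--Yau --- this is where characteristic zero and the hypothesis that all cycles in $W$ have length $\ge 3$ enter, through the structure theory of complete Ginzburg algebras --- so $\Ext^i(S,S)\cong\Ext^{3-i}(S,S)^*$, which together with $\Ext^{<0}(S,S)=0$ (connectivity) confines $\Ext^*(S,S)$ to degrees $0,1,2,3$ with finite-dimensional pieces. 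Either way, $\RHom_A(S,S)$ is locally proper.

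For (b): by construction $\hat\Pi_n(Q)$ and $\hat\Gamma(Q,W)$ are the termwise $\mathfrak{m}$-adic completions of $\Pi_n(Q)$ and $\Gamma(Q,W)$ along the arrow ideal. Since the underlying quiver is finite in each case, the subquotients $\mathfrak{m}^j/\mathfrak{m}^{j+1}$ are finite-dimensional, so the defining inverse system is Mittag--Leffler and the termwise completion already computes the derived completion; hence both algebras are derived complete (this can also be packaged via the pseudocompact dg algebra formalism). Combining (a) and (b), \Cref{thm: reflexivity via KD for conn. algs} yields that $\hat\Pi_n(Q)$ and $\hat\Gamma(Q,W)$ are reflexive, and along the way one obtains the Koszul-duality equivalences $\pvd(\hat A)\simeq\per(\RHom_{\hat A}(S,S))$ and $\per(\hat A)\simeq\pvd(\RHom_{\hat A}(S,S))$, in the spirit of \Cref{Intro thm: reflexivity chains cochains}.

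The step I expect to be the main obstacle is making (b) precise at the required level of rigour --- verifying that the completed Ginzburg and Calabi--Yau dg algebras of the literature are genuinely \emph{derived} complete, not merely termwise complete --- hand in hand with the compatible claim in (a) that $\Ext^*_{\hat A}(S,S)=\Ext^*_A(S,S)$ despite $S$ having infinite projective dimension over $A$. Both come down to exactness of the arrow-adic completion for finite quivers, which needs careful formulation. A lesser subtlety is that the $\Ext$-bound for $\hat\Gamma(Q,W)$ uses the $3$-Calabi--Yau property of the \emph{completed} Ginzburg algebra rather than smoothness of the uncompleted one; it is this property --- together with the length-$\ge 3$ condition, which also prevents any reduction of $(Q,W)$ to a smaller quiver with potential --- for which characteristic zero is needed.
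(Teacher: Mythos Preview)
Your plan has a genuine gap at the very first step: \Cref{thm: reflexivity via KD for conn. algs} requires the connective algebra $A$ itself to be \emph{locally proper} (each $H^i(A)$ finite-dimensional), not merely that its Koszul dual $\RHom_A(S,S)$ be so. This hypothesis fails for $A=\hat\Pi_n(Q)$ and $A=\hat\Gamma(Q,W)$ as soon as $Q$ has an oriented cycle: $H^0(\hat\Gamma(Q,W))$ is the completed Jacobi algebra $\widehat{\mathrm{Jac}}(W)$, and $H^0(\hat\Pi_n(Q))$ contains (for $n\ge 3$, equals) the completed path algebra $\widehat{kQ}$; both are infinite-dimensional. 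The same obstruction blocks \Cref{connectiveDC}, whose hypothesis is that $H^0(A)$ be finite-dimensional. So neither connective criterion applies as a black box, and your step (b) --- showing $A$ is derived complete --- would not finish the argument even if carried out, because the ``reflexive $\Leftrightarrow$ derived complete'' bridge you are trying to cross is only built under those finiteness hypotheses.

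The paper avoids this by working on the opposite side of Koszul duality, where everything is finite-dimensional. One writes down explicitly the graded algebra
\[
R_n \;=\; kQ_0 \oplus kQ_1[-1] \oplus kQ_1^\vee[1-n] \oplus kQ_0[-n],
\]
a trivial extension of the radical-square-zero algebra on $Q$, and for part (2) Van den Bergh's minimal $A_\infty$-deformation $R_3^W$ of $R_3$ encoding the superpotential (this is where $\mathrm{char}\,k=0$ and $W=W^{\ge 3}$ enter). For $n\ge 2$ these are \emph{proper} coconnective (dg or $A_\infty$) algebras with $H^0\cong kQ_0$ semisimple, hence automatically derived complete by \Cref{proper is derived complete}, and therefore reflexive by \Cref{lem: coconnective semisimple H^0 = reflexive}. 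One then identifies $R_n^!\simeq \hat\Pi_n(Q)$ and $B(R_3^W)^\vee\simeq \hat\Gamma(Q,W)$ --- the passage from $\Omega(R_n^\vee)\simeq \Pi_n(Q)$ to its completion uses \cite[4.1.7]{MBDDP} --- and reflexivity of the completed Ginzburg/CY algebras follows because $\pvd$ of a reflexive dg category is again reflexive. In short, properness and derived completeness are free on the $R_n$ side, whereas on the $\hat\Pi_n$ side they are precisely the issues you flagged as ``the main obstacle''; the paper's manoeuvre is to never have to face them.
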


	\subsection*{Fukaya categories of surfaces and graded gentle algebras} \ 

     We study the reflexivity of topological Fukaya categories of surfaces in the sense of \cite{HaidenKatzarkovKontsevich}. These categories admit formal generators whose endomorphism dg algebras are graded versions of \textbf{gentle algebras} \cite{AssemSkowronski}. The gentle algebras which arise in this way are smooth, but not necessarily proper. The Koszul dual perspective, that of \textit{finite dimensional} graded gentle algebras, has been studied by many authors \cite{OpperPlamondonSchroll, OpperDerivedInvariants, LekiliPolishchuk, AmiotPlamondonSchroll}.
	
	\begin{Introthm}\label{Intro thm: partially wrapped} {Let $\Sigma$ be a graded marked surface, $\Fuk(\Sigma)$ its partially wrapped Fukaya category, and $\Fuk^{\operatorname{inf}}(\Sigma)$ its infinitesimal Fukaya category, cf.~\Cref{def: Fukaya category arc system}. If $\Sigma$ contains at least one stop, and contains no boundary component without stops and with vanishing winding number, then $\Fuk(\Sigma)$ is reflexive and we moreover have equivalences $\Fuk(\Sigma) \simeq \pvd(\Fuk^{\operatorname{inf}}(\Sigma))$ and $\Fuk^{\operatorname{inf}}(\Sigma)\simeq \pvd(\Fuk(\Sigma))$.}
	\end{Introthm} 
We believe that this result is sharp (cf.~\Cref{exa: non reflexive Fukaya}). {The condition on $\Sigma$ in \Cref{Intro thm: partially wrapped} which ensures that $\Fuk(\Sigma)$ is reflexive is equivalent to the condition that any (and hence all) smooth graded gentle algebra which arises from one of its generators is finite dimensional in each degree.} {The last assertion of \Cref{Intro thm: partially wrapped} confirms an expectation expressed in \cite[Remark 3.21]{LekiliPolishchuk} and its preceding paragraph.} \Cref{Intro thm: partially wrapped} is a consequence of the following statement:
\begin{Introthm}\label{Intro thm: finite dimensional graded gentle}
Let $A$ be a finite dimensional graded gentle algebra. Then $A$ is reflexive and $\pvd(A)$ is generated as a thick subcategory by the simple $A$-modules, i.e.\  $\thick_A \left( A/\rad(A)\right)=\pvd(A)$. {In particular, if $\Sigma$ is a graded marked surface which contains at least one stop, then {the derived completion of $\Fuk(\Sigma)$} is reflexive.}  
\end{Introthm}
 The results of \Cref{Intro thm: partially wrapped} and \Cref{Intro thm: finite dimensional graded gentle} are used as ingredients for the third author's description of derived Picard groups of partially wrapped Fukaya categories and graded gentle algebras \cite{OpperGradedGentle}. Moreover, in the newest update of \cite{OpperPlamondonSchroll}, the authors describe a geometric model for the thick closure of $A/\rad(A)$. Since the grading of $A$ can be arbitrary (in particular nonconnective), no previous techniques were able to show that this geometric model does indeed describe the whole category $\pvd(A)$.  The proof of \Cref{Intro thm: finite dimensional graded gentle} is heavily based on the following result, which describes the behaviour of reflexivity under semiorthogonal gluing:

\begin{Introthm}Let $\cT$ be a proper dg category. If $\per(\cT)$ admits a semiorthogonal decomposition into reflexive dg categories, then $\cT$ is reflexive.
\end{Introthm}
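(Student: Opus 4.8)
The plan is to reduce to a single semiorthogonal gluing and then to understand how $\pvd$ transforms such a gluing. I would argue by induction on the length $n$ of the decomposition $\per(\cT)=\langle\cA_1,\dots,\cA_n\rangle$, the case $n=1$ being vacuous. For $n>1$, the subcategory $\cS:=\langle\cA_1,\dots,\cA_{n-1}\rangle$ is admissible in the proper dg category $\per(\cT)$, so, equipped with its inherited dg enhancement, it is itself a small proper dg category with $\per(\cS)=\langle\cA_1,\dots,\cA_{n-1}\rangle$ a semiorthogonal decomposition of length $n-1$ into reflexive dg categories; by the inductive hypothesis $\cS$ is reflexive. It therefore suffices to treat the case $\per(\cT)=\langle\cA,\cB\rangle$ with $\cA$ and $\cB$ reflexive --- and proper, being admissible subcategories of a proper category.

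I would then make the gluing explicit: the decomposition presents $\cT$, up to Morita equivalence, as a triangular dg category $\cT\simeq\left(\begin{smallmatrix}\cB&M\\0&\cA\end{smallmatrix}\right)$ (identifying $\cA$ and $\cB$ with dg enhancements), where $M$ is the $\cB$-$\cA$-bimodule of morphisms between the two factors. Since the decomposition is semiorthogonal, $M$ is perfect as a bimodule; since $\per(\cT)$ is proper, $M$ has finite-dimensional total cohomology. Now a perfectly valued dg module over this triangular category is a triple $(N_\cB,N_\cA,\phi)$ with $N_\cB\in\pvd(\cB)$, $N_\cA\in\pvd(\cA)$ and $\phi\colon N_\cB\lot_\cB M\to N_\cA$, and the two factors embed into $\pvd(\cT)$ as the full subcategories on the objects $(N_\cB,0,0)$ and $(0,N_\cA,0)$ --- fully faithfully, by a homological-epimorphism argument for the canonical projections of $\cT$ onto $\cB$ and onto $\cA$. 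A direct check then shows that these give a semiorthogonal decomposition $\pvd(\cT)\simeq\langle\pvd(\cB),\pvd(\cA)\rangle$, with the factors in the reverse order, glued along a shift of the $k$-linear dual $M^{\vee}=\RHom_k(M,k)$ --- which, since $M$ is perfect and finite-dimensional, is again a perfect bimodule with finite-dimensional cohomology.

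Iterating this computation once more yields $\pvd(\pvd(\cT))\simeq\langle\pvd(\pvd(\cA)),\pvd(\pvd(\cB))\rangle$, back in the original order, glued along $M^{\vee\vee}$, the two degree shifts introduced along the way cancelling; and since $M$ has finite-dimensional cohomology the canonical map $M\to M^{\vee\vee}$ is an isomorphism of bimodules. Finally I would check that the biduality functor $\cT\to\pvd(\pvd(\cT))$ respects these decompositions and restricts on the two factors to the biduality functors $\cA\to\pvd(\pvd(\cA))$ and $\cB\to\pvd(\pvd(\cB))$, which are Morita equivalences because $\cA$ and $\cB$ are reflexive. A functor between semiorthogonally decomposed dg categories that is an equivalence on each factor and compatible with the gluing bimodule is an equivalence, so $\cT\to\pvd(\pvd(\cT))$ is a Morita equivalence and $\cT$ is reflexive.

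The main obstacle is the computation of $\pvd$ of the gluing, carried out with enough naturality that it can be iterated and that biduality is visibly compatible with it. One must contend with the fact that $\pvd(\cA)$ and $\pvd(\cB)$ are in general neither proper nor generated by a single compact object, so that the induced decomposition of $\pvd(\cT)$ need only be admissible on one side; one must also identify the new gluing bimodule precisely --- including the role of the duality exchanging $\per$ and $\pvd$ --- and track the degree shift so that it cancels after two applications. Properness of $\cT$ enters exactly here: it is what makes the relevant gluing bimodules perfect and reflexive under $k$-linear duality, so that applying $\pvd$ twice returns the original gluing datum $M$ rather than only some twist of $M^{\vee\vee}$.
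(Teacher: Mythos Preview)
Your overall strategy---reduce to a two-term SOD, propagate it through two applications of $\pvd$, and check that $\coev_{\cT}$ restricts to the equivalences $\coev_{\cA}$ and $\coev_{\cB}$ on the factors---is the same skeleton as the paper's. The difference is in how the ``cross terms'' are handled.

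The paper never identifies the gluing bimodule. Instead it uses that a proper dg category is \emph{semireflexive}, so $\coev_{\cT}$ is already fully faithful. Then, having obtained (via \cite[Lemma~3.7]{KS}) the SOD $\pvd\pvd(\cT)=\langle\pvd\pvd(\cA),\pvd\pvd(\cB)\rangle$ compatible with $\coev$, essential surjectivity is a two-line argument: for $M\in\pvd\pvd(\cT)$ take the decomposition triangle, hit each end with $\coev_{\cA}^{-1}$ or $\coev_{\cB}^{-1}$, and lift the connecting map using full faithfulness of $\coev_{\cT}$. No bimodule bookkeeping is needed.

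Your route instead tries to compute the gluing datum at each stage and show it returns to $M$ via $M\simeq M^{\vee\vee}$. This can in principle be made to work, but as written there are gaps. First, the sentence ``since the decomposition is semiorthogonal, $M$ is perfect as a bimodule'' is false in general and in any case irrelevant---you only need that $M$ has finite-dimensional total cohomology, which is exactly what properness of $\cT$ gives. Second, and more seriously, the gluing bimodule for $\pvd(\cT)=\langle\pvd(\cB),\pvd(\cA)\rangle$ is a $\pvd(\cA)$-$\pvd(\cB)$-bimodule, not an $\cA^{\op}$-$\cB^{\op}$-bimodule, so writing it as ``$M^\vee$'' already hides a nontrivial identification; iterating to $\pvd\pvd$ then involves taking $\pvd$ of categories glued along bimodules over the large categories $\pvd(\cA)$ and $\pvd(\cB)$, and matching the result back to $M^{\vee\vee}$ requires invoking the reflexivity of $\cA$ and $\cB$ in a way you have not spelled out. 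You acknowledge this obstacle, but the paper's point is precisely that it can be bypassed: semireflexivity from properness supplies full faithfulness on the cross terms for free, so one never has to name the gluing bimodule at all.
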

 The theorem holds true under the more general assumption that $\mathcal{T}$ is \textbf{semireflexive} (cf.~\Cref{thm:Gluing over Semiorthogonal decompositions}). To prove \Cref{Intro thm: finite dimensional graded gentle}, one exploits that $\per(A)$ admits a semiorthogonal decomposition into perfect derived categories of simpler gentle algebras whose reflexivity follows easily from our other criteria.

\subsection*{Simple-minded collections and silting objects} \

Simple-minded collections and silting objects in triangulated categories abstract simple modules and projective generators for finite dimensional algebras \cite{KoenigYang}. Algebraic triangulated categories admitting simple-minded collections or silting objects are Morita equivalent to certain (co)connective dg algebras.
\begin{Introthm}[Corollaries {\ref{thm: reflexivity via KD for conn. algs}, \ref{smoothcoconncor}, \ref{propcoconncor}, \ref{thm: reflexivity via KD for coconn. algs}}]\label{Intro thm: connective coconnective} Let $A$ be a \textbf{locally proper} dg algebra, i.e.\ $H^i(A)$ is finite dimensional for all $i \in \mathbb{Z}$.
	\begin{enumerate}
		\item If $A$ is connective then $A$ is reflexive.
        \item Suppose that $A$ is coconnective and $H^0(A)$ is a semisimple $k$-algebra. If $A$ is smooth, or proper, or $H^1(A)\cong 0$, then $A$ is reflexive.
	\end{enumerate}
\end{Introthm}
As corollaries of our more general theorems, we give some reflexivity theorems for simple-minded collections (\Cref{ex: SMCs in general}), silting objects (\Cref{siltingcor}), and relative singularity categories (\Cref{ex: rel sing cats}). We also generalise a result of \cite{MaoYangHe} on derived Picard groups of coconnective dg algebras (\Cref{MYHrmk}).
\subsection*{Affine schemes} \ 
    
We characterise all reflexive noetherian affine schemes:
	\begin{Introthm}[{\Cref{noeththm}}]\label{Intro Theorem: affine schemes} Let $R$ be a commutative noetherian $k$-algebra. Then $\per(R)$ is reflexive if and only if $R$ is a finite product of complete local $k$-algebras, each of which has a residue field which is a finite extension of $k$.
	\end{Introthm}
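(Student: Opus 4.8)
The plan is to compute the double dual $\pvd\pvd(\per R)$ explicitly via Koszul duality and derived completion, and to detect the reflexivity functor through its effect on $HH^0$. Since $R$ is noetherian, $\operatorname{Spec}R$ has finitely many connected components, so $R\cong R_1\times\cdots\times R_m$ with each $\operatorname{Spec}R_i$ connected; as $\pvd$, and hence the natural transformation $\operatorname{id}\Rightarrow\pvd\pvd$, commutes with finite products of dg categories (the easiest instance of compatibility of reflexivity with gluings), $\per R$ is reflexive if and only if every $\per R_i$ is, and a finite product of complete local $k$-algebras with finite residue fields is precisely a ring of the asserted form. So it is enough to carry out the following analysis for an arbitrary noetherian $R$ and read off the connected case at the end.

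First I would identify $\pvd(\per R)$. Testing a dg $\per R$-module against the compact generator $R\in\per R$ shows that $\pvd(\per R)$ is the full subcategory of $\mathcal{D}(R)$ of complexes with finite-dimensional total cohomology, i.e.\ — as $R$ is noetherian — of complexes with finite-length cohomology. Dévissage along composition series identifies this with the thick subcategory generated by the residue fields $\ell_{\mathfrak m}=R/\mathfrak m$ of those maximal ideals $\mathfrak m$ with $[\ell_{\mathfrak m}:k]<\infty$; write $Z$ for this set of maximal ideals. Distinct $\ell_{\mathfrak m}$ are orthogonal in $\mathcal{D}(R)$ and $E_{\mathfrak m}:=\operatorname{REnd}_R(\ell_{\mathfrak m})$ is unchanged under $\mathfrak m$-adic completion, so $\pvd(\per R)$ is Morita equivalent to the dg category $\coprod_{\mathfrak m\in Z}\per(E_{\mathfrak m})$, in which each $E_{\mathfrak m}$ is a coconnective dg $\ell_{\mathfrak m}$-algebra with $H^0(E_{\mathfrak m})=\ell_{\mathfrak m}$.

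Applying $\pvd$ once more, and using that it sends coproducts of dg categories to products, gives $\pvd\pvd(\per R)\simeq\prod_{\mathfrak m\in Z}\pvd(E_{\mathfrak m})$. The essential input is the Koszul-duality identification $\pvd(E_{\mathfrak m})\simeq\per(\widehat R_{\mathfrak m})$ with $\widehat R_{\mathfrak m}$ the $\mathfrak m$-adic completion — in other words, that $\widehat R_{\mathfrak m}$ is reflexive — together with the compatibility statement that the composite $\per R\to\pvd\pvd(\per R)\simeq\prod_{\mathfrak m\in Z}\per(\widehat R_{\mathfrak m})$ is induced by the canonical ring map $R\to\prod_{\mathfrak m\in Z}\widehat R_{\mathfrak m}$. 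This is the $\mathcal{D}(R)$-avatar of the Dwyer–Greenlees–Iyengar double-dual/completion theorem; since over a noetherian base derived $\mathfrak m$-completion coincides with classical $\mathfrak m$-adic completion, it is also the expected specialisation of the equivalence between reflexivity and derived completeness for (co)connective dg algebras announced in the abstract. I expect this to be the main obstacle: proving the Koszul-duality equivalence with enough naturality to recognise the reflexivity functor, and doing so over an arbitrary, possibly imperfect, field $k$, so that an inseparable residue extension $\ell_{\mathfrak m}/k$ causes no trouble — here Cohen's structure theorem, which supplies a coefficient field $\ell_{\mathfrak m}\hookrightarrow\widehat R_{\mathfrak m}$, should resolve the issue.

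Finally I would conclude using $HH^0$. Recall that $HH^0(\per S)\cong S$ for a commutative noetherian ring $S$, that $HH^0$ is a Morita invariant, and that $HH^0$ of a product of dg categories is the product of the $HH^0$'s. Hence if $\per R$ is reflexive then $R\cong HH^0(\per R)\cong HH^0\big(\pvd\pvd(\per R)\big)\cong\prod_{\mathfrak m\in Z}\widehat R_{\mathfrak m}$. An infinite product of nonzero rings is never noetherian, so $Z=\{\mathfrak m_1,\dots,\mathfrak m_n\}$ is finite and $R\cong\prod_{i=1}^n\widehat R_{\mathfrak m_i}$ is a finite product of complete noetherian local $k$-algebras with residue fields $\ell_{\mathfrak m_i}$ finite over $k$ (with $n=0$, $R=0$, the degenerate case). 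Conversely, if $R=\prod_{i=1}^nR_i$ has this shape then $Z=\{\mathfrak m_1,\dots,\mathfrak m_n\}$, each $\widehat R_{\mathfrak m_i}=R_i$, and under the identifications above the natural functor $\per R\to\pvd\pvd(\per R)$ becomes the identity equivalence $\per\big(\textstyle\prod_iR_i\big)\to\prod_i\per(R_i)$, so $\per R$ is reflexive.
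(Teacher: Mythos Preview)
Your proposal is correct and rests on the same essential ingredients as the paper's proof---the DGI identification of the derived double centraliser with $\mathfrak m$-adic completion, and the Keller--Nicol\'as fact that $\sigma_{\leq 0}$ is a $\pvd$-generator for a coconnective dg algebra with semisimple $H^0$---but the organisation is genuinely different. The paper first reduces to connected $R$, then uses an orthogonal-decomposition argument (\Cref{lemmareducetolocal}) to show that a nonzero connected reflexive $R$ has exactly one residue field finite over $k$, and finally invokes the two-out-of-three theorem (\Cref{lem: two out of three}) directly: $K$ is a $\pvd$-generator for both $R$ and $R^!_K$, so reflexivity forces derived completeness, which is $\mathfrak m$-adic completeness by DGI. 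You instead compute $\pvd\pvd(\per R)$ globally as $\prod_{\mathfrak m\in Z}\per(\widehat R_{\mathfrak m})$---which, as you note, amounts to establishing the backward direction first---and then extract the ring isomorphism $R\cong\prod_{\mathfrak m\in Z}\widehat R_{\mathfrak m}$ via $HH^0$, with noetherianity of $R$ forcing $Z$ to be finite. The $HH^0$ device is a pleasant alternative to the orthogonal-decomposition trick; on the other hand your backward direction requires tracking the evaluation functor through the Koszul-duality equivalence, whereas the paper's two-out-of-three packages this naturality for free. Finally, your worry about imperfect $k$ and the appeal to Cohen's structure theorem are unnecessary: neither the DGI result nor the Keller--Nicol\'as generation statement needs a coefficient field $\ell_{\mathfrak m}\hookrightarrow\widehat R_{\mathfrak m}$; one works throughout with $\ell_{\mathfrak m}$ purely as an $R$-module.
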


\subsection*{Methodology} \ 
The key technical insight of this paper can be summed up by the following slogan:
$$\textbf{Reflexivity = well-behaved }\pvd\textbf{-generators }+\textbf{ derived completeness}$$
If $A$ is a dg algebra and $M$ a {right $A$-module, we put $A^{!!}_M\coloneqq \R\mathrm{End}_{\R\mathrm{End}_A(M)^\op}(M)^\op$} and regard $A^{!!}_M$ as a derived completion of $A$ along $M$. Indeed, in algebro-geometric settings this often recovers the adic completion of $A$ at an ideal \cite{DGI}. To make the above slogan precise, we prove the following `two-out-of-three' theorem:
\begin{Introthm}[\Cref{lem: two out of three}]
    Let $A$ be a dg algebra and $M$ a thick generator for $\pvd(A)$. If any two of the following hold then so does the third:
\begin{enumerate}
    \item $A$ is reflexive.
    \item $M$ is a thick generator for {$\pvd(\R\mathrm{End}_A(M)^\op)$}.
    \item The derived completion map $A \to A^{!!}_M$ is a quasi-isomorphism.
\end{enumerate}
\end{Introthm}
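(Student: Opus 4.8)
The plan is to reduce the theorem to the single equivalence
\[ (1)\iff (2)\text{ and }(3), \]
which plainly implies the two-out-of-three property. Write $B\coloneqq\R\mathrm{End}_A(M)$, so that $M$ carries commuting actions of $A$ and $B$, one has $A^{!!}_M=\R\mathrm{End}_B(M)$, and the derived completion map of (3) is the homomorphism $A\to\R\mathrm{End}_B(M)$ sending $a$ to the ($B$-linear) operator $m\mapsto am$.

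First I would assemble two standard facts. By standard Morita theory for dg categories (due to Keller), the adjunction $(-\lot_B M,\ \R\mathrm{Hom}_A(M,-))$ restricts to a quasi-equivalence $\per(B)\xrightarrow{\ \sim\ }\thick_A(M)$, and by hypothesis $\thick_A(M)=\pvd(A)$. Secondly, $\pvd(-)$ is a contravariant Morita invariant; in particular $\pvd(\per(B))\simeq\pvd(B)$, concretely via restriction along the Yoneda embedding $B\into\per(B)$. Combining these produces an equivalence $\Phi\colon\pvd(\pvd(A))\xrightarrow{\ \sim\ }\pvd(B)$.

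The heart of the argument is to identify the reflexivity functor $\rho_A\colon A\to\pvd(\pvd(A))$ under $\Phi$. Unwinding the definition, $\rho_A$ sends the object of $A$ to the evaluation functor $\mathrm{ev}\colon\pvd(A)\to\per(k)$, $N\mapsto N$, with $a\in A$ acting through the natural endomorphism $(a\cdot-\colon N\to N)_{N}$ (natural since the maps in $\pvd(A)$ are $A$-linear). Precomposition with $-\lot_B M$ turns $\mathrm{ev}$ into the functor $P\mapsto P\lot_B M$ on $\per(B)$, with $a$ acting on the $M$-factor; restriction along $B\into\per(B)$ leaves the $B$-module $M$ itself, with $a$ acting via $A\to\R\mathrm{End}_B(M)$. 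Thus $\Phi\circ\rho_A$ is the functor $A\to\pvd(B)$ classified by the object $M\in\pvd(B)$ together with the completion map $A\to A^{!!}_M$.

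It then remains to read off when this functor is a Morita equivalence. A dg functor from the one-object dg category $A$ into an idempotent-complete pretriangulated dg category $\mathcal E$, classified by an object $X$ and a map $A\to\R\mathrm{End}_{\mathcal E}(X)$, is a Morita equivalence exactly when $A\to\R\mathrm{End}_{\mathcal E}(X)$ is a quasi-isomorphism and $X$ thickly generates $\mathcal E$. For $\mathcal E=\pvd(B)$ and $X=M$ the first condition is precisely (3); and since $\pvd(B)$ is a thick subcategory of $\mathcal D(B)$, the thick closure of $M$ computed inside $\pvd(B)$ agrees with $\thick_B(M)$, so the second condition says exactly that $M$ thickly generates $\pvd(B)=\pvd(\R\mathrm{End}_A(M))$, i.e.\ (2). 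Hence $A$ is reflexive if and only if (2) and (3) both hold. The genuinely delicate step is the third paragraph: the identification of $\rho_A$ must be carried out at the level of dg enhancements rather than homotopy categories, and one must check that it is compatible with Keller's equivalence and with restriction along $B\into\per(B)$; everything else is formal.
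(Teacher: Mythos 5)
Your proof is correct and, modulo packaging, is essentially the paper's own argument: the paper's proof is a commutative square which likewise uses the generator hypothesis to identify $\pvd(\pvd(A)^{\op})$ with $\pvd(\R\mathrm{End}_A(M))$ and then splits the statement that the (co)evaluation functor is a Morita equivalence into quasi-fully-faithfulness on the generator, i.e.\ the completion map being a quasi-isomorphism (condition (3)), and thick generation of $\pvd(\R\mathrm{End}_A(M))$ by $M$ (condition (2)). The slightly stronger biconditional (1) $\Leftrightarrow$ (2) and (3) that you prove follows equally from that square, and the $(-)^{\op}$ and dg-enhancement bookkeeping you flag at the end is routine.
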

It is hence important for us to identify thick generators for $\pvd(A)$. For connective dg algebras one can do this using the standard t-structure, and for coconnective dg algebras with semisimple $H^0$ one can do this using the co-t-structures of \cite{nicolaskeller}. This allows us to prove the following precursor to \Cref{Intro thm: connective coconnective}:
\begin{Introthm}[\Cref{lem: coconnective semisimple H^0 = reflexive} and \Cref{connectiveDC}]\label{Intro thm: preCC}
Let $A$ be a dg algebra with $H^0(A)$ finite dimensional. Suppose that either of the following conditions hold:
\begin{itemize}
\item $A$ is connective.
\item $A$ is coconnective and $H^0(A)$ is semisimple.
\end{itemize}
Then $A$ is reflexive if and only if it is derived complete at $H^0(A)/\rad H^0(A)$.
\end{Introthm}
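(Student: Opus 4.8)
The plan is to apply the two-out-of-three theorem, \Cref{lem: two out of three}, with $M = S$, where $S \coloneqq H^0(A)/\rad H^0(A)$ is placed in cohomological degree $0$. As $H^0(A)$ is finite dimensional, so is $S$, hence $S \in \pvd(A)$. For this $M$, condition $(3)$ of \Cref{lem: two out of three} --- that $A \to A^{!!}_S = \R\mathrm{End}_{\R\mathrm{End}_A(S)}(S)$ is a quasi-isomorphism --- is precisely the statement that $A$ is derived complete at $S$. So it suffices to prove two things: (i) that $S$ is a thick generator of $\pvd(A)$, so that \Cref{lem: two out of three} applies; and (ii) that condition $(2)$ holds automatically, i.e.\ that $S$ is a thick generator of $\pvd\bigl(\R\mathrm{End}_A(S)\bigr)$. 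Granting these, \Cref{lem: two out of three} yields the equivalence $(1)\Leftrightarrow(3)$, which is the assertion.

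For (i) one argues according to the two bullets. If $A$ is connective, the standard t-structure on $\mathcal{D}(A)$ restricts to a bounded t-structure on $\pvd(A)$ whose heart is the category of finite-dimensional $H^0(A)$-modules; since $H^0(A)$ is finite dimensional, each such module has finite length with composition factors among the summands of $S$, so a dévissage along the t-structure gives $\thick_A(S) = \pvd(A)$. If $A$ is coconnective with $H^0(A)$ semisimple, then $S = H^0(A)$, the standard t-structure is unavailable, but the co-t-structure of \cite{nicolaskeller} plays the analogous role, and the corresponding dévissage again gives $\thick_A(S) = \pvd(A)$.

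For (ii), set $B \coloneqq \R\mathrm{End}_A(S)$. Koszul duality interchanges the two regimes: $B$ is coconnective when $A$ is connective, and connective when $A$ is coconnective. When $A$ is connective, $H^0(B) \cong \mathrm{End}_{H^0(A)}(S)$, which is semisimple, so $B$ falls under the second bullet. When $A$ is coconnective, $B$ is connective but $H^0(B) = \mathrm{End}_{\mathcal{D}(A)}(S)$ need not be finite dimensional --- already for $A = C^\bullet(S^1;k)$ one has $\R\mathrm{End}_A(k)\simeq k[[x]]$ --- yet dévissage along the standard t-structure on $\mathcal{D}(B)$ still shows $\pvd(B) = \thick_B\bigl(\{\text{simple }H^0(B)\text{-modules}\}\bigr)$, since cohomologies of objects of $\pvd(B)$ are finite-length $H^0(B)$-modules. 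In both regimes the $B$-module $S$ restricts to a semisimple $H^0(B)$-module meeting every simple, the action of $H^0(B) = \mathrm{End}_{\mathcal{D}(A)}(S)$ on $S$ factoring through its semisimple quotient; so the generation statement of (i), applied to $B$ (in its coconnective form when $A$ is connective, in its connective form when $A$ is coconnective), gives $\thick_B(S) = \pvd(B)$, i.e.\ condition $(2)$. There is no circularity here: each case invokes only the \emph{generation} half of the other case, never its reflexivity conclusion.

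The main obstacle is part (ii): the Koszul-duality bookkeeping needed to pin down the connectivity of $B = \R\mathrm{End}_A(S)$, to describe $H^0(B)$ and the $B$-module structure on $S$ precisely enough to identify $S$ with the sum of the simple $H^0(B)$-modules, and --- in the coconnective case, where $H^0(B)$ may be infinite dimensional --- to carry the dévissage through without any finiteness hypothesis on $H^0(B)$. The remaining substantial ingredient is establishing the two generation statements of (i), the coconnective one resting on the co-t-structure machinery of \cite{nicolaskeller}. Once these are in place, the equivalence follows formally from \Cref{lem: two out of three}.
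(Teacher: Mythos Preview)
Your approach is essentially the paper's: apply the two-out-of-three theorem with $M=S$, verify that $S$ is a $\pvd$-generator for $A$ via the standard t-structure (connective case) or the Keller--Nicol\'as co-t-structure (coconnective case), and then verify that $S$ is a $\pvd$-generator for $B=\R\mathrm{End}_A(S)$ by switching roles. The paper's proofs of \Cref{lem: coconnective semisimple H^0 = reflexive} and \Cref{connectiveDC} do exactly this.

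The point you correctly flag as the obstacle---that $S$, viewed as an $H^0(B)$-module, ``meets every simple''---is indeed where the substance lies, and the paper does not leave it as bookkeeping. In the coconnective-$A$ case the paper first reduces (via \Cref{divalg}) to $H^0(A)$ a product of division algebras, then proves that $H^0(B)$ is \emph{semiperfect} (\Cref{semiperf}) and establishes a simple--projective bijection (\Cref{simpleprojbij}) to conclude that $H^0(B)/\rad H^0(B)\cong H^0(A)^{\op}$ and that the simple $H^0(B)$-modules are exactly the summands of $S$ (\Cref{simplesimplebij}). In the connective-$A$ case the paper computes $H^0(B)\cong S^{\op}$ directly and then lifts the $H^0(B)$-linear isomorphism $H^0(\sigma_{\leq 0}B)\cong S$ to a $B$-linear quasi-isomorphism $\sigma_{\leq 0}B\simeq S$ using the same Keller--Nicol\'as lifting lemmas. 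Your sketch is on target, but these are genuine arguments rather than routine identifications.
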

Now we have dealt with the existence of good generators, we need a method to check when an algebra is derived complete. To do this we pass through the well-known relationship between derived completion and Koszul duality. Indeed, \Cref{Intro thm: connective coconnective} above follows by combining \Cref{Intro thm: preCC} with some derived completion results of Fushimi \cite{Fus24} which in the augmented setting were established in \cite{MBDDP} using Koszul duality. This approach to reflexivity already has an antecedent in \cite{LekiliUeda}. We also develop a notion of reflexivity for dg coalgebras, and show that a dg coalgebra $C$ is reflexive precisely when its Koszul dual dg algebra $\Omega C$ is (\Cref{mrefcog}). The interaction between reflexivity of $C$ and reflexivity of its linear dual $C^*$ is subtle, but their interaction is key to our proof of \Cref{Intro thm: reflexivity chains cochains} via the Koszul duality between the dg algebra of chains on loops and the dg coalgebra of chains \cite{rzcubes}. Koszul duality is moreover crucial to the proof of \Cref{intro thm: ginzburgs}, as one needs to identify Ginzburg dg algebras as Koszul duals of certain cyclic $A_\infty$-algebras, as in \cite{segal, kellercyc}.

\subsection*{Acknowledgements}
The authors thank Benjamin Briggs, Joe Chuang, Dan Kaplan, Andrey Lazarev, Yank{\i} Lekili, David Pauksztello, George Raptis, Greg Stevenson, and Michael Wemyss for helpful conversations. We are grateful to Marvin Plogmann for directing us to \cite{Fus24} and to Severin Barmeier for asking about the relationship between local properness of graded gentle algebras and reflexivity.

MB was supported by the Additional Funding Programme for Mathematical Sciences, delivered by EPSRC (EP/V521917/1) and the Heilbronn Institute for Mathematical Research. IG was supported by a PhD scholarship from the Carnegie Trust for the Universities of Scotland. SO was supported by the Primus grant PRIMUS/23/SCI/006. 

\subsection{Notation and conventions}

Throughout, $k$ will denote a fixed base field. A \textbf{complex} is a cochain complex of $k$-vector spaces. We will usually use cohomological indexing for complexes; one can convert between homological and cohomological indexing by the formula $M_i = M^{-i}$. The shift of a complex $M$ will be denoted by $M[1]$, so that $M[1]^i \cong M^{i+1}$. The category of complexes is closed symmetric monoidal, with product given by the usual tensor product of complexes. 

A \textbf{dg category} is a category enriched in complexes. The derived category of right modules over a dg category $\mathcal{A}$ will be denoted by $\mathcal{D}(\mathcal{A})$; it is a pretriangulated dg category. An $\mathcal{A}$-module $M$ is \textbf{proper} or \textbf{perfectly valued} if $H^\ast(M(a))$ is a finite dimensional graded vector space for each $a \in \mathcal{A}$. The derived category of perfectly valued modules over $\mathcal{A}$ will be denoted $\pvd(\mathcal{A})$; it is a pretriangulated dg category. A \textbf{perfect} module is a compact object of $\mathcal{D}(\mathcal{A})$; these form a pretriangulated dg category which we will denote by $\per(\mathcal{A})$.  A dg category $\mathcal{A}$ is \textbf{proper} if $H^\ast(\mathcal{A}(a,b))$ is finite dimensional for all $a,b \in \mathcal{A}$. This is equivalent to each representable $\mathcal{A}(a,-)$ being a proper module, or $\mathcal{A}$ itself being a proper $\mathcal{A}$-bimodule.

A \textbf{dg algebra} is a dg category with one object, i.e.\ a complex with a compatible multiplication. A dg algebra $A$ is \textbf{connective} if $H^i(A) \cong 0$ for $i > 0$ and \textbf{coconnective} if $H^i(A) \cong 0$ for $i < 0$. A dg algebra is \textbf{finite dimensional} if its underlying complex is finite dimensional. A finite dimensional dg algebra is clearly proper. A \textbf{module} over a dg algebra is a complex $M$ with an action map $A\otimes M \to M$. We will sometimes refer to modules as \textbf{dg modules} for emphasis. All modules are by default right modules.

\section{Preliminaries}

In this section we recall the definition of reflexivity, before showing that reflexivity is closely linked to the notion of derived completeness.

\subsection{Reflexivity}

\begin{defn}
A dg category $\mathcal{A}$ is \textbf{reflexive} (resp.~\textbf{semireflexive}) if the natural map 
\[
\mathrm{ev}_\mathcal{A}:\mathcal{A}^{\op} \to \pvd (\pvd (\mathcal{A})^{\op}); \quad a \mapsto (M \mapsto M(a))
\]
is a Morita equivalence (resp.~quasi-fully-faithful).
\end{defn}

\begin{rmk}
 Equivalently, one can use the related \textbf{coevaluation map}
$$\mathrm{coev}_\mathcal{A}:\mathcal{A} \to \pvd (\pvd (\mathcal{A}))$$
or its opposite $\mathrm{coev}_{\mathcal{A}^\op}$ in the above definition, as per \cite[Lemma 3.10]{KS}.
\end{rmk}

\begin{ex} \label{refexamples}

\begin{enumerate}\hfill

\item A smooth proper dg category $\mathcal{A}$ satisfies $\pvd(\mathcal{A})\simeq \per(\mathcal{A})$ and hence is reflexive (one inclusion is clear and the other is well-known, see e.g.\ \cite{KS}). This also follows from the fact that they are the dualisable objects in the closed symmetric monoidal category $\mathrm{Hmo}$ of dg categories localised at Morita equivalences.  In particular, if $\mathcal{X}$ is a smooth proper DM stack over a field of characteristic zero, then $\per(\mathcal{X})$ is reflexive \cite{dmstacks}.

\item Proper connective dg algebras are reflexive. When $k$ is perfect this was shown in \cite{KS} and in general this appears in \cite{approx}.

\item If $X$ is a proper scheme over $k$ then both $\per(X)$ and $\mathcal{D}^b_\mathrm{coh}(X)$ are reflexive. In characteristic zero this appears in \cite{bznp} (in fact, a relative version for algebraic spaces is given). In \cite{KS} this was proved for projective schemes over perfect fields. For all fields this appears in \cite{approx}.

\item In \cite{approx}, Azumaya algebras over proper schemes over any field were shown to be reflexive. 

\item In \cite{Greflex} the power series ring $k\llbracket t \rrbracket$ was shown to be reflexive. 

\item The polynomial ring $k[t]$ is not reflexive; this follows from \Cref{noeththm} below.

\item Let $f:\mathbb{C}^n \to \mathbb{C}$ be a weighted homogeneous polynomial with an isolated critical point, and let $V$ be its Milnor fibre. Associated to $V$ are its Fukaya category $\mathcal{F}\coloneqq \mathcal{F}(V)$ and its wrapped Fukaya category $\mathcal{W}\coloneqq \mathcal{W}(V)$. If a mild numerical condition is satisfied, then Lekili--Ueda show that both $\mathcal{F}$ and $\mathcal{W}$ are reflexive \cite[Theorem 6.11]{LekiliUeda}. In fact, they show that $\pvd(\mathcal{F})\simeq \mathcal{W}$ and $\pvd(\mathcal{W})\simeq \mathcal{F}$. Similar examples not fitting into the above framework are given in \cite{LiCY}.

\item Non-proper non-examples are easy to come by: e.g.\ there are no finite dimensional modules over the Weyl algebra, or algebras of graded Laurent polynomials, and so they cannot be reflexive. 

\item Proper dg categories are semireflexive \cite{KS}.

\end{enumerate}
\end{ex}

\begin{rmk}
A key feature of a reflexive dg category $\mathcal{A}$ is that there is some common information between $\pvd(\mathcal{A})$ and $\per(\mathcal{A})$:
\begin{enumerate}
\item For any dg category $\mathcal{A}$, $\pvd(\mathcal{A})$ is determined by $\per(\mathcal{A})$; reflexivity guarantees the converse.

\item In \cite{KS}, it was shown that for $\mathcal{A}$ reflexive there is a bijection between semiorthogonal decompositions of $\pvd(\mathcal{A})$ and of $\per(\mathcal{A})$, and an isomorphism between the triangulated autoequivalence groups of these categories.

\item It follows immediately from the results of \cite{Greflex} that if $\mathcal{A}$ is reflexive, then there is an isomorphism between the derived Picard groups of $\pvd(\mathcal{A})$ and $\per(\mathcal{A})$. It was also shown in  \textit{op.~cit.}\ that  $\pvd(\mathcal{A})$ and $\per(\mathcal{A})$ have the same Hochschild cohomology. 

\end{enumerate}

\end{rmk}

\begin{rmk} \
\begin{enumerate}
\item $\mathcal{A}$ is reflexive if and only if $\mathcal{A}^{\op}$ is \cite{KS}. 

\item Since $\per(\mathcal{A})$ is a Morita fibrant replacement of $\mathcal{A}$, and a Morita equivalence between pretriangulated idempotent complete dg categories is a quasi-equivalence, a dg category is reflexive if and only if the natural map
\[
\per(\mathcal{A})^{\op} \to \pvd (\pvd (\mathcal{A})^{\op}); \quad M \mapsto \R\hom_\mathcal{A}(M,-)
\]
is a quasi-equivalence. From this point of view, reflexivity is seen to be a representability property for functors defined on $\pvd(\mathcal{A})$. 

\item Let $\mathrm{Hmo}$ denote the Morita homotopy category of dg categories. It is a closed symmetric monoidal category, with monoidal structure induced by the derived tensor product of dg categories and internal hom induced by the internal hom of dg categories \cite{RodriguezGonzalez}. In \cite{Greflex} it was shown that the reflexive dg categories are precisely the reflexive objects in $\mathrm{Hmo}$. Loosely, this is because we have
$$\R\hom_{\mathrm{Hmo}}(\mathcal{A},k) \simeq \R\hom_{\mathrm{Hqe}}(\mathcal{A},\per (k)) \simeq \pvd (\mathcal{A}^{\op})$$so that the Morita dual of a dg category $\mathcal{A}$ coincides with $\pvd(\mathcal{A}^{\op})$.

\end{enumerate}
\end{rmk}

\subsection{$\pvd$-generators}\label{section: generators}
We are interested in dg categories of the form $\pvd(\mathcal{A})$. In many situations of interest these come with natural generators. For example, for finite dimensional algebras one takes the sum of all the simple modules, and for commutative local augmented $k$-algebras one takes $k$ itself. A special case of \cite[Theorem 0.15]{StronGen} shows that $\pvd(X)$ admits a generator whenever $X$ is a proper scheme. In this section we record some simple properties of these generators and give some examples.

\begin{defn}
    Let $\mathcal{A}$ be a dg category and $S\in \pvd(\mathcal{A})$ a perfectly valued $\mathcal{A}$-module. Say that $S$ is a \textbf{$\pvd$-generator} if we have an equality $\thick_{{\mathcal{D}}(\mathcal{A})}(S)=\pvd(\mathcal{A})$ of subcategories of $\mathcal{D}(A)$.
\end{defn}

When ${\mathcal{D}}(\mathcal{A})$ admits a t-structure, we can reduce the question of existence of a $\pvd$-generator for $\mathcal{A}$ to a question about a $\pvd$-generator for the heart:

\begin{prop}\label{prop: Dfd generator connective case}
Let $A$ be a connective dg algebra. Then $\pvd(A)$ is generated by the finite dimensional simple $H^0(A)$-modules. 
\end{prop}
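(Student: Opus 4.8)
The plan is to use the standard t-structure on $\mathcal{D}(A)$ — which exists because $A$ is connective — to filter an arbitrary object of $\pvd(A)$ by its cohomology modules, and then observe that each cohomology module lies in the thick subcategory generated by the simple $H^0(A)$-modules. First I would recall that for a connective dg algebra $A$, the derived category $\mathcal{D}(A)$ carries a t-structure whose truncation functors $\tau^{\leq n}$, $\tau^{\geq n}$ are the usual ones and whose heart is equivalent to $\mathrm{Mod}\text{-}H^0(A)$; an object $M$ lies in $\mathcal{D}^{\leq n}$ iff $H^i(M) = 0$ for $i > n$, and dually. Now let $M \in \pvd(A)$. Since $M$ is perfectly valued, $H^\ast(M)$ is a finite-dimensional graded vector space, so only finitely many $H^i(M)$ are nonzero, say for $a \leq i \leq b$.

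The key step is then a dévissage: using the truncation triangles $\tau^{\leq i-1}M \to \tau^{\leq i}M \to H^i(M)[-i] \to$, an easy induction on $b - a$ shows that $M$ lies in the thick (indeed, even the extension-closed, shift-closed) subcategory generated by the objects $H^i(M)[-i]$. Thus it suffices to show that each $H^i(M)$, viewed as an object of the heart $\mathrm{Mod}\text{-}H^0(A) \subseteq \mathcal{D}(A)$, lies in $\thick_{\mathcal{D}(A)}(S)$ where $S$ is the sum of the finite-dimensional simple $H^0(A)$-modules. But $H^i(M)$ is a finite-dimensional $H^0(A)$-module (again because $M$ is perfectly valued), hence has a finite composition series with simple subquotients, each of which is one of the finite-dimensional simple $H^0(A)$-modules; so $H^i(M) \in \thick_{\mathcal{D}(A)}(S)$ by another short induction using the short exact sequences in the heart, which become triangles in $\mathcal{D}(A)$. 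Combining the two inductions gives $M \in \thick_{\mathcal{D}(A)}(S)$, so $\pvd(A) \subseteq \thick_{\mathcal{D}(A)}(S)$. The reverse inclusion is immediate: each simple $H^0(A)$-module is finite-dimensional over $k$, hence perfectly valued over $A$, and $\pvd(A)$ is a thick subcategory of $\mathcal{D}(A)$.

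I expect the only genuine subtlety — and the main point to get right rather than a true obstacle — is the interaction between "finite dimensional over $k$" and "finite length over $H^0(A)$": one must be slightly careful that the simple $H^0(A)$-modules appearing as composition factors of a finite-dimensional module are themselves finite-dimensional over $k$ (true, since they are subquotients), and that these are exactly the "finite dimensional simple $H^0(A)$-modules" in the statement. One should also note that $H^0(A)$ need not be finite-dimensional or even noetherian here, so "simple $H^0(A)$-module" must be interpreted appropriately and the finite-dimensionality hypothesis on $M$ is what makes the composition series finite. Everything else is the routine dévissage packaged above; no hypothesis beyond connectivity of $A$ is needed.
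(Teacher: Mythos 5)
Your proof is correct and follows essentially the same route as the paper: dévissage of a perfectly valued module along the standard t-structure of the connective algebra, followed by a composition-series argument identifying each (finite dimensional, hence finite length) cohomology module with an iterated extension of finite dimensional simple $H^0(A)$-modules. The only cosmetic point is that you phrase generation via the direct sum $S$ of all simples (which could be an infinite sum and not itself perfectly valued), but since each $H^i(M)$ only involves finitely many simples, your argument in fact lands in the thick subcategory generated by the set of simples, exactly as in the paper.
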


\begin{proof}
Since $A$ is connective, the cohomology of any $A$-module can be viewed as an $H^0(A)$-module via restriction along $A \to H^0(A)$. If $X \in  \pvd(A)$ then $X \in \thick(H^\ast(X))$ using the standard $t$-structure. Each $H^i(X)$ is finite dimensional, and in particular a finite length $H^0(A)$-module, and so admits a finite filtration whose factors are finite dimensional simple $H^0(A)$-modules. It follows that each $H^i(A)$ is contained in the thick subcategory of $\mathcal{{\mathcal{D}}}(H^0(A))$ generated by the finite dimensional simples. Now $H^\ast(X)$ is in the image of the restriction functor $\pvd(H^0(A)) \to \pvd(A)$ and so $H^{\ast}(X)$ is in the thick subcategory of $A$ generated by the finite dimensional simple $H^0(A)$-modules. 
\end{proof}

\begin{cor}\label{congen}
    Let $A$ be a connective dg algebra such that $H^0(A)$ is finite dimensional. Let $S = H^0(A)/\mathrm{rad}H^0(A)$ be the maximal semisimple quotient of $H^0(A)$, and regard $S$ as an $A$-module. Then $S$ is a $\pvd$-generator for $A$.
\end{cor}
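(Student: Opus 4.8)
The plan is to deduce this directly from \Cref{prop: Dfd generator connective case}. The key observation will be that $S$ is obtained from the collection of finite dimensional simple $H^0(A)$-modules by finitely many direct sums, while conversely each such simple is a direct summand of $S$. Since thick closures in $\mathcal{D}(A)$ are closed under both finite direct sums and direct summands, this forces $\thick_{\mathcal{D}(A)}(S)$ to coincide with the thick subcategory generated by all finite dimensional simple $H^0(A)$-modules, which by \Cref{prop: Dfd generator connective case} equals $\pvd(A)$.

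First I would check that $S$ genuinely lies in $\pvd(A)$: since $H^0(A)$ is finite dimensional over $k$, so is its maximal semisimple quotient $S$, and hence $S$ is perfectly valued when regarded as an $A$-module via the composite $A \to H^0(A) \to S$. Next I would identify the simple $H^0(A)$-modules with the indecomposable summands of $S$. Because $\rad H^0(A)$ annihilates every simple $H^0(A)$-module, the simple $H^0(A)$-modules are precisely the simple modules over the finite dimensional semisimple $k$-algebra $S$; by Artin--Wedderburn, $S$ is a finite product of matrix algebras over finite dimensional division $k$-algebras, and as a module over itself it decomposes as a finite direct sum in which every simple $S$-module occurs. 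Thus, up to isomorphism, the finite dimensional simple $H^0(A)$-modules are exactly the indecomposable summands of $S$.

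Finally I would combine these facts. On the one hand $S$ is a finite direct sum of simple $H^0(A)$-modules, so $\thick_{\mathcal{D}(A)}(S) \subseteq \thick_{\mathcal{D}(A)}\big(\{\text{finite dimensional simple } H^0(A)\text{-modules}\}\big)$. On the other hand each such simple is a direct summand of $S$, and thick subcategories are closed under summands, giving the reverse inclusion; \Cref{prop: Dfd generator connective case} then identifies the right-hand side with $\pvd(A)$. I do not anticipate a genuine obstacle here: the only mild subtlety is that over a non-algebraically-closed field a "simple $H^0(A)$-module" need not be one-dimensional, but the Artin--Wedderburn description of $S$ handles this cleanly, since each simple summand still appears inside $S$.
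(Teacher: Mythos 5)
Your proposal is correct and follows essentially the same route as the paper: both identify $S$ (up to multiplicity/summands) with the direct sum of the simple $H^0(A)$-modules, note that thick subcategories are closed under finite sums and summands, and then invoke \Cref{prop: Dfd generator connective case}. The extra details you supply (that $S \in \pvd(A)$, and the Artin--Wedderburn identification of the simples with summands of $S$) are accurate elaborations of what the paper leaves implicit.
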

\begin{proof}
    Up to multiplicity, $S$ is the direct sum of the simple $H^0(A)$-modules. Hence $S$ generates the same thick subcategory as the simple $H^0(A)$-modules do.
\end{proof}

\begin{rmk}
    There is an analogous theorem for coconnective dg algebras with semisimple $H^0$ due to Keller and Nicol\'as \cite{nicolaskeller}. The proof makes use of weight structures, which we will return to in \Cref{weightsection}.
\end{rmk}

\subsection{Derived completion}\label{section: derived completion}
When $A$ admits a $\pvd$-generator $S$, standard tilting theorems imply that $\pvd(A)$ is Morita equivalent to the endomorphism dg algebra $\R\mathrm{End}_A(S)$. In this section we explore the properties of this construction from the viewpoint of reflexivity. We pay particular attention to the two-fold application of this construction, known as the \textbf{derived double centraliser} or the \textbf{derived completion} \cite{DGI, efimovcompletion}. We will show that in favourable circumstances, being derived complete with respect to a $\pvd$-generator is equivalent to being reflexive.

\begin{defn}
    Let $A$ be a dg algebra and $M$ an $A$-module. We define a new dg algebra $A^!_M\coloneqq \R\mathrm{End}_A(M)^\op$. We refer to $A^!_M$ as the \textbf{centraliser of $A$ relative to $M$}.
\end{defn}

We think of $A^!_M$ as an $M$-relative dual of $A$ - indeed, when $M$ is a $\pvd$-generator it is the Morita dual of $A$, and when $M$ is the base field it is (up to an opposite) the Koszul dual of $A$. We will explore this latter perspective further in \Cref{kdsection}.

Clearly $M$ is itself a right $A^!_M$-module, and so we may form the double dual $A^{!!}_M \coloneqq (A^!_M)^!_M$. We call this the \textbf{derived completion of $A$ along $M$}. Observe that there is a functor
\[
\R\hom_A(-,M)\colon \mathcal{D}(A)^{\op} \to \mathcal{D}(A^{!}_M) 
\]
which sends $A$ to $M$, which induces a map of dg algebras
\[
A = \R\hom_A(A,A) \to \R\hom_{A^{!}_M}(M,M)^{\op} = A_M^{!!}. 
\]

\begin{defn}
We say that a dg algebra $A$ is \textbf{derived complete with respect to $M \in \mathcal{D}(A)$} if the natural map $A \to A_M^{!!}$ is a quasi-isomorphism. In \cite{DGI} this is called \textbf{dc-completeness}.
\end{defn}

\begin{lem}
If $A$ is derived complete with respect to $M$ then $A^!_M$ is derived complete with respect to $M$.
\end{lem}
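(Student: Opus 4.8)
The plan is to unwind the definitions and use the symmetry of the double-centraliser construction. Write $B \coloneqq A^!_M = \R\mathrm{End}_A(M)^{\op}$, so that $M$ becomes a right $B$-module and by definition $A^{!!}_M = (A^!_M)^!_M = \R\mathrm{End}_B(M)^{\op}$. The hypothesis is that the natural map $A \to A^{!!}_M = \R\mathrm{End}_B(M)^{\op}$ is a quasi-isomorphism, and the goal is to show that the analogous map $B \to B^{!!}_M = (B^!_M)^!_M$ is a quasi-isomorphism. The first step is to observe that $B^!_M = \R\mathrm{End}_B(M)^{\op}$, which by the quasi-isomorphism $A \simeq \R\mathrm{End}_B(M)^{\op}$ is quasi-isomorphic to $(A^{!!}_M)^{\op}\kern -1pt{}^{\op} \simeq A$; more carefully, $B^!_M = \R\mathrm{End}_B(M)^{\op} = (A^{!!}_M)^{\op}$, and under the given quasi-isomorphism this identifies with $A^{\op}$... one must track the opposites carefully here. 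In any case, $B^!_M \simeq A^{\op}$ as dg algebras compatibly with the $M$-module structures (the right $B^!_M$-module structure on $M$ corresponds to the left $A$-module structure, i.e. the right $A^{\op}$-module structure, which is exactly the original one).

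Granting that identification, the second step is immediate: $B^{!!}_M = (B^!_M)^!_M \simeq (A^{\op})^!_M = \R\mathrm{End}_{A^{\op}}(M)^{\op}$. But $\R\mathrm{End}_{A^{\op}}(M) = \R\mathrm{End}_A(M)^{\op}$ (endomorphisms of $M$ as a left $A$-module are the opposite algebra of endomorphisms as a right module, or vice versa, up to the standard identification), so $(A^{\op})^!_M \simeq \R\mathrm{End}_A(M) = (A^!_M)^{\op} = B^{\op}\kern -1pt{}^{\op}$... again the opposites need to be matched up, but the upshot is that $B^{!!}_M \simeq B$. Finally one checks that under this chain of identifications the natural completion map $B \to B^{!!}_M$ coincides with the identity (or rather the canonical quasi-isomorphism), which follows because all the maps involved are the canonical evaluation/double-dual maps and these are natural; concretely, the square relating the completion map for $A$ and the completion map for $B$ commutes by functoriality of $\R\hom_{(-)}(M,M)$.

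I would structure the write-up as: (i) set notation $B = A^!_M$; (ii) identify $B^!_M \simeq A^{\op}$ as dg algebras with compatible $M$-actions, using the hypothesis $A \xrightarrow{\sim} A^{!!}_M$; (iii) conclude $B^{!!}_M = (B^!_M)^!_M \simeq (A^{\op})^!_M \simeq (A^!_M)^{\op}\kern -1pt{}^{\op}$... i.e. trace through to get $\simeq B$; (iv) verify the completion map $B \to B^{!!}_M$ matches the canonical equivalence by naturality. The main obstacle is purely bookkeeping: keeping the opposite-algebra decorations straight so that the module structures genuinely line up, since $A^!_M$ is defined with an $\op$ precisely to make $M$ a right module, and iterating this twice forces one to be scrupulous about which side everything acts on. No serious homological input is needed beyond the already-established naturality of the evaluation maps (cf.\ the discussion preceding the definition of derived completeness); the content is that the double-centraliser construction is, up to opposites, an involution on the data $(A, M)$, so derived completeness of $A$ transports to derived completeness of $A^!_M$ automatically.
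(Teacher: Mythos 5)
Your overall strategy is sound, and (once corrected) it is essentially the paper's argument read at the level of endomorphism algebras: the hypothesis identifies $B^!_M$ with $A$ compatibly with the module $M$, whence $B^{!!}_M \simeq A^!_M = B$ compatibly with the completion map. The paper packages the same idea functorially: it considers the composite $\mathcal{D}(A^!_M) \xrightarrow{\R\hom(-,M)} \mathcal{D}(A^{!!}_M)^{\op} \xrightarrow{\sim} \mathcal{D}(A)^{\op}$, where the second functor is restriction along $A \to A^{!!}_M$ (an equivalence by hypothesis), argues the composite is fully faithful, and reads off the quasi-isomorphism $A^!_M \to (A^!_M)^{!!}_M$ on the generator.

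However, the op-bookkeeping in your steps (ii)--(iii) is not merely unfinished but wrong as written. First, $B^!_M = \R\mathrm{End}_B(M)^{\op}$ is equal to $A^{!!}_M$ \emph{by definition} --- there is no extra $\op$ --- so the hypothesis identifies $B^!_M$ with $A$, not with $A^{\op}$. Second, $M$ carries no left $A$-module structure in this setup: the module structure to which the right $B^!_M$-action on $M$ restricts along $A \to A^{!!}_M = B^!_M$ is the \emph{original right} $A$-action, because the completion map sends $a$ to right multiplication by $a$. Third, the identity you then invoke, $\R\mathrm{End}_{A^{\op}}(M) = \R\mathrm{End}_A(M)^{\op}$, is false in general: left-module and right-module endomorphism algebras of a one-sided module are unrelated (and here the left-hand side is not even defined by the given data); it holds essentially only for $M=A$. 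Once the spurious $\op$ is removed, these problems cancel and the argument becomes exactly what you intend: restriction along the quasi-isomorphism $A \to B^!_M$ is an equivalence $\mathcal{D}(B^!_M) \to \mathcal{D}(A)$ sending $M$ to $M$, hence $B^{!!}_M = \R\mathrm{End}_{B^!_M}(M)^{\op} \simeq \R\mathrm{End}_A(M)^{\op} = B$; and since the composite $B \to B^{!!}_M \to B$ is the identity (your naturality point (iv)), the completion map $B \to B^{!!}_M$ is a quasi-isomorphism, as required.
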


\begin{proof}
Consider the composition
\[
\mathcal{D}(A^!_M) \xrightarrow{\R\hom(-,M)} \mathcal{D}(A^{!!}_M)^{\op} \xrightarrow{\simeq} \mathcal{D}(A)^{\op} 
\]
The second functor is the equivalence induced by restricting along $A \to A^{!!}_M$. The long composite is a coproduct-preserving functor which is fully faithful, since it is fully faithful when restricted to the generator $M$: one has by hypothesis a natural quasi-isomorphism $\R\mathrm{End}_{A^!_M}(M)^\op\eqqcolon A^{!!}_M \simeq A$. Therefore the first map is fully faithful, and so the induced map $A_M^! \to (A_M^!)^{!!}_M$ is an equivalence. 
\end{proof}

\begin{rmk}\label{dercomprmk}
\begin{enumerate}\hfill

\item It was shown in \cite[Proposition 4.20]{DGI} that if $(R,\mathfrak{m},K)$ is a commutative noetherian local ring, then the map $R \to R^{!!}_K$ coincides with the $\mathfrak{m}$-adic completion map $R \to \hat{R}_\mathfrak{m}$. More generally, the same holds for any regular quotient of $R$. Efimov generalised this to a non-affine version \cite{efimovcompletion}.
\item If $\thick_A(M) = \thick_A(M')$, then clearly $A_M^!$ and $A_{M'}^!$ are Morita equivalent. By \cite[Proposition 3.2]{efimovcompletion}, in this situation $A_M^{!!}$ and $A_{M'}^{!!}$ are also Morita equivalent. 

\item \label{Morita invariance derived completion} By \cite[Proposition 3.4]{efimovcompletion}, derived completion respects Morita equivalences: if $A$ and $B$ are Morita equivalent dg algebras, with $M$ an $A$-module and $N$ the corresponding $B$-module, then $A_M^{!!}$ and $B_{N}^{!!}$ are Morita equivalent.
\end{enumerate}
\end{rmk}

\begin{lem}\label{thicklem}
Let $A$ be a dg algebra and $M$ an $A$-module. If $A \in \thick_A(M)$ then $A$ is derived complete with respect to $M$. 
\end{lem}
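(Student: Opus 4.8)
The statement to prove is \Cref{thicklem}: if $A \in \thick_A(M)$ then $A$ is derived complete with respect to $M$, i.e.\ the natural map $A \to A^{!!}_M$ is a quasi-isomorphism. The plan is to exploit the functor $\R\hom_A(-,M)\colon \mathcal{D}(A)^{\op} \to \mathcal{D}(A^!_M)$ that was constructed just before the statement, which sends $A \mapsto M$. First I would observe that this functor is triangulated and preserves direct summands, hence sends $\thick_A(A)$ into $\thick_{A^!_M}(M)$; combined with the hypothesis $A \in \thick_A(M)$ this tells us that the object $M \in \mathcal{D}(A^!_M)$ is a compact generator of $\per(A^!_M)$ whose image under $\R\hom_A(-,M)$ accounts for all of $\per(A)$.

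\textbf{Key steps.} The cleanest route is to check that $\R\hom_A(-,M)$ is quasi-fully-faithful on $\thick_A(M)$, and then read off derived completeness by evaluating at $A$. Concretely: (i) the map $A \to A^{!!}_M = \R\hom_{A^!_M}(M,M)^{\op}$ is exactly the map on hom-complexes induced by the functor $\R\hom_A(-,M)$, applied to the pair $(A,A)$; (ii) the functor $\R\hom_A(-,M)$ restricted to $\thick_A(M)$ lands in $\thick_{A^!_M}(M) = \per(A^!_M)$, on which $\R\hom_{A^!_M}(M,-)$ is an equivalence onto $\per(A^{!!}_M)$; (iii) since $\R\hom_A(M,M) \simeq A^!_M{}^{\op} \simeq \R\hom_{A^!_M}(M,M)$ tautologically, the composite $\R\hom_{A^!_M}(-,M) \circ \R\hom_A(-,M)$ agrees with the identity on the generator $M$ (up to the standard biduality comparison, which is a quasi-isomorphism on $M$), hence on all of $\thick_A(M)$ by a thick-subcategory argument; (iv) in particular it is a quasi-isomorphism on $A$, which gives $A \xrightarrow{\simeq} A^{!!}_M$ after unwinding. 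An alternative, perhaps more economical, phrasing: the evaluation $\R\hom_A(-,M)$ makes $M$ a compact object of $\mathcal{D}(A^!_M)$ (since $A$ is compact in $\mathcal{D}(A)$ and the functor, being $\R\hom_A(-,M)$ with $M$ fixed, has a coproduct-preserving behaviour on the subcategory it matters on), and one invokes the derived Morita/tilting yoga: $M$ a compact generator of the thick subcategory it generates means $\thick_A(M)$ is equivalent to $\per(\R\hom_A(M,M)^{\op})=\per(A^!_M{}^{\op\op})$ — here the hypothesis $A\in\thick_A(M)$ forces this thick subcategory to contain, hence equal on the relevant part, $\per(A)$, and biduality closes the loop.

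\textbf{Main obstacle.} The subtle point is keeping the variance and the opposite-algebra bookkeeping straight: $A^!_M = \R\mathrm{End}_A(M)^{\op}$, $M$ is a \emph{right} $A^!_M$-module, and $A^{!!}_M = \R\mathrm{End}_{A^!_M}(M)^{\op}$, so one must verify that the canonical map $A = \R\mathrm{End}_A(A) \to \R\mathrm{End}_{A^!_M}(\R\hom_A(A,M))^{\op} = \R\mathrm{End}_{A^!_M}(M)^{\op}$ really is the derived completion map defined in the excerpt, and not some twisted variant. Once that is pinned down, the mathematical content is light: it is entirely a matter of noting that an exact functor between triangulated categories that induces an isomorphism on all Hom-groups out of and into a fixed object $X$ does so for every object of $\thick(X)$, applied to $X = M$ inside $\mathcal{D}(A^!_M)$ and to the comparison functor $\R\hom_{A^!_M}(-,M)\circ\R\hom_A(-,M)$ versus the identity on $\thick_A(M) \ni A$. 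I do not expect to need any finiteness or properness hypothesis here — purely the containment $A \in \thick_A(M)$ — which matches the clean statement of the lemma.
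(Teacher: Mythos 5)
Your main route is essentially the paper's own proof: the functor $F=\R\hom_A(-,M)$ is quasi-fully-faithful on the generator $M$ by the very definition of $A^!_M$, this propagates over all of $\thick_A(M)$ by exactness and thickness (equivalently, your biduality comparison $\mathrm{id}\to\R\hom_{A^!_M}(F(-),M)$ is an isomorphism on a thick subcategory containing $M$), and evaluating at $A\in\thick_A(M)$ is precisely the statement that $A\to A^{!!}_M$ is a quasi-isomorphism, with the variance bookkeeping exactly as you flag. One correction to your step (ii), fortunately not load-bearing for the route (iii)--(iv): $F$ carries $\thick_A(M)$ onto $\thick_{A^!_M}(FM)=\thick_{A^!_M}(A^!_M)=\per(A^!_M)$, whereas it is $\per(A)=\thick_A(A)$ that lands in $\thick_{A^!_M}(M)$; the asserted equality $\thick_{A^!_M}(M)=\per(A^!_M)$ is false in general, being equivalent (under the hypothesis) to $M\in\per(A)$ --- for instance $A=k[x]/(x^2)$, $M=k$ gives $A^!_M\simeq k[t]$ with $|t|=1$, and $k[t]\notin\thick_{k[t]}(k)$. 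With that bookkeeping corrected, your argument and the paper's coincide.
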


\begin{proof}
We have an equivalence
\[
\thick_A(M) \xrightarrow{\R\hom_A(-,M)} \mathcal{D}^{\perf}(A^{!}_M)^\mathrm{op}
\]
Indeed the functor is fully faithful restricted to the generator $M$ by definition of $A_M^!$. Then since $\mathcal{D}^{\perf}(A^!_M) = \thick(A_M^!)$, it is essentially surjective. Since $A \in \thick_A(M)$ it restricts to an equivalence
\[
\mathcal{D}^{\perf}(A) \xrightarrow{\R\hom_A(-,M)} \thick_{A_M^!}(M)^\mathrm{op}
\]
Indeed it is fully faithful as it is the restriction of a fully faithful functor. It is essentially surjective since $A$ maps to $M$. Therefore, by definition the map $A \to A^{!!}_M$ is a quasi-isomorphism.
\end{proof}

Let $A$ be a finite dimensional dg algebra. Orlov introduced the concept of a \textbf{dg radical} $J_-$ of $A$ \cite{orlovrad}. It follows from the results of \cite{orlovrad} that every module over a finite dimensional dg algebra $A$ whose underlying chain complex is finite dimensional is in the thick subcategory generated by $A/J_-$.

\begin{cor}\label{proper is derived complete} \hfill
\begin{enumerate}
\item Let $A$ be a proper dg algebra and $M$ a $\pvd$-generator of $A$. Then $A$ is derived complete with respect to $M$.
\item Let $A$ be a finite dimensional dg algebra. Then $A$ is derived complete with respect to $A/J_-$.
\end{enumerate}
 
\end{cor}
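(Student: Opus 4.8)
The plan is to deduce both parts from \Cref{thicklem}. For part (1), the key point is to show that if $A$ is a proper dg algebra and $M$ is a $\pvd$-generator of $A$, then in fact $A \in \thick_A(M)$; reflexivity-free, \Cref{thicklem} then immediately gives derived completeness. To see $A \in \thick_A(M)$, observe that $A$ itself is a proper $A$-module (this is the definition of properness), hence $A \in \pvd(A)$. Since $M$ is a $\pvd$-generator, we have $\thick_{\mathcal{D}(A)}(M) = \pvd(A)$ by definition, and therefore $A \in \thick_A(M)$. Now apply \Cref{thicklem}.

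For part (2), I would first reduce it to part (1) by exhibiting $A/J_-$ as a $\pvd$-generator of $A$. A finite dimensional dg algebra is proper, so part (1) applies once we know $A/J_- \in \pvd(A)$ and $\thick_A(A/J_-) = \pvd(A)$. The module $A/J_-$ has finite dimensional underlying complex, so it is perfectly valued, i.e.\ lies in $\pvd(A)$. For the generation statement, this is precisely the cited consequence of Orlov's work \cite{orlovrad}: every $A$-module with finite dimensional underlying chain complex lies in $\thick_A(A/J_-)$. It remains to check that every object of $\pvd(A)$ is quasi-isomorphic to such a module. Since $A$ is a finite dimensional dg algebra, any $X \in \pvd(A)$ has $H^\ast(X)$ finite dimensional and concentrated in finitely many degrees; one then replaces $X$ by a minimal (or merely bounded, finite dimensional) model — for instance the truncation argument shows $X$ is a finite extension of its cohomologies, each of which is a finite dimensional $A$-module. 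Hence $\pvd(A) = \thick_A(A/J_-)$, and part (1) (or \Cref{thicklem} directly) finishes the proof.

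The main obstacle, such as it is, is the bookkeeping in part (2): making precise that objects of $\pvd(A)$ for $A$ a finite dimensional dg algebra can be represented by (or are thick-generated by) modules with finite dimensional underlying complex, so that the invocation of Orlov's radical theorem is legitimate. This is where one uses finite dimensionality of $A$ in an essential way, rather than mere properness — an arbitrary proper dg algebra need not have a bounded model with finite dimensional terms. Once that representability is in hand, everything else is a formal appeal to \Cref{thicklem} and the definitions of $\pvd$-generator and of properness. I expect the whole proof to be short.
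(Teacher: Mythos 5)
Part (1) of your proposal is correct and is exactly the paper's argument: properness gives $A \in \pvd(A) = \thick_A(M)$, and \Cref{thicklem} finishes.

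Part (2) has a genuine gap. You reduce to part (1) by trying to show that $A/J_-$ is a $\pvd$-generator, i.e.\ that $\thick_A(A/J_-) = \pvd(A)$, and for this you claim that any $X \in \pvd(A)$ can be replaced by a finite dimensional model because ``$X$ is a finite extension of its cohomologies, each of which is a finite dimensional $A$-module.'' This step fails for a general finite dimensional dg algebra: $H^i(X)$ is not an $A$-module (only an $H^\ast(A)$-module), and the truncation argument requires a t-structure on $\mathcal{D}(A)$ whose truncations stay in the category, which one has for connective (or, dually, suitable coconnective) dg algebras but not for a finite dimensional dg algebra with cohomology in both positive and negative degrees. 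Indeed the paper flags exactly this point in the remark immediately following the corollary: by an example of Efimov there is a formal coconnective finite dimensional dg algebra with a module having finite dimensional cohomology which is \emph{not} quasi-isomorphic to any finite dimensional module, so the ``representability'' you call bookkeeping is precisely the subtle (and in general unavailable) ingredient, and your intended appeal to part (1) does not go through.

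The fix is that part (2) never needed $A/J_-$ to be a $\pvd$-generator: the conclusion is only derived completeness with respect to $A/J_-$, and \Cref{thicklem} requires merely that $A \in \thick_A(A/J_-)$. Since $A$ is itself a finite dimensional module over the finite dimensional dg algebra $A$, Orlov's result (the one you quote) applies directly to $A$ and gives $A \in \thick_A(A/J_-)$; then \Cref{thicklem} concludes. This is the paper's proof, and it avoids any statement about generation of $\pvd(A)$.
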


\begin{proof}
If $A$ is proper, then $A \in \pvd(A) = \thick_A(M)$ and if $A$ is finite dimensional then $A \in \thick(A/J_-)$. Both claims now follow from \Cref{thicklem}. 
\end{proof}

\begin{rmk}
In the general setting, the existence of a $\pvd$-generator for finite dimensional dg algebras is subtle. For example, Efimov constructed an example of a formal coconnective finite dimensional dg algebra $A$, and a module over it which has finite dimensional cohomology but which is not quasi-isomorphic to a finite dimensional module \cite{efimovhodge}.
\end{rmk}

We can now link reflexivity to derived completeness:

\begin{thm}[`two-out-of-three' theorem]\label{lem: two out of three}
Let $A$ be a dg algebra and $M$ a $\pvd$-generator for $A$. If any two of the following hold then so does the third:
\begin{enumerate}
    \item $A$ is reflexive.
    \item $M$ is a $\pvd$-generator for $A_M^!$.
    \item $A$ is derived complete at $M$.
\end{enumerate}
    
\end{thm}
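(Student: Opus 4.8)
The plan is to exploit the fact that, since $M$ is a $\pvd$-generator for $A$, standard tilting/Morita theory gives an equivalence $\pvd(A) \simeq \per(A_M^!)$ via $\R\hom_A(-,M)$, under which the $A$-module $M$ corresponds to the free rank-one $A_M^!$-module. The three conditions in the statement are really conditions on the composite of dualising functors, so I would set up the following diagram of functors and track where $A$, $M$, and the relevant generators land. Write $B \coloneqq A_M^!$, so that $M$ is naturally a $B$-module and $A_M^{!!} = B_M^!{}^{\op} = \R\mathrm{End}_B(M)$. There is the chain
\[
\per(A)^{\op} \xrightarrow{\ \R\hom_A(-,M)\ } \pvd(A_M^!) \hookrightarrow \mathcal{D}(A_M^!), \qquad \per(A_M^!)^{\op} \xrightarrow{\ \R\hom_{A_M^!}(-,M)\ } \mathcal{D}(A_M^{!!}) \simeq \mathcal{D}(A),
\]
where the last equivalence is restriction along the derived completion map $A \to A_M^{!!}$ (this is always an equivalence on derived categories since $M$ generates). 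The key observation is that the reflexivity map $\mathrm{ev}_A$ (or $\mathrm{coev}_A$) is, up to the identifications above, the two-fold application of $\R\hom(-,M)$ composed with the restriction equivalence; concretely, chasing $A \mapsto M \mapsto \R\mathrm{End}_A(M)$ shows the coevaluation map of $A$ factors through the derived completion map $A \to A_M^{!!}$.

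Having set this up, I would argue as follows. First: the functor $\Phi \coloneqq \R\hom_A(-,M)\colon \per(A)^{\op} \to \mathcal{D}(A_M^!)$ is always quasi-fully-faithful onto $\thick_{A_M^!}(M)$ and sends $A$ to $M$; by \Cref{thicklem} and the discussion around the derived completion map, condition (3) — that $A \to A_M^{!!}$ is a quasi-isomorphism — is equivalent to $\Phi$ being a Morita equivalence onto $\per(A_M^!)$, i.e.\ to $A_M^! \in \thick_{A_M^!}(M)$. Wait — that is not quite (3) in general; rather, $A \to A_M^{!!}$ being a quasi-isomorphism says exactly that $\R\mathrm{End}_{A_M^!}(M)^{\op} \simeq A$, i.e.\ that the essential image $\thick_{A_M^!}(M)$ is, via $\Phi$, a copy of $\per(A)^{\op}$. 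So: (3) $\iff$ $\Phi\colon \per(A)^{\op} \xrightarrow{\sim} \thick_{A_M^!}(M)$. Now observe $\pvd(A) = \thick_A(M)$ by hypothesis, so under $\Phi$ the subcategory $\pvd(A)^{\op}$ of $\per(A)^{\op}$ — wait, $\pvd(A)$ need not be inside $\per(A)$. Here I would instead work symmetrically: set $N \coloneqq M$ viewed in $\pvd(A_M^!)$, note reflexivity of $A$ is the statement $\mathrm{coev}_A\colon A \to \pvd(\pvd(A)) \simeq \pvd(A_M^!{}^{\op})$-ish is a Morita equivalence, which after the tilting identification $\pvd(A) \simeq \per(A_M^!)$ becomes: $\R\hom_{A_M^!}(-,M)\colon \per(A_M^!)^{\op} \to \pvd(A)$ is a Morita equivalence. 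Since $A \mapsto M$ under the first dualisation and $M \mapsto A_M^{!!}$ under the second, and $\thick_{A_M^!}(A_M^!) = \per(A_M^!)$, this second functor has essential image $\thick_A(M) = \pvd(A)$ by condition (3) — provided condition (2) ensures it is fully faithful, i.e.\ that $M$ generates $\pvd(A_M^!)$ so that $\R\hom_{A_M^!}(-,M)$ is conservative/ff where it needs to be.

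So the logical skeleton is: (i) the tilting equivalence $\pvd(A)\simeq\per(B)$, $B=A_M^!$, always holds; (ii) reflexivity of $A$ $\iff$ $\R\hom_B(-,M)\colon \per(B)^{\op}\to\pvd(A)$ is a quasi-equivalence; (iii) this functor is quasi-fully-faithful $\iff$ $M$ generates $\pvd(B)$ (condition (2)), because $\per(B)=\thick_B(B)$ and $\R\hom_B(-,M)$ restricted to $\thick_B(M)$ lands quasi-ff in $\pvd(B)^{\op}$... — here one uses that $\R\hom_B(B,M)=M$ and $\R\mathrm{End}_B(M)^{\op}=A_M^{!!}$, so full faithfulness on the generator $B$ needs $A\xrightarrow{\sim}A_M^{!!}$, i.e.\ (3); and essential surjectivity onto $\pvd(A)=\thick_A(M)$ needs $B\in\thick$ of the appropriate object, controlled by (2). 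Unwinding: given (2)+(3), the functor is quasi-ff (from (3), on the generator) and essentially surjective (from (2)+(3), since $\thick_B(M)$ maps onto $\pvd(A)$ and equals... ), giving (1); given (1)+(3) one reverses to get that $M$ must generate $\pvd(B)$, giving (2); given (1)+(2) the functor is a quasi-equivalence and ff on the generator forces $A\xrightarrow{\sim}A_M^{!!}$, giving (3). I expect the main obstacle to be bookkeeping the opposite categories and the precise essential images correctly — in particular, making rigorous the claim that ``$\R\hom_B(-,M)$ is quasi-fully-faithful if and only if $M$ generates $\pvd(B)$'' requires care, since quasi-full-faithfulness of a functor out of $\per(B)^{\op}$ is controlled by its behaviour on $B$, but its image's position inside $\pvd(A)$ (whether it hits all of $\thick_A(M)$) is a genuinely separate generation statement that is exactly where condition (2) (resp.\ its failure) enters. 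Everything else is a formal consequence of the two-step dualisation $A \rightsquigarrow M \rightsquigarrow A_M^{!!}$ together with \Cref{thicklem} and the Morita-invariance statements of \Cref{dercomprmk}.
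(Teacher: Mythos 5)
Your opening analysis is essentially the paper's argument, and had you pushed it through you would have recovered the paper's proof: the functor $\R\hom_A(-,M)\colon \per(A)^{\op}\to\mathcal{D}(A_M^!)$ lands in $\thick_{A_M^!}(M)$; condition (3) is equivalent to $\per(A)^{\op}\xrightarrow{\ \sim\ }\thick_{A_M^!}(M)$ (quasi-full-faithfulness at the generator $A$ \emph{is} the completion map, and essential surjectivity is then formal); and condition (2) is exactly the statement that the inclusion $\thick_{A_M^!}(M)\hookrightarrow\pvd(A_M^!)$ is an equivalence. The piece you never supply --- and where you abandon the correct road at ``wait, $\pvd(A)$ need not be inside $\per(A)$'' (a non-issue) --- is the link to condition (1): because $M$ is a $\pvd$-generator for $A$, restriction along $\{M\}\subseteq\thick_A(M)=\pvd(A)$ gives an equivalence $\pvd(\pvd(A)^{\op})\simeq\pvd(A_M^!)$ under which $\ev_A$ becomes precisely the composite $\per(A)^{\op}\to\thick_{A_M^!}(M)\hookrightarrow\pvd(A_M^!)$; the theorem is then two-out-of-three for equivalences in this composite. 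This compatibility is the actual content and is missing from your write-up.

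The symmetric reformulation you substitute for it is false. Your claim (ii), that reflexivity of $A$ is equivalent to $\R\hom_{A_M^!}(-,M)\colon\per(A_M^!)^{\op}\to\pvd(A)$ being a quasi-equivalence, cannot hold: under the standing hypothesis this functor is \emph{always} a quasi-equivalence. It is quasi-fully-faithful at the generator $A_M^!\mapsto M$ by the very definition of $A_M^!=\R\mathrm{End}_A(M)^{\op}$ (the same biduality used in \Cref{thicklem}), hence quasi-fully-faithful, and its essential image is $\thick_A(M)=\pvd(A)$; it is just the inverse of the tilting equivalence and detects none of (1)--(3). For instance $A=k[t]_{(t)}$, $M=k$ satisfies the hypotheses ($k$ is a $\pvd$-generator by \Cref{lemmafdsplit}), your functor is an equivalence, yet $A$ is not reflexive by \Cref{noeththm}. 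Relatedly, the parenthetical ``restriction along $A\to A_M^{!!}$ is always an equivalence on derived categories since $M$ generates'' is wrong: if restriction along a dg algebra map is an equivalence, the unit of the extension/restriction adjunction at $A$, i.e.\ the completion map $A\to A_M^{!!}$ itself, must be a quasi-isomorphism, so this claim is condition (3) in disguise (and fails in the example, where $A_M^{!!}\simeq k\llbracket t\rrbracket$ by \Cref{dercomprmk}). Finally, your unwinding assigns the conditions inconsistently (first ``(2) gives full faithfulness, (3) essential surjectivity'', later the reverse); in the correct factorisation (3) is the full-faithfulness statement, (2) is the generation/essential-surjectivity statement, and the generator hypothesis on $M$ is what ties both to reflexivity.
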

\begin{proof}
Consider the commutative diagram 
\[
\begin{tikzcd}
\per (A)^{\op}  \arrow[r,"\ev"] \arrow[d] & \pvd( \pvd (A)^{\op}) \arrow[d,"\simeq"] \\
\per (A_M^{!!})^{\op} \simeq \thick_{A^!_M}(M) \arrow[r,hook] & \pvd (A^!_M) \simeq \pvd (\thick_A(M)^{\op})
\end{tikzcd}
\]

The assumption that $M$ is a $\pvd $-generator for $A$ implies that the right-hand vertical map is an isomorphism. Condition (1) is equivalent to the upper horizontal map being an equivalence, condition (2) is equivalent to the left-hand vertical map being an equivalence, and condition (3) is equivalent to the lower horizontal map being an equivalence. 
\end{proof} 
From the proof of \Cref{lem: two out of three}, one can immediately deduce the following:

\begin{cor}\label{semirmk}
Let $A$ be a dg algebra. If $A$ is derived complete with respect to a $\pvd$-generator, then $A$ is semireflexive.
\end{cor}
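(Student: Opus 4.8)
The goal is to prove \Cref{semirmk}: if $A$ is derived complete with respect to a $\pvd$-generator $M$, then $A$ is semireflexive.

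The plan is to read this off directly from the commutative square that appears in the proof of \Cref{lem: two out of three}. Recall that semireflexivity of $A$ means the evaluation map $\ev_A\colon A^{\op}\to\pvd(\pvd(A)^{\op})$ is quasi-fully-faithful; equivalently (using that $\per(A)$ is the Morita fibrant replacement of $A$) that the top horizontal map
\[
\per(A)^{\op}\xrightarrow{\ev}\pvd(\pvd(A)^{\op})
\]
in that square is \emph{fully faithful}. So I do not need the full strength of the two-out-of-three theorem; I only need a one-sided version of the implication ``(2) and (3) $\Rightarrow$ (1)''.

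First I would recall the setup of the square: the right-hand vertical map $\pvd(\pvd(A)^{\op})\to\pvd(A^!_M)\simeq\pvd(\thick_A(M)^{\op})$ is an equivalence because $M$ is a $\pvd$-generator for $A$ (this is exactly the tilting statement $\pvd(A)\simeq\thick_A(M)$, Morita-dual to $A^!_M$). The left-hand vertical map is $\per(A)^{\op}\to\per(A^{!!}_M)^{\op}\simeq\thick_{A^!_M}(M)$, induced by $A\to A^{!!}_M$ together with $A\mapsto M$; and the bottom horizontal map is the inclusion $\thick_{A^!_M}(M)\hookrightarrow\pvd(A^!_M)$, which is fully faithful for free. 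Now the hypothesis that $A$ is derived complete at $M$ says precisely that $A\to A^{!!}_M$ is a quasi-isomorphism, hence the left-hand vertical map is a quasi-equivalence, in particular fully faithful. Chasing the square: the top map, post-composed with the (equivalence) right vertical, equals the bottom map pre-composed with the left vertical; the bottom map is fully faithful and the left vertical is fully faithful, so the composite is fully faithful, and since the right vertical is an equivalence the top map $\ev$ is fully faithful. That is exactly semireflexivity.

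I expect no real obstacle here — the content has already been packaged into the commutative square of \Cref{lem: two out of three}, and the only thing to observe is that ``fully faithful'' (rather than ``equivalence'') propagates through the diagram under just hypotheses (2) and (3), which is all semireflexivity requires. The one point worth stating carefully is why semireflexivity of $A$ is equivalent to $\ev$ being fully faithful at the level of $\per(A)^{\op}$ rather than $\mathcal{A}^{\op}$: this is because $\pvd(\pvd(A)^{\op})$ is idempotent-complete and pretriangulated, so quasi-fully-faithfulness of $\mathcal{A}^{\op}\to\pvd(\pvd(A)^{\op})$ is equivalent to quasi-fully-faithfulness of its thick closure $\per(A)^{\op}\to\pvd(\pvd(A)^{\op})$, exactly as in the second remark after the definition of reflexivity. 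With that identification in hand, the argument above is complete.
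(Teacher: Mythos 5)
Your proof is correct and is essentially the paper's own: \Cref{semirmk} is deduced immediately from the commutative square in the proof of \Cref{lem: two out of three}, exactly as you do --- the $\pvd$-generator hypothesis makes the right vertical map an equivalence, derived completeness makes the map $\per(A)^{\op}\to\thick_{A^!_M}(M)$ an equivalence, and the bottom inclusion $\thick_{A^!_M}(M)\hookrightarrow\pvd(A^!_M)$ is fully faithful for free, so $\ev$ is quasi-fully-faithful. Your closing remark on replacing $\mathcal{A}^{\op}$ by $\per(A)^{\op}$ is the same Morita-fibrant-replacement observation the paper records after the definition of reflexivity, so nothing is missing.
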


\begin{rmk}
In \cite{efimovcompletion}, the completion of a dg category $\mathcal{A}$ along any subcategory of $\mathcal{D}(\mathcal{A})$ is defined. In particular, one can complete $\mathcal{A}$ along ${\pvd(\mathcal{A})}$, and this completion comes with a natural map $\mathcal{A} \to \hat{\mathcal{A}}_{\pvd(\mathcal{A})}$. Although this does not agree with the natural morphism to $\pvd\pvd(A)$, c.f.\ \Cref{EfimovRemark}, one can formulate a result similar to \Cref{lem: two out of three} where $\pvd\pvd(A)$ is replaced by $\hat{\mathcal{A}}_{\pvd(\mathcal{A})}$.
\end{rmk}

\begin{prop}
If $A$ is a proper dg algebra and $M$ is a $\pvd$-generator for $A$, then $A$ is reflexive if and only if $M$ is a $\pvd$-generator for $A^!_M$. In this case $A^!_M$ is also reflexive. 
\end{prop}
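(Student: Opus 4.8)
The proof follows almost immediately from the two-out-of-three theorem (\Cref{lem: two out of three}) once we observe that properness furnishes condition (3) for free. The plan is as follows. Since $A$ is proper and $M$ is a $\pvd$-generator, \Cref{proper is derived complete}(1) tells us that $A$ is derived complete with respect to $M$, i.e.\ condition (3) of \Cref{lem: two out of three} holds unconditionally. The three conditions of that theorem are then reduced to two: $A$ is reflexive (condition (1)) if and only if $M$ is a $\pvd$-generator for $A^!_M$ (condition (2)), since given that (3) always holds, (1) and (2) are now equivalent by the two-out-of-three principle. This is exactly the claimed equivalence.

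For the final sentence---that $A^!_M$ is also reflexive in this case---I would argue as follows. Assume $A$ is reflexive, so by the above $M$ is a $\pvd$-generator for $A^!_M$. Since $A$ is proper, $\pvd(A) = \per(A_M^{!!})$ is equivalent (via $\R\hom_A(-,M)$) to $\thick_{A^!_M}(M) = \per(A^!_M)$ because $A$ is derived complete, so $A^!_M$ is Morita equivalent to $\pvd(A)$; in particular $A^!_M$ is a proper dg category (it is $\pvd$ of something). Now apply the equivalence (1)$\Leftrightarrow$(2) of \Cref{lem: two out of three}, this time with the roles of $A$ and $A^!_M$ swapped and the module still $M$: we need $M$ to be a $\pvd$-generator for $A^!_M$ (which we have just established) in order to invoke the theorem, and then $A^!_M$ is reflexive if and only if $M$ is a $\pvd$-generator for $(A^!_M)^!_M = A^{!!}_M$. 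But $A^{!!}_M \simeq A$ since $A$ is derived complete, and $M$ \emph{is} a $\pvd$-generator for $A$ by hypothesis. Hence $A^!_M$ is reflexive.

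The only point requiring a little care is checking that $M$ genuinely remains a $\pvd$-generator on the $A^!_M$ side at each stage and that properness of $A^!_M$ holds so that \Cref{proper is derived complete} applies to it as well---but this is immediate from the Morita equivalence $A^!_M \simeq \pvd(A)$ together with the fact that $\pvd$ of a proper dg category is again proper and comes equipped with the generator $M$. I expect no serious obstacle here; the entire statement is essentially a bookkeeping consequence of \Cref{lem: two out of three} and \Cref{proper is derived complete}, with the symmetry $A \leftrightarrow A^!_M$ and $A^{!!}_M \simeq A$ doing all the work.
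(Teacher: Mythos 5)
Your first paragraph is exactly the paper's argument: properness of $A$ plus \Cref{proper is derived complete} gives condition (3) of \Cref{lem: two out of three}, so (1) and (2) become equivalent. No issues there.

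The second half, however, hinges on a step that is false. You assert that $A^!_M$ is proper ``because it is $\pvd$ of something'' and ``$\pvd$ of a proper dg category is again proper'', and you use this to apply \Cref{proper is derived complete} to $A^!_M$ and obtain condition (3) for it. That fact is not true: take $A=k[x]/(x^2)$ with $x$ in degree $0$ and $M=k$. Then $A$ is proper and $M$ is a $\pvd$-generator, but $A^!_M\simeq \R\mathrm{End}_A(k)^{\op}\simeq k[t]$ with $|t|=1$, whose cohomology is infinite dimensional; so $A^!_M$ (equivalently $\pvd(A)$, which contains $k$) is not proper, and \Cref{proper is derived complete} cannot be applied to it. The same example falsifies your intermediate identification $\thick_{A^!_M}(M)=\per(A^!_M)$: under the equivalence of \Cref{thicklem} this equality amounts to $M$ being perfect over $A$, and indeed $\pvd(k[t])\subsetneq\per(k[t])$. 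The gap is repairable within your strategy: the derived completeness of $A^!_M$ at $M$ that you need is exactly the content of the lemma proved in the paper immediately after the definition of derived completeness (derived completeness of $A$ at $M$ is inherited by $A^!_M$); with that substitution, your second application of \Cref{lem: two out of three} to $A^!_M$ --- using $A^{!!}_M\simeq A$ to see that $M$ is a $\pvd$-generator for $(A^!_M)^!_M$ --- does go through. For comparison, the paper's own proof of the last sentence is shorter and avoids a second application of the two-out-of-three theorem: if $A$ is reflexive then so is $\pvd(A)$, and $\pvd(A)=\thick_A(M)$ is Morita equivalent to $A^!_M$ up to an opposite, so $A^!_M$ is reflexive since reflexivity is invariant under Morita equivalence and passage to opposites.
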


\begin{proof}
The equivalence follows from \Cref{lem: two out of three} combined with \Cref{proper is derived complete}. If $A$ is reflexive then so is $\pvd(A)$, which is Morita equivalent to $A^!_M$, up to an opposite.
\end{proof}

\begin{lem}\label{lem: reflexive and Koszul dual} Let $A$ be a dg algebra and $M \in \pvd(A)$. Suppose that $A$ is derived complete with respect to $M$. Then:
\begin{enumerate}
\item If $\thick_A(M) = \pvd(A)$ and $\thick_{A_M^!}(M) = \pvd(A_M^!)$, then $A$ and $A_M^!$ are reflexive.

\item The following are equivalent:
\begin{enumerate}
    \item $A$ is reflexive and $\thick_A(M) = \pvd(A)$ .
    \item $A^!_M$ is reflexive and $\thick_{A_M^!}(M) = \pvd(A_M^!)$.
\end{enumerate}
\end{enumerate}
\end{lem}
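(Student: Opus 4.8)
The two statements in Lemma~\ref{lem: reflexive and Koszul dual} are really corollaries of the `two-out-of-three' theorem (\Cref{lem: two out of three}), once we feed in the standing hypothesis that $A$ is derived complete with respect to $M$. The only subtlety is that \Cref{lem: two out of three} is stated for $M$ a $\pvd$-generator for $A$, whereas here the generation hypotheses on $M$ are part of what we want to vary, so we will need to invoke the theorem both for the pair $(A,M)$ and for the pair $(A^!_M,M)$, and to keep track of the derived completeness of $A^!_M$.

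\textbf{Proof sketch.}

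First I would record the two auxiliary facts we will reuse. (i) By the Lemma preceding \Cref{dercomprmk}, if $A$ is derived complete with respect to $M$ then so is $A^!_M$; thus ``$A$ derived complete at $M$'' and ``$A^!_M$ derived complete at $M$'' are interchangeable throughout. (ii) The double centraliser is involutive up to the Morita equivalences in play: $(A^!_M)^!_M = A^{!!}_M \simeq A$ when $A$ is derived complete at $M$, and under this equivalence the module $M$ corresponds to $M$. So the situation is symmetric in $A$ and $A^!_M$.

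For part (1): assume $\thick_A(M)=\pvd(A)$, so $M$ is a $\pvd$-generator for $A$, and also $\thick_{A^!_M}(M)=\pvd(A^!_M)$. Apply \Cref{lem: two out of three} to the pair $(A,M)$. Condition (3) of that theorem holds by hypothesis ($A$ is derived complete at $M$), and condition (2) of that theorem is precisely ``$M$ is a $\pvd$-generator for $A^!_M$'', which is our second hypothesis $\thick_{A^!_M}(M)=\pvd(A^!_M)$. Two-out-of-three then gives condition (1): $A$ is reflexive. For the reflexivity of $A^!_M$, note that $\pvd(A)$ is Morita equivalent to $A^!_M$ up to an opposite (standard tilting along the $\pvd$-generator $M$, as in \Cref{section: derived completion}), and $\pvd(A)$ is reflexive because $A$ is and reflexivity passes to $\pvd$ of a reflexive dg category (as recorded after \Cref{lem: two out of three}; alternatively reflexivity is Morita-invariant and stable under opposites by \cite{KS}). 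Hence $A^!_M$ is reflexive.

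For part (2): by the symmetry noted in (ii), it suffices to prove (a) $\Rightarrow$ (b). So assume $A$ is reflexive and $\thick_A(M)=\pvd(A)$. Then $M$ is a $\pvd$-generator for $A$, so \Cref{lem: two out of three} applies to $(A,M)$: condition (1) holds (reflexivity) and condition (3) holds ($A$ derived complete at $M$), so condition (2) holds, i.e.\ $M$ is a $\pvd$-generator for $A^!_M$, which is exactly $\thick_{A^!_M}(M)=\pvd(A^!_M)$. It remains to see $A^!_M$ is reflexive; but now we have verified the hypotheses of part (1) (both generation conditions, plus $A$ derived complete at $M$), so part (1) gives that $A^!_M$ is reflexive. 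This proves (a) $\Rightarrow$ (b); applying the same argument to the pair $(A^!_M, M)$, which is derived complete at $M$ by (i) and satisfies $(A^!_M)^!_M \simeq A$ by (ii), yields (b) $\Rightarrow$ (a).

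\textbf{Main obstacle.} There is no deep obstacle: the content is all in \Cref{lem: two out of three}. The one thing to be careful about is the bookkeeping around (i) and (ii) --- that derived completeness and the generation property transport correctly under the Morita equivalence $A^{!!}_M \simeq A$ and under taking opposites --- so that the symmetry argument for part (2) is legitimate and we are genuinely allowed to feed $(A^!_M,M)$ back into \Cref{lem: two out of three}. Invoking \Cref{dercomprmk}\ref{Morita invariance derived completion} (Morita-invariance of derived completion) makes the transport of condition (3) rigorous.
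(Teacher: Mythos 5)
Your proof is correct and follows essentially the same route as the paper's: both parts are deduced from the two-out-of-three theorem together with the fact that derived completeness of $A$ at $M$ passes to $A^!_M$, with reflexivity transported along the Morita equivalence $\pvd(A)\simeq\per(A^!_M)$. The only cosmetic difference is that the paper proves (b)$\Rightarrow$(a) directly (via $\pvd(A^!_M)\simeq\per(A^{\op})$ and the restriction equivalence $\pvd(A^{!!}_M)\simeq\pvd(A)$), whereas you obtain it by feeding the pair $(A^!_M,M)$ back into the already-proved implication, which is legitimate given your transport facts (i) and (ii).
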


\begin{proof}
If the generation conditions in (1) hold, then since $A$ is derived complete, \Cref{lem: two out of three} implies that $A$ is reflexive. Hence $\pvd(A) \simeq \mathcal{D}^{\perf}(A_M^!)$ is also reflexive. For (2), if $\thick_A(M) = \pvd(A)$ and $A$ is reflexive, then \Cref{lem: two out of three} implies that $\thick_{A_M^!}(M) = \pvd(A_M^!)$. Also $\pvd(A) = \per(A^!_M)$ is reflexive. Conversely, suppose that $A^!_M$ is reflexive and $\thick_{A_M^!}(M) = \pvd(A_M^!)$. Then $\pvd(A^!_M) = \thick_{A^!}(M) \simeq \mathcal{D}^{\perf}((A^{!!}_M)^{\op}) \simeq \mathcal{D}^{\perf}(A^{\op})$ is reflexive. Hence $A$ is reflexive. Since $A$ is derived complete, so is $A^!_M$. Then \Cref{lem: two out of three} applied to $A_M^!$ implies that $M$ generates $\pvd(A^{!!}_M)$. Since $A$ is derived complete, restriction induces an equivalence $\pvd(A^{!!}_M) \simeq \pvd(A)$, and so $M$ generates $\pvd(A)$, as required.
\end{proof}

\subsection{Restricted reflexivity}

Let $\mathcal{A}$ be a dg category and $\mathcal{B}$ a pretriangulated dg subcategory of $\pvd(\mathcal{A})$. Applying the $\pvd$ functor to the inclusion $\mathcal{B}^\op \into \pvd(\mathcal{A})^\op$ gives a map $\pvd(\pvd(\mathcal{A})^\op) \to \pvd(\mathcal{B}^\op)$. Composition with $\ev_\mathcal{A}$ hence yields a map
$$\ev_{\mathcal{A},\mathcal{B}}\colon\mathcal{A}^\op \to \pvd(\mathcal{B}^\op)$$which we call the \textbf{restricted evaluation functor}. We say that $\mathcal{A}$ is \textbf{$\mathcal{B}$-restricted reflexive} when $\ev_{\mathcal{A},\mathcal{B}}$ is a quasi-equivalence. When $\mathcal{B} = \thick_{A}(M)$ for some $M \in \pvd(A)$,  then we replace $\mathcal{B}$ by $M$ in the above notation.
\begin{ex}\label{restobs}
    Clearly $\mathcal{A}$ is reflexive precisely when it is $\pvd(A)$-restricted reflexive.
\end{ex}
\begin{prop}
    Let $\mathcal{A}$ be a dg category and $\mathcal{B}\subseteq \pvd(A)$ a reflexive subcategory. If any two of the following three conditions hold, then so does the third:\hfill
    \begin{enumerate}
        \item The inclusion $\mathcal{B}\into \pvd(A)$ is a quasi-equivalence.
        \item $\mathcal{A}$ is $\mathcal{B}$-restricted reflexive.
        \item $\mathcal{A}$ is reflexive.
    \end{enumerate}
\end{prop}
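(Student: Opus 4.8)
The plan is to run the same kind of argument as in the proof of \Cref{lem: two out of three}, with the commutative square there replaced by the commutative triangle built into the definition of the restricted evaluation functor. Writing $\iota\colon\mathcal{B}\into\pvd(\mathcal{A})$ for the inclusion, one has $\ev_{\mathcal{A},\mathcal{B}} = \pvd(\iota^\op)\circ\ev_\mathcal{A}$ by construction, so $\ev_\mathcal{A}$, $\ev_{\mathcal{A},\mathcal{B}}$ and the map $\pvd(\iota^\op)\colon\pvd(\pvd(\mathcal{A})^\op)\to\pvd(\mathcal{B}^\op)$ form a commuting triangle. Reading everything up to Morita equivalence (as is already implicit for reflexivity itself, cf.\ \Cref{restobs}), condition (3) says $\ev_\mathcal{A}$ is an equivalence and condition (2) says $\ev_{\mathcal{A},\mathcal{B}}$ is an equivalence. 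Since Morita equivalences satisfy two-out-of-three, the statement will reduce to identifying condition (1) with the assertion that $\pvd(\iota^\op)$ is an equivalence.

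One direction of this identification is immediate: $\pvd$ is invariant under Morita equivalences, so if $\iota$ is a quasi-equivalence then $\pvd(\iota^\op)$ is an equivalence. Combined with two-out-of-three in the triangle, this already settles the cases ``(1) and (3) $\Rightarrow$ (2)'' and ``(1) and (2) $\Rightarrow$ (3)'', since in both of these the only implication used is $\iota$ a quasi-equivalence $\Rightarrow\pvd(\iota^\op)$ an equivalence.

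For the remaining case ``(2) and (3) $\Rightarrow$ (1)'' I would argue as follows. From the triangle and two-out-of-three, $\pvd(\iota^\op)$ is an equivalence; the task is to upgrade this to $\iota$ being a quasi-equivalence. First I would record that reflexivity of $\mathcal{A}$ forces reflexivity of $\pvd(\mathcal{A})$: this is formal, as $\pvd(\mathcal{A})$ is the Morita dual of $\mathcal{A}^\op$ in the closed symmetric monoidal category $\mathrm{Hmo}$ (cf.\ the description of $\R\hom_{\mathrm{Hmo}}(-,k)$ above), and the dual of a reflexive object of a closed symmetric monoidal category is reflexive by the triangle identities. Next I would invoke naturality of the coevaluation map: $\iota$ fits into a commuting square with vertical maps $\mathrm{coev}_\mathcal{B}$ and $\mathrm{coev}_{\pvd(\mathcal{A})}$ and bottom map $\pvd\pvd(\iota)\colon\pvd\pvd(\mathcal{B})\to\pvd\pvd\pvd(\mathcal{A})$. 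Here $\mathrm{coev}_\mathcal{B}$ is an equivalence since $\mathcal{B}$ is reflexive by hypothesis, $\mathrm{coev}_{\pvd(\mathcal{A})}$ is an equivalence since $\pvd(\mathcal{A})$ is reflexive, and $\pvd\pvd(\iota)$ is an equivalence because it is obtained by applying the functor $\mathcal{X}\mapsto\pvd(\mathcal{X}^\op)$ to the equivalence $\pvd(\iota^\op)$, after the standard identification $\pvd(\mathcal{C}^\op)^\op\simeq\pvd(\mathcal{C})$ coming from linear duality of perfectly valued modules. Two-out-of-three in this square then yields that $\iota$ is a quasi-equivalence, i.e.\ condition (1).

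The substantive input is the implication ``$\mathcal{A}$ reflexive $\Rightarrow\pvd(\mathcal{A})$ reflexive''; everything else is the same flavour of opposite-category and Morita-versus-quasi-equivalence bookkeeping that already appears in the proof of \Cref{lem: two out of three}. I expect this bookkeeping---in particular keeping straight the chain of identifications $\pvd(\mathcal{C}^\op)^\op\simeq\pvd(\mathcal{C})$ and checking that applying $\pvd((-)^\op)$ twice to $\iota$ really recovers $\pvd\pvd(\iota)$---to be the only point requiring care, and it is precisely the reason that the third case, unlike the other two, genuinely uses hypothesis (3) beyond what the triangle already gives.
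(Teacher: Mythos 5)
Your proposal is correct and follows essentially the same route as the paper: the easy cases come from the commuting triangle $\ev_{\mathcal{A},\mathcal{B}}=\pvd(\iota^\op)\circ\ev_\mathcal{A}$ together with two-out-of-three, and for ``(2) and (3) imply (1)'' the paper likewise deduces that $\pvd(\iota^\op)$ is a quasi-equivalence, applies $\pvd$ once more, and uses reflexivity of both $\pvd(\mathcal{A})$ (forced by reflexivity of $\mathcal{A}$) and $\mathcal{B}$ to recover that $\iota$ is a quasi-equivalence. Your naturality-of-coevaluation square and the Hmo-duality justification that $\mathcal{A}$ reflexive implies $\pvd(\mathcal{A})$ reflexive just make explicit the bookkeeping the paper leaves implicit.
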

Before we begin the proof, observe that when $\mathcal{B}=\thick_{{\mathcal{D}}(\mathcal{A})}({M})$, then condition (1) says that $M$ is a $\pvd$-generator for $\mathcal{A}$.
\begin{proof}
We have already observed in \Cref{restobs} that if (1) holds, then (2) is equivalent to (3), even without the reflexivity hypothesis on $\mathcal{B}$. We need only show that (2) and (3) together imply (1). If (2) and (3) hold, then $\pvd(\mathcal{A})$ is also reflexive and moreover the natural map $\pvd(\pvd(\mathcal{A})^\op) \to \pvd(\mathcal{B}^\op)$ is necessarily a quasi-equivalence. We can conclude that (1) holds by applying $\pvd$ to this quasi-equivalence and using that both $\pvd(\mathcal{A})$ and $\mathcal{B}$ are reflexive.
\end{proof}

In general the relationship between reflexivity and restricted reflexivity seems unclear; neither implies the other.

\begin{rmk}
    If $\mathcal{A}$ is a proper dg category and $\mathcal{B}\subseteq \pvd(A)$ contains the image of $\mathcal{A}$ under the Yoneda embedding, then one can adapt the arguments of \cite{KS} to show that $\ev_{\mathcal{A},\mathcal{B}}$ is quasi-fully faithful (one should call this property $\mathcal{B}$\textbf{-restricted semireflexivity}).
\end{rmk}

\section{Connective and coconnective dg algebras}\label{weightsection}

Coconnective dg algebras with semisimple $H^0$ appear naturally as algebras of cochains on topological spaces, as derived endomorphism algebras of semisimple modules, and, more generally, as derived endomorphisms of simple-minded collections in the sense of \cite{KoenigYang}. In this section, we use the weight structures constructed by Keller and Nicol\'as \cite{nicolaskeller} to investigate when such dg algebras are reflexive. A similar argument also gives a criterion for reflexivity of connective dg algebras with finite dimensional $H^0$, which appear when considering silting objects.

\subsection{Weight Structures}
A weight structure (or a co-t-structure) on a triangulated category generalises the properties of the brutal truncation functors, just like t-structures generalise the properties of the good truncation functors. Weight structures were introduced independently by Pauksztello \cite{pauksztello} and Bondarko \cite{bondarko}. It was shown in \cite{nicolaskeller} that the derived category of a coconnective dg algebra with semisimple $H^0$ admits a particularly well-behaved weight structure. We begin by recalling these results.

\begin{defn}
A \textbf{weight structure} (or \textbf{co-t-structure}) $(\mathcal{T}^{w > 0}, \mathcal{T}^{w \leq 0})$  on a triangulated category $\mathcal{T}$ with shift functor $[1]$ is a pair of additive subcategories closed under summands that satisfy the following conditions:
\begin{enumerate}
\item $\mathcal{T}^{w > 0}$ is closed under $[-1]$ and $\mathcal{T}^{w \leq 0}$ is closed under $[1]$. 
\item $\Hom_{\mathcal{T}}(\mathcal{T}^{w > 0}, \mathcal{T}^{w \leq 0}) \cong 0$.

\item For every object $X \in \mathcal{T}$, there is a truncation triangle 
\[
\sigma_{>0} X \to X \to \sigma_{\leq 0}X \to 
\]
with $\sigma_{>0} X \in \mathcal{T}^{w > 0}$ and $\sigma_{\leq 0}X \in \mathcal{T}^{w \leq 0}$.
\end{enumerate}
\end{defn}

\begin{ex}
The typical example of a weight structure is the one on the homotopy category of an additive category $\mathcal{K}(\mathcal{C})$ given by $(\mathcal{K}(\mathcal{C})^{w > 0}, \mathcal{K}(\mathcal{C})^{w \leq 0})$ where $\mathcal{K}(\mathcal{C})^{w > 0}$ consists of complexes isomorphic to those concentrated in positive degrees and $\mathcal{K}(\mathcal{C})^{w \leq 0}$ consists of those concentrated in non-positive degrees. 
\end{ex}

\begin{thm}[{\cite[Corollary 5.1]{nicolaskeller}}]
Let $A$ be a coconnective dg algebra with $H^0(A)$ semisimple. Then:
\begin{enumerate}
\item There is a weight structure $(\mathcal{D}(A)^{w > 0},\mathcal{D}(A)^{w \leq 0})$ on $\mathcal{D}(A)$ given by 
\begin{align*}
\mathcal{D}(A)^{w > 0} &= \{X \in \mathcal{D}(A) \mid H^i(X) = 0 \text{ for } i \leq 0 \} \\
\mathcal{D}(A)^{w \leq 0} &= \{X \in \mathcal{D}(A) \mid H^i(X) = 0 \text{ for } i > 0 \}. 
\end{align*}

\item For every $X \in \mathcal{D}(A)$ there is a truncation triangle $\sigma_{>0} X \to X \to \sigma_{\leq 0}X \to$ such that the map $\sigma_{>0} X \to X$ induces an isomorphism on $H^i$ for $i \leq 0$ and the map $X \to \sigma_{\leq 0}X$ induces an isomorphism on $H^i$ for $i > 0$. 
\end{enumerate}
\end{thm}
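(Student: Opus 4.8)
\emph{Setup.} The plan is to verify the three weight-structure axioms for the proposed pair $(\mathcal{D}(A)^{w>0},\mathcal{D}(A)^{w\leq 0})$. First I would reduce to a convenient model: a coconnective dg algebra with $H^0$ semisimple is quasi-isomorphic, by a minimal-model argument, to a \textbf{strictly positive} dg algebra, i.e. one with $A^i=0$ for $i<0$, $A^0=H^0(A)=:S$ semisimple and $d|_{A^0}=0$. For such an $A$ one has the augmentation $A\twoheadrightarrow S$ with kernel $\bar A=A^{\geq 1}$; since $S$ is semisimple, $\bar A$ is the Jacobson radical of $A$, and crucially it is concentrated in \emph{strictly} positive degrees. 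The simple $A$-modules $S_\alpha$ are the simple $S$-modules placed in degree $0$. (This reduction, together with the identification of the radical, is where the semisimplicity of $H^0(A)$ really enters.)

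\emph{Axioms (1) and (3).} Closure of the two classes under the relevant shifts and under summands is immediate from the cohomological descriptions. For the truncation triangles I would use the \textbf{stupid} (brutal) truncation: because $A$ is positively graded, $A^a\cdot M^b\subseteq M^{a+b}$ with $a\geq 0$, so for any dg $A$-module $M$ the subcomplex $M^{\geq 1}$ is in fact a sub-dg-module, with quotient module $M^{\leq 0}\coloneqq M/M^{\geq 1}$. The short exact sequence $0\to M^{\geq 1}\to M\to M^{\leq 0}\to 0$ gives the triangle $\sigma_{>0}M\to M\to\sigma_{\leq 0}M\to$ with $\sigma_{>0}M=M^{\geq 1}\in\mathcal{D}(A)^{w>0}$ and $\sigma_{\leq 0}M=M^{\leq 0}\in\mathcal{D}(A)^{w\leq 0}$; inspecting the cohomology of $M^{\geq 1}$ and $M^{\leq 0}$ yields the sharper statement in part (2) of the theorem. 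This assignment is not functorial on $\mathcal{D}(A)$, but — unlike for a $t$-structure — a weight structure does not demand functorial truncations.

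\emph{Axiom (2), the crux.} One must show $\Hom_{\mathcal{D}(A)}(X,Y)=0$ for $X\in\mathcal{D}(A)^{w>0}$ and $Y\in\mathcal{D}(A)^{w\leq 0}$. The heart of the matter is the vanishing $\Ext^{\geq 1}_A(S_\alpha,S_\beta)=0$, equivalently that $\RHom_A(S,S)$ is coconnective; this holds because the reduced bar resolution $\bigoplus_{n\geq 0}S_\alpha\otimes_S(\bar A[1])^{\otimes_S n}\otimes_S A$ has every term concentrated in degrees $\geq 0$ — as $\bar A$ lives in degrees $\geq 1$, so $\bar A[1]$ in degrees $\geq 0$ — whence $\RHom_A(S_\alpha,S_\beta)$, computed from this resolution, is concentrated in degrees $\leq 0$. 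Equivalently, and this is the conceptual (Bondarko-style) repackaging, $\add(S)$ is a \emph{negative} subcategory that generates $\mathcal{D}(A)$ — the latter via the degree filtration $A\supseteq\bar A\supseteq(\bar A)^2\supseteq\cdots$, whose subquotients $A^n$ are semisimple $S$-modules, hence coproducts of shifted simples — so one obtains a weight structure whose weight heart is a closure of $\add(S)$, and one then identifies its aisles with the cohomological classes above by a dévissage along the stupid truncations of the previous paragraph.

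\emph{Main obstacle.} I expect axiom (2) to be the real work, and within it the fact that $\mathcal{D}(A)$ carries no $t$-structure — precisely why one reaches for a weight structure here — so that the dévissage matching the abstract and cohomological aisles cannot lean on functorial truncations and must be run by hand, with the attendant $\lim^1$-bookkeeping for the brutal-truncation towers. A secondary difficulty is that the simple modules need not be compact in $\mathcal{D}(A)$ (for instance when $A=\Lambda[x,y]$ with $\deg x=\deg y=1$), so any appeal to a generation theorem for weight structures must be to a version not requiring compact generators. Finally, the reduction to a strictly positive model with semisimple degree-zero part deserves care, and is the locus of the $H^0$-semisimplicity hypothesis.
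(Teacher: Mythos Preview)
The paper does not give its own proof of this theorem: it is stated with a citation to Keller--Nicol\'as and used as a black box. So there is no ``paper's proof'' to compare against; your sketch is effectively a proposed reconstruction of the Keller--Nicol\'as argument.

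Your outline is broadly correct and close in spirit to what Keller--Nicol\'as do: reduce to a strictly nonnegatively graded model with $A^0=S$ semisimple, use brutal truncation for axiom~(3), and for axiom~(2) exploit that $\bar A$ sits in strictly positive degrees so that $\RHom_A(S,S)$ is coconnective. One genuine gap: brutal truncation alone does \emph{not} yield the sharpened statement in part~(2). For a general representative $M$, the quotient $M^{\leq 0}$ has $H^0(M^{\leq 0})=M^0/\operatorname{im}d^{-1}$, which surjects onto but need not equal $H^0(M)$; equivalently the connecting map $H^0(\sigma_{\leq 0}X)\to H^1(\sigma_{>0}X)$ need not vanish. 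To get the sharp isomorphisms at the boundary degrees you must first replace $M$ by a well-chosen representative --- this is where the semisimplicity of $S$ is used again, to split off the image of $d^0$ inside $M^1$ (or dually to adjust $M^0$) and produce a model on which brutal truncation behaves as claimed. Keller--Nicol\'as handle this via an explicit inductive construction of such representatives; you should not assert that ``inspecting the cohomology'' of the brutal pieces suffices.

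A second point: your plan for axiom~(2) via Bondarko-style generation from the negative subcategory $\operatorname{add}(S)$ is reasonable, but as you note the simples are not compact, and the filtration $A\supseteq\bar A\supseteq(\bar A)^2\supseteq\cdots$ shows $A\in\overline{\langle S\rangle}$ only in a localising sense. The cleaner route (and the one closer to Keller--Nicol\'as) is to avoid generation theorems entirely: once you know every $X\in\mathcal D(A)^{w>0}$ admits a representative concentrated in strictly positive degrees (the same replacement step needed above), the orthogonality $\Hom_{\mathcal D(A)}(X,Y)=0$ for $Y$ with a nonpositively-concentrated representative follows by computing $\RHom$ from a semifree resolution of $X$ built in positive degrees, which exists precisely because $A$ is nonnegatively graded with semisimple $A^0$.
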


\begin{rmk}
If $A$ is strictly coconnective, i.e. $A^i = 0$ for $i <0$, then we may take $\sigma_{\leq 0}A$ to be the brutal truncation $A^0$. 
\end{rmk}

The following result of Keller and Nicol\'as provides a $\pvd$-generator for our algebras of interest.

\begin{prop}[{\cite[5.6.1 a)]{nicolaskeller}}]\label{prop: keller-nicolas locality}
Let $A$ be a coconnective dg algebra such that $H^0(A)$ is a finite dimensional semisimple $k$-algebra. Then $\sigma_{\leq 0} A$ is a $\pvd$-generator for $A$.
\end{prop}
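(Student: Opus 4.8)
The plan is to exhibit $\sigma_{\leq 0}A$ as a $\pvd$-generator by combining the weight structure of \cite{nicolaskeller} (recalled above) with the finite-dimensionality of each $H^i(A)$ in exactly the same spirit as \Cref{prop: Dfd generator connective case}, but now using the \emph{co}-t-structure rather than the t-structure. First I would observe that $\sigma_{\leq 0}A$ lies in $\pvd(A)$: since $H^i(A)$ vanishes for $i<0$ and the truncation triangle $\sigma_{>0}A \to A \to \sigma_{\leq 0}A \to$ makes $\sigma_{\leq 0}A \to A$ an isomorphism on $H^i$ for $i>0$ while $H^{\leq 0}(\sigma_{\leq 0}A)$ is computed from the triangle and the hypothesis that $H^0(A)$ is finite dimensional, every cohomology group of $\sigma_{\leq 0}A$ is finite dimensional. (In the strictly coconnective case one can simply take $\sigma_{\leq 0}A = A^0$, which is visibly perfectly valued.) So $\thick_A(\sigma_{\leq 0}A) \subseteq \pvd(A)$ is automatic, and the content is the reverse inclusion.

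For the reverse inclusion, the key step is to show that $H^0(A) = H^0(\sigma_{\leq 0}A)$, viewed as an $A$-module via restriction along $A \to \sigma_{\leq 0}A \to H^0(A)$, is generated by $\sigma_{\leq 0}A$ together with its shifts; then a dévissage argument upgrades this to all of $\pvd(A)$. Concretely, I would argue as follows. Given $X \in \pvd(A)$, iterate the weight truncations: the triangles $\sigma_{>n}X \to X \to \sigma_{\leq n}X$ (suitably shifted) let one build $X$ from its weight-graded pieces, and each weight-graded piece is, up to shift, a module concentrated in a single cohomological degree, i.e.\ a shift of a finite-dimensional $H^0(A)$-module — here one uses that $X$ has bounded, finite-dimensional cohomology so only finitely many weight pieces are nonzero. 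Since $H^0(A)$ is a finite-dimensional semisimple algebra, every finite-dimensional $H^0(A)$-module is a finite direct sum of simple summands of $H^0(A)$, hence lies in $\add(H^0(A))$. Finally $H^0(A) \in \thick_A(\sigma_{\leq 0}A)$: one more weight truncation of $\sigma_{\leq 0}A$ isolates $H^0(A)$ as a summand in the triangulated category generated by $\sigma_{\leq 0}A$ and its shifts (indeed $\sigma_{\leq 0}(\sigma_{\leq 0}A)$ together with $\sigma_{\leq -1}(\sigma_{\leq 0}A)$ produces $H^0(A)[0]$ up to a shift). Stringing these together gives $X \in \thick_A(\sigma_{\leq 0}A)$.

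The main obstacle I anticipate is the bookkeeping in the dévissage: one must be careful that the weight structure of \cite{nicolaskeller} is described in terms of cohomology vanishing ranges, so the ``weight truncation'' is really a homological brutal-truncation-type operation, and extracting from $X \in \pvd(A)$ a \emph{finite} tower of triangles whose subquotients are genuine shifts of $H^0(A)$-modules requires knowing that $X$ is cohomologically bounded — which holds because $X \in \pvd(A)$ and $A$ is coconnective, so an object with finite-dimensional total cohomology automatically has cohomology concentrated in finitely many degrees once one also invokes that $A$ is coconnective (so cohomology is bounded below) together with boundedness above coming from perfect-valuedness. The secondary subtlety is checking that the restriction functor along $A \to H^0(A)$ interacts correctly with the weight structure so that the graded pieces are genuinely in the image of $\pvd(H^0(A)) \to \pvd(A)$, which is exactly the analogue of the manoeuvre in the proof of \Cref{prop: Dfd generator connective case}. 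Once these finiteness and boundedness points are nailed down, the rest is the standard ``thick subcategory generated by simples'' argument transplanted from the connective to the coconnective setting.
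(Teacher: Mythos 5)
First, a point of comparison: the paper offers no proof of this proposition --- it is cited directly from \cite[5.6.1 a)]{nicolaskeller} --- so your argument has to stand on its own against that result. Your d\'evissage skeleton (weight-truncate $X \in \pvd(A)$ into finitely many pieces whose cohomology sits in a single degree, then handle each piece) is the right shape, but the step carrying all the weight fails as written. You pass from ``a module concentrated in a single cohomological degree'' to ``a shift of a finite-dimensional $H^0(A)$-module, hence in $\add(H^0(A))$'', and in your last paragraph you speak of ``the restriction functor along $A \to H^0(A)$''. For a \emph{coconnective} dg algebra there is in general no dg algebra map $A \to H^0(A)$ (one only has $H^0(A) \to A$ when $A$ is strictly coconnective; the existence of a retraction $A \to H^0(A)$ is exactly the nontrivial augmentation hypothesis imposed later in \Cref{thm: reflexivity via KD for coconn. algs} and \Cref{ex: SMCs in general}). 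Consequently finite-dimensional $H^0(A)$-modules are not objects of $\mathcal{D}(A)$, ``$\add(H^0(A))$'' has no meaning inside $\mathcal{D}(A)$, and ``$H^0(A)\in\thick_A(\sigma_{\leq 0}A)$'' only parses once you have solved the real problem: showing that an $A$-module with finite-dimensional cohomology concentrated in one degree can be built in $\mathcal{D}(A)$ from $\sigma_{\leq 0}A$. (Your aside that ``one more weight truncation of $\sigma_{\leq 0}A$ isolates $H^0(A)$'' also does nothing: $\sigma_{\leq 0}A$ already has cohomology concentrated in degree $0$.) This is precisely the lifting content of Keller--Nicol\'as (their Lemmas 5.4--5.5 and Corollary 5.7), which the present paper itself must invoke later in \Cref{simpleprojbij} and \Cref{divalg}; your proposal assumes it with an ``i.e.''.

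The gap is fillable, but by an argument of the following kind rather than by restriction of scalars. For $M\in\mathcal{D}(A)^{w\leq 0}$, apply $\Hom_{\mathcal{D}(A)}(-,M)$ to the triangle $\sigma_{>0}A\to A\to\sigma_{\leq 0}A$; since $\Hom(\mathcal{D}(A)^{w>0},\mathcal{D}(A)^{w\leq 0})=0$, the map $\Hom_{\mathcal{D}(A)}(\sigma_{\leq 0}A,M)\to\Hom_{\mathcal{D}(A)}(A,M)\cong H^0(M)$ is surjective, i.e.\ $H^0(A)$-linear maps out of $H^0(A)$ lift to maps out of $\sigma_{\leq 0}A$ in $\mathcal{D}(A)$. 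Now take $M$ with finite-dimensional cohomology concentrated in degree $0$, use semisimplicity of $H^0(A)$ to choose a surjection $H^0(A)^n\onto H^0(M)$, and lift it to $f\colon(\sigma_{\leq 0}A)^n\to M$; the long exact sequence shows $\operatorname{cone}(f)\simeq K[1]$ with $K$ again concentrated in degree $0$, and orthogonality applied to $M[-1]\in\mathcal{D}(A)^{w>0}$ and $K\in\mathcal{D}(A)^{w\leq 0}$ shows the connecting map $M\to K[1]$ vanishes, so the triangle splits and $M$ is a direct summand of $(\sigma_{\leq 0}A)^n$. Only with this (or an explicit appeal to the cited lemmas of \cite{nicolaskeller}) does your d\'evissage have anything to land on; as written, the proposal assumes the very point that makes the coconnective case harder than \Cref{prop: Dfd generator connective case}.
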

For a coconnective dg algebra $A$ with finite dimensional semisimple $H^0$ we set
\[
A^! \coloneqq A^!_{\sigma_{\leq 0}A} = \R\hom_A(\sigma_{\leq 0}A,\sigma_{\leq 0}A)^{\op}
\]
We note that by \cite[ Lemma 6.2]{nicolaskeller}, $A^!$ is a connective dg algebra. By \Cref{prop: keller-nicolas locality}, there is a quasi-equivalence $\pvd(A) \simeq \per(A^!)^\mathrm{op}$. For the remainder of this section, we will say that $A$ is \textbf{derived complete} to mean that $A$ is derived complete at $\sigma_{\leq 0}A$.

\subsection{The simple-projective bijection}

The aim of this subsection is to relate the simple $H^0(A)$ modules and the simple $H^0(A^!)$ modules. We begin with a version of the simple-projective bijection which follows from the results of \cite{nicolaskeller}.

\begin{thm} \label{simpleprojbij}
Let $A$ be a coconnective dg algebra with $H^0(A)$ a finite dimensional semisimple algebra. Then there is a bijection between the indecomposable summands of $H^0(A^!)$ and the isomorphism classes of simple $H^0(A)$-modules.
\end{thm}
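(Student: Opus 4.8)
The plan is to leverage the equivalence $\pvd(A) \simeq \per(A^!)^{\op}$ coming from \Cref{prop: keller-nicolas locality} and to transport the simple objects on one side to recognisable objects on the other. First I would observe that since $A^!$ is a connective dg algebra (by \cite[Lemma 6.2]{nicolaskeller}), $H^0(A^!)$ is a finite dimensional algebra, so $H^0(A^!)$ decomposes up to isomorphism as a direct sum of indecomposable projective $H^0(A^!)$-modules, one for each isomorphism class of simple $H^0(A^!)$-module. Dually, $H^0(A)$ is a finite dimensional semisimple algebra, so its indecomposable summands are precisely the simple $H^0(A)$-modules. Thus the statement amounts to matching simple $H^0(A)$-modules with indecomposable summands of $A^!$ (equivalently, with simple $H^0(A^!)$-modules, equivalently with the indecomposable summands of $H^0(A^!)$ as a module over itself).

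Next I would make the matching explicit. Write $S = \sigma_{\leq 0}A = \bigoplus_i S_i$ as a direct sum of the simple $H^0(A)$-modules $S_i$ (with appropriate multiplicities absorbed, or tracked), regarded as $A$-modules via $A \to H^0(A)$. Under $\R\hom_A(-,S)\colon \per(A)^{\op} \xrightarrow{\sim} \thick_{A^!}(S)^{\op} = \pvd(A)$ — wait, I want it the other way: the functor $\R\hom_A(-,S)$ sends $A \mapsto S$ and identifies $\per(A^!)^{\op}$ with $\pvd(A)$, with $A^!$ corresponding to $S$. So the indecomposable summands of $A^!$ as an $A^!$-module correspond to the indecomposable summands of $S$, i.e. to the $S_i$, in the dg category $\pvd(A)$. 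The point is then that an indecomposable object of $\per(A^!)$ need not stay indecomposable after applying $H^0$; so I would argue that the \emph{idempotents} match up. Concretely, a complete set of primitive orthogonal idempotents in $H^0(A^!) = \Hom_{\mathcal{D}(A)}(S,S)$ is obtained by noting that $\Hom_{\mathcal{D}(A)}(S,S) \cong \Hom_{\mathcal{D}(A)}(\bigoplus S_i, \bigoplus S_j)$ and, since each $S_i$ is a simple $H^0(A)$-module and $A$ is coconnective, $\Hom_{\mathcal{D}(A)}(S_i, S_j)$ in degree $0$ is $\Hom_{H^0(A)}(S_i,S_j)$, which is a division ring if $i=j$ and $0$ otherwise (using semisimplicity of $H^0(A)$ and that the $S_i$ are pairwise non-isomorphic). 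Hence $H^0(\R\End_A(S))$ has a block decomposition indexed by the isomorphism classes of simple $H^0(A)$-modules, each block being a (matrix algebra over a) division ring, and the primitive idempotents of this $H^0$ therefore biject with those isomorphism classes; lifting these idempotents to the dg algebra $A^!$ (possible since $A^!$ is connective, so idempotents in $H^0(A^!)$ lift) gives the claimed bijection between indecomposable summands of $A^!$ and simple $H^0(A)$-modules.

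I expect the main obstacle to be the bookkeeping around multiplicities and, more seriously, justifying that $\Hom_{\mathcal{D}(A)}(S_i,S_j)$ is concentrated in non-negative degrees with degree-zero part $\Hom_{H^0(A)}(S_i,S_j)$ — this is where coconnectivity of $A$ and the weight structure of \cite{nicolaskeller} enter, since a priori there could be negative self-extensions. One should invoke that $S_i, S_j \in \mathcal{D}(A)^{w\leq 0}$ and use $\Hom(\mathcal{D}(A)^{w>0},\mathcal{D}(A)^{w\leq 0}) = 0$ together with the explicit description of the weight truncations to control $\Hom_{\mathcal{D}(A)}^{<0}(S_i,S_j)$, or more simply observe that $\R\hom_A(S,S)^{\op} = A^!$ is connective so its $H^0$ is a genuine algebra and the degree-zero morphisms compute a quotient of $\Hom$ in a connective dg algebra, which is what we need. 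The remaining steps — lifting idempotents along $A^! \to H^0(A^!)$, and translating "indecomposable summand of $H^0(A^!)$" into "indecomposable summand of $A^!$ in $\per(A^!)$" — are standard, so I would treat them briskly, citing the usual idempotent-lifting lemma for connective dg algebras.
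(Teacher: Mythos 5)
The central computational step of your proposal is false, and it is exactly the point where the real difficulty of the theorem lives. You claim that the degree-zero part of $\Hom_{\mathcal{D}(A)}(S_i,S_j)$ is $\Hom_{H^0(A)}(S_i,S_j)$, so that $H^0(A^!)$ is a product of (matrix algebras over) division rings and its primitive idempotents trivially biject with the simples. If that were true, $H^0(A^!)$ would be semisimple and even finite dimensional, and neither holds in general: for $A=k[\epsilon]/\epsilon^2$ with $|\epsilon|=1$ (i.e.\ $C^\bullet S^1$, cf.\ \Cref{circleExample}) one has $S\simeq k$ and $H^0(A^!)\cong k\llbracket t\rrbracket$, which is infinite dimensional and far from semisimple, while $\Hom_{H^0(A)}(k,k)\cong k$. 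The problem is not negative-degree morphisms (connectivity of $A^!$, i.e.\ the weight-structure orthogonality you invoke, does kill those); it is that degree-zero classes in $\R\Hom_A(S_i,S_j)$ can be built from positive-degree Ext data paired against the positive-degree cohomology of $A$. The correct statement in this direction is only that $H^0(A^!)$ is semiperfect with top isomorphic to $H^0(A)^{\op}$ (\Cref{semiperf}, \Cref{simplesimplebij}), and in the paper those facts come \emph{after} \Cref{simpleprojbij} and partly rely on it, so your argument also has the logical order backwards. A related symptom: you regard the simple $H^0(A)$-modules as $A$-modules ``via $A\to H^0(A)$'', but for a coconnective dg algebra there is no such algebra map; producing $A$-module lifts of the simples (and of direct sum decompositions of $H^0(S)$) is genuinely nontrivial and is what the Keller--Nicol\'as lemmas provide.

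Your opening move — identify indecomposable summands of $H^0(A^!)$ with primitive idempotents of $\End_{\mathcal{D}(A)}(S)$ and split them on $S$ — is the same as the paper's, but after that the paper has to do real work that your proposal replaces by the false block computation: one must show that an indecomposable summand $\overline{\phi(P)}$ of $S$ in $\mathcal{D}(A)$ has \emph{simple} $H^0$, which is done by lifting a putative decomposition $H^0(\overline{\phi(P)})\cong\bigoplus S_i$ to a map $\bigoplus A_i\to\overline{\phi(P)}$ using \cite[Lemmas 5.4, 5.5]{nicolaskeller}, factoring through the weight truncation, and deriving a contradiction; and conversely one must lift each simple $H^0(A)$-module to a summand of $S$ and prove uniqueness of the lift (\cite[Corollary 5.7]{nicolaskeller}) to get surjectivity and injectivity of the correspondence. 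None of this is recoverable from your argument as written, so the proposal has a genuine gap rather than being an alternative proof.
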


\begin{proof} Put $S\coloneqq \sigma_{\leq 0} A$. Any indecomposable summand of $H^0(A^!)$ is of the form $P = e_PH^0(A^!)$ for some primitive idempotent $e_P$ of $H^0(A^!) = \Hom_{\mathcal{D}(A)}(S,S)^\mathrm{op}$. This idempotent splits in $\mathcal{D}(A)$ and produces an indecomposable summand $\overline{\phi(P)}$ of $S$ and so a summand $\phi(P)$ of $H^0(S) \simeq H^0(A)$. Suppose that $\phi(P)$ is decomposable, so that $\phi(P) \simeq \bigoplus S_i$ for some simple summands $S_i$ of $H^0(A)$. By \cite[Lemma 5.5]{nicolaskeller}, there are indecomposable summands $\tilde{A_i}$ of $A$ in $\mathcal{D}(A)$ lifting each of the $S_i$. By Lemma 5.4 \textit{op.~cit.}\, the isomorphism $\bigoplus S_i \to \phi(P)$ can be lifted to a map $f: \bigoplus A_i \to \overline{\phi(P)}$ which is an isomorphism on $H^0$. Since $\phi(P) \in \mathcal{D}(A)^{w \leq 0}$, the map $f$ factors as $\tilde{f}: \sigma_{\leq 0} (\bigoplus A_i) \to \overline{\phi(P)}$ and $H^0(\tilde{f})$ is an isomorphism. But then $\tilde{f}$ is a quasi-isomorphism and so $\overline{\phi(P)} \simeq \bigoplus \sigma_{\leq 0} A_i$ using Lemma 4.2 \textit{op.~cit.}\ . This contradicts the indecomposablility of $\overline{\phi(P)}$. Therefore $\phi(P)$ is an indecomposable summand of $H^0(A)$ and hence a simple $H^0(A)$ module. 

Given a simple $H^0(A)$ module, Corollary 5.7 \textit{op.~cit.}\ and its proof show that it can be lifted to a summand of $S$. The same argument as above shows that it must be indecomposable and so it corresponds to a primitive idempotent in $H^0(A^!)$. 

Suppose that $\phi(P') \simeq \phi(P)$. Then by the uniqueness of Corollary 5.7 \textit{op.~cit.}\ , we have that $\overline{\phi(P)} \simeq \overline{\phi(P')}$. Therefore there are isomorphisms
\[
e_P H^0(A^!) \simeq \Hom_{\mathcal{D}(A)}(S,\overline{\phi(P)}) \simeq \Hom_{\mathcal{D}(A)}(S,\overline{\phi(P')}) \simeq e_{P'} H^0(A^!)
\]
where the first isomorphism follows from the bijection between idempotents and summands for $H^0(A^!)$. 
\end{proof}

Recall that a ring $R$ is \textbf{semiperfect} if it has a complete set of orthogonal idempotents $e_i$ such that each $e_iRe_i$ is local. Local rings and Artinian rings are semiperfect. For semiperfect rings there is a bijection between indecomposable projectives and simples given by taking projective covers. \Cref{simpleprojbij} allows us to relate the simple $H^0(A)$-modules to the projective $H^0(A^!)$ modules. We will show now that $H^0(A^!)$ is semiperfect, and so we can relate the simple $H^0(A)$-modules to the simple $H^0(A^!)$-modules.

\begin{rmk}
If we assume that $H^0(A)$ is a product of division algebras, then by \Cref{semiperf} and Corollary 24.12 in \cite{LamNC}, every indecomposable projective is a summand of $H^0(A^!)$. So in this setting the statement of \Cref{simpleprojbij} can simplified to replace indecomposable summands of $H^0(A^!)$ with indecomposable projective $H^0(A^!)$-modules. In fact, up to Morita equivalence, the following lemma shows that we may always assume this. 
\end{rmk}

\begin{lem}\label{divalg}
Let $A$ be a coconnective dg algebra such that $H^0(A)$ is a semisimple algebra. Then there is a dg algebra $A'$ such that $\per(A) \simeq \per(A')$, and $H^0(A')$ is a product of division algebras.
\end{lem}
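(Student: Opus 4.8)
The plan is to perform a Morita reduction to the basic case. Recall that $H^0(A)$ is finite dimensional semisimple over $k$, so by Artin--Wedderburn it is a finite product $\prod_{i} M_{n_i}(D_i)$ of matrix algebras over division $k$-algebras $D_i$. The idea is that passing to the ``basic algebra'' in each matrix factor replaces $M_{n_i}(D_i)$ by $D_i$ without changing the derived category up to equivalence, and this can be lifted to the dg level using the weight structure of \cite{nicolaskeller}.

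Concretely, I would first choose a primitive idempotent $e \in H^0(A)$ picking out one simple factor from each matrix block; more precisely, let $f \in H^0(A)$ be an idempotent which is a sum of one primitive idempotent from each Wedderburn component, so that $f H^0(A) f \cong \prod_i D_i$ is a product of division algebras, and $f$ is a \emph{full} idempotent in the sense that $H^0(A) f H^0(A) = H^0(A)$ (equivalently, the $H^0(A)$-module $fH^0(A)$ is a projective generator). Next I would lift $f$ to an idempotent in $A$: since $A$ is coconnective, the algebra map $A^0 \to H^0(A)$ (or rather, one uses $\sigma_{\leq 0}A$ and the fact that $H^0$ of the degree-zero part surjects onto $H^0(A)$, cf.\ the strictly coconnective remark) allows idempotents to be lifted along the surjection onto $H^0(A)$ because the kernel is a nilpotent-up-to-homotopy ideal; alternatively one works directly in $\mathcal{D}(A)$ and splits the idempotent there, obtaining a summand $P$ of $A$ in $\mathcal{D}(A)$ with $H^0(P) \cong fH^0(A)$. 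Then set $A' \coloneqq \R\mathrm{End}_A(P)^{\op}$.

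It remains to check the two claims about $A'$. That $\per(A) \simeq \per(A')$ follows from the standard tilting argument: $P$ is a perfect $A$-module (it is a summand of $A$), and it is a generator of $\per(A)$ because $f$ is a full idempotent, so $H^0(A)$ lies in $\thick_{H^0(A)}(fH^0(A))$ and hence, using the weight structure to pass between $A$ and $H^0(A)$ as in the proof of \Cref{simpleprojbij}, $A \in \thick_A(P)$; therefore $\R\hom_A(P,-)$ induces an equivalence $\per(A) \simeq \per(A')$. For the computation of $H^0(A')$, one uses that $A'$ is connective (by \cite[Lemma 6.2]{nicolaskeller}, or directly: $P$ is a summand of $A \in \mathcal{D}(A)^{w\le 0}$, and morphisms inside $\mathcal{D}(A)^{w\le 0}$ are concentrated in non-positive degrees), so $H^0(A') = \mathrm{Hom}_{\mathcal{D}(A)}(P,P)^{\op}$. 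Since $P$ decomposes in $\mathcal{D}(A)$ into the summands lifting the distinct simple $H^0(A)$-modules and these lifts have no negative self-extensions and no maps between non-isomorphic ones in degree zero (again by the weight-structure arguments of \cite{nicolaskeller}, Lemmas 5.4--5.5), $\mathrm{Hom}_{\mathcal{D}(A)}(P,P)$ is a product of the division algebras $D_i^{\op}$, hence $H^0(A')$ is a product of division algebras.

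The main obstacle I anticipate is the lifting step together with the degree-zero endomorphism computation: one needs to be careful that the idempotent $f$ lifts to an honest summand of $A$ in the dg sense (not merely in $\mathcal{D}(A)$, which is automatic by idempotent completeness, but compatibly enough to identify $H^0$ of the endomorphism dg algebra), and that no ``extra'' morphisms in non-positive degrees contribute to $H^0$ of $\R\mathrm{End}_A(P)$. Both of these are exactly the kind of statements controlled by the weight structure on $\mathcal{D}(A)$ — in particular the fact that $\sigma_{\le 0}$ behaves well on summands of $A$ — so the proof will run parallel to, and reuse, the technical lemmas from \cite{nicolaskeller} already invoked in the proof of \Cref{simpleprojbij}.
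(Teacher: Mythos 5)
Your proposal is correct in substance and lands on the same underlying idea as the paper -- Morita reduction to a ``basic'' dg algebra by splitting idempotents of $H^0(A)=\mathrm{End}_{\mathcal{D}(A)}(A)$ in $\mathcal{D}(A)$ -- but the implementation differs. The paper invokes \cite[Lemmas 5.4, 5.5]{nicolaskeller} to decompose $A\simeq\bigoplus A_{i,j}$ into indecomposable summands lifting the simple $H^0(A)$-modules, shows that summands lifting isomorphic simples are quasi-isomorphic, and sets $A'=\R\hom_A(\bigoplus_i A_{i,1},\bigoplus_i A_{i,1})$; you instead split a single full idempotent $f$ with $fH^0(A)f\cong\prod_i D_i$, obtaining one summand $P$ of $A$, and take $A'=\R\mathrm{End}_A(P)^{\op}$. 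Your route is arguably more elementary: generation follows purely ring-theoretically (fullness gives $1=\sum_s a_sfb_s$ in $H^0(A)=\mathrm{End}_{\mathcal{D}(A)}(A)$, exhibiting $A$ as a retract of $P^{\oplus m}$ in $\mathcal{D}(A)$, so $A\in\thick_A(P)$), and $H^0(A')\cong\bigl(fH^0(A)f\bigr)^{\op}$ is just the corner-ring identity for a split idempotent -- no appeal to the weight structure or to the Keller--Nicol\'as lifting lemmas is needed at all. What the paper's finer decomposition buys is the explicit matching of summands with simples, which is reused elsewhere (e.g.\ to see that $\sigma_{\leq 0}A$ corresponds to $\sigma_{\leq 0}A'$ in the proof of \Cref{lem: coconnective semisimple H^0 = reflexive}).

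Three small corrections. First, your assertion that $A'$ is connective is wrong: since $P$ is a summand of $A$ and $\Hom_{\mathcal{D}(A)}(A,A[n])=H^n(A)=0$ for $n<0$, $A'$ is \emph{coconnective} ( \cite[Lemma 6.2]{nicolaskeller} concerns $\R\mathrm{End}_A(\sigma_{\leq 0}A)$, not summands of $A$); fortunately no connectivity is needed, because $H^0(\R\mathrm{End}_A(P))=\Hom_{\mathcal{D}(A)}(P,P)$ holds unconditionally -- and coconnectivity of $A'$ is in fact what the later application requires. Second, your first alternative of lifting $f$ to a strict idempotent of $A$ is problematic (the kernel of $Z^0(A)\to H^0(A)$ need not be nilpotent); your fallback of splitting $f$ in the idempotent-complete category $\mathcal{D}(A)$ is the right move and is what your construction actually uses. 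Third, the step ``$H^0(A)\in\thick_{H^0(A)}(fH^0(A))$, hence $A\in\thick_A(P)$ via the weight structure'' is vague as written; replace it with the one-line retract argument above.
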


\begin{proof}
Suppose $H^0(A) \simeq \bigoplus_{i = 1,\dots,n, j= 1,\dots, k_i} S_{i,j}$ is a decomposition of $H^0(A)$ into simple right modules such that $S_{i,j} \simeq S_{i,j'}$ for $1 \leq j,j' \leq k_i$ for all $i$, and $S_{i,j}\not\simeq S_{i',j'}$ if $i\neq i'$. Then $H^0(A) \simeq M_{k_1}(D_1) \times \dots \times M_{k_n}(D_n)$ as algebras, where each $D_i\simeq \mathrm{End}_{H^0(A)}(S_{i,1})$ is a division algebra. By \cite[Lemma 5.5]{nicolaskeller}, there is an $A$-linear quasi-isomorphism $A \simeq \bigoplus A_{i,j}$ where $H^0(A_{i,j}) \simeq S_{i,j}$. For each $i$ and each $1 \leq j, j'\leq k_i$ there is an isomorphism $S_{i,j} \to S_{i,j'}$ which lifts to a map $f:A_{i,j} \to A_{i,j'}$ by Lemma 5.4 \textit{op.~cit.}\ , and moreover by naturality of Lemma 5.4 \textit{op.~cit.}\ it follows that $f$ is a quasi-isomorphism. So if $A':= \R\hom_A(\bigoplus_i A_i,\bigoplus_i A_i)$ where $A_i\coloneqq A_{i,1}$ then it follows that $\mathcal{D}^{\perf}(A) \simeq \mathcal{D}^{\perf}(A')$. Furthermore we have that $H^0(A') \simeq D_1 \times \dots \times D_n$, as required. 
\end{proof}

\begin{lem}\label{coconnlem}
Let $A$ be a coconnective dg algebra with semisimple $H^0(A)$. Then $H^0(\sigma_{\leq 0}A)$ is a semisimple $H^0(A^!)$-module.
\end{lem}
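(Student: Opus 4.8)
The plan is to identify $H^0(\sigma_{\leq 0}A)$ as the image of a projective $H^0(A^!)$-module under a suitable functor and then argue semisimplicity directly. First I would recall that, by definition, $A^! = \R\hom_A(\sigma_{\leq 0}A, \sigma_{\leq 0}A)^{\op}$, so $S \coloneqq \sigma_{\leq 0}A$ is naturally a right $A^!$-module; concretely, $H^0(A^!) = \Hom_{\mathcal{D}(A)}(S,S)^{\op}$ acts on $H^0(S) \cong H^0(A)$ on the right. The claim is precisely that this action makes the $k$-algebra $H^0(A)$ into a semisimple module over $H^0(A^!)$.

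The key step is to understand how the idempotents of $H^0(A^!)$ decompose $H^0(S)$. Writing a complete set of primitive orthogonal idempotents $e_1,\dots,e_m$ of $H^0(A^!)$, we get a decomposition $S \simeq \bigoplus \overline{\phi(P_i)}$ in $\mathcal{D}(A)$ into indecomposable summands, and hence $H^0(S) \cong \bigoplus H^0(\overline{\phi(P_i)})$ as a right $H^0(A^!)$-module, where the $j$-th summand is killed by all $e_i$ with $i \neq j$ and is a module over $e_j H^0(A^!) e_j$. By the argument inside the proof of \Cref{simpleprojbij}, each $\phi(P_i) \coloneqq H^0(\overline{\phi(P_i)})$ is an indecomposable (hence simple) summand of the semisimple algebra $H^0(A)$; in particular it is a simple right $H^0(A)$-module and the endomorphism ring $e_i H^0(A^!) e_i \cong \Hom_{\mathcal{D}(A)}(\overline{\phi(P_i)}, \overline{\phi(P_i)})^{\op}$ acts on it. So each summand $\phi(P_i)$ is a module over a local ring $e_i H^0(A^!) e_i$ which is \emph{simple} as a module over the larger algebra $H^0(A)$. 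One then needs that it is also simple, or at least semisimple, as a module over $e_i H^0(A^!) e_i$, and then sum up: a finite direct sum of simple $H^0(A^!)$-modules is semisimple.

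To finish, I would reduce to the case that $H^0(A)$ is a product of division algebras using \Cref{divalg} — this is legitimate because passing to the Morita-equivalent $A'$ replaces $S$ by a summand generating the same thick subcategory, hence replaces $A^!$ by a Morita-equivalent algebra and $H^0(\sigma_{\leq 0}A)$ by a module with the same semisimplicity status. In that case each $\phi(P_i)$ is a copy of the division algebra $D_i$, and $e_i H^0(A^!)e_i$ acts on $D_i$ through its action by $\mathcal{D}(A)$-endomorphisms of the lift $\overline{\phi(P_i)}$; the point is that this action factors through a division ring (indeed through $D_i^{\op}$, by the lifting lemmas of \cite{nicolaskeller} identifying endomorphisms of the lift with endomorphisms of $\phi(P_i)$ on $H^0$), so $D_i$ is a simple $e_i H^0(A^!)e_i$-module, and therefore $H^0(S) \cong \bigoplus D_i$ is semisimple over $H^0(A^!) = \prod_i e_i H^0(A^!) e_i$ (a product because distinct $\overline{\phi(P_i)}$ have no maps between them on $H^0$, being non-isomorphic simples after the reduction).

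The main obstacle I anticipate is controlling the $H^0(A^!)$-action precisely enough: a priori $H^0(A^!)$ need not be semisimple, only semiperfect (as the surrounding text emphasizes), so one cannot simply invoke semisimplicity of the ring. The crux is therefore the identification, via Lemmas 5.4 and 5.5 of \cite{nicolaskeller}, of the action of $e_i H^0(A^!) e_i$ on the simple module $\phi(P_i)$ with the action of $\End_{H^0(A)}(\phi(P_i))$ — i.e.\ showing that the radical of $H^0(A^!)$ acts as zero on $H^0(\sigma_{\leq 0}A)$. Once that radical-vanishing is established the semisimplicity is immediate.
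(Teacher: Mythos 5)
Your closing observation does isolate the right mechanism---the action of $H^0(A^!)$ on $H^0(\sigma_{\leq 0}A)$ is computed by applying $H^0$ to endomorphisms of $S=\sigma_{\leq 0}A$, so it factors through $H^0(A)$-linear endomorphisms of $H^0(S)\cong H^0(A)$---and this is exactly what the paper's proof runs on. But the paper does it directly: functoriality of $H^0$ gives a ring map $H^0(A^!)^{\op}\cong\mathrm{End}_{\mathcal{D}(A)}(S)\to\mathrm{End}_{H^0(A)}(H^0(S))\cong H^0(A)$, one checks (using $\Hom_{\mathcal{D}(A)}(A,-)\simeq H^0(-)$ and the truncation map $A\to S$) that the $H^0(A^!)$-module structure on $H^0(S)$ is restriction of the regular $H^0(A)$-action along this map, and semisimplicity of $H^0(A)$ concludes. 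No decomposition into indecomposables, no passage through \Cref{divalg}, and no finiteness hypotheses are needed; note that \Cref{coconnlem} does not assume $H^0(A)$ finite dimensional, whereas your appeal to the proof of \Cref{simpleprojbij} does.

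Two steps of your route are genuine gaps as written. First, you begin by choosing a complete set of primitive orthogonal idempotents of $H^0(A^!)$; nothing available at this stage guarantees such a set exists---that is essentially semiperfectness of $H^0(A^!)$, i.e.\ \Cref{semiperf}, which comes \emph{after} this lemma and whose proof uses the very map constructed in it---so this step is circular (and also redundant: after reducing via \Cref{divalg}, the decomposition $S\simeq\bigoplus S_i$ is supplied directly by \cite[Lemma 5.5]{nicolaskeller}, with no primitivity needed). Second, the finish is too quick. The lifting lemmas do not ``identify'' $\mathrm{End}_{\mathcal{D}(A)}(\overline{\phi(P_i)})$ with $\mathrm{End}_{H^0(A)}(\phi(P_i))$: the comparison map is in general far from injective (e.g.\ for $A=k[\epsilon]/(\epsilon^2)$ with $|\epsilon|=1$ one has $H^0(A^!)\cong k\llbracket x\rrbracket$ mapping onto $k$), and conversely ``the radical acts by zero'' gives semisimplicity only once one knows $H^0(A^!)/\rad H^0(A^!)$ is semisimple, which is again \Cref{semiperf}. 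What your argument actually needs is that each $D_i$ is semisimple over the image of $e_iH^0(A^!)e_i$; the clean input is that the comparison map surjects onto $\mathrm{End}_{H^0(A)}(H^0(S))$ (this is what the lifting lemma of \cite{nicolaskeller} provides), together with the compatibility of the two actions on $H^0(S)$---precisely the check the paper carries out. Once phrased that way the blockwise reduction adds nothing and your argument collapses to the paper's; note also that your claim that the Morita reduction ``preserves the semisimplicity status'' silently uses that the equivalence of \Cref{divalg} only inflates multiplicities, so that $H^0(A^!)$ changes by a classical Morita equivalence matching up the relevant modules.
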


\begin{proof}
Set $S = \sigma_{\leq 0} A$. The functor $H^0$ induces a map of $k$-algebras
\[
H^0(A^!)^\mathrm{op} \cong \mathrm{End}_{\mathcal{D}(A)}(S) \to \mathrm{End}_{H^0(A)}(H^0(S)) \cong \mathrm{End}_{H^0(A)}(H^0(A)) \cong H^0(A)
\]
using the isomorphism $H^0(A) \cong H^0(S)$. Since $H^0(A)$ is a semisimple $k$-algebra, it is a semisimple $H^0(A^!)$-module, and moreover the isomorphism $H^0(A) \cong H^0(S)$ is $H^0(A)$-linear. It remains to check that this is an isomorphism of $H^0(A^!)$-modules where $H^0(A)$ is viewed as an $H^0(A^!)$ module via restriction and $H^0(S)$ is viewed as an $H^0(A^!)$ module using the fact that $H^0(A^!)^\mathrm{op}$ is the endomorphism ring of $S$ in $\mathcal{D}(A)$. To check this, note that the map $H^0(A^!)^\mathrm{op} \to H^0(A)$ can be identified with the map 
\[
H^0(A^!)^\mathrm{op} \cong \Hom_{\mathcal{D}(A)}(S,S) \xrightarrow{f^\ast} \Hom_{\mathcal{D}(A)}(A,S) \xrightarrow{f_{\ast}^{-1}} \Hom_{\mathcal{D}(A)}(A,A) \simeq H^0(A)
\]
where $f: A \to S$ and where $f_{\ast}$ agrees with the isomorphism $H^0(f)\colon H^0(A) \rightarrow H^0(S)$ across the identification $\hom_{\mathcal{D}(A)}(A,-)\simeq H^0(-)$. One can then check directly that the actions agree. 
\end{proof}

\begin{lem} \label{semiperf}
Let $A$ be a coconnective dg algebra such that $H^0(A)$ is a product of division algebras. Then $H^0(A^!)$ is semiperfect.
\end{lem}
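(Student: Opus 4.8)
The plan is to prove the equivalent statement that $\mathrm{End}_{\mathcal{D}(A)}(\sigma_{\leq 0}A) \cong H^0(A^!)^{\op}$ is semiperfect (a ring is semiperfect precisely when its opposite is). Write $H^0(A) \cong D_1 \times \dots \times D_n$ with each $D_i$ a division algebra; the simple $H^0(A)$-modules are then the regular modules $S_1, \dots, S_n$ of the factors, and each $\mathrm{End}_{H^0(A)}(S_i) \cong D_i^{\op}$ is a division ring. By \cite[Lemma 5.5]{nicolaskeller} there is an $A$-linear quasi-isomorphism $A \simeq A_1 \oplus \dots \oplus A_n$ in $\mathcal{D}(A)$ with $H^0(A_i) \cong S_i$. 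As $\mathcal{D}(A)^{w > 0}$ and $\mathcal{D}(A)^{w \leq 0}$ are closed under finite direct sums, summing the weight truncation triangles of the $A_i$ yields a weight truncation triangle for $A$, so by uniqueness up to isomorphism of weight truncations $\sigma_{\leq 0}A \cong X_1 \oplus \dots \oplus X_n$ where $X_i \coloneqq \sigma_{\leq 0}A_i$. Since $A$, hence each $A_i$, is coconnective, while $X_i \in \mathcal{D}(A)^{w \leq 0}$ and the truncation map $A_i \to X_i$ is an isomorphism on $H^0$, each $X_i$ has cohomology concentrated in degree $0$, where it is identified with $S_i$. Consequently $\mathrm{End}_{\mathcal{D}(A)}(\sigma_{\leq 0}A)$ carries a complete set of orthogonal idempotents, the projections onto the summands $X_i$, with corner rings $\mathrm{End}_{\mathcal{D}(A)}(X_i)$, and it remains to prove that each $\mathrm{End}_{\mathcal{D}(A)}(X_i)$ is local.

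To do this I would study the ring homomorphism $H^0 \colon \mathrm{End}_{\mathcal{D}(A)}(X_i) \to \mathrm{End}_{H^0(A)}(H^0 X_i) \cong D_i^{\op}$ and show it is surjective with kernel the Jacobson radical; then $\mathrm{End}_{\mathcal{D}(A)}(X_i)/\rad \cong D_i^{\op}$ is a division ring, so $\mathrm{End}_{\mathcal{D}(A)}(X_i)$ is local. For surjectivity I would lift a given $\phi \in \mathrm{End}_{H^0(A)}(S_i)$ to $\widetilde{\phi} \in \mathrm{End}_{\mathcal{D}(A)}(A_i)$ using the lifting lemma \cite[Lemma 5.4]{nicolaskeller}, then apply $\mathrm{Hom}_{\mathcal{D}(A)}(-, X_i)$ to the truncation triangle $\sigma_{>0}A_i \to A_i \xrightarrow{\pi} X_i \to$ and use $\mathrm{Hom}_{\mathcal{D}(A)}(\mathcal{D}(A)^{w > 0}, \mathcal{D}(A)^{w \leq 0}) = 0$ to see that $\pi^\ast \colon \mathrm{End}_{\mathcal{D}(A)}(X_i) \to \mathrm{Hom}_{\mathcal{D}(A)}(A_i, X_i)$ is surjective; thus $\pi \circ \widetilde{\phi} = h' \circ \pi$ for some $h' \in \mathrm{End}_{\mathcal{D}(A)}(X_i)$, and since $H^0(\pi)$ is an isomorphism one checks that $H^0(h')$ equals $\phi$ under this identification. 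For the kernel, I would use that $X_i$ has cohomology in a single degree, so a self-map $f$ of $X_i$ is an isomorphism as soon as $H^0(f)$ is: the cone of $f$ has cohomology only in degrees $-1$ and $0$, equal to $\ker H^0(f)$ and $\operatorname{coker} H^0(f)$. Hence if $H^0(f) = 0$ then $\mathrm{id} - gf$ is an isomorphism for every $g$, i.e.\ $f \in \rad \mathrm{End}_{\mathcal{D}(A)}(X_i)$; the reverse inclusion $\rad \subseteq \ker H^0$ holds because the quotient $D_i^{\op}$ is a division ring.

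The surjectivity step is the one I expect to be most delicate, as it combines the lifting lemma of Keller and Nicol\'as with the observation that every morphism from $A_i$ into the $w\leq 0$-object $X_i$ factors through the weight truncation $\pi$; the rest is formal once one knows, from the weight structure of \cite{nicolaskeller} together with coconnectivity of $A$, that $\sigma_{\leq 0}A_i$ has cohomology concentrated in degree $0$ (when $A$ is strictly coconnective one may take the $X_i$ to be brutal truncations and argue more concretely). As a consistency check, \Cref{simpleprojbij} shows that the number of corner rings produced this way is exactly the number of isomorphism classes of simple $H^0(A)$-modules.
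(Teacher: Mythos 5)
Your proof is correct and follows essentially the same route as the paper: decompose $A \simeq \bigoplus A_i$ via \cite[Lemma 5.5]{nicolaskeller}, deduce $\sigma_{\leq 0}A \simeq \bigoplus_i \sigma_{\leq 0}A_i$, take the projections onto the summands as the complete set of orthogonal idempotents, and prove each corner ring $\mathrm{End}_{\mathcal{D}(A)}(\sigma_{\leq 0}A_i)$ is local via the ring map induced by $H^0$. Two small remarks. First, your surjectivity step (lifting with \cite[Lemma 5.4]{nicolaskeller} and factoring through the weight truncation) is correct but not needed: since $\sigma_{\leq 0}A_i$ has cohomology concentrated in degree $0$, the map $\mathrm{End}_{\mathcal{D}(A)}(\sigma_{\leq 0}A_i) \to \mathrm{End}_{H^0(A)}(H^0\sigma_{\leq 0}A_i)$ reflects units, and because the target is a division ring its kernel is exactly the set of non-units; hence the non-units form an ideal and the ring is local, with no need to identify the image (this is the paper's shortcut, and it also makes the kernel-equals-radical discussion superfluous). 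Second, be careful with the phrase ``uniqueness up to isomorphism of weight truncations'': for general weight structures weight decompositions are not unique up to isomorphism (for instance $H[-1] \to 0 \to H$ is a weight decomposition of $0$ for any $H$ in the heart). The identification $\sigma_{\leq 0}A \simeq \bigoplus_i \sigma_{\leq 0}A_i$ does hold here, but it should be justified the way you treat the $X_i$ themselves: both objects lie in $\mathcal{D}(A)^{w\leq 0}$, have cohomology concentrated in degree $0$, and receive maps from $A$ inducing isomorphisms on $H^0$, so the orthogonality $\Hom(\mathcal{D}(A)^{w>0},\mathcal{D}(A)^{w\leq 0})=0$ produces a comparison map which is then a quasi-isomorphism, hence an isomorphism in $\mathcal{D}(A)$.
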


\begin{proof}
Suppose $H^0(A) = D_1 \times \dots \times D_n$ is a product of division algebras. By \cite[ Lemma 5.5]{nicolaskeller}, $A$ splits into a direct sum of indecomposables $A \simeq \bigoplus A_i$ such that $H^0(A_i) \simeq D_i$. It follows that $S \coloneqq \sigma _{\leq 0} A \simeq \bigoplus S_i$ where we put $S_i\coloneqq \sigma_{\leq 0}A_i$. Therefore $H^0(A^!)^\mathrm{op} \simeq \Hom_{\mathcal{D}(A)}(\bigoplus S_i, \bigoplus S_i)$. The maps $e_i: S_i \to S \to S_i$ clearly form a complete set of orthogonal idempotents so it remains to show that each $e_i H^0(A)^\mathrm{op} e_i = \Hom_{\mathcal{D}(A^!)}(S_i,S_i)$ is local. Consider the map of algebras from the proof of \Cref{coconnlem},
\[
H^0(A^!)^\mathrm{op} \to H^0(A)
\]
induced by taking $H^0$. Clearly it restricts to the subalgebras
\begin{displaymath}
\begin{aligned}
\Hom_{\mathcal{D}(A)}(S_i,S_i) \to \Hom_{H^0(A)}(H^0(S_i),H^0(S_i)) & \simeq \Hom_{H^0(A)}(H^0(A_i),H^0(A_i)) \\ & \simeq D_i
\end{aligned}
\end{displaymath}
We note that this map reflects units. Indeed if $f: S_i \to S_i$ is such that $H^0(f)$ is an isomorphism, then since the cohomology of $S_i$ only lives in degree zero, $f$ is a quasi-isomorphism. Since $D_i$ is a division algebra this implies that the kernel of the above map is exactly the set of non-units. Therefore the non-units form an ideal and $\Hom_{\mathcal{D}(A)}(S_i,S_i)$ is local. 
\end{proof}

\begin{prop} \label{simplesimplebij}
Let $A$ be a coconnective dg algebra with $H^0(A)$ a product of division algebras. Then there is an isomorphism $H^0(A^!)/\rad H^0(A^!) \simeq H^0(A)^\mathrm{op}$. Therefore the simple $H^0(A^!)$ modules are exactly the indecomposable summands of $H^0(\sigma_{\leq 0} A)$.
\end{prop}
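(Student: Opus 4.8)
The plan is to study the $k$-algebra homomorphism $\psi\colon H^0(A^!)^{\op}\to H^0(A)$ introduced in the proof of \Cref{coconnlem}. Writing $S\coloneqq\sigma_{\leq 0}A$ and $f\colon A\to S$ for the canonical truncation map (so that $H^0(f)$ is an isomorphism), $\psi$ is the composite $\mathrm{End}_{\mathcal{D}(A)}(S)\xrightarrow{f^\ast}\Hom_{\mathcal{D}(A)}(A,S)\xrightarrow{f_\ast^{-1}}\Hom_{\mathcal{D}(A)}(A,A)\cong H^0(A)$ from that proof. I will show that $\psi$ is surjective with kernel $\rad(H^0(A^!)^{\op})=(\rad H^0(A^!))^{\op}$; since the Jacobson radical is compatible with $(-)^{\op}$, this yields $H^0(A)\cong\big(H^0(A^!)/\rad H^0(A^!)\big)^{\op}$, equivalently $H^0(A^!)/\rad H^0(A^!)\cong H^0(A)^{\op}$, which is the first assertion.

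Surjectivity of $\psi$ is where the weight structure enters. As $f_\ast$ is an isomorphism (this is part of the proof of \Cref{coconnlem}), it suffices to see that $f^\ast\colon\mathrm{End}_{\mathcal{D}(A)}(S)\to\Hom_{\mathcal{D}(A)}(A,S)$ is surjective, i.e.\ that every morphism $A\to S$ in $\mathcal{D}(A)$ factors through $f$. Applying $\Hom_{\mathcal{D}(A)}(-,S)$ to the truncation triangle $\sigma_{>0}A\to A\xrightarrow{f}S\to\sigma_{>0}A[1]$ identifies the cokernel of $f^\ast$ with a subgroup of $\Hom_{\mathcal{D}(A)}(\sigma_{>0}A,S)$, which vanishes since $\sigma_{>0}A\in\mathcal{D}(A)^{w>0}$, $S=\sigma_{\leq 0}A\in\mathcal{D}(A)^{w\leq 0}$, and these subcategories are $\Hom$-orthogonal by the weight structure axioms.

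For the kernel I would use \Cref{semiperf} together with its proof. By \cite[Lemma 5.5]{nicolaskeller} we may write $A\simeq\bigoplus_{i=1}^{n}A_i$ with $H^0(A_i)\cong D_i$, where $H^0(A)=D_1\times\cdots\times D_n$; setting $S_i\coloneqq\sigma_{\leq 0}A_i$ gives $S\simeq\bigoplus_i S_i$, and the projections $e_i\in\mathrm{End}_{\mathcal{D}(A)}(S)=H^0(A^!)^{\op}$ form a complete set of orthogonal idempotents whose corners $e_iH^0(A^!)^{\op}e_i=\mathrm{End}_{\mathcal{D}(A)}(S_i)$ are local. Since $S_i\not\simeq S_j$ for $i\ne j$ (their $H^0$'s are non-isomorphic simple $H^0(A)$-modules), $H^0(A^!)$ is basic, so $\rad(H^0(A^!)^{\op})=\bigoplus_{i\ne j}e_iH^0(A^!)^{\op}e_j\oplus\bigoplus_i\rad\big(\mathrm{End}_{\mathcal{D}(A)}(S_i)\big)$. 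A direct check gives $\psi(e_i)=e_i$, so $\psi$ maps the off-diagonal summands into $e_iH^0(A)e_j=0$; and $\psi$ annihilates each $\rad\big(\mathrm{End}_{\mathcal{D}(A)}(S_i)\big)$ because, as shown in the proof of \Cref{semiperf}, the induced map $\mathrm{End}_{\mathcal{D}(A)}(S_i)\to D_i$ has kernel exactly its set of non-units. This gives $\rad(H^0(A^!)^{\op})\subseteq\ker\psi$. Conversely, for $r\in\ker\psi$ write $r=\sum_{i,j}e_ire_j$; since $H^0(A)$ is the product of the $D_i$, the identity $\psi(r)=0$ forces $\psi(e_ire_i)=0$ in each factor $D_i$, hence $e_ire_i\in\rad\big(\mathrm{End}_{\mathcal{D}(A)}(S_i)\big)$, while the off-diagonal components already lie in $\rad(H^0(A^!)^{\op})$; thus $r\in\rad(H^0(A^!)^{\op})$.

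Finally, for the statement about simple modules: by \Cref{coconnlem}, $H^0(\sigma_{\leq 0}A)=H^0(S)$ is a semisimple $H^0(A^!)$-module, so its indecomposable summands are simple. Under the identification $H^0(S)\cong H^0(A)$ of \Cref{coconnlem}, the $H^0(A^!)$-module structure on $H^0(S)$ is the restriction along $\psi$ of the regular $H^0(A)$-module; as $\psi$ is surjective, the $H^0(A^!)$-submodules of $H^0(S)$ coincide with its $H^0(A)$-submodules, so the decomposition of $H^0(S)$ into simple $H^0(A^!)$-modules is $\bigoplus_{i=1}^{n}D_i$, with the $D_i$ pairwise non-isomorphic even as $H^0(A^!)$-modules (an $H^0(A^!)$-linear isomorphism between two of them would be $H^0(A)$-linear since $\psi$ is onto). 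By the first part, $H^0(A^!)$ has exactly $n$ isomorphism classes of simple modules, namely those of $H^0(A)^{\op}=\prod_i D_i^{\op}$, so comparing counts shows that the simple $H^0(A^!)$-modules are precisely the $n$ indecomposable summands of $H^0(\sigma_{\leq 0}A)$. The only genuinely non-formal step here is the surjectivity of $\psi$; the remainder is bookkeeping with the Peirce decomposition of a basic semiperfect ring.
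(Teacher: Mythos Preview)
Your proof is correct, and it takes a genuinely different route from the paper's. Both arguments start from the algebra map $\psi\colon H^0(A^!)^{\op}\to H^0(A)$ of \Cref{coconnlem} and use its surjectivity (which you prove cleanly via the weight-structure orthogonality $\Hom(\sigma_{>0}A,S)=0$, while the paper simply asserts it). The divergence is in how you identify $\ker\psi$ with the radical. The paper observes that $\psi$ factors through the maximal semisimple quotient $H^0(A^!)^{\op}/J$, notes that any ring quotient of this semisimple ring is a product of some of its simple factors, and then invokes \Cref{simpleprojbij} to match the number of factors on both sides, forcing the induced map to be an isomorphism. You instead bypass \Cref{simpleprojbij} entirely: using that $H^0(A^!)^{\op}$ is basic semiperfect (from \Cref{semiperf} and the pairwise non-isomorphism of the $S_i$), you write down the Peirce decomposition of its radical and check directly that $\psi$ kills exactly those pieces, the off-diagonal blocks landing in $e_iH^0(A)e_j=0$ and the diagonal radicals being the kernels already identified in the proof of \Cref{semiperf}. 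Your approach is more hands-on and makes the argument for this proposition independent of \Cref{simpleprojbij}; the paper's approach is shorter once that theorem is available and highlights the structural reason (a count of simple factors) rather than an explicit kernel computation.
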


\begin{proof}
Let $J \coloneqq \rad H^0(A^!)$. Since $H^0(A^!)$ is semiperfect by \Cref{semiperf}, it follows that $H^0(A^!)/J$ is the maximal semisimple quotient of $H^0(A^!)$. Hence the map $H^0(A^!)^\mathrm{op} \twoheadrightarrow H^0(A)$ from the proof of \Cref{coconnlem} factors as a map $H^0(A^!)^\mathrm{op}/J \twoheadrightarrow H^0(A)$. Since $H^0(A^!)$ is semiperfect, $H^0(A^!)/ J$ is isomorphic to a product of $N$ matrix rings over division algebras. Here $N$ is the number of isomorphism classes of simples, which equals the number of isomorphism classes of indecomposable projectives. As it is semisimple, the only quotient rings of $H^0(A^!)/J$ are products of its connected components, and so $H^0(A)^{\op}$ must be some product of connected components of $H^0(A^!)/J$. However by \Cref{simpleprojbij}, the number of connected components of $H^0(A)^{\op}$ and of $H^0(A)/J$ are equal. It follows that the map $H^0(A^!)^\mathrm{op}/J \twoheadrightarrow H^0(A)$ is an isomorphism. Since $H^0(\sigma_{\leq 0} A) \simeq H^0(A)$ as $H^0(A^!)^{\mathrm{op}}$-modules the second claim also follows.
\end{proof}

The proof of \Cref{semiperf} also shows the following: 

\begin{cor} \label{cor:divimplieslocal}
If $A$ is a coconnective dg algebra with $H^0(A)$ a division algebra, then $H^0(A^!)$ is local.
\end{cor}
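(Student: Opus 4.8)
The plan is to extract this as the special case $n=1$ of the argument already carried out in the proof of \Cref{semiperf}. First I would note that the decomposition of $A$ supplied by \cite[Lemma 5.5]{nicolaskeller} is now trivial (a division algebra has no nontrivial idempotents), so that, writing $S \coloneqq \sigma_{\leq 0}A$, we simply have $H^0(A^!)^{\mathrm{op}} \cong \Hom_{\mathcal{D}(A)}(S,S)$, and it suffices to show that this ring is local.

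Next I would reuse the algebra homomorphism constructed in the proof of \Cref{coconnlem}, namely the map
\[
H^0(A^!)^{\mathrm{op}} \cong \Hom_{\mathcal{D}(A)}(S,S) \longrightarrow \Hom_{H^0(A)}(H^0(S),H^0(S)) \cong \Hom_{H^0(A)}(H^0(A),H^0(A)) \cong H^0(A)
\]
induced by applying $H^0$, where the middle identification uses the isomorphism $H^0(S) \cong H^0(A)$ coming from the weight-structure setup. Exactly as in the proof of \Cref{semiperf}, this map reflects units: if $f\colon S \to S$ is such that $H^0(f)$ is invertible then, since the cohomology of $S$ lives only in degree zero, $f$ is a quasi-isomorphism and hence invertible in $\mathcal{D}(A)$. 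Because $H^0(A)$ is a division algebra, every nonzero element of the target is a unit, so the kernel of the displayed map is precisely the set of non-units of $H^0(A^!)^{\mathrm{op}}$. The non-units therefore form an ideal, which is exactly the statement that $H^0(A^!)^{\mathrm{op}}$ — and hence $H^0(A^!)$ — is local.

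There is essentially no obstacle here, since this is a direct specialisation of an argument already in hand; the only points that deserve a word are the identification $H^0(S) \cong H^0(A)$ (which is part of the Keller–Nicolás weight-structure package and is also used in \Cref{coconnlem}) and the observation that $H^0(f)$ invertible forces $f$ to be a quasi-isomorphism, which holds only because $S$ has cohomology concentrated in a single degree. One can also phrase the whole statement as the remark that the single idempotent summand $\Hom_{\mathcal{D}(A)}(S_1,S_1)$ occurring in the proof of \Cref{semiperf} is now all of $H^0(A^!)^{\mathrm{op}}$.
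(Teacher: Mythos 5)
Your argument is correct and is precisely the paper's intended proof: the corollary is stated with the remark that the proof of \Cref{semiperf} already establishes it, namely by specialising to the case of a single division algebra, where the unit-reflecting map $H^0(A^!)^{\mathrm{op}} \cong \Hom_{\mathcal{D}(A)}(\sigma_{\leq 0}A,\sigma_{\leq 0}A) \to H^0(A)$ shows the non-units form an ideal. Your two flagged points (the identification $H^0(\sigma_{\leq 0}A)\cong H^0(A)$ and the fact that $H^0(f)$ invertible forces $f$ to be a quasi-isomorphism because the cohomology of $\sigma_{\leq 0}A$ is concentrated in degree zero) are exactly the ingredients the paper relies on, so nothing is missing.
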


If a commutative noetherian ring is complete at a maximal ideal, then it must be local. We conclude a similar result along these lines:

\begin{cor}
Let $B$ be a connective dg algebra augmented over $k$. If $B$ is derived complete at $k$, then $H^0(B)$ is local. 
\end{cor}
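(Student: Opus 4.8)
The plan is to reduce the statement for a connective dg algebra $B$ augmented over $k$ and derived complete at $k$ to the corresponding fact for $B^!_k$, using the Koszul-duality machinery already set up, and then exploit the results of the previous subsections on coconnective algebras. First I would observe that $k$ is a $\pvd$-generator for $B$ since $B$ is connective and $H^0(B)$ is a quotient of $B$ admitting $k$ as its (unique) simple — more precisely, one should replace $B$ by a suitable $B'$ with $H^0(B') = H^0(B)$ via \Cref{prop: Dfd generator connective case}, but actually the cleanest route is to pass directly to $A \coloneqq B^!_k = \R\hom_B(k,k)$. Since $B$ is connective and augmented, $A$ is coconnective, and $H^0(A) = \hom_{\mathcal{D}(B)}(k,k)$. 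Because $B$ is \emph{connective}, every self-map of $k$ in $\mathcal{D}(B)$ of degree $0$ is a scalar, so $H^0(A) \cong k$; in particular $H^0(A)$ is a division algebra (trivially), so the results of this subsection apply to $A$.

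The key step is then: by hypothesis $B$ is derived complete at $k$, i.e.\ the map $B \to B^{!!}_k = A^!_k$ is a quasi-isomorphism, where $A^!_k = A^!_{\sigma_{\leq 0}A}$ makes sense because $H^0(A) \cong k$ is semisimple. So $H^0(B) \cong H^0(A^!)$. Now apply \Cref{cor:divimplieslocal} to the coconnective algebra $A$ with $H^0(A) \cong k$ a division algebra: this gives that $H^0(A^!)$ is local. Combining, $H^0(B) \cong H^0(A^!)$ is local, which is exactly the claim.

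There is one point that needs care: I must make sure that the $k$-module appearing as $M$ in the derived double centraliser $B^{!!}_k$ really is (up to thick closure) the generator $\sigma_{\leq 0}A$ of $\mathcal{D}(A)$ under the Koszul-dual identification, so that $A^!_k$ in the sense of $A^!_{\sigma_{\leq 0}A}$ agrees with $B^{!!}_k$. Concretely, under the equivalence $\thick_B(k) \simeq \per(A)^{\op}$ (which sends $k \mapsto A$ and $B \mapsto k$ when $B \in \thick_B(k)$, and in general is fully faithful on $k$), the $B$-module $k$ corresponds to the $A$-module $k = \sigma_{\leq 0}A$, using that $A$ is coconnective with $H^0(A) \cong k$ so that its brutal/weight truncation $\sigma_{\leq 0}A$ has cohomology $k$ in degree $0$. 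This identification is routine but is the one place where connectivity of $B$ is genuinely used twice (once to get $H^0(A) \cong k$ and once implicitly to control the $t$-structure), so it is the main obstacle to a fully clean write-up.

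\begin{proof}
Put $A \coloneqq B^!_k = \R\hom_B(k,k)$; since $B$ is connective and augmented over $k$, $A$ is coconnective and $H^0(A) \cong \hom_{\mathcal{D}(B)}(k,k) \cong k$, as any degree-$0$ endomorphism of the augmentation module over a connective dg algebra is a scalar. In particular $H^0(A)$ is a division algebra, so \Cref{cor:divimplieslocal} applies and $H^0(A^!)$ is local, where $A^! = A^!_{\sigma_{\leq 0}A}$. Because $H^0(A) \cong k$, the weight truncation $\sigma_{\leq 0}A$ has cohomology concentrated in degree $0$, equal to $k$, and under the Koszul-duality identification it corresponds to the $B$-module $k$; hence $A^! \simeq (B^!_k)^!_k = B^{!!}_k$. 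By hypothesis $B$ is derived complete at $k$, so $B \to B^{!!}_k$ is a quasi-isomorphism, giving $H^0(B) \cong H^0(A^!)$, which is local.
\end{proof}
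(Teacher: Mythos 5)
Your proof is correct and takes essentially the same route as the paper: form the Koszul dual $A = B^!_k$, note it is coconnective with $H^0(A)\cong \Hom_{H^0(B)}(k,k)\cong k$, apply \Cref{cor:divimplieslocal}, and transfer locality back along the derived completeness quasi-isomorphism $B\simeq B^{!!}_k$. The only slip is the sentence asserting that the $B$-module $k$ corresponds to the $A$-module $k$ (it corresponds to $A$, while $B$ corresponds to $k$, as your own parenthesis states); the fact you actually need, and do justify, is that the tautological $A$-module $k$ appearing in $B^{!!}_k$ agrees with $\sigma_{\leq 0}A$ up to quasi-isomorphism since both have cohomology $k$ concentrated in degree $0$ --- an identification the paper leaves implicit.
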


\begin{proof}
If $B$ is connective then $B^! \coloneqq \R\hom_B(k,k)^\mathrm{op}$ is coconnective as follows from the standard $t$-structure. Furthermore 
\[
H^0(B^!) = \Hom_{\mathcal{D}(B)}(k,k)^\mathrm{op} \simeq \Hom_{H^0(B)}(k,k)^\mathrm{op} \simeq k
\]
 and so by \Cref{cor:divimplieslocal} we see that $H^0(B) \simeq H^0(B^{!!})$ is local. 
\end{proof}

\begin{rmk} The results of this section should be compared to the extensive literature relating simple-minded collections, silting objects, $t$-structures, and weight structures. See for example \cite{Al-Nofayee, nicolaskeller, KoenigYang, bonfert, fushimi}. A simple-minded collection in an algebraic triangulated category is the same information as a coconnective dg algebra with $H^0$ a product of division algebras, and a silting object is the same as a connective dg algebra.
\end{rmk}

\subsection{Reflexivity of coconnective dg algebras with semisimple $H^0$}

\begin{thm}\label{lem: coconnective semisimple H^0 = reflexive}
Let $A$ be a coconnective dg algebra with $H^0(A)$ a finite dimensional semisimple $k$-algebra. Then $A$ is derived complete if and only if it is reflexive. 
\end{thm}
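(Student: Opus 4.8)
The plan is to deduce the result from the `two-out-of-three' theorem \Cref{lem: two out of three}, applied with the $\pvd$-generator $M \coloneqq \sigma_{\leq 0}A$ provided by \Cref{prop: keller-nicolas locality}. Since for such an $A$ the phrase `derived complete' means precisely `derived complete at $M$', the asserted equivalence follows at once from \Cref{lem: two out of three} as soon as one establishes the single remaining input: that $M$ is \emph{always} a $\pvd$-generator for $A^{!}=A^{!}_{M}$. Indeed, granting that, the implications ``$(2)$ and $(1)\Rightarrow(3)$'' and ``$(2)$ and $(3)\Rightarrow(1)$'' of \Cref{lem: two out of three} show that reflexivity and derived completeness each imply the other. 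So the whole content of the proof is the claim that $M\in\pvd(A^{!})$ and $\thick_{A^{!}}(M)=\pvd(A^{!})$.

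To prove this I would argue as follows. By \cite[Lemma 6.2]{nicolaskeller} the dg algebra $A^{!}$ is connective, so by \Cref{prop: Dfd generator connective case} the category $\pvd(A^{!})$ is generated, as a thick subcategory, by the finite-dimensional simple $H^{0}(A^{!})$-modules. The underlying complex of $M$ is unchanged when we pass from its $A$-module to its $A^{!}$-module structure, so $H^{\ast}(M)$ is finite-dimensional and $M\in\pvd(A^{!})$. The key observation is that, $A$ being coconnective, $H^{i}(A)=0$ for $i<0$; since $\sigma_{\leq0}A$ has vanishing cohomology in positive degrees and agrees with $A$ in non-positive degrees, $H^{\ast}(M)$ is concentrated in degree $0$ and equals $H^{0}(A)$. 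As $A^{!}$ is connective, this forces $M$ into the heart of the standard $t$-structure on $\mathcal{D}(A^{!})$; that is, $M$ is quasi-isomorphic to the $H^{0}(A^{!})$-module $H^{0}(A)$ regarded via restriction along $A^{!}\to H^{0}(A^{!})$.

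It then remains to show that every simple $H^{0}(A^{!})$-module occurs as a direct summand of this module $H^{0}(A)$ (which is semisimple over $H^{0}(A^{!})$ by \Cref{coconnlem}, so its summands are exactly its simple constituents). After replacing $A$ by the dg algebra $A'$ of \Cref{divalg} --- a reduction that affects neither reflexivity nor derived completeness, both being invariant under the resulting Morita equivalence by the discussion in \Cref{dercomprmk} --- we may assume $H^{0}(A)$ is a product of division algebras, and then \Cref{simplesimplebij} identifies the indecomposable summands of $H^{0}(\sigma_{\leq0}A)\cong H^{0}(A)$ with precisely the simple $H^{0}(A^{!})$-modules. Hence $\thick_{A^{!}}(M)$ contains all these simples, so equals $\pvd(A^{!})$, completing the proof. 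The main obstacle is exactly this last identification: recognising that $M$, viewed over $A^{!}$, is a semisimple module that exhausts all of the simples of $H^{0}(A^{!})$; everything else is the formal bookkeeping of \Cref{lem: two out of three}, together with a small amount of care that the reduction via \Cref{divalg} really does preserve both sides of the stated equivalence.
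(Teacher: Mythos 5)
Your proposal is correct and follows essentially the same route as the paper: reduce via \Cref{divalg} to the case where $H^0(A)$ is a product of division algebras, take $M=\sigma_{\leq 0}A$ as the $\pvd$-generator from \Cref{prop: keller-nicolas locality}, and apply \Cref{lem: two out of three} after showing $M\simeq H^0(M)$ generates $\pvd(A^!)$ using connectivity of $A^!$, \Cref{prop: Dfd generator connective case}, and \Cref{simplesimplebij}. The only point handled more carefully in the paper is that the Morita equivalence of \Cref{divalg} really matches $\sigma_{\leq 0}A$ with $\sigma_{\leq 0}A'$, which is what makes derived completeness transfer; you gesture at this via \Cref{dercomprmk} but it is the same argument.
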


\begin{proof}\Cref{divalg} ensures the existence of a coconnective dg algebra $A'$, Morita equivalent to $A$, such that $H^0(A')$ is a product of division algebras. In particular $A$ is reflexive if and only if $A'$ is. Moreover, across the induced equivalence $\pvd(A)\simeq \pvd(A')$, the proof of \textit{op.~cit.}\ shows that $\sigma_{\leq 0}A$ corresponds to $\sigma_{\leq 0}A'$, and hence $A$ is derived complete if and only if $A'$ is. So replacing $A$ by $A'$ we may assume that $H^0(A)$ is a product of division algebras. Set $S: = \sigma_{\leq 0} A$. By \Cref{prop: keller-nicolas locality}, $S$ is a $\pvd$-generator for $A$. So by \Cref{lem: two out of three}, it is enough to show that $S$ is a $\pvd$-generator for $A^!$. By \cite[Lemma 6.2]{nicolaskeller}, $A^!$ is connective so by \Cref{prop: Dfd generator connective case} the simple $H^0(A^!)$-modules generate $\pvd(A^!)$. By \Cref{simplesimplebij}, the simple $H^0(A^!)$-modules are summands of $H^0(S)$. Therefore $S \simeq H^0(S) \in \mathcal{D}(A^!)$ is a $\pvd$-generator for $A^!$.
\end{proof}

\begin{cor}\label{propcoconncor}
If $A$ is a proper coconnective dg algebra with $H^0(A)$ semisimple, then $A$ is reflexive.
\end{cor}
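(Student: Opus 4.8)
The plan is to obtain this as an immediate specialisation of \Cref{lem: coconnective semisimple H^0 = reflexive}, using that properness automatically supplies derived completeness. First I would note that if $A$ is a proper coconnective dg algebra, then $H^\ast(A)$ is finite dimensional; in particular $H^0(A)$ is a finite dimensional $k$-algebra, which together with the semisimplicity hypothesis places us exactly in the hypotheses of \Cref{lem: coconnective semisimple H^0 = reflexive}. Next I would invoke \Cref{prop: keller-nicolas locality} to see that $\sigma_{\leq 0}A$ is a $\pvd$-generator for $A$, so that ``derived complete'' in the sense of this subsection means derived complete at $\sigma_{\leq 0}A$. Since $A$ is proper we have $A \in \pvd(A) = \thick_A(\sigma_{\leq 0}A)$, so \Cref{proper is derived complete} (equivalently \Cref{thicklem}) gives that $A$ is derived complete with respect to $\sigma_{\leq 0}A$. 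Feeding this into \Cref{lem: coconnective semisimple H^0 = reflexive} converts derived completeness into reflexivity, which is the claim.

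In effect there is no genuine obstacle remaining: all the work has already been carried out in \Cref{lem: coconnective semisimple H^0 = reflexive} (and, upstream of it, in the weight-structure machinery of Keller--Nicol\'as and the two-out-of-three theorem \Cref{lem: two out of three}), and the corollary is just the observation that in the proper case condition ``derived complete'' is free. The only small point worth making explicit in the write-up is the automatic finite-dimensionality of $H^0(A)$, since the cited theorem is stated for \emph{finite dimensional} semisimple $H^0$.
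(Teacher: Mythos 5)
Your proof is correct and follows exactly the paper's route: the paper's own proof is the one-line combination of \Cref{proper is derived complete} with \Cref{lem: coconnective semisimple H^0 = reflexive}, which you have simply spelled out, including the (worthwhile) remark that properness makes $H^0(A)$ finite dimensional so the theorem's hypotheses apply.
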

\begin{proof}
Follows from \Cref{proper is derived complete} and \Cref{lem: coconnective semisimple H^0 = reflexive}.
\end{proof}

\begin{defn}
    Say that a dg algebra $A$ is \textbf{locally proper} if each $H^i(A)$ is a finite dimensional vector space.
\end{defn}

\begin{prop}\label{FushimiCoConn}
    Let $A$ be a locally proper coconnective dg algebra with $H^0(A)$ semisimple. If $H^0(A^!)$ is finite dimensional, then $A$ is reflexive.
\end{prop}
\begin{proof}
    $A$ is derived complete by \cite[Theorem 4.17]{Fus24}, and hence $A$ is reflexive by an application of \Cref{lem: coconnective semisimple H^0 = reflexive}.
\end{proof}
\begin{rmk}
    In \cite{Fus24}, a locally proper dg algebra is called \textbf{positive} if it is connective and has semisimple $H^0$.
\end{rmk}


\begin{cor}\label{smoothcoconncor}
Let $A$ be a smooth coconnective locally proper dg algebra such that $H^0(A)$ is semisimple. Then $A$ is reflexive.
\end{cor}

\begin{proof}
By \Cref{FushimiCoConn}, it suffices to show that $H^0(A^!)\cong \Hom_{\mathcal{D}(A)}(S,S)$ is a finite dimensional algebra. Since $A$ is smooth, we have an inclusion $\pvd(A) \subseteq \per(A)$ by e.g.\ \cite[Lemma 3.8]{KS}. But since $A$ is locally proper, the triangulated category $\mathcal{D}^{\perf}(A)$ is hom-finite, i.e.\ the space of morphisms between any two objects is finite dimensional. 
\end{proof}

\begin{rmk}\label{finiteglobdimcoconncor}
When $k$ is not perfect, smoothness can be ill-behaved and a more natural replacement is \textbf{hfd-closure}: namely, a dg algebra $A$ is hfd-closed when $\pvd(A) \subseteq \per(A)$. Clearly the proof of \Cref{smoothcoconncor} still applies in this greater generality.
\end{rmk}

\begin{rmk}\label{MYHrmk}
    In the situation of \Cref{smoothcoconncor}, we obtain an isomorphism of derived Picard groups $\mathrm{DPic}(A)\cong \mathrm{DPic}(A^!)$. This generalises the main result of \cite{MaoYangHe}, which assumes that $H^0(A)\cong k$, and in addition that $A^!$ is concentrated in degree zero.
\end{rmk}

\begin{cor}\label{thm: reflexivity via KD for coconn. algs}
 Let $A$ be a locally proper coconnective dg algebra with $H^0(A)$ semisimple. If $H^1(A) \cong 0$ then $A$ is reflexive.
\end{cor}

\begin{proof}
Again, by \Cref{FushimiCoConn} it suffices to show that $H^0(A^!)$ is finite dimensional. The truncation triangles of the co-t-structure for $A$ give an exact sequence
\[
\Hom_{\mathcal{D}(A)}( \sigma_{>0} A,\Sigma^{-1} S) \to \Hom_{\mathcal{D}(A)}(S,S) \to  \Hom_{\mathcal{D}(A)}(A,S) \to \Hom_{\mathcal{D}(A)}(\sigma_{>0} A,S)
\]
Since $\sigma_{>0} A \in \mathcal{D}(A)^{w > 0}$ and $S \in \mathcal{D}(A)^{w \leq 0}$ the last term vanishes. As $H^1(A) \cong 0$, we have $H^1(\sigma_{>0} A) \cong 0$ and so $\sigma_{>0} A \in \mathcal{D}(A)^{w > 1}$. Since $\Sigma^{-1} S \in \mathcal{D}(A)^{w \leq 1}$ the first term vanishes. Therefore $H^0(A^!) \simeq H^0(S) \simeq H^0(A)$ is finite dimensional, as required.
\end{proof}

\begin{ex}\label{ex: SMCs in general} Let $\mathcal{T}$ be a pretriangulated dg category whose underlying triangulated category is hom-finite and admits a simple-minded collection in the sense of \cite{KoenigYang}.

\begin{enumerate}
\item  By \Cref{smoothcoconncor}, if $\mathcal{T}$ is smooth then it is reflexive. 

\item By \Cref{propcoconncor}, if $\mathcal{T}$ is proper, then it is reflexive. For example, this holds if $\mathcal{T}$ is $\thick(S_1,\dots, S_n) \subseteq \mathcal{D}(\mathcal{A})$ where the $S_i$ are simple objects of finite projective or injective dimension in a hom-finite abelian category $\mathcal{A}$.  

\item Recall that a simple-minded collection $\{S_1,\ldots,S_n\}$ is \textbf{rigid} if  $\mathrm{Ext}^1_\mathcal{T}(S_i,S_j)$ vanishes for all $i,j$. By \Cref{thm: reflexivity via KD for coconn. algs}, if $\mathcal{T}$ admits a rigid simple-minded collection, then it is reflexive.
\end{enumerate}
\end{ex}

\begin{ex}
Let $A$ be a finite dimensional algebra which admits a grading with $A/\rad(A)$ in degree zero and $\rad A$ in positive degrees. Then $A$ can be viewed as a formal coconnective dg algebra with semisimple $H^0$, which is hence reflexive. This includes exterior algebras, truncated polynomial rings, and many algebras given by path algebras of quivers with relations.
\end{ex}

\begin{ex}
Let $A$ be a connected graded $k$-algebra ie. $A^i = 0$ for $i < 0$, $A^0 = k$ and $A^i$ is finite dimensional for all $i$. 
\begin{enumerate}
\item If $A^1 = 0$, then by \Cref{thm: reflexivity via KD for coconn. algs}, $A$ is reflexive when viewed as a dg algebra with zero differential. This includes for example the case when $A$ is concentrated in even degrees.

\item If the underlying algebra of $A$ is right Noetherian and has finite global dimension, then when $A$ is viewed as a dg algebra with zero differential it is hfd-closed. Indeed, the above assumptions imply that every finite dimensional graded right $A$-module admits a finite resolution by finitely generated projective right $A$-modules. The claim then follows by lifting these resolutions from cohomology, as in \cite[Section 3]{Kel94}. Hence $A$ is reflexive by \Cref{smoothcoconncor} and \Cref{finiteglobdimcoconncor}.
\end{enumerate}
\end{ex}

\subsection{Reflexivity for connective dg algebras}

\begin{thm}\label{connectiveDC}
Let $A$ be a connective dg algebra such that $H^0(A)$ is finite dimensional. Then $A$ is reflexive if and only if it is derived complete at $H^0(A)/\rad H^0(A)$.
\end{thm}
\begin{proof}For brevity put $R:=H^0(A)/\rad H^0(A)$, so that $R$ is a finite dimensional semisimple $k$-algebra. By \Cref{congen}, $R$ is a $\pvd$-generator for $A$. We wish to apply \Cref{lem: two out of three}, for which it will suffice to show that $R$ is a $\pvd$-generator for $A^!:= \R\hom_A(R,R)^{\op}$. The standard $t$-structure implies that $A^!$ is coconnective, and furthermore we have isomorphisms
\[
H^0(A^!) \cong \Hom_{\mathcal{D}(A)}(R,R)^{\op} \cong \Hom_{H^0(A)}(R,R)^{\op}  = \Hom_R(R,R)^{\op} \cong R^{\op}
\]
which is a semisimple finite dimensional $k$-algebra. Since $\sigma_{\leq 0} A^!$ is a $\pvd$-generator for $A^!$ by \Cref{prop: keller-nicolas locality}, it is enough to show that we have an $A^!$-linear quasi-isomorphism $\sigma_{\leq 0} A^! \simeq R$. But we already have an $H^0(A^!)$-linear isomorphism $H^0(\sigma_{\leq 0} A^!) \cong R$, and by a similar argument to the proof of \Cref{simpleprojbij}, this can be lifted to a quasi-isomorphism of $A^!$-modules $\sigma_{\leq 0}A^! \simeq R$, as required. 
\end{proof}

\begin{rmk}
Note that \Cref{connectiveDC} also holds under the weaker assumption that $H^0(A)$ is semiperfect and $H^0(A)/\rad H^0(A)$ is finite dimensional.
\end{rmk}

\begin{cor} \label{thm: reflexivity via KD for conn. algs}
Let $A$ be a locally proper connective dg algebra. Then $A$ is reflexive.
\end{cor}

\begin{proof}
$A$ is derived complete by \cite[Theorem 4.17]{Fus24}, and hence reflexive by \Cref{connectiveDC}.
\end{proof}

\begin{ex}\label{siltingcor}
If $\mathcal{T}$ is a pretriangulated dg category whose underlying triangulated category is hom-finite and admits a silting object then $\mathcal{T}$ is reflexive. 
\end{ex}

\begin{ex}[Relative singularity categories]\label{ex: rel sing cats}
    Let $R$ be a commutative Gorenstein algebra over an algebraically closed field of characteristic zero. Suppose that $A=\mathrm{End}_R(R\oplus M)$ is a noncommutative resolution of $R$ in the sense of \cite{singcats}, with $M$ a basic MCM $R$-module. Let $e=\mathrm{id}_R \in A$ be the obvious idempotent, so that $A/AeA$ is the stable endomorphism algebra $\underline{\mathrm{End}}_R(M)$. Since $M$ was basic, $A/AeA$ is a nilpotent extension of a finite dimensional $k$-algebra of the form $k\times\cdots \times k$. Then \cite{singcats} gives a Morita equivalence between the derived quotient $A/^\mathbb{L}AeA$ and the relative singularity category $\Delta_R(A)\coloneqq \per(A) / \thick(eA)$. When $R$ is a complete local isolated hypersurface singularity, then the singularity category of $R$ is hom-finite and hence $A/^\mathbb{L}AeA$ is locally proper by \cite[Theorem 6.4.6]{singcats} (cf.\ the proof of \cite[Proposition 8.1.4]{singcats}). In particular, $A/^\mathbb{L}AeA$ is reflexive by \Cref{thm: reflexivity via KD for conn. algs} and hence the dg category $ \Delta_R(A)$ is also reflexive.
\end{ex}

\begin{rmk}
    \Cref{thm: reflexivity via KD for conn. algs} can be interpreted as the Koszul dual of \Cref{FushimiCoConn}, as per \cite[Theorem 4.3]{Fus24}.
\end{rmk}

\section{Commutative rings}

In this section we show that a commutative noetherian $k$-algebra is reflexive if and only if it is a finite product of complete local $k$-algebras, each with residue field finite over $k$. If $R$ is a commutative $k$-algebra and $\mathfrak{m}$ a maximal ideal, we let $k(\mathfrak{m}) \coloneqq R/\mathfrak{m}$ denote the residue field of $R$ at $\mathfrak{m}$. We let $\mathrm{Fin}(R) \subseteq \spec(R)$ denote the set of those maximal ideals $\mathfrak{m} \subseteq R$ such that $k(\mathfrak{m})$ is a finite extension of $k$.
\begin{ex}\label{fintypeex}
    When $R$ is finite type over $k$, then $\mathrm{Fin}(R)$ is simply $\mathrm{MaxSpec}(R)$, the set of maximal ideals of $R$.
\end{ex}

\begin{lem} \label{lemmafdsplit}
Let $R$ be a commutative $k$-algebra. There is a quasi-equivalence 
\[
\pvd(R) \simeq \bigoplus_{\mathfrak{m}\in \mathrm{Fin}(R)}  \thick_R(k(\mathfrak{m})) 
\]
where the right hand side denotes the orthogonal sum of triangulated categories.
\end{lem}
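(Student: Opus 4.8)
The plan is to show that $\pvd(R)$ decomposes as an orthogonal sum indexed by $\mathrm{Fin}(R)$ by first identifying which objects of $\pvd(R)$ are "supported" at a single maximal ideal, and then proving that every object of $\pvd(R)$ is built from such objects. First I would observe that any $X\in\pvd(R)$ has total cohomology $H^*(X)$ a finite-dimensional $k$-vector space, and in particular a finitely generated $R$-module, so its support $\mathrm{Supp}_R(H^*(X))$ is a finite set of maximal ideals, each of which lies in $\mathrm{Fin}(R)$: indeed if $\mathfrak{m}\in\mathrm{Supp}_R(H^*(X))$ then $R/\mathfrak{m}$ embeds as a subquotient of the finite-dimensional $k$-module $H^*(X)$, forcing $k(\mathfrak{m})$ to be finite over $k$. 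This already shows that $\pvd(R)$ is supported set-theoretically on $\mathrm{Fin}(R)$.

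Next I would establish the orthogonality: for $\mathfrak{m}\neq\mathfrak{n}$ in $\mathrm{Fin}(R)$, any two objects $X\in\thick_R(k(\mathfrak{m}))$ and $Y\in\thick_R(k(\mathfrak{n}))$ satisfy $\R\hom_R(X,Y)\simeq 0$. It suffices to check this on generators, i.e.\ that $\R\hom_R(k(\mathfrak{m}),k(\mathfrak{n}))\simeq 0$; this is standard since $k(\mathfrak{m})$ and $k(\mathfrak{n})$ have disjoint support (one can localise at $\mathfrak{m}$ to see that $\R\hom_R(k(\mathfrak{m}),k(\mathfrak{n}))\simeq \R\hom_{R_\mathfrak{m}}(k(\mathfrak{m}),k(\mathfrak{n})_\mathfrak{m})$ and $k(\mathfrak{n})_\mathfrak{m}=0$). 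Hence the subcategories $\thick_R(k(\mathfrak{m}))$, as $\mathfrak{m}$ ranges over $\mathrm{Fin}(R)$, are pairwise orthogonal in $\mathcal{D}(R)$, so they assemble into an orthogonal sum which includes into $\pvd(R)$.

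The remaining task — which I expect to be the main obstacle — is to prove this inclusion is essentially surjective, i.e.\ that every $X\in\pvd(R)$ actually lies in the sum of the $\thick_R(k(\mathfrak{m}))$. The natural approach is induction on the length of $H^*(X)$ as a graded $R$-module (finite since $H^*(X)$ is finite-dimensional over $k$). If $H^*(X)=0$ then $X\simeq 0$; otherwise pick a maximal ideal $\mathfrak{m}$ in the support and a nonzero map $k(\mathfrak{m})[j]\to X$ or $X\to k(\mathfrak{m})[j]$ realising a simple sub/quotient of the top or bottom nonvanishing cohomology, take the cone, and argue the cone has strictly smaller length. One has to be slightly careful that the cone remains in $\pvd(R)$ (clear, since $\pvd(R)$ is triangulated) and that the length genuinely drops; organising the induction via truncation of $X$ with respect to the standard $t$-structure on $\mathcal{D}(R)$ and then a dévissage of each $H^i(X)$ through its composition series makes this transparent, giving $X\in\thick_R(\bigoplus_{\mathfrak{m}\in\mathrm{Fin}(R)}k(\mathfrak{m}))$. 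Combined with the orthogonality this yields $X$ decomposing as a finite direct sum of objects, one in each $\thick_R(k(\mathfrak{m}))$, completing the proof. A subtlety worth flagging is that $R$ is only assumed commutative here, not noetherian — but finite-dimensionality of $H^*(X)$ over $k$ makes $H^*(X)$ automatically of finite length over $R$, so noetherianity is not needed for this lemma.
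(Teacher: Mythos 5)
Your proposal is correct and follows essentially the same route as the paper: dévissage of $X\in\pvd(R)$ via the standard $t$-structure and composition series of its finite-dimensional (hence finite-length) cohomology reduces everything to residue fields $k(\mathfrak{m})$ with $\mathfrak{m}\in\mathrm{Fin}(R)$, and orthogonality of the pieces is deduced from disjoint supports, exactly as in the paper's proof. The only small caveat is that your localisation step ($\R\hom_R(k(\mathfrak{m}),k(\mathfrak{n}))\simeq \R\hom_{R_\mathfrak{m}}(k(\mathfrak{m}),k(\mathfrak{n})_\mathfrak{m})$) is delicate over a non-noetherian $R$, where $k(\mathfrak{m})$ need not be finitely presented; the vanishing is immediate anyway since each $\mathrm{Ext}^i_R(k(\mathfrak{m}),k(\mathfrak{n}))$ is annihilated by both $\mathfrak{m}$ and $\mathfrak{n}$, hence by $\mathfrak{m}+\mathfrak{n}=R$ (the paper's own localisation justification is loose in the same way).
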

\begin{proof}
We first claim that $\pvd(R)= \thick_R\{k(\mathfrak{m})\colon \mathfrak{m} \in \mathrm{Fin}(R)\}$.
If $k(\mathfrak{m})$ is a finite extension of $k$, then it is certainly contained in $\pvd(R)$, and so the right hand side is contained in the left. On the other hand, if $M \in \pvd(R)$, then $M \in \thick_R( H^\ast(M))$ by an induction argument using the standard $t$-structure, and so it is enough to show that every finite dimensional $R$-module is contained in the right hand side. Every finite dimensional $R$-module $M$ is finite length, and so admits a finite filtration 
\[
0 = M_0 \subseteq M_1 \subseteq \dots \subseteq M_n = M
\]
with each $M_i/M_{i-1} \simeq k(\mathfrak{m}_i)$ for some maximal ideals $\mathfrak{m}_i$. Since $M$ is finite dimensional over $k$, each of the submodules $M_i$ are finite dimensional over $k$, and hence each $\mathfrak{m}_i$ is contained in $\mathrm{Fin}(R)$. Hence $M$ is contained in $\thick_R\{k(\mathfrak{m})\colon \mathfrak{m} \in \mathrm{Fin}(R)\}$ as desired. Finally we just need to show that $\thick_R\{k(\mathfrak{m})\colon \mathfrak{m} \in \mathrm{Fin}(R)\}$ splits as the orthogonal sum $\bigoplus_{\mathfrak{m}\in \mathrm{Fin}(R)}  \thick_R(k(\mathfrak{m}))$. This follows since for all distinct $\mathfrak{m},\mathfrak{m}' \in \mathrm{Fin}(R)$, we have  $\Ext_R^i(k(\mathfrak{m}),k(\mathfrak{m}'))\cong 0$ for all $i$ since $k(\mathfrak{m})$ and $k(\mathfrak{m'})$ have disjoint support.
\end{proof}

\begin{lem} \label{lemmareducetolocal}
Let $R$ be a non-zero reflexive connected commutative $k$-algebra. Then exactly one of the residue fields of $R$ is a finite extension of $k$.
\end{lem}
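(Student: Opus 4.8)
The plan is to combine the structural decomposition of $\pvd(R)$ from \Cref{lemmafdsplit} with the behaviour of $\pvd$ under orthogonal sums and with the reflexivity hypothesis. First I would observe that $R$ non-zero implies $\pvd(R) \neq 0$, so by \Cref{lemmafdsplit} the set $\mathrm{Fin}(R)$ is non-empty; we must rule out the possibility that it has two or more elements. Suppose for contradiction that $\mathfrak{m}_1 \neq \mathfrak{m}_2$ both lie in $\mathrm{Fin}(R)$. Then \Cref{lemmafdsplit} gives a non-trivial orthogonal decomposition $\pvd(R) \simeq \thick_R(k(\mathfrak{m}_1)) \oplus \mathcal{B}$ where $\mathcal{B}$ collects the remaining summands (and is itself non-zero).

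The key step is then to transport this decomposition through the reflexivity isomorphism. Since $R$ is reflexive, $\per(R) \simeq \pvd(\pvd(R)^{\op})$ up to Morita equivalence (via $\ev_R$), and $\pvd$ sends an orthogonal sum of dg categories to a product — equivalently, $\pvd(\mathcal{C} \oplus \mathcal{D})^{\op} \simeq \pvd(\mathcal{C}^{\op}) \times \pvd(\mathcal{D}^{\op})$, because a perfectly valued module over $\mathcal{C}\oplus\mathcal{D}$ restricts to perfectly valued modules on each factor and there are no morphisms between the factors. Hence $\per(R)$ would acquire a non-trivial semiorthogonal (in fact orthogonal, hence product) decomposition $\per(R) \simeq \mathcal{U}_1 \times \mathcal{U}_2$ with both factors non-zero: indeed $\mathcal{U}_1 = \pvd(\thick_R(k(\mathfrak{m}_1))^{\op})$ is non-zero since $\thick_R(k(\mathfrak{m}_1))$ is a non-zero proper dg category and hence semireflexive, so $\ev$ is at least quasi-fully-faithful and the target is non-zero. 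The final step is to derive a contradiction from the fact that $R$ is a connected commutative ring: the unit object of $\per(R)$, namely $R$ itself, is indecomposable (as $\operatorname{End}(R) = R$ has no non-trivial idempotents), so $\per(R)$ cannot split as a product of two non-zero triangulated categories, because in any such product the unit decomposes as a pair of units.

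The main obstacle I anticipate is making the passage "$\pvd$ of an orthogonal sum is a product, and this interacts correctly with the double-$\pvd$ and with $\ev_R$" fully rigorous at the level of dg categories rather than just triangulated categories — one needs that orthogonal sums are respected by the Morita-theoretic constructions and that a non-zero proper summand really does contribute a non-zero factor to $\pvd(\pvd(R)^{\op})$. The semireflexivity of proper dg categories (\Cref{refexamples}(9)) is exactly what guarantees the latter: applied to $\thick_R(k(\mathfrak{m}_1))$ it shows the relevant restricted evaluation functor is quasi-fully-faithful, so its target cannot vanish. A cleaner alternative, which I would pursue if the dg bookkeeping becomes unwieldy, is to argue entirely on the level of compact generators: reflexivity forces $\per(R)$ to be recovered as $\pvd(\pvd(R)^{\op})$, and an orthogonal decomposition of $\pvd(R)$ forces a compatible orthogonal decomposition of this double dual; then one simply notes that a connected commutative ring has no non-trivial orthogonal decomposition of its perfect derived category, since such a decomposition would yield a non-trivial idempotent in $H^0(R) = R$.
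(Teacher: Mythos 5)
Your overall strategy is the paper's: use \Cref{lemmafdsplit} to get an orthogonal decomposition of $\pvd(R)$, pass it through $\pvd(-)$ and the reflexivity equivalence $\per(R)\simeq \pvd(\pvd(R)^{\op})$, and contradict connectedness because a non-trivial orthogonal decomposition of $\per(R)$ produces a non-trivial idempotent in $R$. However, two of your justifications are wrong as stated. First, ``$R$ non-zero implies $\pvd(R)\neq 0$'' is false (take $R=k(t)$, or any $R$ with no residue field finite over $k$); the correct way to see $\mathrm{Fin}(R)\neq\emptyset$ is to use reflexivity itself: if $\mathrm{Fin}(R)=\emptyset$ then $\pvd(R)\simeq 0$ by \Cref{lemmafdsplit}, so $\pvd(\pvd(R)^{\op})\simeq 0$, which cannot be Morita equivalent to the non-zero category $\per(R)$. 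This is exactly how the paper handles the ``at least one'' direction.

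Second, your argument that the factor $\pvd\bigl(\thick_R(k(\mathfrak{m}_1))^{\op}\bigr)$ is non-zero relies on $\thick_R(k(\mathfrak{m}_1))$ being a \emph{proper} dg category, and that is false in general: properness requires the Hom complexes to have finite-dimensional \emph{total} cohomology, whereas $\mathrm{Ext}^\ast_R(k(\mathfrak{m}),k(\mathfrak{m}))$ is infinite-dimensional whenever $R_{\mathfrak{m}}$ is non-regular (already for $R=k[x]/(x^2)$ it is $k[t]$), and in the non-noetherian case it can even fail to be finite-dimensional in single degrees; note the lemma has no noetherian or regularity hypothesis. So the ``proper, hence semireflexive, hence $\ev$ faithful'' route to non-vanishing does not go through. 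The fix is cheap and is what the paper does: identify $\pvd(R)^{\op}\simeq\bigoplus_{\mathfrak{m}\in\mathrm{Fin}(R)}\per\bigl(\R\mathrm{End}_R(k(\mathfrak{m}))\bigr)$ and observe that each $\pvd\bigl(\R\mathrm{End}_R(k(\mathfrak{m}))\bigr)$ is non-zero because it contains $k(\mathfrak{m})$ (equivalently, the underlying-complex module $M\mapsto M$ is a non-zero perfectly valued module on $\thick_R(k(\mathfrak{m}))^{\op}$). With these two repairs your argument coincides with the paper's proof.
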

\begin{proof}
Using the splitting of \Cref{lemmafdsplit} we obtain a quasi-equivalence 
\[
\pvd(R) \simeq \bigoplus_{\mathfrak{m}\in \mathrm{Fin}(R)} \mathcal{D}^{\perf}(\R\mathrm{End}_R(k(\mathfrak{m}))).
\]
If $\mathrm{Fin}(R)$ has more than two elements, then $\pvd(R)^\mathrm{op} \simeq \pvd(R)$ admits a non-trivial orthogonal decomposition, cf.~\Cref{def: semiorthogonal decomposition}. As in \cite[Lemma 3.7]{KS} it follows that $\pvd(\pvd(R)^\mathrm{op})$ admits an orthogonal decomposition. It is non-trivial since each $\pvd(\R\mathrm{End}_R(k(\mathfrak{m})))$ is nonzero; for example it contains $k(\mathfrak{m})$. Since $R$ was reflexive by assumption, it follows that $\mathcal{D}^{\perf}(R) \simeq \pvd(\pvd(R)^\op)$ admits a non-trivial orthogonal decomposition. It follows that $R$ is a decomposable $R$-module, and hence $R$ must be disconnected, which is a contradiction. If none of the residue fields of $R$ are finite extensions of $k$, then $\pvd(R)  \simeq 0$ and $R$ cannot be reflexive.
\end{proof}

\begin{thm} \label{noeththm} Let $R$ be a commutative noetherian $k$-algebra. Then $R$ is reflexive if and only if it is a finite product of complete local $k$-algebras whose residue fields are finite extensions of $k$.
\end{thm}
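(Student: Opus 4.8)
The plan is to prove both directions, leaning on the preliminary results already established. First I would dispatch the easy direction. Suppose $R$ is a finite product of complete local $k$-algebras $R_i$ with residue field $k(\mathfrak{m}_i)$ finite over $k$. Since $\per(R) \simeq \bigoplus_i \per(R_i)$ and reflexivity is compatible with finite orthogonal sums (this is how \Cref{lemmareducetolocal} used \cite[Lemma 3.7]{KS}; the same bookkeeping shows reflexivity of each factor implies reflexivity of the product), it suffices to treat a single complete local noetherian $k$-algebra $(R,\mathfrak{m},K)$ with $[K:k] < \infty$. By \Cref{lemmafdsplit}, $\pvd(R) = \thick_R(K)$, so $K$ is a $\pvd$-generator. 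By \Cref{dercomprmk}(1) (the result of \cite[Proposition 4.20]{DGI}), the derived completion map $R \to R^{!!}_K$ is the $\mathfrak{m}$-adic completion map, which is a quasi-isomorphism precisely because $R$ is already complete. So $R$ is derived complete at the $\pvd$-generator $K$, and by \Cref{lem: two out of three} it remains only to check condition (2): that $K$ is a $\pvd$-generator for $R^!_K = \R\mathrm{End}_R(K)^\op$. Since $R$ is noetherian local, $R^!_K$ is connective with $H^0(R^!_K) \cong \End_R(K)^\op$, a finite-dimensional (division, since $K$ is a field) $k$-algebra; by \Cref{congen}, its maximal semisimple quotient — which here is $H^0(R^!_K)$ itself, i.e. corresponds to $K$ — is a $\pvd$-generator. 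A small argument identifying the $R^!_K$-module $\sigma_{\leq 0}$-piece with $K$ (as in the proofs of \Cref{simpleprojbij} and \Cref{connectiveDC}) closes this direction.

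For the converse, assume $\per(R)$ is reflexive. First I would reduce to the connected case: a commutative noetherian ring decomposes as a finite product of connected rings $R = \prod R_i$, reflexivity passes to each orthogonal summand $\per(R_i)$ (again by the argument of \cite[Lemma 3.7]{KS}, since an orthogonal summand of a reflexive category is reflexive), and if each $R_i$ is a complete local $k$-algebra with residue field finite over $k$ then so is the product up to reindexing. So assume $R$ is connected, nonzero, noetherian, and reflexive. By \Cref{lemmareducetolocal}, $R$ has exactly one maximal ideal $\mathfrak{m}$ with residue field $K := k(\mathfrak{m})$ finite over $k$. I must upgrade this to: $R$ is local (every maximal ideal has finite residue field — equivalently $\mathfrak{m}$ is the only maximal ideal) and $R$ is $\mathfrak{m}$-adically complete.

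For completeness and locality: by \Cref{lemmafdsplit}, $\pvd(R) = \thick_R(K)$, so $K$ is a $\pvd$-generator, and since $\per(R)$ is reflexive, \Cref{lem: two out of three} gives that $R$ is derived complete at $K$, i.e. $R \xrightarrow{\sim} R^{!!}_K$. Now here the subtlety is that $K = R/\mathfrak m$ is a field but need not be $R/\mathfrak m$ with $\mathfrak m$ regular in the sense needed for \cite[Proposition 4.20]{DGI} — however, we can localise: $R^{!!}_K$ depends only on $\thick_R(K)$ (\Cref{dercomprmk}(2)) and by the flat-base-change/support argument in \Cref{lemmafdsplit}, $K$ is supported only at $\mathfrak{m}$, so $\R\mathrm{End}_R(K) \simeq \R\mathrm{End}_{R_\mathfrak{m}}(K)$ and hence $R^{!!}_K \simeq (R_\mathfrak{m})^{!!}_K$. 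For the noetherian \emph{local} ring $(R_\mathfrak{m}, \mathfrak{m}R_\mathfrak{m}, K)$, \cite[Proposition 4.20]{DGI} identifies $(R_\mathfrak{m})^{!!}_K$ with the $\mathfrak{m}$-adic completion $\widehat{R_\mathfrak{m}}$. So derived completeness of $R$ at $K$ reads $R \xrightarrow{\sim} \widehat{R_\mathfrak{m}}$. The target is a complete noetherian local ring; in particular $R$ is local (it has a single maximal ideal, namely the preimage of the maximal ideal of $\widehat{R_\mathfrak m}$), so $\mathfrak m$ is the unique maximal ideal, $\mathrm{Fin}(R) = \{\mathfrak m\}$ is consistent with local-ness, and $R \cong \widehat{R} = \widehat{R_\mathfrak m}$ is $\mathfrak m$-adically complete with residue field $K$ finite over $k$, as required. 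The main obstacle I anticipate is precisely this last point — carefully justifying the reduction to the local ring $R_\mathfrak{m}$ so that \cite[Proposition 4.20]{DGI} applies, and then extracting from $R \xrightarrow{\sim} \widehat{R_\mathfrak m}$ that $R$ was local and complete to begin with (one needs that a noetherian ring quasi-isomorphic, hence isomorphic as a ring since both are discrete, to a complete local ring is itself that complete local ring). The forward direction's condition (2) in \Cref{lem: two out of three} and the backward direction's support/localisation step are the two places requiring genuine care; everything else is assembly of the cited results.
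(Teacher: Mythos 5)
There are two genuine problems, both located at the step that carries the real content of the theorem, namely condition (2) of \Cref{lem: two out of three} (that $K$ is a $\pvd$-generator for $R^!_K$). First, in your ``easy'' direction you assert that $R^!_K=\R\mathrm{End}_R(K)^{\op}$ is \emph{connective} with $H^0\cong K$ and then invoke \Cref{congen}. With the paper's (cohomological) conventions this is false: $H^i(R^!_K)\cong \Ext^i_R(K,K)$ lives in degrees $i\geq 0$ and is nonzero in positive degrees unless $R$ is a field, so $R^!_K$ is \emph{coconnective}, and \Cref{congen} (a connective-case statement relying on the standard $t$-structure) does not apply. For a coconnective algebra it is simply not automatic that $\pvd$ is generated by anything seen by $H^0$; this is exactly why the weight-structure input is needed. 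The correct route is \Cref{prop: keller-nicolas locality}: one notes $R^!_K$ is coconnective with $H^0(R^!_K)\cong K$, chooses a model (via the minimal resolution of $K$) which is strictly coconnective with degree-zero part $K$, so that $\sigma_{\leq 0}R^!_K\simeq K$ is a $\pvd$-generator. Your closing sentence mentioning the ``$\sigma_{\leq 0}$-piece'' gestures at this, but as written the argument rests on a wrong connectivity claim and the wrong lemma.

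Second, and more seriously, in the converse direction you apply \Cref{lem: two out of three} with only one of its three conditions in hand: you have (1) $R$ reflexive and the standing hypothesis that $K$ is a $\pvd$-generator for $R$, and from this alone you conclude (3) derived completeness. The theorem requires \emph{two} of the three conditions; you must also establish (2), that $K$ is a $\pvd$-generator for $R^!_K$, before you may deduce derived completeness -- and this is precisely the coconnective Keller--Nicol\'as argument above, which is the heart of the paper's proof (the paper proves (1) and (2) and deduces (3), then reads off completeness and locality from \Cref{dercomprmk}(1)). Once (2) is supplied, the rest of your outline is sound; indeed your localisation remark (using that $K$ is supported only at $\mathfrak m$, so $R^{!!}_K\simeq (R_\mathfrak m)^{!!}_K\simeq \widehat{R_\mathfrak m}$, whence $R$ is local and complete) is a legitimate and slightly more careful way to justify the appeal to \cite[Proposition 4.20]{DGI} before locality of $R$ is known.
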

\begin{proof}
Since $R$ is noetherian, it is a finite product of connected $k$-algebras, and $R$ is reflexive if and only if each of its connected components is (since $\per (R)$ splits as the product of the perfect derived categories of the connected components of $R$). So without loss of generality we may assume that $R$ is connected and non-zero. 

If $R$ is reflexive, then \Cref{lemmareducetolocal} implies that $R$ possesses a unique maximal ideal $\mathfrak{m}$ such that its residue field $K\coloneqq R/\mathfrak{m}$ is a finite extension of $k$. By \Cref{lemmafdsplit}, $K$ is a $\pvd$-generator for $R$. Note that $R_K^! = \R \hom_R(K,K)^\mathrm{op}$ is a coconnective dg algebra with $H^0(R^!_K) = K$, which can be computed using the minimal projective resolution for $K$. As a consequence, we may also assume that $(R_K^!)^0 = K$. By \Cref{prop: keller-nicolas locality} it follows that $\sigma_{\leq 0}R_K^! = (R_K^!)^0$ is a $\pvd$-generator for $R^!_K$.  Hence $K$ is a $\pvd$-generator for $R^!_K$. Since $R$ is reflexive, it now follows from \Cref{lem: two out of three} that the natural map $R \to R_K^{!!}$ is an isomorphism. By \Cref{dercomprmk}(1), this is the case if and only if $R$ is $\mathfrak{m}$-adically complete. In particular, $R$ must be local and $\mathfrak{m}$ its unique maximal ideal. The converse is similar. 
\end{proof}

\begin{rmk}
Let $R$ be a non-regular complete local $k$-algebra of Krull dimension at least one at residue field $k$. Then $R$ is reflexive by \Cref{noeththm}, but neither $R$ nor $R^!_K$ are proper. This gives to our knowledge the first example of a reflexive dg category such that neither $R$ nor $\pvd(R)$ are proper.
\end{rmk}

\begin{ex}
Suppose that $R$ is finite type over $k$ and reflexive. Then $R$ is a finite product of Artinian local $k$-algebras: by \Cref{fintypeex} and \Cref{noeththm}, it is a finite product of complete local $k$-algebras. But a complete $k$-algebra is finite type exactly when it is Artinian. 
\end{ex}

\begin{rmk} The field extension condition cannot be dropped: if $K$ is an infinite field extension of $k$ then certainly $K$ is complete local, but not a reflexive $k$-algebra. However, via the Cohen structure theorem, the assumption on the residue field $K$ can be dropped in one direction of \Cref{noeththm} by changing the base field. Indeed, let $R$ be a commutative complete local noetherian $k$-algebra with residue field $K$. Then the Cohen structure theorem tells us that $R$ is a complete local $K$-algebra, and hence by \Cref{noeththm} $R$ is a reflexive $K$-algebra.
\end{rmk}

\begin{rmk}
    Let $(R,\mathfrak{m},K)$ be a local noetherian $k$-algebra. If $\mathrm{Kos}(\mathfrak{m})$ denotes the Koszul complex of $\mathfrak{m}$, then it follows from the main result of {\cite{PSY}} that $R$ is complete local if and only if it is derived complete with respect to $\mathrm{Kos}(\mathfrak{m})$. In particular, if $K$ is finite over $k$, then $R$ is reflexive if and only if it is $\mathrm{Kos}(\mathfrak{m})$-derived complete. We remark that in this setting, $\mathrm{Kos}(\mathfrak{m})$ is a $\pvd$-generator for $R$ precisely when $R$ is regular: in one direction, simply use that $\mathrm{Kos}(\mathfrak{m}) \simeq K$ when $R$ is regular. In the other direction, note that $\mathrm{Kos}(\mathfrak{m})$ is perfect, and hence if it is a $\pvd$-generator then $K$ is perfect, which implies that $R$ is regular.
\end{rmk}

\begin{rmk}\label{EfimovRemark}
   We note a comparison to the derived completion along $\pvd$ in the sense of \cite{efimovcompletion}. If $R$ is a commutative noetherian ring, then $\hat{R}_{\pvd(R)}$ is the one-object dg category $\prod_{\mathfrak{m}\in \mathrm{Fin}(R)} \hat{R}_\mathfrak{m}$, whereas $\pvd\pvd(R)$ need not have a single compact generator. Observe that the map $R \to \hat{R}_{\pvd(R)}$ is precisely the map of dg algebras $\mathrm{End}_{\per(R)}(R) \to \mathrm{End}_{\pvd(\pvd(R))}(\mathrm{ev}R)$ induced by the evaluation functor. We note also that $\hat{R}_{\pvd(R)}$ is the pseudocompact completion of $R$.
\end{rmk}

\section{Reflexivity and Koszul duality}\label{kdsection}

In this section we use Koszul duality to understand reflexivity. The key point for us will be that the Koszul double dual of an augmented dg algebra computes its derived completion along $k$, a viewpoint which was first taken in \cite{DGI}. We begin by recalling facts about dg coalgebras and the bar and cobar constructions. We then define what it means for a dg coalgebra to be reflexive, and compare this notion to the one for algebras across both linear and Koszul duality.

\subsection{Coalgebras, comodules, coderived categories}
We begin by quickly recalling some standard facts about dg coalgebras and their coderived categories. For comprehensive references on this section and the next, see e.g.\ \cite{lefevre, positselski, lodayvallette}. Throughout this section, we let $R$ be a commutative finite dimensional semisimple $k$-algebra (i.e.\ a finite product of finite field extensions of $k$). If $M$ is a dg $R$-module then we denote by $M^\vee\coloneqq \hom_R(M,R)$ its $R$-linear dual.

A \textbf{dg-$R$-coalgebra} is a comonoid in the symmetric monoidal category of dg-$R$-modules: explicitly it is a complex of $R$-modules $C$ with a coassociative comultiplication $\Delta\colon C\to C\otimes_R C$ and a counit $\eta\colon C\to R$. The condition that $\Delta$ is a chain map translates into the condition that $d$ is a coderivation for $\Delta$. A dg coalgebra is \textbf{coaugmented} if $\eta$ admits a section which is a morphism of $R$-coalgebras. In this case, the coaugmentation coideal $\bar C\coloneqq \ker \eta$ becomes a noncounital dg coalgebra under the reduced comultiplication $\bar\Delta$. Say that a coaugmented dg coalgebra $C$ is \textbf{conilpotent} if for all $c\in \bar C$ there exists $N\in \mathbb{N}$ such that $\bar{\Delta}^N(c)=0$.

If $C$ is a dg-$R$-coalgebra then its $R$-linear dual $C^\vee$ is a dg-$R$-algebra under the operations $\eta^\vee$ and $\Delta^\vee$. In fact, $C^\vee$ is a \textbf{pseudocompact} dg-$R$-algebra, meaning a topological dg algebra obtained as an inverse limit of discrete finite dimensional dg algebras, equipped with the inverse limit topology. If $C$ is conilpotent then $C^\vee$ is \textbf{pronilpotent}, meaning that the finite dimensional algebras occurring in the inverse limit are all nilpotent extensions of $R$. The $R$-linear dual functor gives an equivalence of categories between dg-$R$-coalgebras and pseudocompact dg-$R$-algebras, and between conilpotent dg-$R$-coalgebras and pronilpotent dg-$R$-algebras.

If $C$ is a dg-$R$-coalgebra, a (right) \textbf{dg-$C$-comodule} is a dg-$R$-module $V$ together with a coaction map $\rho\colon V\to V\otimes C$ such that $(\mathrm{id}_V\otimes \Delta)\rho = (\rho\otimes \mathrm{id}_C)\rho$. A \textbf{$C$-colinear map} between two dg-$C$-comodules $U,V$ is an $R$-linear map $U\to V$ which is compatible with the coactions. These define an abelian dg category $C\mathbf{-Comod}$ of dg-$C$-comodules. We let $\mathrm{Hot}(C)\coloneqq H^0(C\mathbf{-Comod})$ denote the corresponding homotopy category; it is a triangulated category. The subcategory $\mathbf{CoAcy}(C)$ of \textbf{coacyclic} $C$-comodules is then defined to be the smallest localising subcategory of $\mathrm{Hot}(C)$ containing the totalisations of exact triples of $C$-comodules. The \textbf{coderived category} of $C$ is the Verdier quotient $\dco(C)\coloneqq \mathrm{Hot}(C)/\mathbf{CoAcy}(C)$. By \cite[\S5.5]{positselski} the triangulated category $\dco(C)$ is compactly generated by the full subcategory $\fd(C) \into \dco(C)$ on those comodules weakly equivalent to finite dimensional comodules. When $C$ is conilpotent over $R$ there is a natural equivalence $\fd(C)\simeq \thick_{\dco(C)}(R)$.

One can also recover $\dco(C)$ as the homotopy category of a model structure on the category of $C$-comodules; the cofibrations are the injections and the weak equivalences are the maps with coacyclic cone. In particular every comodule is cofibrant. Via taking dg quotients one can also enhance $\dco(C)$ to a pretriangulated dg category, and we will frequently regard it as such.

A weak equivalence between comodules is a quasi-isomorphism, but the converse is not true. In particular, there is a quotient map $\dco(C) \to {\mathcal{D}}(C\mathbf{-Comod})$ which is full and essentially surjective, but not faithful (fullness follows from Brown representability). Recalling that the $R$-linear dual $C^\vee$ is a pseudocompact dg algebra, the linear dual functor gives a contravariant equivalence between $C\mathbf{-Comod}$ and the category $C^\vee\mathbf{-pcMod}$ of pseudocompact $C^\vee$-modules. Thus, we  obtain a natural quasi-equivalence of pretriangulated dg categories between ${\mathcal{D}}(C\mathbf{-Comod})^\op $ and $ {\mathcal{D}}(C^\vee\mathbf{-pcMod})$. The forgetful functor from pseudocompact $C^\vee$-modules to all $C^\vee$-modules induces a functor ${\mathcal{D}}(C^\vee\mathbf{-pcMod}) \to {\mathcal{D}}(C^\vee)$ which is neither essentially surjective nor full. Hence by combining the above functors we obtain an $R$-linear dual functor $\dco(C)^\op \to {\mathcal{D}}(C^\vee)$ which sends a comodule $N$ to its $R$-linear dual $N^\vee$. In general this functor need not be full, faithful, or essentially surjective.

\subsection{Bar and cobar constructions}

 Let $A$ be a dg-$R$-algebra. Recall that $A$ is \textbf{augmented} if there is an $R$-algebra map $A \to R$ splitting the unit. In this case, the augmentation ideal $\bar A \coloneqq \ker(A\to R)$ becomes a nonunital dg algebra.

If $V$ is a dg-$R$-module, its \textbf{tensor coalgebra} is the dg-$R$-coalgebra given by $T_R^c(V)\coloneqq R\oplus V \oplus \left(V\otimes_R V\right)\oplus \cdots$ with comultiplication given by the deconcatenation coproduct, which sends a tensor $v_1\otimes\cdots\otimes v_n$ to the sum $\sum_i(v_1 \otimes \cdots \otimes v_i)\otimes(v_{i+1}\otimes\cdots\otimes v_n)$. It is not hard to see that $T^c_RV$ is a conilpotent dg-$R$-coalgebra. In fact, $T^c_R(V)$ is the cofree conilpotent coalgebra on $V$, in the sense that $T^c_R$ is the right adjoint to the forgetful functor from conilpotent dg-$R$-coalgebras to dg-$R$-modules.

If $A$ is an augmented dg-$R$-algebra, then its \textbf{bar construction} is the conilpotent dg coalgebra $B_RA$ whose underlying graded coalgebra is $T^c_R(\bar{A}[1])$. The differential combines the usual differential on the tensor coalgebra with the multiplication on $A$. We denote the $R$-linear dual of $B_RA$ by $B^\vee_RA$; it is a pseudocompact dg-$R$-algebra.

\begin{ex}
    If $A$ is the square-zero extension $R[\epsilon]/\epsilon^2$, with $\epsilon$ in degree $1-n$, then $B^\vee_RA$ is the pseudocompact algebra $R\llbracket x \rrbracket$ with $x$ placed in degree $n$. More generally, the square-zero extension $R\oplus V$ has dual bar construction given by the free pseudocompact dg-$R$-algebra on the pseudocompact dg-$R$-module $V^\vee[-1]$.
\end{ex}

There is a dual construction, the {cobar construction}, which sends conilpotent coalgebras to algebras. In brief, let $C$ be a conilpotent dg-$R$-coalgebra. The \textbf{cobar construction} of $C$ is the dg algebra $\Omega C$ whose underlying graded algebra is $T(\bar C[-1])$, the tensor algebra on the shifted augmentation ideal of $C$. The differential combines the natural internal differential on the tensor algebra with the comultiplication on $C$.

\begin{thm}[Koszul duality, cf.\ {\cite{positselski}}]\hfill
\begin{enumerate}
\item $\Omega$ and $B$ form an adjunction $\Omega\colon \mathbf{dgCog}_R^\mathrm{conil} \longleftrightarrow \mathbf{dgAlg}_R^\mathrm{aug}\colon B$.
\item Let $A$ be an augmented dg-$R$-algebra. The counit $\Omega BA \to A$ is a quasi-isomorphism of algebras.
\item Let $C$ be a conilpotent dg-$R$-coalgebra. The unit $C \to B\Omega C$ is a weak equivalence of coalgebras (i.e.\ is sent to a quasi-isomorphism by $\Omega$).
\end{enumerate}
\end{thm}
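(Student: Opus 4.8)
The plan is to prove the Koszul duality theorem in three parts, treating the adjunction first and then the two (co)unit statements separately, since the (co)unit computations are where the real content lies.

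First I would establish the adjunction $\Omega \dashv B$ directly from the universal properties in play. The functor $T^c_R$ is cofree on the forgetful functor to dg-$R$-modules, and the tensor algebra $T$ is free on dg-$R$-modules; the bar and cobar constructions are twisted versions of these. Concretely, one checks that $\hom_{\mathbf{dgAlg}_R^{\mathrm{aug}}}(\Omega C, A)$ and $\hom_{\mathbf{dgCog}_R^{\mathrm{conil}}}(C, BA)$ are both naturally identified with the set of twisting cochains $\tau\colon C \to A$ (degree $-1$ $R$-linear maps satisfying the Maurer--Cartan equation $d\tau + \tau \smile \tau = 0$, where $\smile$ is the convolution product). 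This is a standard bookkeeping argument: a twisting cochain determines, and is determined by, either an algebra map out of the free object $\Omega C$ compatible with the differential, or a coalgebra map into the cofree object $BA$ compatible with the differential. One must be slightly careful that conilpotence of $C$ guarantees the relevant sums converge, so that the correspondence is well-defined. This gives part (1), with the universal twisting cochain $\pi\colon BA \to A$ and $\iota\colon C \to \Omega C$ recovering the counit and unit.

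Next, for part (2), I would show the counit $\Omega B A \to A$ is a quasi-isomorphism. The cleanest route is to filter $\Omega B A$ by the number of bar-tensor factors (the ``arity'' filtration) and identify the associated graded with a Koszul-type complex. One way: introduce the two-sided bar/cobar object and use an explicit contracting homotopy; equivalently, reduce to the case $A = R$ trivially, and then bootstrap via a filtration argument whose $E_1$ page is the reduced bar complex of $A$ tensored appropriately, which is acyclic because the bar complex computes $\mathrm{Tor}$ and here carries an explicit simplicial contraction. I would invoke the acyclicity of the (reduced) bar resolution directly. The main subtlety is controlling convergence of the spectral sequence, which holds because on each piece the filtration is exhaustive and bounded below in the relevant degree.

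Finally, for part (3), I would note that the unit $C \to B\Omega C$ is generally \emph{not} a quasi-isomorphism, so the statement is weaker: it is a weak equivalence of coalgebras, i.e.\ $\Omega$ sends it to a quasi-isomorphism $\Omega C \to \Omega B \Omega C$. But this now follows formally from part (2) together with the triangle identities for the adjunction: the composite $\Omega C \to \Omega B \Omega C \to \Omega C$ is the identity, and the second map is the counit at $\Omega C$, which is a quasi-isomorphism by part (2). Hence the first map is a quasi-isomorphism as well, by two-out-of-three. I expect the main obstacle to be part (2) — specifically, writing down the contracting homotopy or the filtration cleanly and verifying convergence — rather than the formal adjunction or the purely categorical deduction of part (3); since this is a ``recollection'' theorem we would most likely just cite \cite{positselski} (or \cite{lefevre}, \cite{lodayvallette}) for the details and sketch the twisting-cochain description for the reader's convenience.
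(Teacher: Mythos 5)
The paper does not prove this statement at all: it is recalled from Positselski (cf.\ \cite{positselski}, see also \cite{lefevre, lodayvallette}), which is exactly what you propose to do in your closing remark, so there is no conflict with the paper's treatment. Your sketch is the standard argument and is correct: the adjunction via the natural identifications $\hom(\Omega C,A)\cong \mathrm{Tw}(C,A)\cong\hom(C,BA)$ (with conilpotence of $C$ needed so the induced coalgebra map lands in $BA$), the filtration/contracting-homotopy proof that the counit $\Omega BA\to A$ is a quasi-isomorphism, and the purely formal deduction of (3) from (2): since $\epsilon_{\Omega C}\circ\Omega(\eta_C)=\mathrm{id}$ and $\epsilon_{\Omega C}$ is a quasi-isomorphism by (2), two-out-of-three shows $\Omega(\eta_C)$ is a quasi-isomorphism, which is precisely the asserted weak equivalence of coalgebras.
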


\begin{rmk}
    In fact, Positselski proves that there exist model structures on the categories of augmented dg algebras and conilpotent dg coalgebras making $\Omega \dashv B$ into a Quillen equivalence. A weak equivalence of algebras is precisely a quasi-isomorphism. The weak equivalences of coalgebras are created by $\Omega$; every weak equivalence is a quasi-isomorphism but the converse is not true.
\end{rmk}

Following \cite{BCLcalabiyau} we will refer to any pair $(C,A)$ consisting of a conilpotent dg-$R$-coalgebra $C$ and an augmented dg-$R$-algebra $A$ such that $\Omega C \simeq A$ as a \textbf{Koszul duality pair}. The prototypical examples of Koszul duality pairs are pairs of the form $(C,\Omega C)$ and $(BA,A)$, and up to pairwise weak equivalence every pair is of this form.

\begin{thm}[Module-comodule Koszul duality, cf.\ {\cite{positselski}}]
    If $(C,A)$ is a Koszul duality pair then there is a quasi-equivalence of pretriangulated dg categories $\dco(C)\simeq {\mathcal{D}}(A)$ which sends $R$ to $A$ and $C$ to $R$.
\end{thm}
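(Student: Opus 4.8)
The plan is to realise the equivalence through a twisted tensor product adjunction, following Positselski. A Koszul duality pair $(C,A)$ carries a twisting cochain $\tau\colon C\to A$, namely the canonical one $C\twoheadrightarrow\bar C\xrightarrow{s^{-1}}\bar C[-1]\hookrightarrow T(\bar C[-1])=\Omega C$ composed with the quasi-isomorphism $\Omega C\xrightarrow{\sim}A$; it satisfies the Maurer--Cartan identity $d\tau+\tau\star\tau=0$ in the convolution algebra $\Hom_R(C,A)$. Associated to $\tau$ is a dg adjunction $L_\tau\dashv R_\tau$ between $C\mathbf{-Comod}$ and $A\mathbf{-Mod}$: the functor $L_\tau$ sends a comodule $N$ to the $A$-module with underlying graded object $N\otimes_RA$ and differential $d_N\otimes 1+1\otimes d_A$ corrected by the twist $(\mathrm{id}\otimes\mu_A)(\mathrm{id}\otimes\tau\otimes\mathrm{id})(\rho_N\otimes\mathrm{id})$, while $R_\tau$ sends $M$ to the cofree comodule $M\otimes_RC$ with the dual twist built from $\Delta_C$, $\tau$ and the $A$-action. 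By inspection $L_\tau(R)=R\otimes_\tau A=A$, since the twist vanishes on the trivial comodule, and $L_\tau(C)=C\otimes_\tau A$ is the standard twisted resolution of $R$ over $A$, hence quasi-isomorphic to $R$; this already gives the claimed behaviour on objects.

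First I would check that $L_\tau$ descends to the homotopy categories. As $R$ is semisimple, $-\otimes_RA$ is exact, so $L_\tau$ carries the totalisation of an exact triple of comodules to the totalisation of an exact triple of $A$-modules, which is acyclic; since $L_\tau$ is additive, coproduct-preserving and commutes with shifts and cones, it therefore sends every coacyclic comodule to an acyclic complex and so factors through a coproduct-preserving dg functor $\bar L_\tau\colon\dco(C)\to\mathcal{D}(A)$. Now $\dco(C)$ is compactly generated by the trivial comodule $R$ --- this is exactly the content of $\fd(C)\simeq\thick_{\dco(C)}(R)$ together with compact generation by $\fd(C)$ --- while $\mathcal{D}(A)$ is compactly generated by $A$, and $\bar L_\tau(R)=A$. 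A coproduct-preserving triangulated functor between compactly generated categories which carries a compact generator to a compact generator is an equivalence as soon as it is quasi-fully-faithful on that generator. So the problem reduces to verifying that $\bar L_\tau$ induces a quasi-isomorphism $\R\mathrm{End}_{\dco(C)}(R)\to\R\mathrm{End}_{\mathcal{D}(A)}(A)=A$.

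To identify $\R\mathrm{End}_{\dco(C)}(R)$ I would use the comodule $\Omega C\otimes_\iota C$, where $\iota\colon C\to\Omega C$ is the canonical twisting cochain: its underlying graded comodule $\Omega C\otimes_RC$ is cofree, hence degreewise injective, hence fibrant for the coderived model structure on comodules. Granting that the coaugmentation $R\to\Omega C\otimes_\iota C$ is a weak equivalence in $\dco(C)$, it is therefore a fibrant replacement of $R$, so $\R\mathrm{End}_{\dco(C)}(R)\simeq\mathrm{End}_{C\mathbf{-Comod}}(\Omega C\otimes_\iota C)$; unwinding this via the cofree--forgetful adjunction $\Hom_C(-,V\otimes_RC)=\Hom_R(-,V)$ and keeping track of the twisted differential recovers $\Omega C\simeq A$ with its correct dg-algebra structure, compatibly with $\bar L_\tau$. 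Dually, under the resulting equivalence the object $C$ is sent to $\R\mathrm{Hom}_{\dco(C)}(R,C)=\Hom_C(R,C)=R$ --- again using that $C$ is cofree, hence fibrant --- with the $A$-module structure factoring through the augmentation, so the equivalence does send $C$ to $R$.

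The step I expect to be the main obstacle is the one used above: that the coaugmentation $R\to\Omega C\otimes_\iota C$ is a weak equivalence in $\dco(C)$, i.e.\ that its cone is \emph{coacyclic} rather than merely acyclic. This is the genuinely Positselskian input; it is precisely here that one must work in the coderived category and not the ordinary derived category of comodules, and here that conilpotence of $C$ is indispensable. The strategy, for which I would cite \cite[\S5--6]{positselski}, is to exploit the conilpotent filtration $R=F_0C\subseteq F_1C\subseteq\cdots$ on $C$ together with the tensor-grading on $\Omega C$ --- note that the twist strictly decreases the conilpotent filtration degree --- so as to reduce the coacyclicity of the cone to that of its associated-graded pieces, which one arranges to be contractible, hence coacyclic, over the semisimple ring $R$. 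I would either carry this filtration argument out in detail or simply cite \emph{loc.\ cit.}; with this input, $\bar L_\tau$ is quasi-fully-faithful on $R$, hence on $\thick_{\dco(C)}(R)=\fd(C)$, hence on all of $\dco(C)$, and its essential image is a localising subcategory containing the generator $A$, so $\bar L_\tau$ is the desired quasi-equivalence.
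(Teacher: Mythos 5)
Correct, and essentially the same approach as the paper: the paper does not prove this statement but recalls it as background with a citation to Positselski, and your argument --- the twisting cochain $\tau$, the twisted tensor functor $L_\tau=-\otimes_\tau A$ descending to $\dco(C)\to\mathcal{D}(A)$, dévissage from the compact generator $R$ (using $\fd(C)=\thick_{\dco(C)}(R)$), the fibrant replacement $\Omega C\otimes_\iota C$ computing $\R\mathrm{End}_{\dco(C)}(R)\simeq A$, and the conilpotent-filtration proof that the cone of $R\to\Omega C\otimes_\iota C$ is coacyclic --- is precisely the standard proof from that reference. The step you flag as the main obstacle is indeed the genuinely Positselskian input, and citing \cite{positselski} for it is exactly what the paper itself does for the whole theorem.
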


Module-comodule Koszul duality can be used to show that the dual bar construction computes derived endomorphisms. Let $C$ be a conilpotent dg coalgebra and $M,N$ two $C$-comodules. We write $\R\hom_C(M,N) \in {\mathcal{D}}(k)$ for the derived mapping space between $M$ and $N$ computed in the dg category $\dco(C)$. This can be computed as the complex of $C$-colinear morphisms $\hom_C(M,\tilde N)$, where $\tilde N$ is a fibrant replacement of $N$ (note that since every comodule is cofibrant, we do not need to replace $M$). 
\begin{prop}
    Let $A$ be an augmented dg-$R$-algebra. Then there is a dg-$R$-algebra quasi-isomorphism $B^\vee_RA \simeq \R\mathrm{End}_A(R)$.
\end{prop}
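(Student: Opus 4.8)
The plan is to identify both sides as derived endomorphism objects inside Koszul-dual categories and transport the quasi-isomorphism across module-comodule Koszul duality. Concretely, recall that $(B_RA, A)$ is a Koszul duality pair, so module-comodule Koszul duality gives a quasi-equivalence of pretriangulated dg categories $\dco(B_RA) \simeq {\mathcal{D}}(A)$ sending the coalgebra $C = B_RA$ (as a comodule over itself) to $R$, and sending $R$ (the trivial comodule) to $A$. Derived endomorphism dg algebras are preserved by quasi-equivalences of pretriangulated dg categories, so one gets a dg-$R$-algebra quasi-isomorphism
\[
\R\mathrm{End}_{B_RA}(B_RA) \simeq \R\mathrm{End}_A(R).
\]
The left-hand side is then computed directly: since every comodule is cofibrant and the cofree comodule $B_RA$ over itself is also fibrant (it is an injective object in the abelian category of comodules, being cofree), the derived endomorphisms are just the genuine endomorphisms of $B_RA$ as a dg-$B_RA$-comodule. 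But the dg algebra of $C$-colinear endomorphisms of $C$ itself is naturally $C^\vee$ when $C$ is finite dimensional in each degree; in the pseudocompact setting this requires a little care, and one argues via the equivalence between $C\mathbf{-Comod}$ and pseudocompact $C^\vee$-modules, under which $C$ corresponds to $C^\vee$ as a pseudocompact module over itself, with endomorphism algebra $C^\vee = B^\vee_RA$.

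Here are the steps in order. First, recall the Koszul duality pair $(B_RA, A)$ and the module-comodule Koszul duality quasi-equivalence $F\colon \dco(B_RA) \xrightarrow{\simeq} {\mathcal{D}}(A)$ with $F(B_RA) \simeq R$ and $F(R) \simeq A$; for the statement at hand we only need $F(B_RA)\simeq R$. Second, observe that a quasi-equivalence of pretriangulated dg categories induces a quasi-isomorphism on derived endomorphism dg algebras of corresponding objects, hence $\R\mathrm{End}_{\dco(B_RA)}(B_RA) \simeq \R\mathrm{End}_{{\mathcal{D}}(A)}(R) = \R\mathrm{End}_A(R)$. Third, compute the left-hand side: via the contravariant equivalence $C\mathbf{-Comod} \simeq C^\vee\mathbf{-pcMod}$ (taking $R$-linear duals), the comodule $C = B_RA$ goes to the free rank-one pseudocompact module $C^\vee = B^\vee_RA$ over itself, and derived endomorphisms of a free rank-one module over a (pseudocompact) dg algebra are the algebra itself; more precisely, one checks $B_RA$ is fibrant in the comodule model structure so that $\R\mathrm{End}$ is computed by honest colinear endomorphisms, and identifies the dg algebra of $B_RA$-colinear endomorphisms of $B_RA$ with $B^\vee_RA$. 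Assembling these yields the claimed quasi-isomorphism $B^\vee_RA \simeq \R\mathrm{End}_A(R)$.

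The main obstacle is the bookkeeping around the (co)augmented, pseudocompact, and opposite-algebra conventions: one must verify that $B_RA$ really is fibrant (injective) as a comodule over itself so that no fibrant replacement is needed, and that the natural dg algebra structure on colinear endomorphisms of $C$ matches $C^\vee$ rather than its opposite — equivalently, that the identification $C\mathbf{-Comod}\simeq C^\vee\mathbf{-pcMod}$ sends $C$ to the \emph{right} module $C^\vee$ with the correct handedness so that $\R\mathrm{End}$ comes out as $B^\vee_RA$ on the nose. All of this is standard (it is essentially the observation that the bar construction computes $\mathrm{Ext}_A(R,R)$, dualised), so the argument is short once the conventions are pinned down; the only genuinely content-bearing input is the module-comodule Koszul duality theorem already recalled above.
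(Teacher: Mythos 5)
Your proposal is correct and follows essentially the same route as the paper: transport $\R\mathrm{End}_A(R)$ across module-comodule Koszul duality to $\R\mathrm{End}_{\dco(B_RA)}(B_RA)$, then use that $B_RA$ is injective (hence fibrant) as a comodule over itself to compute its derived endomorphisms as honest colinear endomorphisms, which give $B^\vee_RA$. The extra detour through pseudocompact $C^\vee$-modules is just a more explicit version of the paper's final identification $\mathrm{End}_C(C)\cong C^\vee$.
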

\begin{proof}
    Put $C\coloneqq B_RA$. By module-comodule Koszul duality, there are quasi-isomorphisms of dg-$R$-algebras
    $$\R\mathrm{End}_A(R) \simeq \R\mathrm{End}_{{\mathcal{D}}(A)}(R) \simeq \R\mathrm{End}_{\dco(C)}(C)$$and since $C$ is an injective $C$-comodule, it is fibrant, and we can compute its derived endomorphisms as $\R\mathrm{End}_{\dco(C)}(C) \simeq \mathrm{End}_C(C) \cong C^\vee$, as desired.
\end{proof}

In particular, there is a dg-$R$-algebra quasi-isomorphism $B^\vee_RA \simeq (A^!_R)^\op$, which we will implicitly use going forward.

\begin{rmk}
    One can use the bar-cobar formalism to prove versions of \Cref{smoothcoconncor}, \Cref{thm: reflexivity via KD for coconn. algs}, and \Cref{thm: reflexivity via KD for conn. algs} under the extra hypothesis that the dg algebras in question are augmented over $R$. The relevant derived completeness results can be deduced in a similar manner as those of \cite{MBDDP}.
\end{rmk}

\subsection{Reflexivity for coalgebras}
Recall that we write $\R\hom_C(-,-)$ for the derived mapping space functor of $\dco(C)$. The following definition appears in \cite{BCLcalabiyau} and is a homotopical version of Takeuchi's definition of a quasi-finite comodule \cite{takeuchi}.

\begin{defn}
    Let $C$ be a dg coalgebra. Say that a $C$-comodule $M$ is \textbf{homotopy quasi-finite} if, for all $X \in \fd(C)$, the complex $\R\hom_C(X,M)$ is proper. We abbreviate homotopy quasi-finite by \textbf{hqf} and denote the full subcategory of $\dco(C)$ on the hqf comodules by $\hqf(C)$.
\end{defn}

\begin{rmk}
    The category $\hqf$ is Morita invariant, in the sense that if there is a quasi-equivalence $\dco(C) \simeq \dco(C')$ then $\hqf(C)\simeq \hqf(C')$. This is because $\fd(C)$ has a Morita invariant description as the compact objects in $\dco(C)$.
\end{rmk}

\begin{prop}
    Let $(C,A)$ be a Koszul duality pair. Module-comodule Koszul duality induces the following quasi-equivalences of pretriangulated dg categories:
    \begin{enumerate}
        \item $\per(A) \simeq \fd(C)$.
        \item $\pvd(A) \simeq \hqf(C)$.
    \end{enumerate}
\end{prop}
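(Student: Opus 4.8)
The plan is to deduce both statements from the module-comodule Koszul duality equivalence $\Phi\colon \dco(C) \xrightarrow{\simeq} {\mathcal{D}}(A)$, which sends $R$ to $A$ and $C$ to $R$. Part (1) is then almost immediate: since $(C,A)$ is a Koszul duality pair, $C$ is by definition a conilpotent dg-$R$-coalgebra, so $\fd(C) \simeq \thick_{\dco(C)}(R)$. As $\Phi$ is a quasi-equivalence of pretriangulated dg categories it carries the thick subcategory generated by an object to the thick subcategory generated by its image, hence restricts to a quasi-equivalence $\fd(C) \simeq \thick_{{\mathcal{D}}(A)}(A) = \per(A)$.

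For part (2) I would first record an elementary characterisation of $\pvd$: for any dg algebra $A$, a module $M$ lies in $\pvd(A)$ if and only if $\R\hom_A(Y,M)$ is proper for every $Y \in \per(A)$. The ``if'' direction is trivial by taking $Y = A$, since $\R\hom_A(A,M) \simeq M$. For the ``only if'' direction, observe that the full subcategory of ${\mathcal{D}}(A)$ on those $Y$ with $\R\hom_A(Y,M)$ proper is thick: it is triangulated because $\R\hom_A(-,M)$ is cohomological and properness is stable under cones and shifts, and it is idempotent complete because a direct summand of a proper complex is proper. When $M \in \pvd(A)$ this subcategory contains $A$, hence contains $\per(A) = \thick_A(A)$.

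Now I would transport this across $\Phi$. Since $\Phi$ induces quasi-isomorphisms on all mapping complexes, we have $\R\hom_C(X,M) \simeq \R\hom_A(\Phi X, \Phi M)$ in ${\mathcal{D}}(k)$ for all comodules $X,M$. Combining with part (1), which identifies $\fd(C)$ with $\per(A)$ under $\Phi$, we see that $M \in \hqf(C)$ — i.e.\ $\R\hom_C(X,M)$ is proper for all $X \in \fd(C)$ — precisely when $\R\hom_A(Y,\Phi M)$ is proper for all $Y \in \per(A)$, which by the characterisation above holds precisely when $\Phi M \in \pvd(A)$. Since $\hqf(C)$ and $\pvd(A)$ are pretriangulated dg subcategories (both defined by conditions stable under the triangulated operations), $\Phi$ restricts to the desired quasi-equivalence $\hqf(C) \simeq \pvd(A)$.

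I do not expect a serious obstacle here; the statement is really a formal consequence of module-comodule Koszul duality together with the bookkeeping described above. The only points demanding a little care are that the identification $\fd(C) \simeq \thick_{\dco(C)}(R)$ genuinely uses conilpotence of $C$ over $R$ (guaranteed by the definition of a Koszul duality pair), and that the various $\R\hom$-complexes computed in $\dco(C)$, ${\mathcal{D}}(A)$ and ${\mathcal{D}}(k)$ are compatibly identified under $\Phi$ — which is automatic because $\Phi$ is a quasi-equivalence of dg categories rather than merely a triangulated equivalence.
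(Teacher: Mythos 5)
Your proposal is correct and follows essentially the same route as the paper: part (1) by restricting the Koszul duality equivalence (using $\fd(C)\simeq\thick_{\dco(C)}(R)$ and $\per(A)=\thick_A(A)$), and part (2) by the chain of equivalences $M\in\pvd(A)\iff\R\hom_A(\per(A),M)$ proper $\iff\R\hom_C(\fd(C),N)$ proper $\iff N\in\hqf(C)$. The only difference is that you spell out the thick-subcategory argument behind the characterisation of $\pvd(A)$, which the paper leaves implicit.
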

\begin{proof}
This appears in \cite{BCLcalabiyau} but we give a self-contained proof. The quasi-equivalence of (1) is well known and follows from restricting the Koszul duality equivalence ${\mathcal{D}}(A) \simeq \dco(C)$ to compact objects. To show that (2) holds, let $M$ be an $A$-module and let $N$ be its corresponding $C$-comodule. Then we have 
    \begin{align*}
        M\in \pvd(A) &\iff \R\hom_A(\per(A),M)\in \pvd(k)&\\
    &\iff \R\hom_C(\fd(C),N)\in \pvd(k) & \text{by (1)}\\
        &\iff N\in \hqf(C) & \text{by definition. \qedhere}
    \end{align*} 
\end{proof}
Note that there is a natural functor $\mathrm{ev}_C\colon\fd (C)^\op\to\pvd(\hqf (C)^\op)$ that sends a finite dimensional $C$-comodule $X$ to the $\hqf (C)$-module $N \mapsto \R\hom_C(X,N)$. Say that a dg coalgebra $C$ is \textbf{reflexive} if $\ev_C$ is a quasi-equivalence. 
\begin{prop}\label{mrefcog}
    Let $(C,A)$ be a Koszul duality pair. Then $A$ is reflexive if and only if $C$ is reflexive.
\end{prop}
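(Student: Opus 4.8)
The plan is to transport the evaluation functor $\ev_A$ along the module--comodule Koszul duality equivalences of the preceding proposition and to identify the result with $\ev_C$.

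First I would recall, using the remark following the definition of reflexivity, that $A$ is reflexive precisely when the functor
\[
\per(A)^\op \longrightarrow \pvd(\pvd(A)^\op), \qquad M \longmapsto \R\hom_A(M,-),
\]
is a quasi-equivalence. The preceding proposition supplies quasi-equivalences $\Phi_0\colon \per(A) \xrightarrow{\ \simeq\ } \fd(C)$ and $\Phi\colon \pvd(A) \xrightarrow{\ \simeq\ } \hqf(C)$, each obtained by restricting the ambient Koszul duality quasi-equivalence ${\mathcal{D}}(A)\simeq\dco(C)$ to the appropriate full subcategory. Applying $(-)^\op$ and then $\pvd$ to $\Phi$ yields a quasi-equivalence $\pvd(\pvd(A)^\op) \xrightarrow{\ \simeq\ } \pvd(\hqf(C)^\op)$, given by precomposition with $(\Phi^{-1})^\op$.

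Next I would consider the square
\[
\begin{tikzcd}
\per(A)^\op \arrow[r,"\ev_A"] \arrow[d,"\Phi_0^\op"',"\simeq"] & \pvd(\pvd(A)^\op) \arrow[d,"\simeq"] \\
\fd(C)^\op \arrow[r,"\ev_C"] & \pvd(\hqf(C)^\op)
\end{tikzcd}
\]
and check that it commutes up to natural quasi-isomorphism. For $M \in \per(A)$ with image $X\coloneqq\Phi_0(M)$, the composite down-then-right is the $\hqf(C)$-module $N\mapsto \R\hom_C(X,N)$, while right-then-down is the $\hqf(C)$-module $N\mapsto \R\hom_A(M,\Phi^{-1}N)$; these are naturally quasi-isomorphic because ${\mathcal{D}}(A)\simeq\dco(C)$ is a quasi-equivalence of pretriangulated dg categories and hence induces quasi-isomorphisms $\R\hom_A(M,P)\simeq\R\hom_C(X,\Phi P)$, natural in $P$, which one restricts to $P\in\pvd(A)$. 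Since the two vertical maps are quasi-equivalences, $\ev_A$ is a quasi-equivalence if and only if $\ev_C$ is, which proves the proposition.

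The step requiring the most care is the commutativity of the square: one must use not merely that Koszul duality matches the relevant objects, but that it is an equivalence of dg enhancements, so that it respects the derived mapping complexes out of which $\ev_A$ and $\ev_C$ are built, together with the naturality needed to regard the comparison as an isomorphism of functors valued in $\pvd(\hqf(C)^\op)$. The passage between the two descriptions of the evaluation functor (on $A^\op$ versus on $\per(A)^\op$) is precisely the content of the cited remark and causes no further difficulty.
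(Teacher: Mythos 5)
Your proof is correct and follows the same route as the paper: transport the evaluation functor across the module--comodule Koszul duality equivalences $\per(A)\simeq\fd(C)$ and $\pvd(A)\simeq\hqf(C)$, observe that the resulting square with $\ev_A$ and $\ev_C$ commutes, and conclude that one is a quasi-equivalence precisely when the other is. The paper states the commutativity of this square without elaboration, so your extra care about naturality of the derived mapping complexes is just a more detailed write-up of the same argument.
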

\begin{proof}
    The diagram
$$\begin{tikzcd}
    \per(A)^\op\ar[d,"\ev_A"] \ar[r,"\simeq"]& \fd(C)^\op \ar[d,"\ev_C"]\\
    \pvd (\pvd (A)^\op) \ar[r,"\simeq"]& \pvd (\hqf (C)^\op)
\end{tikzcd}$$commutes, which implies that $\ev_A$ is a quasi-equivalence precisely when $\ev_C$ is. 
\end{proof}

\subsection{Reflexivity and linear duality}
Let $R$ be a finite dimensional commutative semisimple $k$-algebra and let $C$ be a dg-$R$-coalgebra. We wish to study the relationship between reflexivity of $C$ and reflexivity of its $R$-linear dual $C^\vee$. 


Let $A$ be a dg algebra and $M$ a proper $A$-module. Say that $M$ \textbf{admits a finite dimensional model} if there is a finite dimensional $A$-module $M'$ and an $A$-linear quasi-isomorphism $M\simeq M'$.

\begin{prop}\label{linduallem}
Let $(C,A)$ be a Koszul duality pair. Then:
     \begin{enumerate}
         \item Sending $R \mapsto C^\vee$ induces a quasi-equivalence $\thick_A(R) \xrightarrow{\simeq} \per(C^\vee)^\op$.
         \item Across the quasi-equivalence of (1), the restricted evaluation map $\ev_{A,R}$ corresponds to the linear duality map $\fd(C)^\op \to \pvd(C^\vee)$.
         \item The following are equivalent:
         \begin{enumerate}
             \item $A$ is $R$-restricted reflexive.
             \item The linear duality map $\fd(C)^\op \to \pvd(C^\vee)$ is a quasi-equivalence.
         \end{enumerate}
        \item The linear duality map $\fd(C)^\op \to \pvd(C^\vee)$ is quasi-essentially surjective precisely when every proper $C^\vee$-module admits a finite dimensional model.
     \end{enumerate}
\end{prop}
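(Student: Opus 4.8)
The plan is to establish the four statements in order, using the earlier material on module-comodule Koszul duality, the identification $B^\vee_R A \simeq (A^!_R)^\op$, and the comparison between the coderived category and ordinary (pseudocompact) module categories.

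\medskip

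\emph{Step (1).} First I would recall that the $R$-linear dual of the bar construction satisfies $B^\vee_R A \simeq \R\mathrm{End}_A(R) \cong (A^!_R)^\op$, and that $C \simeq BA$ up to weak equivalence, so $C^\vee \simeq (A^!_R)^\op$ as pseudocompact dg-$R$-algebras. The module-comodule Koszul duality equivalence ${\mathcal{D}}(A)\simeq \dco(C)$ sends $R$ to $A$ and $C$ to $R$; restricting to the thick subcategory generated by $R$ on the left gives a quasi-equivalence $\thick_A(R)\simeq \thick_{\dco(C)}(R)$. When $C$ is conilpotent, the latter is $\fd(C)$. On the other hand, $\R\hom_C(-,-)$ on $\fd(C)$ is computed by the pseudocompact algebra $C^\vee$: linear duality identifies $\fd(C)^\op$ with $\per(C^\vee)$, since a finite dimensional comodule dualises to a finite dimensional (pseudocompact) module, and the compact generator $R$ of $\fd(C)$ goes to the compact generator $C^\vee$. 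Composing, $R \mapsto C^\vee$ gives the asserted quasi-equivalence $\thick_A(R)\xrightarrow{\simeq}\per(C^\vee)^\op$; this is essentially $\R\mathrm{End}_A(R) \cong (C^\vee)^\op$ repackaged functorially.

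\medskip

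\emph{Step (2).} The restricted evaluation functor $\ev_{A,R}\colon A^\op \to \pvd(\thick_A(R)^\op)$, extended to $\thick_A(R)$, sends an object $X$ to the module $Y \mapsto \R\hom_A(X,Y)$ on $\thick_A(R)$. Under the identification of (1), the target $\pvd(\thick_A(R)^\op)\simeq \pvd(\per(C^\vee))\simeq \pvd(C^\vee)$, and one checks on the generator $R \leftrightarrow C \leftrightarrow C^\vee$ that $\ev_{A,R}$ is intertwined with the functor $\fd(C)^\op \to \pvd(C^\vee)$ sending $N$ to its $R$-linear dual $N^\vee$ — indeed $\R\hom_A(R,-)$ corresponds to $\R\hom_C(-,C)$, which on $\fd(C)$ is linear duality. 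Naturality of all these identifications (bar-cobar counit, Koszul duality equivalence) gives the compatibility; this is a diagram chase, not a computation.

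\medskip

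\emph{Step (3).} This is immediate from (1) and (2): $A$ is $R$-restricted reflexive means $\ev_{A,R}$ is a quasi-equivalence, which by (2) holds precisely when the linear duality map $\fd(C)^\op \to \pvd(C^\vee)$ is a quasi-equivalence.

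\medskip

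\emph{Step (4).} The linear duality functor $\fd(C)^\op \to {\mathcal{D}}(C^\vee)$ is always quasi-fully-faithful onto its image with essential image inside $\pvd(C^\vee)$ (a finite dimensional comodule dualises to a finite dimensional, hence proper, module). An object $M \in \pvd(C^\vee)$ lies in the essential image iff $M \simeq N^\vee$ for some finite dimensional comodule $N$, equivalently iff $M^\vee$ (which is the corresponding comodule) is weakly equivalent to a finite dimensional comodule, i.e.\ iff $M$ admits a finite dimensional model as a $C^\vee$-module. Running over all $M\in\pvd(C^\vee)$ gives the claim. The one point needing care is the distinction between pseudocompact and ordinary $C^\vee$-modules and between quasi-isomorphism and weak equivalence of comodules; but a proper $C^\vee$-module that admits a finite dimensional model admits one in either sense, since finite dimensional modules are automatically pseudocompact and the two notions of equivalence agree on bounded finite dimensional objects.

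\medskip

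\emph{Main obstacle.} The genuinely delicate point is Step (4): the functor $\dco(C)^\op \to {\mathcal{D}}(C^\vee)$ is in general neither full, faithful, nor essentially surjective, so one must be careful to argue essential-image statements only on $\fd(C)$ (equivalently, on proper modules), where the comparison is controlled, and to match up "finite dimensional model" with "weakly equivalent to a finite dimensional comodule" across the pseudocompact/discrete dictionary. Everything else is a bookkeeping exercise with the Koszul duality equivalences already recorded above.
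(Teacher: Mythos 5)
Your overall architecture matches the paper's (part (1) via $C^\vee\simeq\R\mathrm{End}_A(R)$, part (2) by checking a square on a generator, part (3) formal, part (4) via the essential image), but Step (1) as written is wrong, and the error is not cosmetic. Module--comodule Koszul duality sends the comodule $R$ to the module $A$ and the comodule $C$ to the module $R$; hence it identifies $\per(A)$ with $\fd(C)=\thick_{\dco(C)}(R)$ and identifies $\thick_A(R)$ with $\thick_{\dco(C)}(C)$ --- not with $\fd(C)$, as you claim. Likewise, linear duality sends the comodule $R$ to $R^\vee\cong R$, not to $C^\vee$, and it is \emph{not} in general a quasi-equivalence $\fd(C)^\op\simeq\per(C^\vee)$; if it were, parts (3) and (4) of the proposition would be contentless. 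Concretely, for the pair with $A\simeq k[t]$ ($|t|=0$) and $C^\vee\simeq k[\epsilon]/(\epsilon^2)$ ($|\epsilon|=1$) from \Cref{circleExample}, linear duality sends the generator $R=k$ of $\fd(C)$ to the $C^\vee$-module $k$, which is not perfect (since $C^\vee$ is proper but $\R\mathrm{End}_{C^\vee}(k)\simeq k\llbracket x\rrbracket$ is not proper), and the comparison $\R\mathrm{End}_{\dco(C)}(k)\simeq k[t]$ versus $\R\mathrm{End}_{C^\vee}(k)\simeq k\llbracket x\rrbracket$ shows the functor is not even quasi-fully-faithful; this also refutes the ``always quasi-fully-faithful'' aside at the start of your Step (4). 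Note that your Step (2) then silently uses the correct dictionary $R\leftrightarrow C\leftrightarrow C^\vee$, contradicting the dictionary you set up in Step (1).

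The repair is exactly the closing remark of your Step (1), which is the paper's actual argument: since $C^\vee\simeq\R\mathrm{End}_A(R)$, the equivalence $\thick_A(R)\simeq\per(C^\vee)^\op$ sending $R\mapsto C^\vee$ is the standard one as in \Cref{thicklem}, with no detour through $\fd(C)$. Granting that, your Step (2) check on the generator --- both composites send $A$ to the $\per(C^\vee)$-module taking $C^\vee$ to $R$, with $a\in A$ acting by its action on $R$; equivalently $\R\hom_A(-,R)$ corresponds to $\R\hom_C(-,C)\cong(-)^\vee$ on $\fd(C)$ (you wrote $\R\hom_A(R,-)$, which has the wrong variance) --- is the paper's argument, Step (3) is formal as you say, and Step (4) needs only the essential-image statement (image $=$ modules admitting finite dimensional models), not any full-faithfulness, which is also how the paper concludes.
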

\begin{proof}
    Claim (1) is clear since $C^\vee \simeq \R\mathrm{End}_A(R)$. To prove (2), for brevity put $\mathcal{R} \coloneqq \thick_{{\mathcal{D}}(A)}(R)^\op$. Applying $\pvd$ to the equivalence of (1) hence yields an equivalence $\pvd (\mathcal{R}) \simeq \pvd(C^\vee)$. To check that the diagram $$\begin{tikzcd}
        \per (A)^\op\ar[d,"\ev_{A,R}"] \ar[r,"\simeq"]& \fd(C)^\op \ar[d,"(-)^\vee"] \\
        \pvd (\mathcal{R}) \ar[r,"\simeq"] & \pvd(\per (C^\vee))
    \end{tikzcd}$$
commutes, it is enough to check it on the generator $A$ of $\per(A)$ together with its endomorphisms. But both compositions send the object $A$ to the $\per (C^\vee)$-module which sends $C^\vee$ to $R$, and an element $a\in A$ to its action on $R$. The equivalence of (3a) and (3b) is immediate from the proof of (2). Finally, (4) follows from the fact that the essential image of $\fd(C)^\op \to \pvd(C^\vee)$ consists exactly of those modules with finite dimensional models.
\end{proof}

If $C$ is a conilpotent dg-$R$-coalgebra, observe that there is a natural completion map $\Omega C \to B_R(C^\vee)^\vee$. Following \cite[Proposition 4.1.7]{MBDDP}, this completion map is a quasi-isomorphism when $C$ is finite dimensional in each degree and either connective or 2-coconnective (i.e.\ connected with $C^0\cong k$ and $C^1\cong 0$).

\begin{cor}\label{LDreflex}
Let $(C,A)$ be a Koszul duality pair such that $R$ is a $\pvd$-generator for $A$. If $C$ is reflexive then $C^\vee$ is reflexive. The converse is true as long as $R$ is a $\pvd$-generator for $C^\vee$ and the completion map $A \to B_R(C^\vee)^\vee$ is a quasi-isomorphism.
\end{cor}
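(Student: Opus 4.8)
The plan is to deduce both implications from the restricted-reflexivity machinery of \Cref{linduallem}, combined with \Cref{mrefcog} and \Cref{lem: two out of three}. First I would unwind what reflexivity of $C$ means in this setup: by \Cref{mrefcog}, $C$ is reflexive if and only if $A = \Omega C$ is reflexive, i.e.\ $\ev_A$ is a quasi-equivalence. Since $R$ is assumed to be a $\pvd$-generator for $A$, we have $\thick_A(R) = \pvd(A)$, and hence $A$ is $R$-restricted reflexive precisely when it is reflexive (this is the content of \Cref{restobs} together with the proposition following it). So $C$ reflexive $\iff$ $A$ is $R$-restricted reflexive $\iff$ (by \Cref{linduallem}(3)) the linear duality map $\fd(C)^\op \to \pvd(C^\vee)$ is a quasi-equivalence.

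For the forward implication, suppose $C$ is reflexive. Then the linear duality map $\fd(C)^\op \to \pvd(C^\vee)$ is a quasi-equivalence. In particular it is quasi-essentially surjective, so by \Cref{linduallem}(4) every proper $C^\vee$-module admits a finite dimensional model; equivalently $\fd(C)^\op \simeq \per(C^\vee)^\op$ maps onto all of $\pvd(C^\vee)$, which says exactly that $R \simeq (C^\vee)^!_R$-dual side generates, i.e.\ that $R$ is a $\pvd$-generator for $C^\vee$. Now I would apply \Cref{lem: two out of three} to the dg algebra $C^\vee$ with the $\pvd$-generator $R$: we have $(C^\vee)^!_R \simeq \R\mathrm{End}_{C^\vee}(R)^\op$, and module-comodule Koszul duality identifies this with $A^\op$ (up to the opposite bookkeeping), so condition (2) of \Cref{lem: two out of three} for $C^\vee$ — that $R$ generates $\pvd((C^\vee)^!_R)$ — becomes the statement that $R$ generates $\pvd(A)$, which holds by hypothesis. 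It remains to supply one of conditions (1) or (3); I would verify (3), derived completeness of $C^\vee$ at $R$, by noting that the reflexivity of $C$ gives, via \Cref{linduallem}, that $A$ is $R$-restricted reflexive, and then chasing the two-out-of-three square for $A$ to see that $A$ is derived complete at $R$, whence by the Koszul-dual computation $B^\vee_R(-)$ the algebra $C^\vee$ inherits derived completeness. Then \Cref{lem: two out of three} yields reflexivity of $C^\vee$.

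For the converse, assume $C^\vee$ is reflexive, that $R$ is a $\pvd$-generator for $C^\vee$, and that the completion map $A \to B_R(C^\vee)^\vee$ is a quasi-isomorphism. Reflexivity of $C^\vee$ together with $R$ being a $\pvd$-generator gives, by \Cref{lem: two out of three} applied to $C^\vee$, that $R$ generates $\pvd((C^\vee)^!_R)$ and that $C^\vee$ is derived complete at $R$. Translating through $B^\vee_R$, the first of these says $R$ generates $\pvd(A)$ — which is our hypothesis anyway — and I would use the quasi-isomorphism $A \simeq B_R(C^\vee)^\vee \simeq (B^\vee_R B^\vee_R A)$ to transport derived completeness of $C^\vee$ into derived completeness of $A$ at $R$; this is exactly where the completion-map hypothesis is needed, since $B_R(C^\vee)^\vee$ is the double dual bar construction computing $A^{!!}_R$. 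Finally, with $R$ a $\pvd$-generator for $A$ and $A$ derived complete at $R$, \Cref{lem: two out of three} (or directly \Cref{lem: reflexive and Koszul dual}(1), using that $R$ also generates on the $A^!_R = (C^\vee)^\op$ side) gives that $A$ is reflexive, hence $C$ is reflexive by \Cref{mrefcog}.

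The main obstacle is the bookkeeping around which of the three algebras $A$, $A^!_R \simeq (C^\vee)^\op$, $C^\vee$ one applies \Cref{lem: two out of three} to, and keeping the generator hypotheses and the derived-completeness hypotheses on the correct side of each Koszul/linear duality. The genuinely nontrivial input — that transporting derived completeness of $C^\vee$ back to $A$ requires the completion map $A \to B_R(C^\vee)^\vee$ to be a quasi-isomorphism, rather than this being automatic — is precisely the extra hypothesis flagged in the statement, and is the reason the converse is not symmetric with the forward direction.
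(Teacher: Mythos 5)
Your overall skeleton can be repaired, but as written both directions contain genuine missteps, and the route is far more roundabout than the paper's.

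In the converse, the step ``reflexivity of $C^\vee$ together with $R$ being a $\pvd$-generator gives, by \Cref{lem: two out of three} applied to $C^\vee$, that $R$ generates $\pvd((C^\vee)^!_R)$ and that $C^\vee$ is derived complete at $R$'' is a misapplication of \Cref{lem: two out of three}: there, ``$M$ is a $\pvd$-generator for $A$'' is the standing hypothesis, not one of the three conditions of which any two imply the third. Knowing only condition (1) for $C^\vee$ (reflexivity), you may conclude that (2) and (3) are equivalent to each other, but not that either holds. The subsequent ``transport'' of derived completeness is also backwards: since $C^\vee \simeq \R\mathrm{End}_A(R)$ up to an opposite, $B_R(C^\vee)^\vee$ computes $A^{!!}_R$, so the completion-map hypothesis \emph{is} the statement that $A$ is derived complete at $R$; nothing needs to be transported from $C^\vee$, and the completeness of $C^\vee$ you tried to extract is never needed. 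With that correction your final step (via \Cref{lem: reflexive and Koszul dual}(1) or \Cref{lem: two out of three} for $A$) does close the converse.

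In the forward direction, the assertion that ``module-comodule Koszul duality identifies $(C^\vee)^!_R$ with $A^{\op}$'' is unjustified: $(C^\vee)^!_R \simeq \R\mathrm{End}_{C^\vee}(R)^{\op} \simeq (B_R(C^\vee)^\vee)^{\op}$ computes the derived completion $A^{!!}_R$ up to an opposite, so identifying it with $A$ is precisely derived completeness of $A$ at $R$ --- exactly the condition you correctly flag at the end as not automatic and as the reason the converse carries an extra hypothesis. In the forward setting this completeness is in fact true, but it must be proved (for instance from reflexivity of $A$ together with the generation statement you extract from essential surjectivity, via \Cref{lem: two out of three} applied to $A$), and your write-up invokes the identification before any such argument is available. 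Note finally that the detour is unnecessary: the paper's forward proof is simply that $C$ reflexive gives $\Omega C \simeq A$ reflexive (\Cref{mrefcog}), hence $\pvd(A)$ is reflexive, and $\pvd(A)=\thick_A(R)\simeq \per(C^\vee)^{\op}$ by \Cref{linduallem}(1) since $R$ is a $\pvd$-generator for $A$; the converse likewise only uses that $\pvd(C^\vee)$ is reflexive and Morita equivalent (up to an opposite) to $(C^\vee)^!_R \simeq B_R(C^\vee)^\vee \simeq A$. No two-out-of-three argument is needed in either direction.
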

\begin{proof}
    If $C$ is reflexive then so is $\Omega C$. Hence $\pvd(\Omega C)$ is also reflexive. But by \Cref{linduallem}(1), $\pvd(\Omega C)$ is Morita equivalent (up to an opposite) to $C^\vee$, and hence $C^\vee$ is reflexive. For the converse, if $C^\vee$ is reflexive then by \Cref{linduallem}(1) again, $\pvd(\Omega C)$ is reflexive. Since $R$ is a $\pvd$-generator for $C^\vee$, there is a Morita equivalence between $(C^\vee)_R^!$ and $\pvd(\Omega C)$, and hence $(C^\vee)_R^!$ is reflexive. But $(C^\vee)_R^!$ is the opposite of $B_R(C^\vee)^\vee$, which by assumption is $\Omega C$. Hence $\Omega C$ is reflexive, and hence $C$ is reflexive, as desired.
\end{proof}

\begin{rmk}
    In the converse situation of \Cref{LDreflex}, one can show that the linear duality map $\fd(C) \to \pvd(C^\vee)^\op$ is a quasi-equivalence. In particular, every proper $C^\vee$-module admits a finite dimensional model.
\end{rmk}

\section{Ginzburg dg algebras}

We use our results on Koszul duality to show that completed Ginzburg dg algebras \cite{ginzburg} and completed undeformed Calabi--Yau completions \cite{kellercyc} are reflexive. A similar treatment appears in \cite[Appendix]{kellercyc} and \cite{hanliuwang}.

\subsection{Calabi--Yau completions}
Let $Q$ be a finite quiver with vertex set $Q_0$ and arrow set $Q_1$. Write $kQ_0$ for the semisimple algebra on the vertex idempotents. The path algebra of $Q$ is then $kQ \coloneqq T_{kQ_0}(kQ_1)$. We denote composition in the path algebra from left to right, so that $ab$ means `follow arrow $a$ then arrow $b$'. If $M$ is a $kQ_0$-module we write $M^\vee$ for its $kQ_0$-linear dual. In particular, the space of arrows $kQ_1$ is a $kQ_0$-bimodule; its dual is the bimodule of `opposite arrows', often written $\overline{kQ_1}\coloneqq kQ_1^\vee$. If $a:u\to v$ is an arrow we denote its corresponding opposite arrow by $a^\vee \colon v \to u$.

Pairing an arrow with its dual gives natural $kQ_0$-linear pairings $kQ_1\otimes_{kQ_0} kQ_1^\vee \to kQ_0$ and $kQ_1^\vee \otimes_{kQ_0} kQ_1 \to kQ_0$ which we denote by $\langle -,-\rangle$. Concretely, if $a,b$ are arrows, then $\langle a, b^\vee\rangle$ is zero unless $a=b$, in which case it is $\mathrm{tail}(a) =\mathrm{head} (b^\vee)$. The pairing $\langle a^\vee, b\rangle$ behaves similarly.

Fix an integer $n$ and let $R_n$ be the graded $kQ_0$-module
$$R_n\coloneqq kQ_0\oplus kQ_1[-1]\oplus kQ_1^\vee[1-n]\oplus kQ_0[-n].$$
We equip $R_n$ with a multiplication making it into a graded $kQ_0$-algebra. Writing a basis element of $R_n$ in the form $(x,a,b^\vee,y)$, the multiplication is given by $$(x,a,b^\vee,y)(u,r,s^\vee,v) = (xu, xr+au,xs^\vee+b^\vee u,\langle a, s^\vee\rangle + \langle b^\vee,r\rangle +xv+yu)$$It is easy to verify that this multiplication makes $R_n$ into a graded $kQ_0$-algebra.

\begin{rmk}\label{cyccomprmk}
Let $S$ be the graded $kQ_0$-algebra given by the square-zero extension $kQ\oplus kQ_1[-1]$. In other words, $S$ is the quotient of the path algebra $kQ$ by the square of the arrow ideal. The grading is given by placing the arrows in degree one. Let $M$ be the graded $S$-bimodule $S^\vee[-n]$, with action given by the pairing $\langle-,-\rangle$. Then $R_n$ is the trivial extension algebra $S\oplus M$. This is a basic example of a construction known as cyclic completion \cite{segal}.
\end{rmk}

 Let $\Pi_n(Q)$ be Keller's (undeformed) Calabi--Yau completion of $kQ$, in the sense of \cite{kellercyc}, and let $\hat\Pi_n(Q)$ be the completion of $\Pi_n(Q)$ at the arrow ideal. Note that both of these are augmented dg-$kQ_0$-algebras. In what follows, if $A$ is an augmented dg algebra we write $A^!\coloneqq B(A)^\vee$. In addition if $A$ is finite dimensional we write $A^\dagger\coloneqq \Omega(A^\vee)$, following the notation of \cite{hanliuwang}.

\begin{lem}\label{rndualispin}For a finite quiver $Q$, there are dg-$kQ_0$-algebra quasi-isomorphisms\begin{enumerate}
    \item $R_n^\dagger\simeq \Pi_n(Q)$.
    \item $R_n^! \simeq \hat\Pi_n(Q)$.
    \item $ \hat\Pi_n(Q)^!\simeq R_n$.
\end{enumerate}

\end{lem}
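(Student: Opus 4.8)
The plan is to prove the three identifications in order, with (1) carrying all the genuine content and (2), (3) being formal consequences of it. For (1) I would compute the cobar construction directly. Since $R_n$ carries the zero differential, its $kQ_0$-linear dual $R_n^\vee$ is a conilpotent dg-$kQ_0$-coalgebra with vanishing internal differential whose comultiplication is the transpose of the multiplication formula given for $R_n$; in particular its reduced comultiplication is concentrated in the transpose of the pairing terms $\langle a,s^\vee\rangle+\langle b^\vee,r\rangle$. Hence $R_n^\dagger=\Omega(R_n^\vee)=T_{kQ_0}\big(\overline{R_n^\vee}[-1]\big)$, whose underlying graded algebra is the tensor $kQ_0$-algebra on $kQ_1^\vee$ in degree $0$, on $kQ_1$ in degree $2-n$ and on $kQ_0$ in degree $1-n$ --- exactly the underlying graded algebra of Keller's explicit model of $\Pi_n(Q)$. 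As $R_n^\vee$ has no internal differential, the cobar differential is determined entirely by the reduced comultiplication, and dualising the pairing terms reproduces the relations $d(a^\vee)=0$ and $d(t_i)=\sum_a[a,a^\vee]$ defining $\Pi_n(Q)$, giving an isomorphism of dg algebras $R_n^\dagger\cong\Pi_n(Q)$. (One must keep track of the quiver's orientation: the degree-$0$ generators $kQ_1^\vee$ of $R_n^\dagger$ are the reversed arrows, so the computation literally produces $\Pi_n$ of the opposite quiver, which one identifies with $\Pi_n(Q)$ via the evident anti-isomorphism. Alternatively, (1) follows from \Cref{cyccomprmk} and Segal's theorem \cite{segal} --- see also \cite{kellercyc,hanliuwang} --- that the Koszul dual of a cyclic completion $S\oplus S^\vee[-n]$ is the Calabi--Yau completion of $\Omega(S^\vee)$, together with the identification $\Omega(S^\vee)\cong kQ$ coming from the square-zero structure on $S$.)

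Statement (2) is essentially a reformulation of (1). We have $R_n^!=B_{kQ_0}(R_n)^\vee$, and $kQ_0$-linear dualisation carries the tensor coalgebra $B_{kQ_0}(R_n)=T^c_{kQ_0}(\overline{R_n}[1])$ to the \emph{completed} tensor algebra $\widehat{T}_{kQ_0}\big(\overline{R_n^\vee}[-1]\big)$, transposing the bar differential to the cobar differential; that is, $R_n^!=\widehat{\Omega}(R_n^\vee)$ is the completed cobar construction on $R_n^\vee$. Since the isomorphism of (1) is strict and carries augmentation ideal to augmentation ideal, completing both sides identifies $\widehat{\Omega}(R_n^\vee)$ with the completion of $\Pi_n(Q)$ along its arrow ideal, which is $\hat\Pi_n(Q)$ by definition.

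For (3) I would exploit that $R_n$ is finite-dimensional. Applying the identification $B^\vee_RA\simeq\R\mathrm{End}_A(R)$ to the augmented dg-$kQ_0$-algebra $\hat\Pi_n(Q)$ gives $\hat\Pi_n(Q)^!\simeq\R\mathrm{End}_{\hat\Pi_n(Q)}(kQ_0)$, while (2) together with the same identification applied to $R_n$ gives $\hat\Pi_n(Q)=R_n^!\simeq\R\mathrm{End}_{R_n}(kQ_0)$. Hence, up to routine bookkeeping with opposite algebras, $\hat\Pi_n(Q)^!$ is the derived double centraliser $(R_n)^{!!}_{kQ_0}$, i.e.\ the derived completion of $R_n$ along $kQ_0$. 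But $R_n$ is finite-dimensional, hence proper, and $kQ_0\cong R_n/\rad R_n\cong R_n/J_-$ is a $\pvd$-generator for $R_n$, so \Cref{proper is derived complete} shows that $R_n\to(R_n)^{!!}_{kQ_0}$ is a quasi-isomorphism; therefore $\hat\Pi_n(Q)^!\simeq R_n$.

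The only real difficulty is step (1): transposing the pairing part of the product on $R_n$ into the cobar differential and recognising the output as Keller's differential on the $n$-Calabi--Yau completion, while correctly handling the degree shifts and the orientation convention on $Q$ (equivalently, checking the cyclic-completion input of \cite{segal}). Once (1) is secured as a strict identification, (2) and (3) follow formally from the bar--cobar adjunction and \Cref{proper is derived complete}.
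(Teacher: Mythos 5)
Your proposal is correct and follows essentially the same route as the paper: (1) a direct computation of the cobar construction on $R_n^\vee$ matched against Keller's Ginzburg-type presentation of $\Pi_n(Q)$, (2) the identification of the dual bar construction with the completed cobar construction (the paper cites \cite[Proposition 4.1.7]{MBDDP}, while you dualise the tensor coalgebra directly, which amounts to the same thing since $R_n$ is finite dimensional), and (3) derived completeness of $R_n$ at $kQ_0\cong R_n/J_-$ via \Cref{proper is derived complete}, giving $\hat\Pi_n(Q)^!\simeq R_n^{!!}\simeq R_n$. Two cosmetic remarks: the $Q$-versus-$Q^{\op}$ bookkeeping in (1) is purely a matter of duality conventions that the paper also leaves implicit (your ``evident anti-isomorphism'' actually identifies $\Pi_n(Q^{\op})$ with $\Pi_n(Q)^{\op}$, so the clean fix is to choose the bimodule-dual convention so that the degree-zero generators come out as the arrows of $Q$), and in (3) the side claim that $kQ_0$ is a $\pvd$-generator for $R_n$ is unnecessary (and not obvious for $n\le 1$), since the finite-dimensional part of \Cref{proper is derived complete} applied to $R_n/J_-$ already gives the derived completeness you need.
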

\begin{proof}
By the definition of the cobar construction, the underlying graded algebra of $R_n^\dagger$ is freely generated over $kQ_0$ by the arrows $kQ_1$ in degree zero, the dual arrows $kQ_1^\vee$ in degree $2-n$, and loops $z_i$, one at each vertex $i$, placed in degree $1-n$. The differential satisfies $da=0, da^\vee=0$, and $dz_i = \sum_a e_i[a,a^\vee]e_i$ where $e_i$ is the idempotent at vertex $i$. Claim (1) then follows from the description of $\Pi_n(Q)$ as a Ginzburg dg category given in \cite[Theorem 6.3]{kellercyc}. Since $R_n^! \simeq B(R_n)^\vee$, Claim (2) follows from Claim (1) by an application of \cite[Proposition 4.1.7]{MBDDP}. Observe that the dg radical of the graded algebra $R_n$ is the ideal $kQ_1[-1]\oplus kQ_1^\vee[1-n]\oplus kQ_0[-n]$. Hence by \Cref{proper is derived complete} we see that $R_n$ is derived complete along $kQ_0$. Claim (3) now follows from Claim (2) using the quasi-isomorphism $R_n\simeq R_n^{!!}$.
\end{proof}

\begin{rmk}Note that the relations $dz_i = \sum_a e_i[a,a^\vee]e_i$ impose the preprojective relations on the cohomology of $\Pi_n(Q)$. In particular, $\Pi_2(Q)$ is the dg preprojective algebra of $Q$. When $Q$ has no cycles and is not of ADE type, then $\Pi_2(Q)$ is a resolution of the classical preprojective algebra of $Q$ \cite{hermes}. 
\end{rmk}

\begin{prop}\label{cycompref}For $n\geq 2$, both $R_n$ and $\hat\Pi_n(Q)$ are reflexive.
\end{prop}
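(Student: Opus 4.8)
The plan is to prove that $R_n$ is reflexive directly, using that it is a proper coconnective dg algebra with semisimple $H^0$, and then to transfer reflexivity to $\hat\Pi_n(Q)$ via the Koszul-duality identification $\hat\Pi_n(Q)\simeq R_n^!$ of \Cref{rndualispin}(2) together with the machinery of \Cref{section: derived completion} relating reflexivity to derived completeness.

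First I would dispatch $R_n$. Since $Q$ is finite, $R_n$ is finite dimensional over $k$, hence proper. Regarding $R_n$ as a formal dg algebra, its cohomology is concentrated in degrees $0$, $1$, $n-1$ and $n$, all of which are non-negative once $n\geq 2$; so $R_n$ is (strictly) coconnective, with $H^0(R_n)=kQ_0\cong\prod_{i\in Q_0}k$ a finite dimensional semisimple $k$-algebra. Hence \Cref{propcoconncor} applies and $R_n$ is reflexive.

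To deduce that $\hat\Pi_n(Q)$ is reflexive I would use \Cref{rndualispin}(2), which gives a quasi-isomorphism $\hat\Pi_n(Q)\simeq R_n^!$, together with the identity $R_n^!\simeq\bigl((R_n)^!_{kQ_0}\bigr)^{\op}$ coming from the comparison $B^\vee_RA\simeq(A^!_R)^{\op}$ recalled in \Cref{kdsection}. Put $M\coloneqq kQ_0$, viewed as an $R_n$-module via the augmentation. By the description of the dg radical of $R_n$ used in the proof of \Cref{rndualispin}, one has $R_n/J_-\cong kQ_0$; hence Orlov's results on the dg radical yield $\thick_{R_n}(M)=\pvd(R_n)$, so that $M$ is a $\pvd$-generator for $R_n$, and \Cref{proper is derived complete}(2) shows that $R_n$ is derived complete with respect to $M$. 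I would then apply \Cref{lem: reflexive and Koszul dual}(2) with $A=R_n$: since $R_n$ is reflexive and $\thick_{R_n}(M)=\pvd(R_n)$, the stated equivalence forces $(R_n)^!_{M}$ to be reflexive. Because a dg category is reflexive if and only if its opposite is, $\hat\Pi_n(Q)\simeq\bigl((R_n)^!_{M}\bigr)^{\op}$ is reflexive too.

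I do not expect a real obstacle here: granted \Cref{rndualispin}, the proof is essentially bookkeeping — assembling the right general statements and keeping track of which module acts as the $\pvd$-generator and of where opposites intervene. The one point needing a little attention is verifying the hypotheses of \Cref{lem: reflexive and Koszul dual} for the pair $(R_n,kQ_0)$, namely the derived completeness of $R_n$ along $kQ_0$ and the generation identity $\thick_{R_n}(kQ_0)=\pvd(R_n)$; but both are immediate consequences of $R_n$ being finite dimensional together with the explicit identification $R_n/J_-\cong kQ_0$.
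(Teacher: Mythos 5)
Your handling of $R_n$ itself is correct and is essentially the paper's argument: for $n\geq 2$ the graded algebra $R_n$ is finite dimensional, strictly coconnective with $H^0(R_n)\cong kQ_0$ semisimple, so \Cref{propcoconncor} (equivalently, derived completeness plus \Cref{lem: coconnective semisimple H^0 = reflexive}) gives reflexivity. The transfer to $\hat\Pi_n(Q)\simeq R_n^!$ is also in the spirit of the paper's proof. The genuine gap is in your justification of the generation identity $\thick_{R_n}(kQ_0)=\pvd(R_n)$: Orlov's dg-radical results do not yield this. What the paper records before \Cref{proper is derived complete} is only that every module with \emph{finite dimensional underlying complex} lies in $\thick_{R_n}(R_n/J_-)$; in particular $R_n\in\thick_{R_n}(kQ_0)$, which legitimately gives derived completeness via \Cref{thicklem}, but it says nothing about arbitrary perfectly valued modules. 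A perfectly valued module over a finite dimensional dg algebra need not admit a finite dimensional model — the paper explicitly flags Efimov's example of a formal coconnective finite dimensional dg algebra with such a module — so one cannot conclude in general that $A/J_-$ is a $\pvd$-generator. (If Orlov's result did give $\thick(A/J_-)=\pvd(A)$ for every finite dimensional dg algebra, the generation half of \Cref{Intro thm: finite dimensional graded gentle} would be essentially automatic, which it is not.)

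The gap is local and easily repaired inside your argument: since $R_n$ is strictly coconnective with $R_n^0=kQ_0$ finite dimensional semisimple (this is exactly where $n\geq 2$ enters; for $n=1$ the degree-zero part is a non-semisimple square-zero extension), the Keller--Nicol\'as result \Cref{prop: keller-nicolas locality} applies with $\sigma_{\leq 0}R_n=R_n^0=kQ_0$, so $kQ_0$ \emph{is} a $\pvd$-generator for $R_n$. With that substitution, your application of \Cref{lem: reflexive and Koszul dual}(2) goes through and $\hat\Pi_n(Q)\simeq\bigl((R_n)^!_{kQ_0}\bigr)^{\op}$ is reflexive, using op- and Morita-invariance of reflexivity. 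The paper reaches the same conclusion slightly more directly: derived completeness at $kQ_0$ (from \Cref{rndualispin}) plus \Cref{lem: coconnective semisimple H^0 = reflexive} gives reflexivity of $R_n$, and then $\pvd(R_n)\simeq\per(R_n^!)^{\op}$ is reflexive as well; your detour through \Cref{lem: reflexive and Koszul dual} is logically fine once the generator is correctly justified.
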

\begin{proof} Since $n\geq 2$, the algebra $R_n$ is coconnective and we have $H^0R_n\cong kQ_0$, which is a finite dimensional semisimple $k$-algebra. Moreover $R_n$ is derived complete at $kQ_0$ by \Cref{rndualispin}. Hence by \Cref{lem: coconnective semisimple H^0 = reflexive}, we see that both $R_n$ and $R_n^!$ are reflexive, as required.
\end{proof}

\begin{rmk}
    When $Q$ has no cycles, the natural map $\Pi_n(Q) \to \hat\Pi_n(Q)$ is an isomorphism (in particular a quasi-isomorphism!), and hence for $n\geq 2$ we see that $\Pi_n(Q)$ is reflexive. When $Q$ is a tree, $n=2$, and $\mathrm{char}(k) \neq 2$ this was proved in \cite{EtguLekili}, who used it to compute the symplectic cohomology of an associated Liouville manifold obtained by plumbing copies of $T^*S^2$ along $Q$. Similar results in higher dimensions were given in \cite{KarabasLee}.
\end{rmk}

\begin{rmk}
    When the quiver $Q$ itself is graded, then $\Pi_n(Q)$ inherits an extra grading, often known as an \textbf{Adams grading}. In good situations, one can use this to prove that $\Pi_n(Q)$ is formal, or complete at the arrow ideal.
\end{rmk}

\begin{rmk}
    For $n\leq 0$, the algebra $R_n$ is neither connective nor coconnective. However, $R_1$ is coconnective, with $H^0(R_1)\cong R^0_1$ a square-zero extension of $kQ_0$. In particular, one should be able to generalise \Cref{cycompref} to the $n=1$ case; ideally this would follow from a similar analysis along the lines of \Cref{weightsection} where one relaxes the condition that $H^0$ be semisimple to the condition that it be Artinian. It is possible that one can use the compactly generated co-t-structures of \cite{pauksztello} to do this (specifically, the co-t-structure generated by $R_1$ itself together with all of its negative shifts).
\end{rmk}

\subsection{Completed Ginzburg algebras}
In the $n=3$ case we now want to turn on a superpotential $W$, to obtain reflexivity results for deformed $3$-Calabi--Yau completions. These were first studied by Ginzburg \cite{ginzburg} and are hence known as Ginzburg dg algebras. In this section, we restrict $k$ to be a characteristic zero field.

Recall that a \textbf{(completed) superpotential} $W$ on a quiver $Q$ is an element of the completed cocentre $\widehat{kQ}/[\widehat{kQ},\widehat{kQ}]$. In simpler terms, a superpotential is a possibly infinite linear combination of cycles, with only a finite number of cycles of any given length occurring. We call a superpotential \textbf{finite} if it has only finitely many terms. Let $W$ be a superpotential and $a$ an arrow of $Q$. We define the \textbf{cyclic derivative} of $W$ with respect to $a$ to be the sum $\partial_aW\coloneqq \sum_{W=uav} vu \in \widehat{kQ}$.

The \textbf{Ginzburg dg algebra} $\Gamma(Q,W)$ associated to a quiver with finite superpotential $(Q,W)$ is defined as follows. The underlying graded algebra is the same as that of $\Omega(R_3^\vee)$. The differential is defined by $da =0$, $da^\vee = \partial_aW$, and $dz_i = \sum_a e_i[a,a^\vee]e_i$. In other words, it is the same as $\Pi_3(Q)$ but the differentials of the opposite arrows are deformed by the superpotential $W$. We have $H^0(\Gamma(Q,W))\cong kQ/(\partial_aW)_{a\in Q_1}$, the \textbf{Jacobi algebra} $\mathrm{Jac}(W)$ associated to the superpotential. Similarly, one can define a \textbf{completed Ginzburg dg algebra} $\hat\Gamma(Q,W)$ from a quiver with superpotential, and $H^0(\hat\Gamma(Q,W))$ is the \textbf{completed Jacobi algebra} $\widehat{\mathrm{Jac}}(W)$.

\begin{rmk}
    There is a natural completion map $\Gamma(Q,W)\to \hat\Gamma(Q,W)$. When $(Q,W)$ is \textbf{Jacobi-finite}, i.e. ${\mathrm{Jac}}(W)$ is a finite dimensional algebra, then this map is a quasi-isomorphism; one can show this in a similar manner to the proof of \cite[Theorem 4.2.6]{MBDDP}.
\end{rmk}

We will use the notation $W=W^{\geq m}$ to mean that all of the cycles appearing in $W$ have length $\geq m$. This ensures that each term in the cyclic derivatives of $W$ has length at least $m-1$.

\begin{prop}[Van den Bergh]
    Let $Q$ be a quiver and $W=W^{\geq 2}$ a superpotential on $Q$. Then there exists the structure of an $A_\infty$-$kQ_0$-algebra $R^W_3$ on the graded vector space $R_3$ and a quasi-isomorphism $B(R_3^W)^\vee \simeq \hat\Gamma(Q,W)$. If $W=W^{\geq 3}$ then $R_3^W$ is minimal (i.e.\ the differential vanishes). If $W=W^{\geq 4}$ then the underlying graded algebra of $R^W_3$ agrees with the previously defined algebra structure on $R_3$. If $W=0$ then $R_3^W=R_3$.
\end{prop}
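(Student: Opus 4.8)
The plan is to extract the $A_\infty$-structure directly from the (completed) cobar presentation of $\hat\Gamma(Q,W)$, following the cyclic Koszul duality of Van den Bergh \cite{vdbsup} (see also \cite{segal}, the appendix to \cite{kellercyc}, and \cite{hanliuwang}). First, forgetting differentials, the completed Ginzburg algebra $\hat\Gamma(Q,W)$, the undeformed Ginzburg algebra $\hat\Pi_3(Q)=\hat\Gamma(Q,0)$, and $R_3^!=B^\vee_{kQ_0}(R_3)$ all share the same underlying complete graded $kQ_0$-algebra (cf.\ \Cref{rndualispin}). The differential $d$ of $\hat\Gamma(Q,W)$ is $kQ_0$-linear and a graded derivation, hence determined by its values on the free generators: $da=0$, $da^\vee=\partial_aW$, and $dz_i=\sum_a e_i[a,a^\vee]e_i$. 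The task is to realise this perturbation of the undeformed differential as the differential of the dual bar construction of a suitable $A_\infty$-deformation of the graded algebra $R_3$.

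Decompose $d=\sum_{j\ge 1}d_j$, where $d_j$ collects the terms whose target involves a product of exactly $j$ free generators. Since $\partial_aW=\sum_{W=uav}vu$, a cycle of length $\ell$ occurring in $W$ contributes to $d_{\ell-1}$ through the rule $a^\vee\mapsto\partial_aW$, while the relation $dz_i=\sum_a e_i[a,a^\vee]e_i$ contributes to $d_2$; thus $d_1$ depends only on the length-$2$ cycles of $W$, $d_2$ on the length-$3$ cycles together with the $z_i$-relations, and $d_j$ (for $j\ge 2$) on the length-$(j+1)$ cycles (with the $z_i$-relations extra in the $j=2$ case). Under the identification of complete graded algebras above, each $d_j$ dualises to a $kQ_0$-linear operation $m_j$ on the graded vector space $R_3$, with $kQ_0$ a strict unit; in particular $m_1$ is dual to the length-$2$ part of $W$, and $m_2$ is the sum of the original multiplication of $R_3$ (dual to the $z_i$-relations, again by \Cref{rndualispin}) and a correction dual to the length-$3$ part of $W$. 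The identity $d^2=0$ — which holds precisely because $W$ lies in the completed cocentre, i.e.\ is cyclically invariant, as in \cite{ginzburg} — is equivalent to the $A_\infty$-relations for $\{m_j\}$; moreover $\{m_j\}$ is cyclic with respect to the canonical pairing on the trivial extension $R_3=S\oplus S^\vee[-3]$ of \Cref{cyccomprmk}, although cyclicity is not needed here. Setting $R_3^W\coloneqq(R_3,\{m_j\})$, the construction produces the required quasi-isomorphism $B^\vee_{kQ_0}(R_3^W)\simeq\hat\Gamma(Q,W)$, which is in fact an equality by construction.

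The remaining statements follow from this word-length analysis. If $W=W^{\geq 3}$ then $W$ has no length-$2$ cycles, so $d_1=0$ and hence $m_1=0$: the $A_\infty$-algebra $R_3^W$ is minimal. If moreover $W=W^{\geq 4}$ then $W$ has no length-$3$ cycles either, so the correction to $m_2$ vanishes and the underlying graded algebra of $R_3^W$ is exactly that of $R_3$. If $W=0$ then only the $z_i$-relations remain, so $R_3^W=R_3$ with zero differential. I expect the genuinely non-formal point of a complete proof to be the bookkeeping of signs and shifts needed to verify that the operations $\{m_j\}$ obtained by dualising $\{d_j\}$ satisfy the shifted $A_\infty$-relations and that $B^\vee$ of this structure reproduces $\hat\Gamma(Q,W)$; this is precisely the compatibility encoded in Van den Bergh's cyclic-completion formalism for the trivial extension $S\oplus S^\vee[-3]$, so that a careful treatment essentially reduces to citing \cite{vdbsup} and matching conventions.
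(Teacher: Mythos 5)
Your proposal is correct and follows essentially the same route as the paper: the paper's proof also reads off the $A_\infty$-structure on $R_3$ by dualising the Ginzburg differential through the bar/cobar dictionary (citing Van den Bergh's appendix \cite[A.15]{kellercyc} for the detailed verification) and deduces the $W=W^{\geq 3}$, $W=W^{\geq 4}$, and $W=0$ cases from exactly the word-length bookkeeping you describe. The only point the paper makes explicit that you omit is the normalisation $m_r(a_1,\ldots,a_r)=\pm b^\vee/r$ in the cited formula, which is where the standing characteristic-zero hypothesis of the section enters.
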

\begin{proof}
This is \cite[A.15]{kellercyc}. Abstract existence of the $A_\infty$ structure simply follows from the fact that the Ginzburg algebra is a dg algebra. To actually construct the $A_\infty$ products $m_r$, the idea is that to obtain $R_3^W$ from $R_3$, one need only add terms of the form $m_r(a_1,\ldots,a_r) = \pm b^\vee/r$ corresponding to length $r+1$ cycles in $W$ whose cyclic derivative with respect to $b$ is a cyclic permutation of $a_1\cdots a_r$. Note that here we are using that $k$ has characteristic zero. In particular, if $W$ has no 2-cycles then we do not modify the differential of $R_3$, and if $W$ has no 3-cycles then we do not modify the multiplication.
\end{proof}

\begin{rmk}
Cyclic invariance of $W$ ensures that the above constructed $A_\infty$ structure is actually a cyclic $A_\infty$ structure in the sense of \cite{kontsoib}; the relevant inner product on $R_3^W$ is the one described above. A concrete description of the above constructed $m_r$ for the two-loop one-vertex quiver is given in \cite{brownwemyss} in terms of necklace polynomials. 
\end{rmk}

\begin{rmk}
    If $Q$ is a finite quiver and $\lambda = (\lambda_i)_{i\in Q_0}$ is a set of weights on $Q$, then the \textbf{deformed dg preprojective algebra} $\Pi_2(Q,\lambda)$ is defined similarly to $\Pi_2(Q)$, but where we now modify $dz_i$ by a $\lambda_i e_i$ term \cite{kellercyc, kalckyangii}. The relevant modification of $R_2$ now requires a curvature term, which our methods cannot handle.
\end{rmk}

Note that $R_3^W$ is $A_\infty$-quasi-isomorphic to a dg algebra, namely the dg algebra $\Omega B(R_3^W)$. We will abusively say that $R_3^W$ is reflexive to mean that this latter dg algebra is reflexive.

\begin{prop}\label{ginzref}
    Let $Q$ be a finite quiver and $W=W^{\geq 3}$ a superpotential on $Q$. Then both $R_3^W$ and $\hat\Gamma(Q,W)$ are reflexive.
\end{prop}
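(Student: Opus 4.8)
The plan is to run the proof of \Cref{cycompref} with the undeformed algebra $R_n$ replaced by the $A_\infty$-algebra $R_3^W$, and then transport reflexivity across Koszul duality to $\hat\Gamma(Q,W)$; the point is that turning on a superpotential with $W=W^{\geq3}$ affects none of the structural features of $R_3$ used there. By the proposition of Van den Bergh above, $R_3^W$ is minimal, so its underlying graded vector space is $R_3$: for a finite quiver this is finite-dimensional and concentrated in cohomological degrees $0$ through $3$, with degree-zero part the finite-dimensional commutative semisimple $k$-algebra $kQ_0$. Thus $R_3^W$ is a finite-dimensional, strictly coconnective, minimal $A_\infty$-$kQ_0$-algebra with $H^0(R_3^W)\cong kQ_0$. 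Passing to the quasi-isomorphic dg algebra $\Omega B_{kQ_0}(R_3^W)$ — the dg representative of the Koszul duality pair $(B_{kQ_0}(R_3^W),R_3^W)$ — one gets a coconnective dg algebra which is proper (it has the same finite-dimensional total cohomology as $R_3^W$) with semisimple $H^0\cong kQ_0$.

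For the reflexivity of $R_3^W$ I would invoke the machinery of \Cref{weightsection}: by \Cref{prop: keller-nicolas locality}, $\sigma_{\leq 0}R_3^W\simeq kQ_0$ is a $\pvd$-generator; since $R_3^W$ is proper, \Cref{proper is derived complete} shows it is derived complete at $kQ_0$; and then \Cref{lem: coconnective semisimple H^0 = reflexive} (equivalently \Cref{propcoconncor}) shows that $R_3^W$ is reflexive, where — as remarked above — this is the assertion that $\Omega B_{kQ_0}(R_3^W)$ is reflexive. For $\hat\Gamma(Q,W)$ I would combine this with the Van den Bergh proposition above: it provides a quasi-isomorphism $\hat\Gamma(Q,W)\simeq B_{kQ_0}(R_3^W)^\vee\simeq\R\mathrm{End}_{R_3^W}(kQ_0)$, and since $kQ_0$ is a $\pvd$-generator for $R_3^W$ a tilting argument identifies $\per(\hat\Gamma(Q,W))$ with $\pvd(R_3^W)$ up to an opposite. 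As $R_3^W$ is reflexive, so is $\pvd(R_3^W)$, and hence so is $\hat\Gamma(Q,W)$; alternatively one can package this last step via \Cref{LDreflex} and \Cref{mrefcog}, noting that $B_{kQ_0}(R_3^W)$ is a reflexive dg coalgebra whose $kQ_0$-linear dual is $\hat\Gamma(Q,W)$.

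All of the computations here are routine; the only point that needs care is the $A_\infty$/dg bookkeeping — concretely, checking that $\Omega B_{kQ_0}(R_3^W)$ is honestly a proper coconnective dg algebra with semisimple $H^0$ (so that the results of \Cref{weightsection} apply on the nose) and that $\sigma_{\leq 0}R_3^W$ may be taken to be the module $kQ_0$ (so that $\R\mathrm{End}_{R_3^W}(kQ_0)\simeq B_{kQ_0}(R_3^W)^\vee$ is the relevant Morita dual). Both are immediate from minimality of $R_3^W$ and its concentration in degrees $0$ through $3$ with semisimple degree-zero part, so I do not expect any essential obstacle beyond this.
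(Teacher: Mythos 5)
Your proposal is correct and follows essentially the same route as the paper: pass to the dg model $\Omega B_{kQ_0}(R_3^W)$, observe it is a proper coconnective dg algebra with finite dimensional semisimple $H^0\cong kQ_0$ (hence derived complete at the $\pvd$-generator $kQ_0$ by \Cref{prop: keller-nicolas locality} and \Cref{proper is derived complete}), apply \Cref{lem: coconnective semisimple H^0 = reflexive}, and identify $\hat\Gamma(Q,W)$ with the Koszul/Morita dual via Van den Bergh's quasi-isomorphism. The extra bookkeeping you flag (minimality, strict coconnectivity, $\sigma_{\leq 0}\simeq kQ_0$) is exactly what the paper leaves implicit in its reference to the proof of \Cref{cycompref}.
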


\begin{proof}
This is similar to the proof of \Cref{cycompref}. Put $A\coloneqq \Omega B(R_3^W)$. Then $A$ is a coconnective dg algebra with finite dimensional semisimple $H^0$. Moreover $A$ is a proper dg algebra and hence derived complete. Since $A^!\simeq \hat\Gamma(Q,W)$, an application of \Cref{lem: coconnective semisimple H^0 = reflexive} proves the desired statement.
\end{proof}

\begin{rmk}
    The proof of \Cref{ginzref} shows that $R^W_3$ is derived complete, and so we obtain an $A_\infty$-quasi-isomorphism $R^W_3\simeq \R\mathrm{End}_{\hat\Gamma(Q,W)}(kQ_0)$. When $W=W^{\geq 4}$, we see that $R^W_3$ is the graded algebra $R_3\cong\mathrm{Ext}^*_{\hat\Pi_3(Q)}(kQ_0,kQ_0)$ equipped with higher $A_\infty$ multiplications. The CY property for $\hat{\Pi}_3(Q)$ yields a description of $R_3$ in terms of $\mathrm{Ext}^*_{kQ}(kQ_0,kQ_0)$ which reduces to the cyclic completion of \Cref{cyccomprmk}. More generally, one should view $R_3^W$ as a `deformed cyclic completion' of $\mathrm{Ext}^*_{kQ}(kQ_0,kQ_0)$.
\end{rmk}

\begin{rmk}
    The condition $W=W^{\geq 3}$ is rather natural and already appears in the literature (e.g.~\cite{HuaZhou}). Indeed, if $W$ has terms of length $1$ or $2$ then one can remove vertices and arrows from $Q$ to obtain a modified quiver with superpotential $(Q',W')$ such that $W'= W'^{\geq 3}$ and $\mathrm{Jac}(W) \simeq \mathrm{Jac}(W')$.
\end{rmk}

\begin{rmk}
As suggested by Matthew Pressland, it would be interesting to establish similar reflexivity results for \textit{relative} Ginzburg algebras \cite{WuIce} and the related class of 1-periodic topological Fukaya categories \cite{Christ}. {This would also establish reflexivity for certain Chekanov--Eliashberg dg algebras \cite{asplund}.}
\end{rmk}

\section{Chains and cochains on topological spaces}
\label{section: Chains and cochains on topological spaces}
In this section we prove that in a wide variety of situations, the dg (co)algebra of (co)chains on a topological space is reflexive. 

\subsection{$\infty$-local systems and Koszul duality}
Here we broadly follow the approach of \cite[Section 5.1]{BDCY}. Fix a field $k$ and let $X$ be a topological space. We define a dg category $\mathcal{C}(X)$ as follows. First view $X$ as a Kan complex via the singular simplicial set functor. Apply the homotopy coherent rigidification functor $\mathfrak{C}$ to the $\infty$-groupoid $X$ to obtain a simplicially enriched category $\mathfrak{C}X$. Linearise the mapping spaces to obtain a category $k\mathfrak{C}X$ enriched in simplicial $k$-vector spaces. Finally, apply the normalised chains functor $N$ of the Dold--Kan correspondence to each mapping space to obtain a dg category $\mathcal{C}(X) \coloneqq N(k\mathfrak{C}X)$. Note that the $\mathcal{C}$ functor is the left adjoint of the dg nerve functor $N_\mathrm{dg}$, so that if $\mathcal{A}$ is a dg category we have a quasi-equivalence $\mathcal{C} N_\mathrm{dg} \mathcal{A}\xrightarrow{\simeq} \mathcal{A}$. The derived category ${\mathcal{D}}(\mathcal{C}X)$ can be interpreted as the category of $\infty$-local systems of dg $k$-vector spaces on $X$.

\begin{rmk}
    The objects of $\mathcal{C}(X)$ are in bijection with the points of $X$, and the mapping spaces are - up to quasi-isomorphism - given by taking $k$-linear chains on the corresponding path space.
\end{rmk}

We let $C_\bullet (X,k)$ denote the dg coalgebra of $k$-chains on $X$, and we let $C^\bullet(X,k)$ denote the dg algebra of $k$-cochains on $X$. We will typically abuse notation and simply denote them as $C_\bullet X$ and $C^\bullet X$, leaving the base field implicit. Observe that $C^\bullet X$ is the $k$-linear dual of $C_\bullet X$. Fixing a point $x\in X$, let $\Omega_x X$ be the space of $x$-based Moore loops in $X$. The `concatenation of loops' operation on $\Omega_xX$ gives the connective dg coalgebra $C_\bullet \Omega_x X$ the structure of a dg algebra.

\begin{rmk}
    If $G_xX$ denotes the $x$-based Kan loop group of the singular simplicial set of $X$, then $C_\bullet(G_xX)$ is a dg Hopf algebra, and there is a quasi-isomorphism $C_\bullet(G_xX)\simeq C_\bullet(\Omega_xX)$ of dg algebras (see e.g.~\cite{GJbook}).
\end{rmk}

If $X$ is path connected then, suppressing the point $x$ from the notation, by \cite[Remark 5.2]{BDCY} we have a quasi-equivalence of dg categories $$\mathcal{C}(X)\simeq C_\bullet \Omega X$$and hence to study $\mathcal{C}(X)$ we may as well study the dg algebra $A\coloneqq C_\bullet\Omega X$. Observe that we have an isomorphism $H^0(A) \cong k\pi_1 (X)$, and in particular $A$ admits a natural augmentation over $k$.

\begin{rmk}
    Although there is a natural map $C_\bullet\Omega X \to k\pi_1(X)$, it generally does not admit a section. Even when $k\pi_1(X)$ is a commutative semisimple $k$-algebra (for example, when $k$ has characteristic zero and $\pi_1(X)$ is finite abelian), the dg algebra $C_\bullet \Omega X$ is rarely a $k\pi_1(X)$-algebra unless $X$ is simply connected.
\end{rmk}

\begin{thm}[{\cite{rzcubes, chlmon}}]\label{rztheorem}Let $X$ be a path connected topological space. Then there is a dg algebra quasi-isomorphism $\Omega (C_\bullet X) \simeq C_\bullet (\Omega X)$.
\end{thm}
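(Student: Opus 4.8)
\emph{Proof proposal.} The result is the non-simply-connected form of Adams' cobar theorem identifying the cobar construction on the chain coalgebra of a space with the chains on its based loop space. The standard proof in the simply connected case --- compare the cobar construction with the path-loop fibration via the Eilenberg--Moore/cobar spectral sequence, or via acyclic models --- does not survive here, because the cobar construction does not preserve arbitrary quasi-isomorphisms and the relevant spectral sequence need not converge without a connectivity hypothesis. The plan is therefore to (i) reduce to a purely simplicial statement, (ii) write down the canonical comparison dg algebra map, and (iii) prove it is a quasi-isomorphism by realising the cobar construction as the chain complex of an explicit cubical model of the loop space.

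First I would reduce to reduced simplicial sets. As $X$ is path connected and pointed, the first Eilenberg subcomplex $S \coloneqq \mathrm{Sing}^1(X) \hookrightarrow \mathrm{Sing}(X)$ is a weak equivalence onto a reduced simplicial set, so $C_\bullet X \simeq C_\bullet S$ as dg coalgebras for the Alexander--Whitney coproduct, and $\Omega X \simeq |GS|$ compatibly with loop concatenation, where $G$ denotes the Kan loop group; hence $C_\bullet \Omega X \simeq C_\bullet(GS)$ as dg algebras. Since $S$ is reduced, $C_\bullet S$ is a conilpotent dg-$k$-coalgebra and $\Omega(C_\bullet S)$ is the cobar construction of the earlier sections. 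It thus suffices to construct, naturally in a reduced simplicial set $S$, a quasi-isomorphism of dg algebras $\Omega(C_\bullet S) \simeq C_\bullet(GS)$, and to check it is compatible with the replacements above. The canonical candidate is the dg algebra map adjoint to the coalgebra map $C_\bullet S \to B\big(C_\bullet(GS)\big)$ encoding the universal twisting cochain $C_\bullet S \to C_\bullet(GS)$ (Szczarba's cochain).

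The heart of the argument, and the main obstacle, is to prove this map is a quasi-isomorphism. Here I would follow the cubical-rigidification strategy: construct a functor $S \mapsto \mathfrak{C}_\square S$ from reduced simplicial sets to categories enriched in cubical sets, with a single object $\ast$, designed so that the normalised cubical chain complex $C^\square_\bullet(\mathfrak{C}_\square S)(\ast,\ast)$ is \emph{isomorphic} as a dg algebra to $\Omega(C_\bullet S)$. Concretely, the free generators of the tensor algebra $T\big(\overline{C_\bullet S}[-1]\big)$ match the non-degenerate simplices of $S$; a bar word $x_1|\cdots|x_r$ indexes the top cell of a product (``necklace'') of standard cubes; and the two summands of the cobar differential --- the internal differential and the deconcatenation term built from the Alexander--Whitney diagonal --- correspond to the two families of codimension-one faces of such product cubes. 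Verifying this identification term by term, signs included, and uniformly in $S$, is precisely where the simply-connected shortcut is replaced by genuine combinatorics; it is the crux of the proof.

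Finally one identifies $\mathfrak{C}_\square S$ with the Kan loop group: there is a natural zig-zag of weak equivalences, compatible with the monoidal structures (concatenation of necklaces versus multiplication in $GS$), between a cubical model extracted from $\mathfrak{C}_\square S$ and $GS$ itself. Passing to normalised chains, comparing cubical with simplicial chains, and tracking the algebra structures then yields $$\Omega(C_\bullet S) \;\cong\; C^\square_\bullet(\mathfrak{C}_\square S)(\ast,\ast) \;\simeq\; C_\bullet(GS) \;\simeq\; C_\bullet(\Omega X)$$ as dg algebras, which is the claim. An alternative, closer to the description $\mathcal{C}(X) = N(k\mathfrak{C}X)$ used later in this section, is to compare the simplicial homotopy-coherent rigidification $\mathfrak{C}(\mathrm{Sing}X)$ with the cobar construction directly, trading the cubical bookkeeping for the necklace description of $\mathfrak{C}$; but the essential combinatorial identification, and hence the main difficulty, is the same.
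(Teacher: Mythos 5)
This theorem is not proved in the paper at all: it is quoted from the cited references \cite{rzcubes, chlmon}, where it is established exactly by the cubical rigidification strategy you describe (reduction to a reduced simplicial set, the Kan loop group, and the identification of the cobar construction with the normalised cubical chains on a cubically enriched rigidification, extending Adams' simply connected result). Your outline is a correct sketch of that argument, with the genuinely hard step --- the term-by-term, sign-correct isomorphism $\Omega(C_\bullet S)\cong C^\square_\bullet(\mathfrak{C}_\square S)(\ast,\ast)$ and its comparison with $GS$ --- deferred to exactly the combinatorial content carried out in those references, so it matches the paper's (cited) proof rather than offering an alternative route.
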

\begin{cor}\label{cor: Koszul dual chain and cochains}
Let $X$ be a path connected topological space. Then there is a quasi-isomorphism $(C_\bullet \Omega X)^! \simeq C^\bullet X$.    
\end{cor}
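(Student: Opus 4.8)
The plan is to obtain the statement as a one-line consequence of \Cref{rztheorem} together with Koszul duality, once we recall what $(C_\bullet\Omega X)^!$ is. Here $A\coloneqq C_\bullet\Omega X$ is augmented over $k$ via $A\onto H^0(A)=k\pi_1(X)\onto k$, and $A^!$ is the dual bar construction $B^\vee_kA=B_k(A)^\vee$, which we have identified with $\R\mathrm{End}_A(k)$. By \Cref{rztheorem} there is a quasi-isomorphism of augmented dg $k$-algebras $C_\bullet\Omega X\simeq \Omega(C_\bullet X)$. Since both the bar construction and the $k$-linear dual preserve quasi-isomorphisms over a field, this gives
$$(C_\bullet\Omega X)^!\;=\;B^\vee_k(C_\bullet\Omega X)\;\simeq\;B^\vee_k\bigl(\Omega(C_\bullet X)\bigr)\;=\;\bigl(B\,\Omega(C_\bullet X)\bigr)^\vee .$$

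It then remains to identify $\bigl(B\Omega(C_\bullet X)\bigr)^\vee$ with $C^\bullet X$. For this I would invoke Koszul duality: $C_\bullet X$ is a conilpotent dg coalgebra (this is built into the hypotheses under which $\Omega(C_\bullet X)$, and hence \Cref{rztheorem}, is formulated), so the unit $C_\bullet X\to B\,\Omega(C_\bullet X)$ is a weak equivalence of dg coalgebras, and in particular a quasi-isomorphism of the underlying complexes. Applying the exact functor $(-)^\vee=\hom_k(-,k)$ yields a quasi-isomorphism $\bigl(B\Omega(C_\bullet X)\bigr)^\vee\simeq (C_\bullet X)^\vee$, and $(C_\bullet X)^\vee=C^\bullet X$ by definition of the cochain algebra. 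Splicing the quasi-isomorphisms together gives $(C_\bullet\Omega X)^!\simeq C^\bullet X$. Read through the identification $B^\vee_kA\simeq\R\mathrm{End}_A(k)$, this says exactly that $C^\bullet X\simeq \R\mathrm{End}_{C_\bullet\Omega X}(k)$ — the derived endomorphism algebra of the trivial $\infty$-local system — which is the Adams/Eilenberg--Moore-type form in which the corollary is used below.

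There is no genuine obstacle: the proof is a short diagram chase, and the only points needing care are bookkeeping. One must know that $C_\bullet X$ is conilpotent so that the (co)bar formalism applies — guaranteed by path-connectedness and already implicit in \Cref{rztheorem} — and that $B$ and $(-)^\vee$ preserve the relevant quasi-isomorphisms, which over a field is standard and is used elsewhere in the paper. Finally, if one instead reads $(C_\bullet\Omega X)^!$ as $\R\mathrm{End}_{C_\bullet\Omega X}(k)^{\op}$, the same argument produces $(C_\bullet\Omega X)^!\simeq (C^\bullet X)^{\op}$; but $C^\bullet X$ is an $E_\infty$-algebra, in particular homotopy commutative, hence quasi-isomorphic to its opposite, so the conclusion is unaffected.
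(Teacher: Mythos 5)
Your argument is correct and follows exactly the route the paper intends (the paper states the corollary without proof as an immediate consequence of \Cref{rztheorem}): identify $(C_\bullet\Omega X)^!$ with $B^\vee_k(C_\bullet\Omega X)\simeq \R\mathrm{End}_{C_\bullet\Omega X}(k)$, transport along the augmented quasi-isomorphism $C_\bullet\Omega X\simeq\Omega(C_\bullet X)$ using that $B$ and $(-)^\vee$ preserve quasi-isomorphisms over a field, and dualise the unit weak equivalence $C_\bullet X\to B\Omega(C_\bullet X)$ to land on $C^\bullet X$. One small caveat on your closing aside: the identification $C^\bullet X\simeq (C^\bullet X)^{\op}$ needs the $E_\infty$ (indeed already $E_2$) structure rather than mere homotopy commutativity, but this remark is not needed for the main argument.
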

\begin{rmk}
    When $X$ is simply connected, \Cref{rztheorem} reduces to a much earlier theorem of Adams, who introduced the cobar construction $\Omega$ for precisely this reason \cite{adams}.
\end{rmk}
Note that $C^\bullet X$ is coconnective, and has $H^0(C^\bullet X)\simeq k$ if $X$ is path connected.

\begin{ex}
    Take $X=\mathbb{CP}^\infty$, which is a $K(\mathbb{Z},2)$, so that $\Omega X$ is $S^1$. Hence, the dg algebra $A=C_\bullet \Omega X$ is (quasi-isomorphic to) the square-zero extension $k[\epsilon]/\epsilon^2$ with $\epsilon$ in homological degree one. Hence the Koszul dual is $A^!=k[t]$ with $t$ in cohomological degree two. This is indeed the cohomology ring of $X$ (and hence proves that $C^\bullet X$ is formal).
\end{ex}

\begin{ex}\label{circleExample}The following essentially appears as \cite[Example 4.3]{chlmon}. Put $X=S^1$ so that $\Omega X$ is the discrete space $\mathbb{Z}$. So $A\coloneqq C_\bullet \Omega X \simeq k[t,t^{-1}]$ is concentrated in degree zero (the augmentation is given by $t \mapsto 1$). We know that $\Omega C_\bullet X \simeq A$. On the other hand, $C^\bullet X \simeq k[\epsilon] / \epsilon^2$, with $\epsilon$ in cohomological degree one. It follows that $C_\bullet X$ is \textit{quasi-isomorphic} to the dg coalgebra $C$ given by the linear dual of $k[\epsilon] / \epsilon^2$. However, they are \textit{not weakly equivalent}, since their cobar constructions disagree: $\Omega C_\bullet X\simeq A$ while $\Omega C\simeq k[t]$. In fact, $C$ is the coalgebra of chains on the non-grouplike simplicial set $\Delta^1 / \partial\Delta^1$, and the fact that $A$ is the localisation of $C$ at $t$ is a general phenomenon \cite[Corollary 4.4]{chlmon}.
\end{ex}

\begin{rmk}
    If $X$ is simply connected, then the quasi-isomorphism type of the coalgebra $C_\bullet X$ determines its weak equivalence type. If $X$ is not simply connected, this fails, as \Cref{circleExample} demonstrates.
\end{rmk}
\begin{ex}
    Let $G$ be a group. Then $C_\bullet\Omega BG \simeq kG$, and hence $(kG)^!\simeq C^\bullet BG$. In particular, if $G$ is an acyclic group (e.g.\ the Higman group) then $(kG)^!\simeq k$ and in particular $(kG)^{!!}\simeq k$. This gives examples of infinite dimensional discrete algebras with small derived completion.
\end{ex}

\begin{lem}\label{dfdcochainslem}
    Let $X$ be a path connected topological space. Then $\pvd(C^\bullet X)$ is Morita equivalent to $C_\bullet(\Omega X)^{!!}$. Thus, $\pvd(C^\bullet X)$ is a derived completion of $C_{\bullet}(\Omega X)$ in the sense of Efimov \cite{efimovcompletion}.
\end{lem}
\begin{proof}
    By \Cref{cor: Koszul dual chain and cochains} we have a quasi-isomorphism $C_\bullet(\Omega X)^{!}\simeq C^\bullet X$. By \Cref{prop: keller-nicolas locality}, $k$ is a $\pvd$-generator for $C^\bullet X$, so that $\pvd(C^\bullet X)$ is Morita equivalent to $C^\bullet(X)^!$. Combining these results we obtain the desired statement.
\end{proof}

\begin{rmk}
Let $G$ be a group with no nontrivial finite dimensional representations (for example, the trivial group or the infinite alternating group). Let $X$ be any path connected space with $\pi_1(X)\cong G$. The natural t-structure on $D(C_\bullet\Omega X)$ with heart the category of $k\pi_1(X)$-modules shows that $k$ is a thick generator for $\pvd(C_\bullet \Omega X)$, and hence we have $\pvd(C_\bullet \Omega X) \simeq \per(C^\bullet X)$. It follows that $C_\bullet\Omega X$ is reflexive precisely when it is derived complete. 
\end{rmk}

\subsection{Reflexivity results}\label{Section: reflexivity for chains and cochains}

Say that $X$ is \textbf{$k$-finite type} if each $H_i(X,k)$ is finite dimensional over $k$. By the Universal Coefficient Theorem, this is equivalent to $C^\bullet X$ being locally proper. Say that $X$ is \textbf{$k$-finite} if the graded vector space $H_\bullet (X,k)$ is finite dimensional; this is equivalent to $C^\bullet X$ being proper. A finite CW complex is clearly $k$-finite. Observe also that if $X$ is a homotopy retract of a $k$-finite (type) space, then $X$ itself is $k$-finite (type). If $X$ is a $k$-finite type topological space, then it necessarily has finitely many path components.

\begin{lem}\label{serrelem}
    Let $X$ be a simply connected $k$-finite topological space. Then $\Omega X$ is a $k$-finite type space.
\end{lem}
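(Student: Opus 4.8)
The plan is to study $\Omega X$ through the loop-space (path) fibration $\Omega X \to PX \to X$, whose total space $PX$ is contractible, by running the homology Serre spectral sequence and inducting on homological degree. Since $X$ is simply connected it is path connected and nonempty, so $\pi_0(\Omega X)\cong\pi_1(X)$ is trivial, $\Omega X$ is path connected, and $H_0(\Omega X,k)\cong k$ is finite dimensional; this is the base case. Because $X$ is simply connected, $\pi_1(X)$ acts trivially on the homology of the fibre, so the Serre spectral sequence of $p\colon PX\to X$ takes the simple form
\[
E^2_{s,t}=H_s(X,k)\otimes_k H_t(\Omega X,k)\;\Longrightarrow\;H_{s+t}(PX,k)=H_{s+t}(\mathrm{pt},k),
\]
with no $\mathrm{Tor}$ contributions since we work over the field $k$.

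The inductive step is the heart of the argument. Assume $H_j(\Omega X,k)$ is finite dimensional for all $j<t$. For each $r\geq 2$ the term $E^r_{r,\,t-r+1}$ is a subquotient of $E^2_{r,\,t-r+1}=H_r(X,k)\otimes_k H_{t-r+1}(\Omega X,k)$; the first factor is finite dimensional because $X$ is $k$-finite, and the second is finite dimensional by the inductive hypothesis (and vanishes unless $2\leq r\leq t+1$). Hence the image of each incoming differential $d_r\colon E^r_{r,\,t-r+1}\to E^r_{0,t}$ is finite dimensional, and only finitely many of these differentials are nonzero. The outgoing differentials from the bottom row vanish for degree reasons, so the bottom-row terms form a tower of surjections $E^2_{0,t}\twoheadrightarrow E^3_{0,t}\twoheadrightarrow\cdots\twoheadrightarrow E^\infty_{0,t}$, and $E^\infty_{0,t}=0$ for $t\geq 1$ by convergence to the homology of a point. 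The successive kernels of this tower are precisely the finite-dimensional images of the $d_r$, so $H_t(\Omega X,k)=E^2_{0,t}$ is finite dimensional. This completes the induction and shows $\Omega X$ is $k$-finite type.

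The main obstacle is not a genuine difficulty but a point requiring care: the spectral-sequence bookkeeping in the last step. One must use that $E^r_{0,t}$ supports no nonzero outgoing differential, so that the passage from $E^2_{0,t}$ to $E^\infty_{0,t}=0$ only ever forms successive quotients by the images of the $d_r$; this is exactly what converts finite-dimensionality of the subquotients into finite-dimensionality of $H_t(\Omega X,k)$. One could instead simply cite the standard computation of the loop-space homology via this spectral sequence (e.g.\ in the spectral sequences literature, or in Félix--Halperin--Thomas); we also note that only the weaker hypothesis that $X$ is $k$-finite type is actually used in the argument, consistent with the fact that $\Omega X$ need not be $k$-finite even when $X$ is (e.g.\ $\Omega S^2$).
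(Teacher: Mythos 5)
Your proof is correct and takes essentially the same route as the paper: the Serre spectral sequence of the path fibration $\Omega X \to PX \to X$ with contractible total space, combined with an induction on the degree of the loop-space homology (the paper runs the cohomological version and you the homological one, but the bookkeeping is the same in substance). Your closing observation that only the $k$-finite type hypothesis on $X$ is actually used is also correct and mildly sharpens the statement.
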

\begin{proof}
It suffices to check that $H^q(\Omega X,k)$ is finite dimensional for every $q$. This is a standard argument using the cohomological Serre spectral sequence associated to the path fibration $\Omega X \to PX \to X$, where $PX\simeq *$ is the path space of $X$. This spectral sequence has $E_2$ page $H^p(X,H^q(\Omega X,k))\cong H^p(X,k)\otimes_k H^q(\Omega X,k)$ and converges to $H^{p+q}(PX,k)$. Since $X$ was $k$-finite, the spectral sequence degenerates after finitely many pages. Since $X$ was simply connected, we have $H^0(\Omega X)\cong k$, and so the $q=0$ column of the $E_2$ page consists of a finite number of finite dimensional vector spaces (namely, the $H^p(X,k)$). Since $PX$ is contractible - and in particular has vanishing $H^1$ - we see that the $q=1$ column of the $E_2$ page must also consist of a finite number of finite dimensional vector spaces. Continuing inductively we see that all entries on the $E_2$ page are finite dimensional, as desired.
\end{proof}

\begin{prop}\label{kfinitespaces}
    Let $X$ be a $k$-finite topological space. Then $C^\bullet X$ is reflexive. If $X$ is simply connected, then $C_\bullet\Omega X$ is reflexive.
\end{prop}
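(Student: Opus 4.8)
The plan is to read off both assertions from reflexivity criteria already established, with the single genuine geometric input being the Serre spectral sequence computation of \Cref{serrelem}.

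\textbf{The cochain case.} First I would record that the singular cochain complex $C^\bullet X$ is \emph{strictly} coconnective: $(C^\bullet X)^n = 0$ for $n < 0$. Its zeroth cohomology is $H^0(C^\bullet X) \cong k^{\pi_0(X)}$, and since $X$ is $k$-finite the space $H_0(X,k)$ is finite dimensional, so $\pi_0(X)$ is finite and $H^0(C^\bullet X)$ is a finite product of copies of $k$; in particular it is a finite dimensional semisimple $k$-algebra. By the Universal Coefficient Theorem, $k$-finiteness of $X$ is exactly properness of $C^\bullet X$. Hence $C^\bullet X$ is a proper coconnective dg algebra with semisimple $H^0$, and \Cref{propcoconncor} applies verbatim to give that $C^\bullet X$ is reflexive.

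\textbf{The loop-space case.} Now assume $X$ is simply connected and $k$-finite. Then \Cref{serrelem} tells us that $\Omega X$ is $k$-finite type, i.e.\ $H_i(\Omega X,k)$ is finite dimensional for every $i$; equivalently, the dg algebra $A \coloneqq C_\bullet\Omega X$ is locally proper. Since chains on a Moore loop space are concentrated in non-negative homological degrees, $A$ is connective, and simple connectivity gives $H^0(A) \cong k\pi_1(X) \cong k$, so that $H^0(A)/\rad H^0(A) \cong k$. Therefore \Cref{conncor} applies and shows that $C_\bullet\Omega X$ is reflexive. (Alternatively, in this case reflexivity of $C^\bullet X$ can also be deduced from that of $C_\bullet\Omega X$ using the Koszul duality identification $(C_\bullet\Omega X)^! \simeq C^\bullet X$ of \Cref{cor: Koszul dual chain and cochains} and the results of \Cref{kdsection}, since $k$ is a $\pvd$-generator for $C_\bullet\Omega X$ by \Cref{congen}.)

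I do not anticipate a real obstacle: essentially all of the content sits inside \Cref{propcoconncor}, \Cref{conncor}, and \Cref{serrelem}. The only points deserving a moment's care are bookkeeping — checking that $C^\bullet X$ and $C_\bullet\Omega X$ are honestly (co)connective rather than merely cohomologically so, that $\pi_0(X)$ is finite under the $k$-finiteness hypothesis, and, in the connective case, that the splitting hypothesis implicitly needed by \Cref{thm: reflexivity via KD for conn. algs} behind \Cref{conncor} is automatic here because $R = k$ and the augmentation $A \onto k$ is split by the unit $k \to A$.
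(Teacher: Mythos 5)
Your proposal is correct and follows essentially the same route as the paper: the cochain case is exactly \Cref{propcoconncor} applied to the proper coconnective dg algebra $C^\bullet X$ with semisimple $H^0$, and the loop-space case combines \Cref{serrelem} with \Cref{thm: reflexivity via KD for conn. algs} (the paper cites the theorem directly, you its corollary \Cref{conncor}, which amounts to the same thing since $R=k$ makes the splitting automatic). Your extra bookkeeping about $\pi_0(X)$ being finite and the unit providing the augmentation splitting is accurate but not a departure from the paper's argument.
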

\begin{proof}
The first claim follows from \Cref{propcoconncor}, since $H^0(X)$ is semisimple. For the second claim, note that $C_\bullet \Omega X$ is a connective $k$-algebra, which by \Cref{serrelem} is locally proper. Hence it is reflexive by \Cref{thm: reflexivity via KD for conn. algs}.
\end{proof}

\begin{ex}The simply connected hypothesis of \Cref{kfinitespaces} cannot be dropped. Indeed, take $X=S^1$ from \Cref{circleExample}, so that we have quasi-isomorphisms $C_\bullet \Omega X \simeq k[t,t^{-1}]$ and $C^\bullet X \simeq k[\epsilon] / \epsilon^2$. We compute $(C^\bullet X)^!\simeq k\llbracket t \rrbracket$, which is the completion of $k[t,t^{-1}]$ at the maximal ideal $(t-1)$. In particular $C_\bullet \Omega X$ is not derived complete and so not reflexive (we could have also deduced this from an application of \Cref{noeththm}).
\end{ex}

Say that a topological space $X$ is \textbf{$k\pi_1$-local} if $k\pi_1(X)$ is a finite dimensional local $k$-algebra (i.e.\ a nilpotent extension of $k$). This property will be key for us, since it implies that $k$ is a $\pvd$-generator for $C_\bullet \Omega X$.
\begin{lem}
    Let $X$ be a topological space and $k$ a field. Then $X$ is $k\pi_1$-local if and only if either of the following conditions is satisfied:
    \begin{enumerate}
        \item $X$ is simply connected.
        \item  $k$ has characteristic $p$ and $\pi_1(X)$ is a finite $p$-group.
    \end{enumerate}
\end{lem}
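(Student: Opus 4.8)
The plan is to rephrase the statement algebraically: setting $G \coloneqq \pi_1(X)$, the space $X$ is $k\pi_1$-local precisely when the group algebra $kG$ is a finite-dimensional local $k$-algebra with residue field $k$, so it suffices to show that this happens if and only if $G$ is trivial, or $\operatorname{char}(k) = p > 0$ and $G$ is a finite $p$-group. Since $\dim_k kG = |G|$ when $G$ is finite and $kG$ is infinite-dimensional otherwise, the hypothesis immediately forces $G$ to be finite, so I assume this throughout. I will also use the elementary observation that a finite-dimensional local algebra contains no idempotents other than $0$ and $1$ (if $e^2 = e$ with $e \neq 0, 1$ then $e$ and $1 - e$ both lie in the maximal ideal, forcing $1$ to as well).

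For the backward implication: if $G$ is trivial then $kG = k$ is local. If $\operatorname{char}(k) = p$ and $G$ is a finite $p$-group, I would argue by induction on $|G|$ that the augmentation ideal $I = \ker(kG \onto k)$ is nilpotent. A nontrivial $p$-group has a central subgroup $Z = \langle z \rangle$ of order $p$; since $z^p - 1 = (z - 1)^p$ in characteristic $p$, the two-sided ideal $(z-1)kG$ is nilpotent, and $kG/(z-1)kG \cong k[G/Z]$, so $I$ is nilpotent by applying the inductive hypothesis to the smaller $p$-group $G/Z$. (Alternatively, one can simply invoke the classical fact that group algebras of finite $p$-groups in characteristic $p$ are local.) A nilpotent ideal whose quotient is the field $k$ is the Jacobson radical, so $kG$ is local with residue field $k$, i.e.\ $X$ is $k\pi_1$-local.

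For the forward implication, suppose $kG$ is local with residue field $k$, and assume $G$ is nontrivial; pick a prime $q$ dividing $|G|$. If $q \neq \operatorname{char}(k)$, then by Cauchy's theorem there is $g \in G$ of order $q$, and the subalgebra $k\langle g \rangle \cong k[t]/(t^q - 1)$ decomposes via the Chinese Remainder Theorem as $k[t]/(t-1) \times k[t]/(1 + t + \cdots + t^{q-1})$, the coprimality of the two factors being exactly the condition that $q$ is invertible in $k$; as $q \geq 2$ the second factor is nonzero, so $kG$ would contain a nontrivial idempotent, a contradiction. Hence every prime dividing $|G|$ equals $\operatorname{char}(k)$, which is therefore a positive prime $p$, and $G$ is a $p$-group. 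The only step carrying real content is the nilpotence of the augmentation ideal in the $p$-group case, which is classical; everything else is a short formal argument, so I foresee no serious obstacle.
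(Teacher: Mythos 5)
Your proof is correct, but it takes a different route from the paper: the paper disposes of the lemma in one line by citing the main theorem of \cite{renault}, which characterises exactly when a group algebra $kG$ is local (namely, $\operatorname{char}(k)=p>0$ and $G$ a locally finite $p$-group, or $G$ trivial), and then the finite-dimensionality requirement in the definition of $k\pi_1$-local pins $G$ down to being finite. You instead reprove the relevant special case from scratch: finite-dimensionality of $kG$ forces $G$ finite, the backward direction is the classical nilpotence of the augmentation ideal of $k[G]$ for a finite $p$-group in characteristic $p$ (your induction via a central element $z$ of order $p$, using that $(z-1)^p=z^p-1=0$ and that $z-1$ is central so the ideal it generates is nilpotent, is fine), and the forward direction uses Cauchy's theorem plus the CRT splitting of $k[t]/(t^q-1)$ for $q\neq\operatorname{char}(k)$ to manufacture a nontrivial idempotent contradicting locality. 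What your approach buys is a self-contained elementary argument requiring no external input beyond standard group and ring theory; what the citation buys is brevity and the stronger statement for arbitrary (not necessarily finite) groups, though that extra generality is not needed here since the finite-dimensionality hypothesis already reduces everything to finite $G$. One small point of care: the paper's definition of $k\pi_1$-local asks for a nilpotent extension of $k$, i.e.\ residue field $k$; your backward direction does verify this (the quotient by the augmentation ideal is $k$), and your forward direction only uses locality, so nothing is missing.
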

\begin{proof}
This follows from the main theorem of \cite{renault}.
\end{proof}

\begin{thm}\label{kpilocal}
    Let $X$ be a path connected $k\pi_1$-local topological space.
    \begin{enumerate}
    \item There is a natural equivalence $\pvd(C_\bullet \Omega X)\simeq \per(C^\bullet X)$.
    \item If $C_\bullet X$ is reflexive then so is $C^\bullet X$. 
    \item If $\Omega X$ is $k$-finite type, then both $C_\bullet X$ and $C^\bullet X$ are reflexive.
    \item If $C_\bullet X$ is reflexive then there are natural equivalences $$\fd(C_\bullet X)^\op\simeq\pvd(C^\bullet X) \simeq \per(C_\bullet \Omega X).$$
    \end{enumerate}
    
\end{thm}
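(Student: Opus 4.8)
The plan is to reduce the whole theorem to the connective augmented dg algebra $A\coloneqq C_\bullet\Omega X$, whose $\pvd$-generator and Koszul dual are already under control. First I would note that $H^0(A)\cong k\pi_1(X)$, and that the $k\pi_1$-locality hypothesis says precisely that this is a finite-dimensional local $k$-algebra with $H^0(A)/\rad H^0(A)\cong k$; by \Cref{congen} the module $k$ is then a $\pvd$-generator for $A$. For part (1) I would run the tilting argument of \Cref{thicklem} exactly as in the proof of \Cref{dfdcochainslem}, producing a quasi-equivalence $\pvd(A)\simeq\per(\R\mathrm{End}_A(k))$ induced by $\R\hom_A(-,k)$, and then feed in the Koszul-duality identification $\R\mathrm{End}_A(k)\simeq A^!\simeq C^\bullet X$ from \Cref{cor: Koszul dual chain and cochains} to obtain the natural equivalence $\pvd(C_\bullet\Omega X)\simeq\per(C^\bullet X)$.

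For (2), the pair $(C_\bullet X, A)$ is a Koszul duality pair by \Cref{rztheorem}, so $C_\bullet X$ is reflexive if and only if $A$ is, by \Cref{mrefcog}. If $A$ is reflexive then so is $\pvd(A)$ (reflexivity passes to $\pvd$, as used repeatedly above), and by (1) this is $\per(C^\bullet X)$; since reflexivity depends only on the Morita equivalence class, $C^\bullet X$ is reflexive. Part (3) is then immediate: if $\Omega X$ is $k$-finite type then each $H^i(A)\cong H_{-i}(\Omega X,k)$ is finite-dimensional, so $A$ is a connective locally proper dg algebra with $H^0(A)/\rad H^0(A)\cong k$, hence reflexive by \Cref{conncor}; now \Cref{mrefcog} gives reflexivity of $C_\bullet X$ and (2) gives that of $C^\bullet X$.

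For (4), reflexivity of $C_\bullet X$ again yields reflexivity of $A$ via \Cref{mrefcog}, so the coevaluation map $A\to\pvd(\pvd(A))$ is a Morita equivalence, hence a quasi-equivalence $\per(A)\simeq\pvd(\pvd(A))$. Applying the Morita-invariant functor $\pvd$ to the equivalence of (1) gives $\pvd(\pvd(A))\simeq\pvd(\per(C^\bullet X))\simeq\pvd(C^\bullet X)$, whence $\per(C_\bullet\Omega X)\simeq\pvd(C^\bullet X)$. I do not expect a genuine obstacle here; the two points needing care are the bookkeeping of opposite categories in the tilting equivalence of (1) (the same bookkeeping already done for \Cref{dfdcochainslem}) and the observation that ``$k\pi_1$-local'' is exactly the hypothesis making $k$ a $\pvd$-generator for $C_\bullet\Omega X$ --- everything else is an assembly of \Cref{rztheorem}, \Cref{mrefcog}, \Cref{conncor}, and the Morita-invariance of reflexivity and of $\pvd$.
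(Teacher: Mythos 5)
Your proposal is correct and follows essentially the same route as the paper: part (1) via \Cref{congen} and Koszul duality (\Cref{cor: Koszul dual chain and cochains}), part (3) via \Cref{thm: reflexivity via KD for conn. algs} (its corollary \Cref{conncor}) together with \Cref{mrefcog}, and part (4) by applying $\pvd$ to (1) and using reflexivity of $C_\bullet\Omega X$. The only cosmetic difference is that for (2) you re-derive inline the argument that the paper packages as \Cref{LDreflex} ($C$ reflexive $\Rightarrow$ $\Omega C$ reflexive $\Rightarrow$ $\pvd(\Omega C)\simeq \per(C^\vee)$ reflexive), which is the same proof, and your flagged opposite-category bookkeeping matches the paper's own level of care.
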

\begin{proof}
Since $X$ is $k\pi_1$-local, $k$ is a $\pvd$-generator for $C_\bullet \Omega X$ by \Cref{congen}. So putting $A\coloneqq C_\bullet \Omega X$, we have $\pvd(A)\simeq \per(A^!)\simeq \per(C^\bullet X)$ by \Cref{cor: Koszul dual chain and cochains}, which proves claim (1). Claim (2) follows from \Cref{LDreflex}. To prove claim (3), by an application of (2) we need only check that $C_\bullet X$ is reflexive. But this is the same as checking that $\Omega C_\bullet X \simeq C_\bullet \Omega X$ is reflexive, and this follows from \Cref{thm: reflexivity via KD for conn. algs}. Claim (4) follows from (1) combined with \Cref{linduallem}.
\end{proof}
\begin{cor}
    If $X$ is a path connected $k\pi_1$-local space with $\Omega X$ a $k$-finite type space, then every proper $C^\bullet(X)$-module dualises to a compact $C_\bullet(X)$-comodule.
\end{cor}

\begin{rmk}
    The category $\pvd(C_\bullet \Omega X)$ appearing in part (1) of \Cref{kpilocal} is the dg category of $\infty$-local systems of finite dimensional $k$-vector spaces on $X$.
\end{rmk}

\begin{ex}
    Part (3) of \Cref{kpilocal} says that for a pair $(X,k)$ which is of Eilenberg--Moore type in the sense of \cite{DGI}, both $C_\bullet(X,k)$ and $C^\bullet(X,k)$ are reflexive.
\end{ex}

\begin{ex}[Rational homotopy theory]
Suppose that $k$ has characteristic zero and let $X$ be a simply connected $k$-finite type space. Then $C_\bullet \Omega X$ is the universal enveloping algebra of the Whitehead Lie algebra $W(X)\coloneqq \pi_*(\Omega X)\otimes_\mathbb{Z} k$ \cite{FHT}. If $W(X)$ is a $k$-finite type Lie algebra, the PBW theorem tells us that $\Omega X$ is $k$-finite type. In particular, we can conclude that both $C_\bullet X$ and $C^\bullet X$ are reflexive.
\end{ex}

\begin{ex}[$p$-compact groups]Take $k=\mathbb{Z}/p$. Recall that a \textbf{$p$-compact group} in the sense of \cite{DWpcompact} is a triple $(X,BX,e)$ consisting of a $k$-finite space $X$, a pointed $p$-complete space $BX$, and a homotopy equivalence $e:X\to \Omega BX$. We typically refer to $X$ itself as the $p$-compact group. The standard example of a $p$-compact group is the $p$-completion of a compact Lie group $G$ with $\pi_0(G)$ a finite $p$-group. Note that in this setting we have ${(BG)}^{\hat{\phantom{}}}_p \simeq B\hat G_p$ \cite[Proposition 11.9]{DWpcompact}. In particular, a finite $p$-group is a $p$-compact group. If $X$ is a $p$-compact group then $\pi_1BX\simeq \pi_0X$ is finite and hence a $p$-group \cite[Proposition 11.14]{DWpcompact}. It follows from \Cref{kpilocal}(3) that both $C_\bullet BX$ and $C^\bullet BX$ are reflexive. Moreover, $C^\bullet X$ is reflexive by \Cref{kfinitespaces}, and if $\pi_1X\simeq 0$ then $C_\bullet X$ is also reflexive.
\end{ex}

\begin{ex}[String topology]\label{exa: string topology}
Let $M$ be a compact simply connected manifold, so that there is an $S^1$-invariant quasi-isomorphism $\mathrm{HH}_\bullet(C_\bullet \Omega M) \simeq C_\bullet(\mathcal{L}M)$, where $\mathrm{HH}_\bullet$ denotes the Hochschild homology complex and $\mathcal{L}M$ denotes the free loop space of $M$ \cite{jones}. In particular, \Cref{kfinitespaces}, \Cref{kpilocal}(4) and the Morita invariance of Hochschild homology give us a natural quasi-isomorphism $$\mathrm{HH}_\bullet(\pvd(C^\bullet M))\simeq C_\bullet (\mathcal{L}M).$$
Moreover, by \cite{Greflex} and the Morita invariance of Hochschild cohomology \cite{kellerHH} we also obtain natural quasi-isomorphisms $$HH^\bullet(C^\bullet M)\simeq \mathrm{HH}^\bullet(\pvd(C^\bullet M))\simeq \mathrm{HH}^\bullet(C_\bullet \Omega M)$$ and when $M$ is in addition $k$-orientable, Poincar\'e duality (e.g.\ \cite[Example 9.38]{BCLcalabiyau}) gives a quasi-isomorphism $$\mathrm{HH}^\bullet(C^\bullet M)\simeq C_\bullet (\mathcal{L}M)[-\mathrm{dim}M].$$
In characteristic zero, these are quasi-isomorphisms of BV-algebras \cite{tradlerzeinalian}.
\end{ex}

The following is a slight generalisation of \Cref{kpilocal} in the simply connected setting (recall that if $\pi_1(X)\cong 0$ then $H^1(X)\cong 0$ by the Hurewicz and Universal Coefficient Theorems). The proof is a straightforward application of \Cref{thm: reflexivity via KD for coconn. algs}.

\begin{prop}
Let $X$ be a path connected $k$-finite type topological space such that $H^1(X)$ vanishes. Then the dg algebra $C^\bullet X$ is reflexive.
\end{prop}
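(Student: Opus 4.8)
The plan is to verify directly that $C^\bullet X$ satisfies the hypotheses of \Cref{thm: reflexivity via KD for coconn. algs}. Recall that the singular cochain complex $C^\bullet X$, equipped with the cup product, is a strictly coconnective dg algebra: $C^i(X,k)=0$ for $i<0$. Since $X$ is $k$-finite type, the Universal Coefficient Theorem (as already noted at the start of \Cref{Section: reflexivity for chains and cochains}) shows that $H^i(C^\bullet X)\cong H^i(X,k)$ is finite dimensional for every $i$, so $C^\bullet X$ is locally proper.

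Next I would check the three numbered conditions of \Cref{thm: reflexivity via KD for coconn. algs}. As $X$ is path connected, $H^0(C^\bullet X)\cong H^0(X,k)\cong k$, which is a commutative semisimple $k$-algebra; this is condition~(1). Condition~(2), that $H^1(C^\bullet X)\cong H^1(X,k)$ vanishes, is precisely the hypothesis of the proposition. For condition~(3) we must exhibit an augmentation of $C^\bullet X$ as a dg-$H^0(C^\bullet X)$-algebra; since $H^0(C^\bullet X)\cong k$ this just means an augmentation over $k$. A choice of basepoint $x\in X$ induces a strict dg algebra map $C^\bullet X\to C^\bullet(\{x\},k)$, and composing with the quotient $C^\bullet(\{x\},k)\twoheadrightarrow H^0(C^\bullet(\{x\},k))\cong k$ by the two-sided ideal of positive-degree cochains yields a dg algebra map $C^\bullet X\to k$ splitting the unit. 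With all three conditions in place, \Cref{thm: reflexivity via KD for coconn. algs} gives that $C^\bullet X$ is reflexive.

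There is no serious obstacle here; the argument is exactly the promised straightforward application. The only point meriting a moment's attention is condition~(3), and even that is unproblematic once one observes that the relevant augmentation is restriction to a basepoint. Alternatively, since reflexivity is a Morita-invariant property, one could replace $C^\bullet X$ by a quasi-isomorphic strictly coconnective locally proper model $A$ with $A^1\cong 0$, which then satisfies both (2) and (3) automatically by the remark following \Cref{thm: reflexivity via KD for coconn. algs}; this route merely trades the trivial verification of (3) for the construction of such a model.
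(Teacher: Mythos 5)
Your argument is correct and is exactly the proof the paper intends: the paper simply declares the result ``a straightforward application of \Cref{thm: reflexivity via KD for coconn. algs}'', and your write-up verifies the hypotheses of that theorem (strict coconnectivity, local properness via the Universal Coefficient Theorem, $H^0\cong k$, the $H^1$ hypothesis, and the basepoint augmentation) in the same way. The only caveat is the closing alternative remark — producing a strictly coconnective dg model with $A^1\cong 0$ is not as immediate as stated — but since you do not rely on it, the main argument stands as is.
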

\begin{cor}
    Let $G$ be a finite perfect group. Then $C^\bullet BG$ is reflexive.
\end{cor}

\begin{ex}[Fukaya categories of cotangent bundles]\label{rmk: Fukaya category cotangent bundle}
Let $M$ be a compact path connected smooth manifold. In this setting, $C_\bullet \Omega M $ is Morita equivalent to $\mathcal{W}(T^*M)$, the derived wrapped Fukaya category of the symplectic manifold $T^*M$. If $M$ is $k\pi_1$-local, then using \Cref{kpilocal} we obtain an equivalence $\pvd(\mathcal{W}(T^*M))\simeq \per(C^\bullet M)$. When $M$ is in addition simply connected, then $C^\bullet M$ is Morita equivalent to $\mathcal{F}(T^*M)$, the compact Fukaya category of $T^*M$, and we hence obtain equivalences $$\begin{array}{ccc}\per(\mathcal{W}(T^*M))\simeq \pvd(\mathcal{F}(T^*M)) & \text{and} & \pvd(\mathcal{W}(T^*M))\simeq \per(\mathcal{F}(T^*M)).\end{array}$$Versions of the above equivalences for more general simply connected symplectic manifolds were given in \cite{EkholmLekili} and versions for Milnor fibres were given in \cite{LekiliUeda}. We will see more about Fukaya categories in \Cref{gentlesection}. 
\end{ex}

\section{Gluing reflexive dg categories}

In this section we note that one can glue (semi)reflexive dg categories along semiorthogonal decompositions. Semiorthogonal decompositions, introduced in \cite{bondalkapranov}, are a fundamental tool in derived noncommutative algebraic geometry, as they allow one to decompose invariants and to isolate singular behaviour. A survey of their uses can be found in \cite{kuzsurvey}.

\begin{defn}\label{def: semiorthogonal decomposition}
Let $\mathcal{T}$ be a triangulated category. A \textbf{semiorthogonal decomposition}  of $\mathcal{T}$ is a pair of thick subcategories $\mathcal{A},\mathcal{B} \subseteq \mathcal{T}$ satisfying the following properties:
\begin{enumerate}
\item The smallest thick subcategory of $\mathcal{T}$ containing $\mathcal{A}$ and $\mathcal{B}$ is $\mathcal{T}$ itself.

\item $\Hom_{\mathcal{T}}(b,a) = 0$ for all $a \in \mathcal{A}$ and $b \in \mathcal{B}$. 

\end{enumerate}
In this situation we write $\mathcal{T} = \langle \mathcal{A},\mathcal{B} \rangle$.
\end{defn}
If $\mathcal{T}$ is a pretriangulated dg category, we say that a semiorthogonal decomposition of $\mathcal{T}$ is a semiorthogonal decomposition of the triangulated category $H^0(\mathcal{T})$.

\begin{rmk}
Semiorthogonal decompositions were studied in the context of reflexivity in \cite{KS}. If $\mathcal{T}$ is a pretriangulated dg category with a semiorthogonal decomposition $\mathcal{T} = \langle\mathcal{A}, \mathcal{B} \rangle$ then there is a semiorthogonal decomposition $\pvd(\mathcal{T}) = \langle\pvd\mathcal{B}, \pvd\mathcal{A} \rangle$ \cite[Lemma 3.7]{KS}. Moreover, if $\mathcal{T}$ is reflexive then so are $\mathcal{A}$ and $\mathcal{B}$; this follows from naturality of the evaluation functor. Alternatively, this can be seen using the monoidal characterisation of \cite{Greflex}, using the fact that reflexive objects are closed under retracts. 
\end{rmk}

\begin{thm}\label{thm:Gluing over Semiorthogonal decompositions}
Let $\mathcal{\mathcal{T}}$ be a semireflexive dg category that admits a semiorthogonal decomposition $\mathcal{D}^{\perf}(\mathcal{T}) = \langle \mathcal{A}, \mathcal{B} \rangle$. If $\mathcal{A}$ and $\mathcal{B}$ are reflexive then so is $\mathcal{T}$.
\end{thm}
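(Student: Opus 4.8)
The plan is to use semireflexivity to reduce the statement to essential surjectivity, and then feed the semiorthogonal decomposition into the naturality of the evaluation functor. First I would recall (as in the Remark following \Cref{refexamples}) that $\mathcal{T}$ is reflexive if and only if $\ev_\mathcal{T}\colon\per(\mathcal{T})^\op\to\pvd(\pvd(\mathcal{T})^\op)$ is a quasi-equivalence. By hypothesis $\mathcal{T}$ is semireflexive, so $\ev_\mathcal{T}$ is already quasi-fully-faithful, and it remains only to prove quasi-essential surjectivity. Both source and target are idempotent-complete pretriangulated dg categories, and the essential image $\mathcal{E}$ of a quasi-fully-faithful exact functor out of such a category is a thick subcategory of $H^0(\pvd(\pvd(\mathcal{T})^\op))$, since cones and idempotents lift along it. So the task reduces to exhibiting a thick generating subcategory of $\pvd(\pvd(\mathcal{T})^\op)$ contained in $\mathcal{E}$.

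Next I would produce such generators from the decomposition. The pieces $\mathcal{A},\mathcal{B}\subseteq\per(\mathcal{T})$ are admissible, hence idempotent-complete pretriangulated, so reflexivity makes $\ev_\mathcal{A}$ and $\ev_\mathcal{B}$ quasi-equivalences. Applying \cite[Lemma 3.7]{KS} to $\per(\mathcal{T})=\langle\mathcal{A},\mathcal{B}\rangle$, then taking opposites, then applying \cite[Lemma 3.7]{KS} once more, yields
\[
\pvd(\pvd(\mathcal{T})^\op)=\big\langle\,\pvd(\pvd(\mathcal{B})^\op),\ \pvd(\pvd(\mathcal{A})^\op)\,\big\rangle ,
\]
where, crucially, each piece is the essential image of the fully faithful functor $\pvd(\pvd(\iota_?)^\op)$ induced by the inclusion $\iota_?\colon ?\hookrightarrow\per(\mathcal{T})$, for $?\in\{\mathcal{A},\mathcal{B}\}$. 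In particular these two essential images thickly generate $\pvd(\pvd(\mathcal{T})^\op)$.

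Then I would combine these using naturality of the evaluation functor: for a dg functor $F\colon\mathcal{C}\to\mathcal{D}$ one has a commuting square $\ev_\mathcal{D}\circ F^\op=\pvd(\pvd(F)^\op)\circ\ev_\mathcal{C}$ (both composites send an object $c$ to the module $N\mapsto N(Fc)$). Taking $F=\iota_\mathcal{A}$ and using $\pvd(\per(\mathcal{T}))\simeq\pvd(\mathcal{T})$ gives $\ev_\mathcal{T}\circ\iota_\mathcal{A}^\op=\pvd(\pvd(\iota_\mathcal{A})^\op)\circ\ev_\mathcal{A}$. Since $\ev_\mathcal{A}$ is a quasi-equivalence, $\ev_\mathcal{A}(\mathcal{A}^\op)$ thickly generates $\pvd(\pvd(\mathcal{A})^\op)$, so $\ev_\mathcal{T}(\iota_\mathcal{A}^\op\mathcal{A}^\op)$ thickly generates the piece $\pvd(\pvd(\iota_\mathcal{A})^\op)(\pvd(\pvd(\mathcal{A})^\op))$, which therefore lies in $\mathcal{E}$; the same argument with $\iota_\mathcal{B}$ puts the other piece in $\mathcal{E}$. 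As $\mathcal{E}$ is thick and contains both semiorthogonal pieces, $\mathcal{E}=\pvd(\pvd(\mathcal{T})^\op)$, so $\ev_\mathcal{T}$ is a quasi-equivalence and $\mathcal{T}$ is reflexive.

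The main obstacle I expect is the compatibility asserted in the second paragraph: not merely that $\pvd$ sends a semiorthogonal decomposition to one, but that the resulting pieces are exactly the essential images of the functors $\pvd(\pvd(\iota_?)^\op)$ coming from the inclusions. Making this precise means tracking restriction functors along the inclusions $\iota_?$ and the projections of the decomposition together with their adjoints; a clean way to see it is that restriction $\pvd(\iota_\mathcal{A})\colon\pvd(\mathcal{T})\to\pvd(\mathcal{A})$ (and likewise for $\mathcal{B}$) is a Verdier quotient functor, so $\pvd$ of its opposite is fully faithful, while the two quotient functors $\pvd(\iota_\mathcal{A})^\op$ and $\pvd(\iota_\mathcal{B})^\op$ are jointly conservative because $\mathcal{A}\cup\mathcal{B}$ thickly generates $\per(\mathcal{T})$; together these yield exactly the needed decomposition with the desired pieces. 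Checking the naturality square for $\ev$ and the thickness of $\mathcal{E}$ are routine secondary points.
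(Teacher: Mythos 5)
Your proposal is correct and takes essentially the same route as the paper's proof: reduce to essential surjectivity via semireflexivity, apply \cite[Lemma 3.7]{KS} twice together with the compatibility of the evaluation functor with the inclusions of the pieces, and use reflexivity of $\mathcal{A}$ and $\mathcal{B}$ to realise those pieces in the image. The only difference is packaging: the paper exhibits an arbitrary object of $\pvd\pvd(\mathcal{T})$ as the cone on a connecting morphism lifted along the (quasi-fully-faithful) evaluation functor, which is exactly the cone-lifting you encode in the statement that the essential image of a quasi-fully-faithful exact functor from an idempotent-complete pretriangulated source is thick and contains generators.
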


\begin{proof}
In this proof, to avoid opposite categories appearing we use the coevaluation functor instead of the evaluation functor. Recall that $\coev_{\mathcal{A}}$ is defined in \cite{KS} as the composition of $\ev_{\mathcal{A}}$ with the linear dual functor $(-)^\ast$. Moreover, (semi)reflexivity can be checked using the coevaluation functor in exactly the same manner as the evaluation functor, cf.~\cite[Lemma 3.10]{KS}.

Without loss of generality we may assume that $\mathcal{T}$ is pretriangulated and idempotent complete, so that $\mathcal{T} = \mathcal{D}^{\perf}(\mathcal{T})$. Applying \cite[Lemma 3.7]{KS} twice, we obtain a semiorthogonal decomposition
\[
\pvd\pvd(\mathcal{T}) = \langle \pvd\pvd(\mathcal{A}), \pvd\pvd(\mathcal{B}) \rangle
\]
 It follows from the proof of \cite[Lemma 3.7]{KS} that the semiorthogonal decompositions are compatible with the (co)evaluation functors, in the sense that the following diagram commutes:

\[
\begin{tikzcd}
\mathcal{A} \arrow[r,"\coev_{\mathcal{A}}"] \arrow[d,"i_{\mathcal{A}}",shift left] & \pvd\left(\pvd(\mathcal{A}) \right)\arrow[d,"i'_{\cA}"] \\
\mathcal{T} \arrow[r,"\coev_{\mathcal{T}}"] & \pvd\left(\pvd(\mathcal{T}) \right).
\end{tikzcd}
\]
Here $i_{\mathcal{A}}$ and $i_{\cA}^{\prime}$ denote the inclusions. Note that these functors have left adjoints, which we denote by $\pi_{\mathcal{A}}$ and $\pi'_{\mathcal{A}}$ respectively. By assumption $\mathcal{T}$ is semireflexive (i.e.\ $\coev_{\mathcal{T}}$ is fully faithful) and so to prove that it is reflexive we need only check that $\coev_{\mathcal{T}}$ is essentially surjective. Take $M \in \pvd\pvd(\mathcal{T})$. Then there is an exact triangle
\[
i_{\cB}' \pi_{\cB}' M \to M \to i_{\cA}' \pi_{\cA}' M \to
\]
Since $\mathcal{A}$ is reflexive we have $\pi'_{\mathcal{A}} M \simeq \coev_{\mathcal{A}}(a)$ for some $a \in \mathcal{A}$. So we have
\[
i_{\cA}' \pi_{\cA}' M \simeq i_{\cA}' \coev_{\mathcal{A}}(a) \simeq \coev_{\mathcal{T}} i_{\mathcal{A}}(a)
\]
Similarly, $
i'_{\mathcal{B}} \pi'_{\mathcal{B}} M \simeq \coev_{\mathcal{T}} i_{\mathcal{B}}(b)$
for some $b \in \mathcal{B}$, and since $\mathcal{T}$ is semireflexive the morphism 
\[
 \coev_{\mathcal{T}} i_{\mathcal{A}}(a)[-1] \to \coev_{\mathcal{T}} i_{\mathcal{B}}(b)
\]
in the triangle above can be lifted to some $f\colon i_{\mathcal{A}}(a)[-1] \to  i_{\mathcal{B}}(b)$. Therefore it follows that $\coev_{\mathcal{T}}(\mathrm{cone}(f)) \simeq M$, as required. 
\end{proof}

\begin{rmk}Examples of semireflexive dg categories include proper dg categories (\Cref{refexamples}) and dg algebras which are derived complete with respect to a $\pvd$-generator (\Cref{semirmk}).
\end{rmk}

\begin{rmk}
    If $\mathcal{A}$, $\mathcal{B}$ are two dg categories and $M$ is a $\mathcal{B}$-$\mathcal{A}$-bimodule, then one can glue $\mathcal{A}$ and $\mathcal{B}$ along $M$ to produce a new dg category $\mathcal{A} \xrightarrow{M} \mathcal{B}$. The objects of the gluing are $\mathrm{Ob}(\mathcal{A}) \sqcup \mathrm{Ob}(\mathcal{B})$, and the morphisms are given by upper-triangular matrices. Even if both $\mathcal{A}$ and $\mathcal{B}$ are pretriangulated, then usually $\mathcal{A} \xrightarrow{M} \mathcal{B}$ will fail to be; in this situation we let $\mathcal{A}\vdash_M \mathcal{B} \coloneqq \per (\mathcal{A} \xrightarrow{M} \mathcal{B})$. Orlov shows that the homotopy category of $\mathcal{A}\vdash_M \mathcal{B}$ admits a semiorthogonal decomposition $\langle \mathcal{A},\mathcal{B}\rangle$, and moreover if $\mathcal{T}= \langle \mathcal{A},\mathcal{B}\rangle$ is any pretriangulated dg category then there is a bimodule $M$ and a quasi-equivalence $\mathcal{T}\simeq \mathcal{A} \vdash_{M} \mathcal{B}$ \cite{orlovgluing}. Note that $\mathcal{A} \vdash_{M} \mathcal{B}$ is reflexive exactly when the gluing $\mathcal{A} \xrightarrow{M} \mathcal{B}$ is reflexive.
\end{rmk}

\section{Fukaya categories and graded gentle algebras}\label{gentlesection}
 We apply the techniques of this paper to study reflexivity for partially wrapped Fukaya categories of surfaces in the sense of \cite{HaidenKatzarkovKontsevich} and the related class of graded gentle algebras.

\subsection{Topological notions}
By a \textbf{marked surface} we mean a pair $(\Sigma, \cM)$, where $\Sigma$ is a compact, oriented smooth surface with boundary $\partial \Sigma \neq \emptyset$ and where $\cM \subseteq \partial \Sigma$ is a compact subset which intersects each boundary component non-trivially and such that each connected component of $\cM$ has non-trivial interior. We often omit the set $\cM$  from the notation and refer to a marked surface as simply $\Sigma$. Boundary components $B \subseteq \partial \Sigma$ such that $B \subseteq \cM$ are called \textbf{fully marked} while all other connected components of $\cM$, namely those homeomorphic to $[0,1]$, are called \textbf{marked intervals}. As consequence of this definition, $\cM$ is the union of the fully marked components and the marked intervals. By a \textbf{boundary segment} or \textbf{stop} we mean the closure in $\Sigma$ of a connected component of $\partial \Sigma \setminus \cM$.

A \textbf{simple arc} on a graded surface $(\Sigma, \cM)$
is an embedded path $\gamma\colon [0,1] \hookrightarrow \Sigma$ such that $\gamma^{-1}(\cM)=\{0,1\}$. Such an arc is said to be \textbf{finite} if both its end points lie in marked intervals. By an \textbf{isotopy} between simple arcs $\gamma_0$ and $\gamma_1$ we mean a map $H\colon [0,1] \times [0,1] \rightarrow \Sigma$ such that for all $t \in [0,1]$, $H_t\coloneqq H|_{\{t\} \times [0,1]}$ is a simple arc and such that $H^{-1}(\cM)=[0,1] \times \{0,1\}$ as well as $H_0=\gamma_0$ and $H_1=\gamma_1$.

An \textbf{arc system} $\Gamma$ is a collection of pairwise disjoint and pairwise non-isotopic simple arcs on $\Sigma$. It follows that $\Gamma$ is necessarily a finite set. An arc system $\Gamma$ is \textbf{full} if its complement $$\Sigma \setminus \Gamma \coloneqq \Sigma \setminus \big(\bigcup_{\gamma \in \Gamma}\gamma([0,1])\big)$$ is homeomorphic to a disjoint union of disks, the boundary of each of which contains at most one boundary segment. Likewise, $\Gamma$ is \textbf{finitely-full} if its complement is homeomorphic to a disjoint union of disks as above as well as half-open cylinders $C=[0,1) \times S^1$ such that $\partial C=\{0\} \times S^1$ corresponds to a fully marked component. A full (resp.~finitely-full) arc system is \textbf{formal} if the boundary of every disk component of $\Sigma \setminus \Gamma$ contains a (and hence exactly one) boundary segment. 

Of course, full formal arc systems can only exist on surfaces with at least one marked interval  but in fact, this is also a sufficient condition.
\begin{prop}[{\cite[]{HaidenKatzarkovKontsevich}}]
If $\Sigma$ contains at least one marked interval, then $\Sigma$ admits a full, formal arc system.
\end{prop}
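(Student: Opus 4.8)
We may assume that $\Sigma$ is connected, treating connected components separately. The plan is to induct on the first Betti number $b_1(\Sigma)$, which is finite since $\Sigma$ has free fundamental group. In the base case $b_1(\Sigma)=0$ the surface $\Sigma$ is a disk; its boundary is a single circle, which cannot be fully marked (otherwise $\Sigma$ has no marked interval), so $\Sigma$ has $k\geq 1$ marked intervals $m_1,\dots,m_k$ alternating with $k$ boundary segments $s_1,\dots,s_k$ around the boundary. For $k=1$ the empty arc system is full and formal. For $k\geq 2$, a \emph{fan} of $k-1$ pairwise disjoint simple arcs emanating from $m_1$, one ending in each of $m_2,\dots,m_k$, cuts $\Sigma$ into $k$ disks, each containing exactly one of the $s_i$; one checks directly that this is a full formal arc system.

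For the inductive step, suppose $b_1(\Sigma)\geq 1$, so $\Sigma$ is not a disk. I would produce a simple arc $\gamma$ with endpoints in $\cM$ and interior disjoint from $\cM$ whose complement $\Sigma_\gamma$ — the surface obtained by cutting $\Sigma$ along $\gamma$ — is connected with $b_1(\Sigma_\gamma)=b_1(\Sigma)-1$, and is again a marked surface containing a marked interval (the two copies of $\gamma$ becoming part of $\cM$, absorbed into the adjacent marked intervals). The arc is chosen as follows: if $\Sigma$ has at least two boundary components, let $\gamma$ join points of $\cM$ on two distinct boundary components (each component meets $\cM$); if $\Sigma$ has a single boundary component, then it has positive genus (as $b_1=2g$) and is not fully marked (else there is no marked interval), so it carries a marked interval, and we take $\gamma$ to be a non-separating properly embedded arc with its endpoints slid into that marked interval. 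In either case $\gamma$ is non-separating, so $\Sigma_\gamma$ is connected and $b_1$ drops by one (cutting along an arc raises the Euler characteristic by one); and since cutting along an arc neither creates nor destroys boundary segments, $\Sigma_\gamma$ retains a boundary segment, hence a marked interval. Applying the inductive hypothesis furnishes a full formal arc system $\Gamma'$ on $\Sigma_\gamma$; after isotoping it so that no arc ends on a copy of $\gamma$ (possible because these copies are never adjacent to a boundary segment), the system $\Gamma'\cup\{\gamma\}$ consists of pairwise disjoint simple arcs in $\Sigma$ with endpoints in $\cM$, and cutting $\Sigma$ along it yields the same disks as cutting $\Sigma_\gamma$ along $\Gamma'$, namely disks each with exactly one boundary segment. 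That property also forces the arcs to be pairwise non-isotopic (two isotopic arcs in the system would cobound a disk whose boundary contains no boundary segment, which is incompatible with formality), so $\Gamma'\cup\{\gamma\}$ is a full formal arc system, completing the induction.

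The facts used are standard: cutting a surface along a properly embedded arc raises its Euler characteristic by one; an arc joining two distinct boundary components is non-separating; and a connected surface with one boundary component and $b_1\geq 1$ has positive genus, hence a non-separating arc. I expect the main obstacle to be the bookkeeping of the boundary structure under cutting and re-gluing — namely, tracking the marked intervals, the boundary segments (which are preserved), and the status of the newly created boundary arcs — together with the consequent verification that $\Gamma'\cup\{\gamma\}$ is genuinely a full formal arc system. This is also the only point where the hypothesis is essential: a marked interval is exactly what one needs to ``open up'' a fully marked boundary component by an arc, and indeed a surface all of whose boundary lies in $\cM$ admits no formal arc system whatsoever.
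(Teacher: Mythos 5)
This proposition is not proved in the paper at all: it is quoted from \cite{HaidenKatzarkovKontsevich} (the text only adds that the finitely-full analogue ``is similar''), so there is no in-paper argument to compare yours against, and your proposal should be judged as a self-contained proof. On its own terms it is correct and is essentially the standard cut-and-induct construction. The base case (a fan of $k-1$ arcs based at one marked interval of a marked disk) does produce $k$ disks, one per boundary segment. In the inductive step the bookkeeping that matters is all in place: you cut along a non-separating arc $\gamma$ (endpoints best taken in the \emph{interiors} of marked components, which is what your sliding step implicitly needs), you declare both copies of $\gamma$ to be marked on $\Sigma_\gamma$, so that boundary segments are neither created nor destroyed and $\Sigma_\gamma$ is again a marked surface with a marked interval; after sliding endpoints of the inductively obtained system off the copies of $\gamma$, the complement of $\Gamma'\cup\{\gamma\}$ in $\Sigma$ is literally the complement of $\Gamma'$ in $\Sigma_\gamma$, and your argument excluding isotopic pairs (a cobounding square with two sides on $\cM$ would force a complement disk with no boundary segment) is sound. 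Two small caveats, neither fatal: the opening reduction ``treat components separately'' silently assumes every component of $\Sigma$ contains a marked interval — as your own closing remark shows, a fully marked component admits no formal arc system — so one should either take $\Sigma$ connected, as is implicit in \cite{HaidenKatzarkovKontsevich}, or impose the hypothesis componentwise; and in the one-boundary-component case the existence of a non-separating properly embedded arc and the cobounding-square fact are invoked as standard, which is fair but deserves a reference or a line of justification.
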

 We note that the same is true for finitely-full, formal arc systems under the same assumptions; the proof is similar.

 A \textbf{flow} from an arc $\gamma_1 \in \Gamma$ to a (possibly identical) arc $\gamma_2 \in \Gamma$ is the homotopy class of a path $f$ inside $\cM$ which follows the natural orientation of $\partial \Sigma$ and which starts on a start or end point of $\gamma_1$ and which ends on a start or end point of $\gamma_2$. A flow $f$ is \textbf{constant} if it agrees with the homotopy class of a constant path. Flows can be composed as paths and a flow is \textbf{irreducible} if it cannot be  decomposed further into non-constant flows between arcs of $\Gamma$. As such, every flow between arcs of $\Gamma$ is a finite composition of irreducible ones in a unique way.

\subsection{Partially wrapped Fukaya categories and gentle algebras}

By a \textbf{graded marked surface} we mean a triple $(\Sigma, \cM, \eta)$ consisting of a graded surface $(\Sigma, \cM)$ and a \textbf{line field} $\eta$ on $\Sigma$, that is, a section $\eta\colon \Sigma \rightarrow \mathbb{P}(T\Sigma)$ of the projectivised tangent bundle. As with $\cM$, the line field is frequently omitted from our notation, so that the graded marked surface above is simply referred to by $\Sigma$. The presence of a line field allows one to endow every simple arc on $\Sigma$ with the extra structure of a grading, cf.~\cite{HaidenKatzarkovKontsevich}. The set of all gradings on every simple arc is (non-canonically) in bijection with $\mathbb{Z}$ and given two graded simple arcs $\gamma_1, \gamma_2$ in an arc system, any flow $f$ from $\gamma_1$ to $\gamma_2$ naturally inherits a degree $|f| \in \mathbb{Z}$ which is compatible with the composition of flows, cf.~\cite[(3.17)]{HaidenKatzarkovKontsevich}. We denote by $\Flow(\gamma_1, \gamma_2)$ the set of flows from $\gamma_1$ to $\gamma_2$.

\begin{defn}[{\cite[]{HaidenKatzarkovKontsevich}}]\label{def: Fukaya category arc system} Let $\Sigma$ be a graded marked surface with at least one marked interval and let $\Gamma \subseteq \Sigma$ be a graded full (resp.~finitely-full) formal arc system. Define $\cF=\cF(\Gamma)$ as the $k$-linear graded category with $\Ob{\cF}=\Gamma$ and morphism spaces $$\Hom_{\cF}^{\bullet}(\gamma_1, \gamma_2)\coloneqq k\Flow(\gamma_1, \gamma_2),$$
 for all $\gamma_1, \gamma_2$, where each $f \in \Flow(\gamma_1, \gamma_2)$ is regarded as a homogeneous element of degree $|f|$. The composition law for morphisms is the unique $k$-linear extension of the composition of flows so that $g \circ f=0$ whenever the start point of the flow $g$ does not agree with the end point of $f$. The \textbf{partially wrapped Fukaya category} of $\Sigma$ is the category $\Fuk(\Sigma)\coloneqq \per(\cF(\mathcal{E}))$, {where $\mathcal{E} \subseteq \Sigma$ denotes any full formal arc system. Likewise, we define $\Fuk^f(\Sigma)\coloneqq \per(\cF(\mathcal{A}))$ for any finitely-full formal arc system $\mathcal{A}$. Finally, we set $\Fuk^{\operatorname{inf}}(\Sigma) \coloneqq \pvd(\Fuk(\Sigma))$.}
\end{defn}
\begin{rmk}
{By  \cite[Remark 3.21]{LekiliPolishchuk} and its preceding paragraph, $\Fuk^{\operatorname{inf}}(\Sigma)$ is a model for the \textit{infinitesimal Fukaya category}. Since $\Fuk(\Sigma)$ is homologically smooth, $\Fuk^{\operatorname{inf}}(\Sigma)$ is naturally identified with a full subcategory of $\Fuk(\Sigma)$. If $\Sigma$ does not contain components without stops and vanishing winding number, it follows from \Cref{prop: Koszul duality gentle on categories} below that $\Fuk^f(\Sigma)=\Fuk^{\operatorname{inf}}(\Sigma)$. In general, $\Fuk^f(\Sigma)$ is a derived completion of $\Fuk(\Sigma)$.}
\end{rmk}

\begin{ex}\label{exa: polynomial ring as Fukaya category}
Let $n \in \mathbb{Z}$ and $A=k[t]$ with $|t|=n$. Then $A=\cF(\{\gamma\})$, where $\gamma \subseteq \Sigma$ is any embedded arc on an annulus with one fully marked component and one marked interval which connects the fully marked component and the marked interval. The degree of $|t|$ is controlled by the line field.
\end{ex}

 Although a priori $\Fuk(\Sigma)$ depends on the choice of an arc system, it turns out that its Morita equivalence class is well-defined, cf.~\Cref{fukmorita} below. 

We note that $\cF(\Gamma)$ is canonically augmented: for each object $\gamma \in \Gamma=\Ob{\cF(\Gamma)}$, the augmentation map is the projection $k \Flow(\gamma, \gamma) \rightarrow k$ onto the basis element of the constant flow. In particular, this induces a canonical augmentation over a product of fields on the category algebra of $\cF(\Gamma)$, that is, the algebra $$\bigoplus_{\gamma, \gamma' \in \Gamma} k \Flow(\gamma, \gamma'),$$
obtained by taking the (finite) direct sum of all morphism spaces in $\cF(\Gamma)$.

The following proposition recalls a few useful properties of the previous constructions. It follows from \cite[Proposition 3.5]{HaidenKatzarkovKontsevich} and \Cref{def: Fukaya category arc system}.

\begin{prop}\label{prop: Fukaya category properness  smoothness} Let $\Gamma$ be a graded full or finitely-full formal arc system on a graded marked surface $\Sigma$ with at least one marked interval. Then the following hold:

\begin{enumerate}
    \item $\cF(\Gamma)$ is proper if and only if $\Gamma$ is finitely-full;
    \item  
    $\cF(\Gamma)$ is a smooth dg category if and only if $\Gamma$
    is full; in particular $\Fuk(\Sigma)$ is smooth.
    \item $\Fuk(\Sigma)$ is smooth and proper if and only if $\Sigma$ has no fully marked components. Equivalently, this is the case if and only if $\Sigma$ admits a formal arc system which is simultaneously full \emph{and} finitely-full.
\end{enumerate}
\end{prop}

 The category algebra of $\cF(\Gamma)$ can be described by a quiver with quadratic relations and vertex set $\Gamma$ whose set of arrows is given by the set of irreducible flows. The set of relations consists of all paths $g f$ of length $2$ such that $g \circ f=0$ in $\cF(\Gamma)$. Algebras of this kind are graded and possibly infinite dimensional analogoues of \textbf{gentle algebras} which were first introduced in \cite{AssemSkowronski}. Every smooth graded gentle algebra $A$ arises as the category algebra of $\cF(\Gamma)$ for some full formal arc system $\Gamma$ on a graded marked surface $\Sigma_A$. Likewise, every proper graded gentle algebra $B$ is the category algebra of $\cF(\Gamma)$ for some finitely-full formal arc system $\Gamma$ on a surface $\Sigma_B$. For an explicit construction of the surfaces associated to a graded gentle algebra we refer the reader to \cite{HaidenKatzarkovKontsevich, OpperPlamondonSchroll, LekiliPolishchuk}. The definition of $\cF$  can be extended to non-formal arc systems which are full or finitely-full \cite{HaidenKatzarkovKontsevich} and hence $\Fuk(\Sigma)$ can be defined for all graded marked surfaces $\Sigma$. In the non-formal case, $\cF(\Gamma)$ is a minimal $A_\infty$-category whose underlying graded $k$-linear category is constructed in the same way as in the formal case. However, in addition to composition,  higher $A_\infty$-compositions are contributed by those disks in $\Sigma \setminus \Gamma$ which do \emph{not} contain a boundary segment, or in other words, disks which are bounded by alternating sequences of marked intervals and arcs of $\Gamma$. For a formal arc system $\Gamma$ the complement $\Sigma \setminus \Gamma$ does not contain such disks, and the $A_\infty$-category $\cF(\Gamma)$ is formal and reduces to the one of \Cref{def: Fukaya category arc system}. Next, we recall an important property of the construction $\Gamma \mapsto \cF(\Gamma)$.

 \begin{prop}[{\cite[Lemma 3.2]{HaidenKatzarkovKontsevich}}]\label{prop: Inclusion = Morita equivalence}
 Let $\Gamma \subseteq \Gamma' \subseteq \Sigma$ be full (resp.~finitely-full) arc systems. Then the canonical inclusion $\cF(\Gamma) \hookrightarrow \cF(\Gamma')$ is a Morita equivalence, that is, the induced $A_\infty$-functor $\per(\cF(\Gamma)) \rightarrow \per(\cF(\Gamma'))$ is an equivalence of triangulated categories.
 \end{prop}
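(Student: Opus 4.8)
The plan is to show that the canonical $A_\infty$-functor $F\colon\cF(\Gamma)\to\cF(\Gamma')$ is quasi-fully-faithful and that its image split-generates $\per(\cF(\Gamma'))$. Since a quasi-fully-faithful $A_\infty$-functor whose image split-generates the target induces a quasi-equivalence on perfect derived categories, this yields the asserted triangle equivalence $\per(\cF(\Gamma))\xrightarrow{\ \simeq\ }\per(\cF(\Gamma'))$.

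Quasi-full-faithfulness should be immediate from \Cref{def: Fukaya category arc system}: for $\gamma_1,\gamma_2\in\Gamma$ the morphism complex $k\Flow(\gamma_1,\gamma_2)$ depends only on the two arcs and on $\cM$, not on the ambient arc system, so it is the same computed in $\cF(\Gamma)$ or in $\cF(\Gamma')$, and since both categories are minimal these complexes carry the zero differential; hence $F$ is an isomorphism on $\Hom$-complexes. (In the non-formal case the higher $A_\infty$-operations among objects of $\Gamma$ need not literally agree in $\cF(\Gamma)$ and $\cF(\Gamma')$, since a boundary-segment-free disk in $\Sigma\setminus\Gamma$ may be subdivided by $\Gamma'$; this is harmless, as $F$ remains a quasi-isomorphism on $\Hom$-complexes, which is all that the generation argument needs.)

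The heart of the matter is split generation: I would show that every $\delta\in\Gamma'\setminus\Gamma$ lies in $\thick_{\per(\cF(\Gamma'))}(\Gamma)$. First reduce to $\Gamma'=\Gamma\cup\{\delta\}$ by adding the finitely many arcs of $\Gamma'\setminus\Gamma$ one at a time: since $\delta$ is disjoint from every arc of $\Gamma$, it lies in a single component $P$ of $\Sigma\setminus\Gamma$, which by fullness is a disk with at most one boundary segment, and cutting $P$ along $\delta$ keeps the complement a disjoint union of such disks, so each intermediate system is again full. With $\Gamma'=\Gamma\cup\{\delta\}$, the arc $\delta$ separates $P$ into two pieces, and the one, $D_0$, avoiding the boundary segment of $P$ is then a disk component of $\Sigma\setminus\Gamma'$ that carries no boundary segment; its boundary is an alternating cyclic sequence of marked intervals and arcs $\delta,\gamma_{i_1},\dots,\gamma_{i_\ell}$ of $\Gamma$. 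The plan is to read off from the $A_\infty$-operation that $D_0$ contributes to $\cF(\Gamma')$ that $\delta$ is an iterated mapping cone of $\gamma_{i_1},\dots,\gamma_{i_\ell}$, whence $\delta\in\thick_{\per(\cF(\Gamma'))}(\Gamma)$; the case $\ell=0$, where $\delta$ bounds a monogon and is a zero object, is handled separately, and $\ell=1$ cannot occur since a bigon would force $\delta$ isotopic to $\gamma_{i_1}$, contradicting that $\Gamma'$ is an arc system. The finitely-full case is entirely analogous, the only additional input being that an arc of $\Gamma'\setminus\Gamma$ contained in an annular component of $\Sigma\setminus\Gamma$ is null-homotopic there and hence represents the zero object.

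The hard part will be this last step: making precise, within the $A_\infty$-formalism and with the correct bookkeeping of gradings and signs of flows, how a boundary-segment-free disk component of $\Sigma\setminus\Gamma'$ encodes a twisted complex on its boundary arcs, and dispatching the degenerate monogon and bigon configurations together with the arcs supported in annular collars of fully marked boundary components.
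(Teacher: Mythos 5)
First, a point of comparison: the paper does not prove this proposition at all. It is imported verbatim from \cite[Lemma 3.2]{HaidenKatzarkovKontsevich}, and the only thing the paper adds is the remark, immediately after the statement, that the argument of \emph{op.~cit.} extends to finitely-full arc systems. Your overall plan --- full faithfulness on the generating arcs plus split generation, the latter obtained by adding the arcs of $\Gamma'\setminus\Gamma$ one at a time and reading off a twisted complex from the boundary-segment-free disk that the new arc cuts out --- is essentially the strategy of the cited proof, so the skeleton is sound. But two of your steps fail as written.

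The more serious one is your treatment of the finitely-full case. An arc $\delta\in\Gamma'\setminus\Gamma$ lying in a half-open cylinder component $C$ of $\Sigma\setminus\Gamma$ need \emph{not} be null-homotopic, and need not be a zero object: it can be an essential arc, with endpoints on marked intervals in the frontier of $C$, separating the fully marked circle $B$ from the open end of $C$. Whenever the frontier of $C$ involves more than one arc of $\Gamma$, such a $\delta$ is non-isotopic to every arc of $\Gamma$ and genuinely occurs: take a pair of pants with $B$ fully marked and one marked interval on each of the other two boundary circles, let $\Gamma$ be a finitely-full system containing two arcs $\alpha,\beta$ between those marked intervals which jointly encircle $B$, and let $\delta$ encircle $B$ with both endpoints on one of the marked intervals; then $\Gamma\cup\{\delta\}$ is again finitely-full and $\delta$ is manifestly a nonzero object of $\cF(\Gamma')$ (its endomorphism complex contains the identity with no differential hitting it). Such arcs must be handled by your main disk argument, not discarded: cutting $C$ along $\delta$ yields a smaller cylinder around $B$ together with a boundary-segment-free disk bounded by $\delta$, $\alpha$, $\beta$ and marked-interval pieces, and it is this disk that exhibits $\delta$ as a twisted complex over $\Gamma$. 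The second problem is your parenthetical on quasi-full-faithfulness in the non-formal case. If a boundary-segment-free disk of $\Sigma\setminus\Gamma$ is subdivided by arcs of $\Gamma'$, then the higher products among the objects of $\Gamma$ computed in $\cF(\Gamma)$ and in the full $A_\infty$-subcategory of $\cF(\Gamma')$ on those objects genuinely differ (the subdivided disk is no longer a component of the complement, so its operation is simply absent from the latter). Consequently the identity-on-objects-and-morphisms map is not an $A_\infty$-functor at all, and the sentence ``$F$ remains a quasi-isomorphism on $\Hom$-complexes'' presupposes a functor $F$ you have not constructed; producing the functor with its higher components (or, equivalently, showing that the twisted complexes in $\per(\cF(\Gamma'))$ representing the arcs of $\Gamma$ recover the $A_\infty$-structure of $\cF(\Gamma)$) is part of the content of the cited lemma and cannot be dismissed as harmless.
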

 We note that while the previous proposition appears in \cite{HaidenKatzarkovKontsevich} only for full arc systems, its proof extends immediately to the finitely-full case. It implies the following.
\begin{prop}\label{fukmorita}
Let $\Gamma, \Gamma' \subseteq \Sigma$ be full (resp.~finitely-full) arc systems. Then $\cF(\Gamma)$ and $\cF(\Gamma')$ are Morita equivalent.
\end{prop}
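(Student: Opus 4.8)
The plan is to deduce this from \Cref{prop: Inclusion = Morita equivalence} by a standard ``common refinement'' argument. First I would recall that any two full (resp.~finitely-full) formal arc systems on a fixed graded marked surface $\Sigma$ admit a common full (resp.~finitely-full) arc system containing both of them, up to isotopy. Concretely, given $\Gamma,\Gamma'\subseteq\Sigma$, one isotopes $\Gamma'$ into generic position with respect to $\Gamma$ so that the arcs of $\Gamma$ and $\Gamma'$ intersect transversally and minimally, and then the union $\Gamma\cup\Gamma'$ (after cutting at intersection points, or more carefully, after adding finitely many arcs to resolve the complement) is again an arc system whose complement consists of disks (resp.~disks and half-open cylinders of the allowed type). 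The key point is that adding arcs to a full arc system keeps it full, and adding arcs to a finitely-full arc system keeps it finitely-full, since subdividing a disk by an arc yields disks and the cylinder components are unaffected.

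Once such a common arc system $\Gamma''\supseteq \Gamma\cup\Gamma'$ is in hand, I would apply \Cref{prop: Inclusion = Morita equivalence} twice: the inclusions $\cF(\Gamma)\hookrightarrow\cF(\Gamma'')$ and $\cF(\Gamma')\hookrightarrow\cF(\Gamma'')$ are both Morita equivalences, hence $\per(\cF(\Gamma))\simeq\per(\cF(\Gamma''))\simeq\per(\cF(\Gamma'))$. This gives the desired Morita equivalence $\cF(\Gamma)\simeq_{\mathrm{Morita}}\cF(\Gamma')$. One subtlety is that \Cref{prop: Inclusion = Morita equivalence} is stated for an honest inclusion $\Gamma\subseteq\Gamma'$ of arc systems, whereas the common refinement is only obtained after an isotopy; but isotopic arc systems yield canonically $A_\infty$-equivalent categories $\cF$ by the construction in \cite{HaidenKatzarkovKontsevich} (the graded mapping spaces and structure maps depend only on isotopy classes of arcs and flows), so this causes no real difficulty.

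The main obstacle, such as it is, lies in justifying the existence of the common full (resp.~finitely-full) formal refinement while staying within the class of \emph{formal} arc systems if one wants the statement at that level of precision — however, since the proposition only asserts Morita equivalence of the (possibly $A_\infty$) categories $\cF(\Gamma)$, and \Cref{prop: Inclusion = Morita equivalence} already covers non-formal full/finitely-full arc systems, one does not actually need the refinement to be formal. So I would phrase the argument at the level of arbitrary full (resp.~finitely-full) arc systems, for which the common-refinement construction is completely standard surface topology: put the arcs in minimal position, and if the resulting complement has a component that is not a disk (resp.~not a disk or allowed cylinder), cut it with finitely many additional arcs until it is. This is the only step requiring genuine (if routine) care, and it closely mirrors the analogous argument already invoked in \cite{HaidenKatzarkovKontsevich}; everything else is a formal consequence of two applications of \Cref{prop: Inclusion = Morita equivalence}.
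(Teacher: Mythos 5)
Your reduction to \Cref{prop: Inclusion = Morita equivalence} is the right instinct, but the key step of your argument fails: two full (resp.~finitely-full) arc systems $\Gamma,\Gamma'$ on $\Sigma$ do \emph{not} in general admit a common arc system containing both up to isotopy. An arc system consists of \emph{pairwise disjoint} simple arcs, so a common refinement $\Gamma''\supseteq\Gamma\cup\Gamma'$ can only exist if every arc of $\Gamma$ can be isotoped off every arc of $\Gamma'$; as soon as some arc of $\Gamma$ intersects some arc of $\Gamma'$ essentially (e.g.\ the two crossing diagonals of a disk with four marked intervals), no such $\Gamma''$ exists. Your fallback of ``cutting at intersection points'' does not help, since the resulting pieces have endpoints outside $\cM$ and, more importantly, the resulting collection no longer contains $\Gamma$ and $\Gamma'$ as subsets, so \Cref{prop: Inclusion = Morita equivalence} no longer applies to them. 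This is not a routine surface-topology lemma you can wave at: it is exactly the point where genuine input is needed.

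The paper's proof avoids this by replacing the (nonexistent) common upper bound with a \emph{zig-zag} of inclusions. For full arc systems it invokes the contractibility of the arc complex used in \cite{HaidenKatzarkovKontsevich}, and the result of \cite{HatcherTriangulations} that any two inclusion-maximal arc systems are connected by a finite sequence of arc insertions and removals; since every full arc system sits inside a maximal one, any two full systems are connected by a chain $\Gamma\subseteq\Gamma_1\supseteq\Gamma_2\subseteq\cdots\supseteq\Gamma'$, and \Cref{prop: Inclusion = Morita equivalence} is applied along each inclusion. For the finitely-full case the same connectivity result is applied to the surface whose marking is obtained by deleting the fully marked components, so that the zig-zag stays within finitely-full arc systems. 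So your two-step argument would be correct \emph{if} a common refinement existed, but in general one must accept a longer zig-zag, and establishing that such a zig-zag exists (Hatcher's theorem, respectively its adaptation to the finitely-full setting) is the substantive content your proposal is missing.
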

\begin{proof}For full arc systems, this was proved in \cite{HaidenKatzarkovKontsevich}. It relies on the fact that the category of full arc systems, with morphisms given by inclusions, is contractible, which in turn is a consequence of the contractibility of the arc complex of $\Sigma$. Likewise, \cite[Corollary]{HatcherTriangulations} shows that every two inclusion maximal arc systems on a graded marked surface (``triangulations'') are connected by a sequence of arc inclusions and removals, or in other words, a zig-zag in the category of arc systems. Since every arc system sits inside an inclusion maximal one, this shows that any two full arc systems can be connected through a zig-zag of inclusions. In fact, the result in  \textit{op.~cit.}\ does not require the set of markings $\cM$ to intersect every boundary component, and when applied to the ``marked'' surface obtained by removing all fully marked components from $\cM$, the previous arguments imply that any two finitely-full arc systems  are connected by a zig-zag of inclusions passing through finitely-full arc systems. Now \Cref{prop: Inclusion = Morita equivalence} implies that $\cF(\Gamma)$ are $\cF(\Gamma')$ are Morita equivalent.
\end{proof}

\subsection{Koszul duality between smooth and proper gentle algebras}\label{SectionKoszulDuality}

 We recall the precise relationship between proper and  smooth graded gentle algebras. In what follows $\Sigma$ denotes a graded marked surface. 

Suppose that  $\Gamma \subseteq \Sigma$ is a graded full formal arc system. We construct a dual finitely-full arc system $\Gamma^! \subseteq \Sigma$ from $\Gamma$ as follows. For every arc $\gamma \in \Gamma$, denote by $\gamma^{\bot}$ any arc which intersects $\gamma$ transversally in the interior but no other arc $\delta \in \Gamma \setminus \{\gamma\}$ and whose end points lie in the unique boundary segment of the two disks of $\Sigma \setminus \Gamma$ which border $\gamma$ from either side, see \Cref{fig:koszul dual arc system}. Then let $\gamma^!$ denote any simple arc obtained from $\gamma^{\bot}$ by sliding both its end points along $\partial \Sigma$, following the induced orientation of $\partial \Sigma$, until the end points land in the marked interval next to the respective boundary segment. The collection $\Gamma^!\coloneqq \{\gamma^! \mid \gamma \in \Gamma\}$ is well defined up to isotopy. By deforming $\gamma^!$ if necessary, we can arrange that $\Gamma^!$ is an arc system (which is then finitely-full) and that for all $\delta \in \Gamma$, $\gamma^!$ and $\delta$ are in minimal position, that is, all their intersections are transversal and the number of their intersections is minimal within their respective homotopy classes.

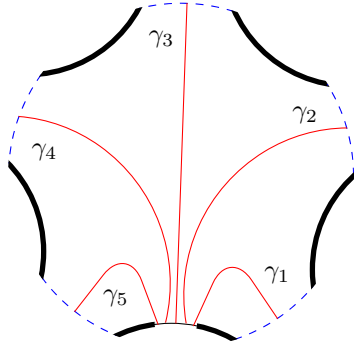
\begin{figure}

\begin{tikzpicture}[scale=2.3]
\def\r{1} 
\def\ratio{0.6} 
\def\ratioangles{1.25}
\def\N{5} 

\pgfmathsetmacro{\variable}{-1}; 

\def\rtwo{\r * \ratio} 

\pgfmathsetmacro{\betaangle}{360/ ((\N) * (1+(\ratioangles)))}

\pgfmathsetmacro{\alphaangle}{\betaangle * \ratioangles}

\clip (0,0) circle[radius=\r];

\pgfmathsetmacro{\h}{sqrt(2 * \r^2 (1- cos(\betaangle)))}
\pgfmathsetmacro{\distance}{sqrt(\r^2 -(\h/2)^2)+ sqrt(\rtwo^2 -(\h/2)^2)} 

\pgfmathsetmacro{\rtwo}{sqrt(\distance^2 + \r^2 - 2*cos(\alphaangle/2) * \distance *\r}

\coordinate (base) at ({-(-\distance+\rtwo)*cos(\variable*(\betaangle+\alphaangle)-\alphaangle/2)},{-(-\distance+\rtwo)*sin(\variable*(\betaangle+\alphaangle)-\alphaangle/2)});

\foreach \x in {1,..., \N} 
{

 \draw[blue, dashed, thick] ({\r*cos(\x*(\betaangle+\alphaangle))},{\r*sin(\x*(\betaangle+\alphaangle))}) arc ({\x*(\betaangle+\alphaangle)}:{\x*(\betaangle+\alphaangle)+\betaangle}:\r);

 \begin{scope}[shift={({\distance*cos(\x*(\betaangle+\alphaangle)-\alphaangle/2)},{\distance*sin(\x*(\betaangle+\alphaangle)-\alphaangle/2)})}]
 \draw[line width=2pt] (0,0) circle (\rtwo);  
 \end{scope}
}


\filldraw[white] (base) circle (3.4pt);

 \draw[] ({\distance*cos(\variable*(\betaangle+\alphaangle)-\alphaangle/2)},{\distance*sin(\variable*(\betaangle+\alphaangle)-\alphaangle/2)}) circle (\rtwo);

\pgfmathsetmacro{\offset}{\alphaangle/(2*\N)}
\foreach \y in {2,..., 4} 
{

\pgfmathsetmacro{\x}{\y-2};
\pgfmathsetmacro{\aux}{\distance-\rtwo};
\draw[hobby, red] plot [tension=1, in=90] coordinates{ ({\r*cos(\x*(\betaangle+\alphaangle)+\betaangle/2)},{\r*sin(\x*(\betaangle+\alphaangle)+\betaangle/2)}) ({\r/2*cos(\x*(\betaangle+\alphaangle)+\betaangle/2)},{\r/2*sin(\x*(\betaangle+\alphaangle)+\betaangle/2)}) ({\aux*cos(\variable*(\betaangle+\alphaangle)-\alphaangle/2-\offset*(\x-1))},{\aux*sin(\variable*(\betaangle+\alphaangle)-\alphaangle/2-\offset*(\x-1))) }) } ;

 \draw ({\r*0.8*cos(\x*(\betaangle+\alphaangle)+\betaangle/2+10)},{\r*0.8*sin(\x*(\betaangle+\alphaangle)+\betaangle/2+10)}) node{$\gamma_{\y}$};
}

\pgfmathsetmacro{\offset}{\alphaangle/(2.4*\N)}
 \foreach \y in {1,5} 
{

\pgfmathsetmacro{\x}{\y-2};
\pgfmathsetmacro{\aux}{\distance-\rtwo+0.018};
\draw[rounded corners=15pt, red] ({\r*cos(\x*(\betaangle+\alphaangle)+\betaangle/2)},{\r*sin(\x*(\betaangle+\alphaangle)+\betaangle/2)})--({\r/2*cos(\x*(\betaangle+\alphaangle)+\betaangle/2)},{\r/2*sin(\x*(\betaangle+\alphaangle)+\betaangle/2)})--({\aux*cos(\variable*(\betaangle+\alphaangle)-\alphaangle/2-\offset*(\x-1))},{\aux*sin(\variable*(\betaangle+\alphaangle)-\alphaangle/2-\offset*(\x-1))) });

 \draw ({\r*0.8*cos(\x*(\betaangle+\alphaangle)+\betaangle/2+10)},{\r*0.8*sin(\x*(\betaangle+\alphaangle)+\betaangle/2+10)}) node{$\gamma_{\y}$};
}

\end{tikzpicture}
    \caption{Marked intervals (black solid) and arcs $\gamma_1, \dots, \gamma_5$ (blue dashed) and $\gamma_1^{\bot}, \dots, \gamma_5^{\bot}$ (red solid). The duals $\gamma_i^!$ are obtained by sliding the end points of the arcs $\gamma_i^{\bot}$ on the boundary segment onto the neighbouring marked interval to their right while keeping the linear order among each other.}
    \label{fig:koszul dual arc system}
\end{figure}

Note that by construction, either $|\Flow(\gamma, \gamma^!)|=1$ or $\gamma$ and $\gamma^!$ intersect in a single (interior) point. On the other hand, for all other $\delta \in \Gamma \setminus \{\gamma\}$, $\delta$ and $\gamma^!$ are disjoint and $\Flow(\delta, \gamma^!)=\emptyset$. In fact these properties uniquely characterise the isotopy classes in the collection $\Gamma^!$. Because each step of the process of passing from $\gamma$ to $\gamma^!$ is reversible, the assignment $\Gamma \mapsto \Gamma^!$ defines a bijection between the sets of full formal arc systems and finitely-full and formal arc systems on $\Sigma$ up to isotopy.

Although it will not be very important in this paper, $\gamma^!$ inherits a canonical grading from $\gamma$ by requiring that the unique flow $f \in \Flow(\gamma, \gamma^!)$ or the unique intersection of $\gamma$ and $\gamma^!$ is of degree $0$. Moreover, with the given grading, there is a bijection 
\begin{displaymath}
\begin{tikzcd}[row sep=0.5em]
\Flow(\gamma_1, \gamma_2) \arrow{r}{\sim} & \Flow(\gamma_2^!, \gamma_1^!), \\
f \arrow[mapsto]{r} & f^!
\end{tikzcd}
\end{displaymath}
such that $|f^!|=1-|f|$. This arc system duality is a topological incarnation of Koszul duality:
\begin{prop}[{\cite[Appendix C]{OpperPlamondonSchroll}}]\label{prop: Koszul dual of Fukaya category} Let $\Gamma \subseteq \Sigma$ be a full formal arc system and let $A$  denote the category algebra of $\cF(\Gamma)$ endowed with its canonical augmentation.  Then $A^!$ is quasi-isomorphic to the category algebra of $\cF(\Gamma^!)$ with $\Gamma^!$ endowed with the induced grading.
\end{prop}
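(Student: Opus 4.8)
The plan is to identify $A^!$ with the derived endomorphism algebra of the semisimple base $R\coloneqq\bigoplus_{\gamma\in\Gamma}k$ of the canonical augmentation, and then to compute that endomorphism algebra by hand, exploiting that $\cF(\Gamma)$ is formal and that its category algebra is a quadratic monomial $R$-algebra. First, since $A$ is augmented over $R$, the identification $B^\vee_RA\simeq\R\mathrm{End}_A(R)$ established in \Cref{kdsection} gives a quasi-isomorphism $A^!\simeq\R\mathrm{End}_A(R)$. Next, because $\Gamma$ is a \emph{formal} arc system, $\cF(\Gamma)$ carries no higher $A_\infty$-products, so $A$ is an honest graded $R$-algebra, and one reads off from \Cref{def: Fukaya category arc system} that the product of two basis flows is again a basis flow or zero. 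Hence $A$ is a quadratic monomial $R$-algebra: it is generated over $R$ in path-length $1$ by the irreducible flows (the arrows of its quiver $Q$), with relations the length-$2$ compositions $g\circ f$ that vanish in $\cF(\Gamma)$. Such an algebra is Koszul with respect to the path-length grading, the intrinsic flow grading playing the role of an auxiliary Adams grading; consequently $\R\mathrm{End}_A(R)$ is formal and coincides with its Ext algebra $\Ext^\ast_A(R,R)$, which is the quadratic dual of $A$: its quiver is the arrow-reverse $Q^\op$ of $Q$, with the arrow $a^\vee$ dual to $a$ placed in degree $1-|a|$, and its quadratic relations are spanned by the reversed length-$2$ paths of $Q$ that are \emph{not} zero relations of $A$.

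It then remains to match this presentation with the category algebra $B$ of $\cF(\Gamma^!)$, which (being the category algebra of a finitely-full formal arc system) is a proper, hence again quadratic monomial, graded gentle algebra. Here one uses the combinatorics of the dual arc system: the bijection $\Flow(\gamma_1,\gamma_2)\xrightarrow{\sim}\Flow(\gamma_2^!,\gamma_1^!)$, $f\mapsto f^!$, is an anti-isomorphism of the flow monoids-with-zero which shifts degrees by $|f^!|=1-|f|$. Being an anti-isomorphism of monoids, it carries the irreducible flows of $\Gamma$ (the arrows of $Q$) bijectively onto the irreducible flows of $\Gamma^!$ (the arrows of the quiver of $B$); thus the quiver of $B$ is $Q^\op$, with each arrow $a^!$ in degree $1-|a|$, matching $a^\vee$. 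For the relations, one checks from the explicit construction of $\Gamma^!$ via the transverse arcs $\gamma^\bot$ and the endpoint-sliding procedure that a length-$2$ composition of arrows of $B$ vanishes precisely when the corresponding length-$2$ path of $Q$ is \emph{non-zero} in $\cF(\Gamma)$ — exactly the swap produced by quadratic duality — and the degrees again agree, both sides having their length-$2$ part in degree $2-|a|-|b|$. Comparing presentations, $B\cong\Ext^\ast_A(R,R)$, possibly up to an opposite which is absorbed by the direction-reversal already built into $f\mapsto f^!$; hence $A^!\simeq B$.

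The one genuinely substantial ingredient is the combinatorial-geometric input of the second paragraph: verifying that the dual arc system $\Gamma^!$ reproduces precisely the quadratic-dual quiver with relations, in particular that no irreducible flow of $\Gamma^!$ arises from a zero relation of $Q$, and that the zero relations of $\cF(\Gamma^!)$ are dual to the non-zero length-$2$ paths of $\cF(\Gamma)$. Beyond this there is no homotopical subtlety: formality of $\R\mathrm{End}_A(R)$ is automatic from Koszulity of the quadratic monomial algebra $A$, and formality of $B$ is part of the hypothesis that $\Gamma^!$ is a formal arc system. Equivalently, one may run the argument geometrically: in the curve model for $\mathcal{D}(A)$ the simple module $S_\gamma$ is represented by the dual arc $\gamma^!$, so $\R\mathrm{End}_A\!\big(\bigoplus_{\gamma}S_\gamma\big)$ is computed by the flows among the $\gamma^!$, i.e.\ by $\cF(\Gamma^!)$; this is the route taken in \cite[Appendix C]{OpperPlamondonSchroll}.
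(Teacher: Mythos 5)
The paper itself offers no proof of this proposition: it is quoted directly from \cite[Appendix C]{OpperPlamondonSchroll}, so your argument can only be judged on its own terms. Your overall strategy (identify $A^!$ with $\R\mathrm{End}_A(R)$ for $R=\bigoplus_{\gamma\in\Gamma}k$, use that the category algebra of a formal arc system is a quadratic monomial, hence Koszul, $R$-algebra, and then match the quadratic dual with the category algebra of $\cF(\Gamma^!)$) is the natural one. The genuine gap is that the combinatorial matching, which you yourself call the one substantial ingredient, is not carried out, and the single tool you offer towards it is false: the bijection $f\mapsto f^!$ is \emph{not} an anti-isomorphism of flow monoids-with-zero. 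Degrees are additive under composition of flows, so multiplicativity is already incompatible with $|f^!|=1-|f|$; an anti-homomorphism would send nonvanishing composites to nonvanishing composites, contradicting your own (correct) statement two sentences later that zero relations and nonzero length-two compositions are exchanged; and concretely, in the annulus example of \Cref{exa: polynomial ring as Fukaya category} and \Cref{exa: Koszul dual polynomial rings} the two category algebras are $k[t]$ and $k[x]/(x^2)$, which are not (anti-)isomorphic -- indeed one has infinitely many flows between the unique pair of arcs and the other only finitely many. What is actually needed, and what must be extracted from the construction of $\Gamma^!$ (the transverse arcs $\gamma^{\bot}$, the endpoint sliding, minimal position), is only a degree-shifting bijection on \emph{irreducible} flows together with the complementarity of the quadratic relations; as written, your proof asserts rather than proves this, and this verification is precisely the content of the cited appendix.

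A secondary, repairable issue is the formality step. Since the cohomological (flow) grading of $A$ is independent of path length, the standard purity argument for Koszul algebras does not apply verbatim: the transferred higher products preserve the path-length grading and shift the \emph{total} degree by $2-n$, which no longer forces them to vanish, because the shift can be absorbed by the internal grading. The conclusion is nevertheless correct and can be justified cleanly: because the relations are monomial, the span of tensors of arrows all of whose consecutive pairs are zero relations is a sub-dg-coalgebra of the bar construction $B_RA$ with zero differential, and Koszulity in the path-length direction says exactly that this inclusion is a quasi-isomorphism; dualising over $R$ yields a dg algebra quasi-isomorphism from $B_R^\vee A\simeq \R\mathrm{End}_A(R)$ onto the quadratic dual, which is the desired formality. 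With that substitution, and with the arrow/relation matching actually established (or explicitly delegated to \cite{OpperPlamondonSchroll}), your outline becomes a complete proof.
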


\begin{ex}\label{exa: Koszul dual polynomial rings}
Let $A$ and $\Sigma$ be as in \Cref{exa: polynomial ring as Fukaya category}. Then $A^!$ is quasi-isomorphic to $k[x]/(x^2)$ with $|x|=1-|t|$. The corresponding arc system of $A^!$ on the annulus is the embedded arc which starts and ends on the marked interval and which traverses the fully marked component once.
\end{ex}
 In particular, Koszul duality induces a bijection between the sets of isomorphism classes of smooth graded gentle algebras and the set of isomorphism classes of proper graded gentle algebras. To state the precise Koszul duality on the level of categories, we recall that every embedded, oriented closed curve $\gamma$ on $\Sigma$ inherits a \textbf{winding number} $\omega(\gamma) \in \mathbb{Z}$ thanks to the line field $\eta$, cf.~\cite{LekiliPolishchuk}. The winding number of a boundary component $B \subseteq \partial \Sigma$ is by definition  the winding number of the closed curve corresponding to the \emph{orientation preserving} identification $S^1 \cong B \subset \partial \Sigma$. Here, $B$ is endowed with the induced orientation which it inherits from the orientation of $\Sigma$ in the usual way. If $B$ is a fully marked component, $\omega(B)$  has a particularly simply algebraic interpretation: for any arc $\gamma$ with an endpoint $p$ on $B$, $\omega(B)$ coincides with the degree of the flow which starts and ends at $p$ and traverses $B$ exactly once. On the level of categories, Koszul duality gives us the following relationship.

\begin{prop}[{\cite[Appendix C, Step 6]{OpperPlamondonSchroll}}]\label{prop: Koszul duality gentle on categories} Let $\Gamma \subseteq \Sigma$ be a full formal graded arc system and $\Gamma^! \subseteq \Sigma$ its dual equipped with any grading. Let further $\cT \subseteq \cD\big(\cF(\Gamma^!)\big)$ denote the thick subcategory associated to the augmentation of ${\cF(\Gamma)}^!$, that is, $\cT \simeq \per({\cF(\Gamma)}^{!!})$. Then there exists an essentially surjective exact functor
\begin{displaymath}
\begin{tikzcd}
\Fuk(\Sigma)\simeq \per(\cF(\Gamma)) \arrow{r} & \cT.
\end{tikzcd}
\end{displaymath}
which is an equivalence if and only if $\Sigma$ contains no fully marked components with vanishing winding number. Moreover, this is the case if and only if $\cF(\Gamma)$ is derived complete.
\end{prop}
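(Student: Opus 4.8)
The plan is to run everything through Koszul duality for gentle algebras and then reduce to a single derived-completeness statement, the substance of which is an explicit computation of the Koszul double dual of $\cF(\Gamma)$.

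Write $A\coloneqq\cF(\Gamma)$, a smooth graded gentle algebra, and $B\coloneqq\cF(\Gamma^!)$; both are augmented over the semisimple algebra $R$ spanned by the vertex idempotents, and $B$ is in fact finite dimensional, since every arc of $\Gamma^!$ ends on a marked interval so that only finitely many flows occur. By \Cref{prop: Koszul dual of Fukaya category} there is a quasi-isomorphism of augmented algebras $A^!_R\simeq B$ identifying the augmentation module of $A^!_R$ with $R=B/\rad B$; hence $\cT=\thick_B(R)$, and since $\R\mathrm{End}_B(R)^{\op}=(A^!_R)^!_R=A^{!!}_R=\cF(\Gamma)^{!!}$ the tilting theorem yields the stated quasi-equivalence $\cT\simeq\per(\cF(\Gamma)^{!!})$. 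The functor $F\colon\per(\cF(\Gamma^!))\to\cT$ is the Koszul functor provided by module--comodule Koszul duality for the pair $(B_R B, B)$; it is an exact, coproduct-preserving functor carrying the free $B$-module to a compact generator of $\cT$, and is therefore essentially surjective.

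It remains to decide when $F$ is fully faithful. Here I would use the machinery relating reflexivity, restricted reflexivity and derived completeness from the preliminaries: combining \Cref{linduallem}(3) (applied to $B$ and its Koszul pair $(B_R B, B)$), \Cref{lem: two out of three}, and the fact --- built into the tilting identification above --- that the comparison $A^!_R\to A^{!!}_R$ is exactly the completion map of $A=\cF(\Gamma)$ along $R$, one reduces ``$F$ is a quasi-equivalence'' to ``$\cF(\Gamma)$ is derived complete along $R$''. One implication is immediate, since derived completeness of $A$ forces that of $A^!_R$ (\S\ref{section: derived completion}). This is precisely the assertion of the ``moreover''.

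The real work is then the ``moreover'' itself, i.e.\ the claim that $\cF(\Gamma)$ is derived complete along $R$ if and only if $\Sigma$ has no fully marked boundary component of vanishing winding number, and this is where I expect the main obstacle. Using $\cF(\Gamma)^{!!}\simeq\R\mathrm{End}_{\cF(\Gamma^!)}(R)$, one computes the minimal projective resolution of the semisimple module $R$ over the finite-dimensional gentle algebra $\cF(\Gamma^!)$ from the string combinatorics of $\Gamma^!$ on $\Sigma$; the resulting dg algebra agrees with $\cF(\Gamma)$ except along the ``winding directions'' around each fully marked boundary component $\partial_0$, where the polynomial generator dual to traversing $\partial_0$ once is replaced by a power-series generator. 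As traversing $\partial_0$ shifts cohomological degree by $\omega(\partial_0)$, this power series collapses to a polynomial --- so that $\cF(\Gamma)\to\cF(\Gamma)^{!!}$ is a quasi-isomorphism --- exactly when $\omega(\partial_0)\neq0$ for every such component, while for $\omega(\partial_0)=0$ the completion is a genuine power-series ring and derived completeness fails. Carrying out this resolution and tracking how the winding numbers interact with the grading is done in \cite{OpperPlamondonSchroll}; combining it with the two reductions above gives $F$ a quasi-equivalence $\iff$ $\cF(\Gamma)$ derived complete along $R$ $\iff$ $\Sigma$ has no fully marked component of vanishing winding number.
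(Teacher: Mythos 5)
This proposition is not proved in the paper at all: it is imported verbatim from \cite[Appendix C, Step 6]{OpperPlamondonSchroll}, so there is no internal argument to compare yours against. Judged as a standalone proof, your sketch reproduces the right framework (Koszul duality between $\cF(\Gamma)$ and $\cF(\Gamma^!)$, identification of $\cT$ with $\per(\cF(\Gamma)^{!!})$, reduction of the equivalence question to derived completeness of $\cF(\Gamma)$), but the decisive content of the statement --- that derived completeness holds precisely when no fully marked component has vanishing winding number, together with essential surjectivity of the functor in the incomplete case --- is exactly what Step 6 of \textit{op.~cit.} establishes, and you defer it back to that reference. So the proposal is a reasonable reduction scheme, not a proof; in effect it relies on the same citation the paper does.

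Two concrete gaps in the part you do argue. First, essential surjectivity: an exact functor carrying a classical generator of the source to a classical generator of $\cT$ only shows that its essential image generates $\cT$ as a thick subcategory; images of exact functors are not closed under cones or summands, so essential surjectivity does not follow, and it is precisely in the non-complete case (e.g.\ the completion functor $\per(k[t])\to\per(k\llbracket t\rrbracket)\simeq\thick_{k[x]/(x^2)}(k)$ with $|t|=0$) that this requires a genuine argument --- in \cite{OpperPlamondonSchroll} it rests on the classification of objects of $\thick_B(R)$. Second, the functor itself and the ``moreover'': for the equivalence criterion to become ``$\cF(\Gamma)$ is derived complete'' one must know that the functor agrees with the derived completion functor $\per(\cF(\Gamma))\to\per(\cF(\Gamma)^{!!})$ (this is the content of the remark following the proposition). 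Your $F$ is produced from module--comodule Koszul duality with source $\per(\cF(\Gamma^!))$, and the map you then call ``the completion map'', $A^!_R\to A^{!!}_R$, is not the completion map of $A=\cF(\Gamma)$ (that is $A\to A^{!!}_R$); as written, the equivalence of ``$F$ fully faithful'' with ``$A$ derived complete'' is asserted rather than established. Relatedly, \Cref{linduallem}(3) compares $\per(A)^{\op}$ with $\pvd(\cF(\Gamma^!))$, not with $\cT=\thick_{\cF(\Gamma^!)}(R)$; the identity of these two categories is exactly the later theorem that $A/\rad(A)$ is a $\pvd$-generator, whose proof in the paper uses the present proposition, so that route is circular unless you argue the generation statement independently.
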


 In fact, as shown in \cite{OpperPlamondonSchroll}, the functor in the previous proposition is equivalent to the derived completion functor $\per(\cF(\Gamma)) \rightarrow \per(\cF(\Gamma)^{!!})$. {We will see below that that the codomain of this completion functor is equivalent to $\pvd(\Fuk^f(\Sigma))$. In particular, $\Fuk(\Sigma) \simeq \pvd(\Fuk^f(\Sigma))$ if $\Sigma$ contains no fully marked components with vanishing winding number.}

\begin{cor}\label{cor: Fukaya category as proper graded gentle}
If $\Sigma$ contains no fully marked components with vanishing winding number then $$\Fuk(\Sigma) \simeq \thick_A A/\rad(A)$$
for some proper graded gentle algebra $A$ with graded marked surface $\Sigma_A\cong \Sigma$.
\end{cor}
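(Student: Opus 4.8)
The plan is to combine \Cref{prop: Koszul duality gentle on categories} with the Koszul duality statement \Cref{prop: Koszul dual of Fukaya category} and the observation that $\thick_A A/\rad(A)$ agrees with the thick subcategory $\cT\simeq \per(\cF(\Gamma)^{!!})$ appearing in \Cref{prop: Koszul duality gentle on categories}. First I would fix a graded full formal arc system $\Gamma\subseteq\Sigma$, which exists since $\Sigma$ has at least one marked interval (here we use that a surface with no fully marked components of vanishing winding number and at least one boundary arc still carries a marked interval; if $\Sigma$ has no marked interval at all the statement is vacuous or one reduces to it). Let $B$ be the category algebra of $\cF(\Gamma)$ with its canonical augmentation over a product of fields $S=B/\rad(B)$, so that $\Fuk(\Sigma)=\per(\cF(\Gamma))\simeq\per(B)$. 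By \Cref{prop: Koszul dual of Fukaya category}, the Koszul dual $B^!\coloneqq B^!_S$ is quasi-isomorphic to the category algebra of $\cF(\Gamma^!)$, where $\Gamma^!$ is the dual finitely-full formal arc system; call this algebra $A$. By \Cref{prop: Fukaya category properness  smoothness}(1), $A$ is a proper graded gentle algebra, and its associated graded marked surface is $\Sigma_A\cong\Sigma$ by construction of $\Gamma^!$.

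Next I would unwind what $\thick_A A/\rad(A)$ is in Koszul-dual terms. Since $A\simeq B^!_S$ and $S$ is a $\pvd$-generator for $B$ (this is the content of the augmentation being over a product of fields, plus \Cref{prop: keller-nicolas locality} applied to a minimal model, or directly \Cref{congen}/its coconnective analogue), the double dual $B^{!!}_S=(B^!_S)^!_S$ is computed as $\R\mathrm{End}_{A}(S)^{\op}$, where $S$ is regarded as the $A$-module corresponding to the generator of $\per(B)$ under module-comodule Koszul duality. Concretely, the $A$-module $S$ is precisely $A/\rad(A)$ up to the usual identification (the simple modules at the vertices), and the thick subcategory $\cT=\per(B^{!!}_S)\simeq\thick_{A}(S)=\thick_A A/\rad(A)$ is exactly the category appearing in \Cref{prop: Koszul duality gentle on categories}. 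Thus that proposition furnishes an essentially surjective exact functor $\per(\cF(\Gamma^!))=\per(A)\to\thick_A A/\rad(A)$ which is an equivalence if and only if $\Sigma$ contains no fully marked components with vanishing winding number, which is precisely our hypothesis. Hence $\Fuk(\Sigma)\simeq\per(A)\simeq\thick_A A/\rad(A)$, completing the proof.

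The main obstacle, and the step requiring the most care, is the bookkeeping identifying the abstractly-defined subcategory $\cT\simeq\per(\cF(\Gamma)^{!!})$ of \Cref{prop: Koszul duality gentle on categories} with the concrete thick closure $\thick_A A/\rad(A)$ of the simple modules over the proper gentle algebra $A$: one must check that the $A$-module corresponding under module-comodule Koszul duality (and the identification $A\simeq\cF(\Gamma^!)\simeq\cF(\Gamma)^!$) to the canonical generator is indeed $A/\rad(A)$, so that $\per(\cF(\Gamma)^{!!})=\thick_{\cF(\Gamma)^!}(\text{image of }A)=\thick_A A/\rad(A)$. This is essentially a compatibility-of-identifications argument and is where one invokes \Cref{prop: Koszul dual of Fukaya category} together with the description of $A^!_M$ and the $\pvd$-generator; once this identification is in place, the reflexivity of $\Fuk(\Sigma)$ and the description of $\pvd$ follow formally, but one should also double-check that the hypothesis ``at least one boundary arc'' is exactly what guarantees the existence of a full formal arc system $\Gamma$ (equivalently that $A$ is a genuine finite-dimensional gentle algebra, not a category with infinitely many objects), so that the statement lands in the stated form.
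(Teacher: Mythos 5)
Your identification of the category $\cT$ of \Cref{prop: Koszul duality gentle on categories} with $\thick_A A/\rad(A)$, where $A\simeq \cF(\Gamma)^!\simeq \cF(\Gamma^!)$ is the proper graded gentle algebra attached to the dual arc system (so $\Sigma_A\cong\Sigma$), is exactly the bookkeeping the paper relies on, and up to that point your argument matches the intended proof. The problem is your final chain of equivalences $\Fuk(\Sigma)\simeq\per(A)\simeq\thick_A A/\rad(A)$. You defined $\Fuk(\Sigma)=\per(\cF(\Gamma))\simeq\per(B)$ with $B$ the \emph{smooth} gentle algebra of the full arc system, and $A=B^!$ is its Koszul dual, not a Morita-equivalent algebra; the step $\Fuk(\Sigma)\simeq\per(A)$ is never justified and is false in general. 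Concretely, for the annulus of \Cref{exa: polynomial ring as Fukaya category} with nonvanishing winding number one has $\Fuk(\Sigma)\simeq\per(k[t])$ with $|t|=n\neq 0$, while $A\simeq k[x]/(x^2)$ with $|x|=1-n$ (\Cref{exa: Koszul dual polynomial rings}); here $\per(A)$ is Hom-finite and $\per(k[t])$ is not, so $\Fuk(\Sigma)\not\simeq\per(A)$, and likewise $\per(A)\not\simeq\thick_A(A/\rad A)$ because $A/\rad(A)\cong k$ is not a perfect $A$-module. So both of your last two equivalences fail, even though their composite (the statement to be proved) is true.

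The slip comes from reading the displayed functor in \Cref{prop: Koszul duality gentle on categories} literally as emanating from $\per(\cF(\Gamma^!))$; the dual-numbers example shows that reading cannot be correct, and the sentence immediately following the proposition identifies the functor as the derived completion functor $\per(\cF(\Gamma))\to\per(\cF(\Gamma)^{!!})$, i.e.\ its source is the smooth side. With that reading your detour through $\per(A)$ is unnecessary and the corollary is immediate: under the winding-number hypothesis the completion functor is an equivalence (equivalently, $\cF(\Gamma)$ is derived complete), so $\Fuk(\Sigma)=\per(\cF(\Gamma))\simeq\per(\cF(\Gamma)^{!!})\simeq\cT=\thick_A A/\rad(A)$, with $A$ proper gentle and $\Sigma_A\cong\Sigma$ exactly as you set up. Your side remarks are fine: one does need at least one marked interval for a full formal arc system (and for the statement) to make sense, and the identification of the augmentation modules of $A$ with the simples summing to $A/\rad(A)$ is precisely the compatibility the paper invokes from \Cref{prop: Koszul dual of Fukaya category}.
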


\subsection{Semiorthogonal decompositions, reflexivity and $\pvd$-generators} \label{section: reflexivity proper generation gentle}

\begin{prop}\label{prop: semiorthogonal decomposition gentle}
Let $A$ be a proper graded gentle algebra and let $\{B_1, \dots, B_n\} \subset \partial \Sigma_A$ denote its set of fully marked components. Then there exists a semiorthogonal decomposition of the form $$\per(A) \simeq \langle \per(A^{(n)}), \per(k[x_1]/(x_1^2)), \dots, \per(k[x_n]/(x_n^2)) \rangle,$$
where $|x_i|=1-\omega(B_i)$ and $A^{(n)}$ is a smooth proper graded gentle algebra.
\end{prop}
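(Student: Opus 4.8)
The plan is to induct on the number $n$ of fully marked components, peeling off one factor $\per(k[x_i]/(x_i^2))$ at each step.

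\textbf{Setup and base case.} By \Cref{fukmorita} we may compute $A$ up to Morita equivalence as $\cF(\Gamma)$ for \emph{any} finitely-full formal arc system $\Gamma$ on $\Sigma = \Sigma_A$, so we are free to choose $\Gamma$ conveniently. If $n = 0$ then $\Sigma$ has no fully marked components, so by \Cref{prop: Fukaya category properness  smoothness}(3) a formal arc system on $\Sigma$ may be taken simultaneously full and finitely-full; hence $A$ is smooth and proper and we set $A^{(0)} := A$. For $n \geq 1$ we arrange that each fully marked component $B_i$ is cut off by a single arc $\gamma_i \in \Gamma$: namely $\gamma_i$ is parallel to $B_i$ with both endpoints on a marked interval, and the complementary half-open cylinder of $\Gamma$ attached to $B_i$ is the annular region bounded by $\gamma_i$ and $B_i$. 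This is possible by the standard flip/inclusion calculus of \cite{HaidenKatzarkovKontsevich}, and the local picture around $B_i$ is then exactly that of \Cref{exa: Koszul dual polynomial rings}. Consequently the only self-flows of $\gamma_i$ are the constant flow and the flow running once along that marked interval, so $\End_{\cF(\Gamma)}(\gamma_i) \cong k[x_i]/(x_i^2)$, and the degree computation of \Cref{exa: Koszul dual polynomial rings} (via the duality $|f^!| = 1 - |f|$ on flow degrees recorded before \Cref{prop: Koszul duality gentle on categories}) gives $|x_i| = 1 - \omega(B_i)$.

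\textbf{The inductive step.} Fix one such component $B := B_n$ with arc $\gamma := \gamma_n$, and let $e = e_\gamma \in A$ be its vertex idempotent. Then $eAe = k\Flow(\gamma,\gamma) \cong k[x_n]/(x_n^2)$, and since $P := eA$ is a projective module we have $\R\mathrm{End}_A(P) \simeq eAe$ and $\thick_A(P) \simeq \per(k[x_n]/(x_n^2))$. A direct check with the gentle relations at $\gamma$ — using $x_n^2 = 0$ together with the gentleness dichotomy at the two arrows meeting $\gamma$ besides the loop — shows that $eA$ is free as a left $eAe$-module and $Ae$ is free as a right $eAe$-module; since $k[x_n]/(x_n^2)$ is self-injective this forces $\thick_A(P)$ to be an admissible subcategory of $\per(A)$, so we obtain a semiorthogonal decomposition $\per(A) = \langle \mathcal{U}, \per(k[x_n]/(x_n^2)) \rangle$ in the sense of \Cref{def: semiorthogonal decomposition}, with $\mathcal{U} = \thick_A(P)^{\perp} \cap \per(A)$.

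\textbf{Identifying $\mathcal{U}$ and finishing.} The category $\mathcal{U}$ is generated by the fibres $R_v$ of the minimal right $\thick_A(P)$-approximations of the projectives $P_v$ for $v \neq \gamma$, and one checks that $A' := \R\mathrm{End}_A\big(\bigoplus_{v \neq \gamma} R_v\big)$ is again a proper graded gentle algebra whose surface $\Sigma_{A'}$ is obtained from $\Sigma$ by collapsing the cylinder at $B_n$; geometrically this is the surgery that deletes the fully marked component $B_n$ while leaving $B_1,\dots,B_{n-1}$ and the remaining arcs untouched, and it stays within the class of proper gentle algebras by the surface dictionary of \cite{HaidenKatzarkovKontsevich, OpperPlamondonSchroll, LekiliPolishchuk}. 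Applying the induction hypothesis to $A'$ and composing semiorthogonal decompositions (as in the proof of \Cref{thm:Gluing over Semiorthogonal decompositions}) then yields $\per(A) \simeq \langle \per(A^{(n)}), \per(k[x_1]/(x_1^2)), \dots, \per(k[x_n]/(x_n^2)) \rangle$ with $A^{(n)}$ smooth proper gentle, as desired. The main obstacle is exactly this identification of $\mathcal{U}$: one must verify that mutating out the arc $\gamma$ returns the perfect derived category of a proper gentle algebra with the expected surface and grading, which can be done either by an explicit local computation of the quiver and relations of $\R\mathrm{End}_A\big(\bigoplus_{v \neq \gamma} R_v\big)$ at the arrows incident to $\gamma$, or by invoking the compatibility of the arc-system dictionary with cutting along an arc; the bookkeeping of the marked intervals created by the cut and of the line field (hence of the winding numbers $\omega(B_i)$) is the delicate part.
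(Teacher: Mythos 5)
There is a genuine gap, and it sits exactly where you flag it: the identification of $\mathcal{U}$. Your inductive step needs $\mathcal{U}=\thick_A(e A)^{\perp}\cap\per(A)$ to be quasi-equivalent to $\per(A')$ for a \emph{proper graded gentle} algebra $A'$ whose surface is $\Sigma_A$ with the cylinder at $B_n$ removed and whose remaining fully marked components $B_1,\dots,B_{n-1}$ keep their winding numbers; without this the induction hypothesis cannot be applied and the argument does not close. You do not prove it -- you only gesture at ``an explicit local computation'' of $\R\mathrm{End}_A\bigl(\bigoplus_{v\neq\gamma}R_v\bigr)$ or at ``compatibility of the arc-system dictionary with cutting,'' and there is no reason a priori that mutating the projectives at an arbitrary vertex of a gentle algebra returns a gentle algebra, let alone one with the predicted surface and line field. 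This is precisely the statement the paper is engineered never to have to prove: it cuts $\Sigma$ along the boundary-parallel arc $\delta_n$ around $B_n$, chooses a finitely-full formal arc system $\cA$ of the cut surface containing the unique arc $\gamma_U$ in the cylinder piece, and observes that there are \emph{no flows} from $\gamma_U$ to the other arcs. Semiorthogonality and generation are then immediate from the definition of $\cF(\cA)$, and the first factor is \emph{by construction} $\per$ of the gentle algebra of an arc system on the cut surface, so its gentleness, properness, surface and winding numbers come for free; \Cref{fukmorita} transfers the decomposition back to $A$, and \Cref{prop: Fukaya category properness  smoothness}(3) gives smoothness of $A^{(n)}$ at the end. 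Note that in the paper's weak sense of \Cref{def: semiorthogonal decomposition} the left factor need not be the full right orthogonal, so one is free to take it to be the thick closure of the remaining arcs rather than $\mathcal{U}$ -- which is what makes the identification tautological.

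Two further remarks. First, your setup assertion (that one may choose a single finitely-full formal arc system in which every $B_i$ is cut off by a boundary-parallel arc $\gamma_i$, with no other endpoints on the relevant segment of the marked interval) is itself the nontrivial geometric input; the paper establishes it by the explicit iterated cutting construction, whereas ``standard flip/inclusion calculus'' is not a proof. Ironically, if you had carried this out with the endpoint bookkeeping the paper does (trimming the marked interval so that the endpoints of $\gamma_i$ sit downstream of all other arcs), you would get $\Hom(\gamma_i,\delta)=0$ for every other arc $\delta$ directly, and your entire mutation detour -- and hence the problematic identification of $\mathcal{U}$ -- would be unnecessary. Second, your admissibility argument is essentially sound but misattributed: self-injectivity of $k[x_n]/(x_n^2)$ is irrelevant; what one needs is that $Ae$ is perfect over $eAe$, so that $X\mapsto \R\Hom_A(eA,X)\simeq Xe$ preserves perfect complexes and the counit triangle $Xe\otimes^{\mathbb{L}}_{eAe}eA\to X\to C$ produces the decomposition. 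Your freeness claim (which does follow from gentleness at a vertex carrying a loop $x$ with $x^2=0$, since gentleness forces $ax\notin I$ and $xb\notin I$ for the other arrows at that vertex) supplies exactly this, so that portion is repairable; the unproved surface identification is not.
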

\begin{proof}
Set $\Sigma=\Sigma_A$. Because $A$ is proper, there exists a marked interval $I \subseteq \partial \Sigma_A$. Now choose a simple arc $\gamma_n\colon[0,1] \rightarrow \Sigma_A$ which starts in $I$ and ends on $B_n$. Then by concatenating $\gamma_n$ first with the closed embedded boundary path with endpoints $\gamma_n(1)$  which traverses $B_n$ in counter-clockwise direction and then with the inverse of $\gamma_n$, we obtain a path $\delta_n$ which can be homotoped relative to $\partial \Sigma$ into a simple arc with endpoints in $x,y \in I$. The natural orientation of $I \subseteq \partial \Sigma$ then induces a total order on the set $\{x,y\}$, say $x < y$. The complement of $\delta_n$ in $\Sigma$ has two connected components, one of which, which we shall call $U$, is a half-open cylinder $C\cong [0,1) \times S^1$  whose boundary $\{0\} \times S^1$ coincides with $B_n$. After smoothing out the complement of $\delta_n$, we obtain a graded surface $\Sigma^{(1)}\coloneqq \Sigma \setminus \delta_n \subset \Sigma$ whose line field is obtained by restriction of $\eta_A$. We endow it with the set of markings obtained from the set of markings of $\Sigma$ by removing any half-open interval $J \subsetneq I$ which contains $y$ but not $x$ and such that $I \setminus J$ is connected, cf.~\Cref{fig: cutting}. Choose now a finitely-full formal graded arc system $\cA$ of $\Sigma^{(1)}$. Note that, there is a unique arc, say $\gamma_U \in \cA$ which lies in in $U$. Equivalently, we may regard $\cA$ as a finitely-full and formal arc system on the original surface $\Sigma$.  Let $A^{(1)}$ (resp.~$A'$) denote the proper graded gentle algebra associated to the arc system $\cA^{(1)}\coloneqq \cA \setminus \{\gamma_U\}$ (resp.~$\cA$). The important point is that, by construction, there are no flows from $\gamma_U$ to any of the arcs in $\cA \setminus \{\gamma_U\}$ and hence we obtain a semiorthogonal decomposition of the form $$\per(A')=\langle \per( A^{(1)}), \per(k[x_n]/(x_n^2))  \rangle,$$
where $k[x_n]/(x_n^2)$ is the graded gentle algebra associated to $\gamma_U$. Furthermore, by \Cref{fukmorita}, $A$ and $A'$ are Morita equivalent, so that $\per(A)$ has the same semiorthogonal decomposition. Now,  we simply iterate the previous constructions, starting with the connected component of $\Sigma^{(1)}$ which contains the arc system $\cA^{(1)}$ corresponding to $A^{(1)}$ and finding a semiorthogonal decomposition of $\per(A^{(1)})$ of the same shape. Induction over the number of fully marked components together with \Cref{prop: Fukaya category properness  smoothness} then eventually yields a choice of a graded finitely-full formal arc system on $\Sigma$ whose associated graded proper gentle algebra $B$ is Morita equivalent to $A$ and such that $\per(B)$ admits the desired semiorthogonal decomposition with $A^{(n)}$ and each algebra $k[x_i]/(x_i^2)$ corresponding to an idempotent subalgebra of $B$.
\end{proof}
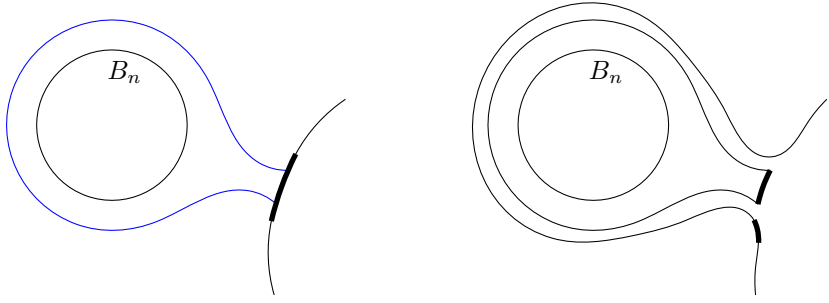
\begin{figure}
\begin{displaymath}
\begin{tikzpicture}[scale=0.8]
    	\begin{scope}[shift={(0,0)}, rotate={-20}]

        \draw[hobby, blue] plot [] coordinates {({-0.6*cos((1.25 * 120))},{0.6*sin((1.25*120))}) (0, 0.25) (-1.25,1.25) (-2.5,1.75) (-4.25,0) (-2.5,-1.75) (-1.25,-1.25) (0, -0.25) ({-0.6*cos((1.75 * 120))},{0.6*sin((1.75*120))})}; 
	\draw (-2.5,0) circle (1.25); 
    
	\draw[hobby, black] ({-2*cos((1 * 120))},{2*sin((1*120))})  to[curve through={({-0.8*cos((1.1 * 120))},{0.8*sin((1.1*120))}) (0.5,0) ({-0.8*cos((1.9 * 120))},{0.8*sin((1.9*120))}) }]({-2*cos((2 * 120)) },{2*sin((2*120))});

 \draw[hobby, line width=2pt, black] plot coordinates { ({-0.8*cos((1.1 * 120))},{0.8*sin((1.1*120))}) (0.5,0) ({-0.8*cos((1.9 * 120))},{0.8*sin((1.9*120))}) }; 

       \node at (-2.6,0.9) {$B_n$};
    \end{scope}

    \begin{scope}[shift={(8,0)}, rotate={-20}]
	\draw (-2.5,0) circle (1.25); 

 \draw[hobby, line width=2pt, black] plot coordinates { ({-0.6*cos((1.25 * 120))},{0.6*sin((1.25*120))}) (0.5,0) ({-0.6*cos((1.75 * 120))},{0.6*sin((1.75*120))}) }; 
   \draw[hobby, line width=2pt, black] plot coordinates {  (0.55,-0.57)  (0.65,-0.69) (0.75,-0.9)}; 


      \draw[hobby, black] plot [] coordinates {({-2*cos((1 * 120)) },{2*sin((1*120))}) ({-1.5*cos((1.03 * 120))},{1.5*sin((1.03*120))})  ({-.7*cos((1.1* 120))},{.7*sin((1.1*120))}) (-0.5,0.95)  (-1.25,1.5) (-2.5,2) (-4.5,0) (-2.5,-2) (-1.25,-1.5) (-0.5,-0.95)  ({-.7*cos((1.9* 120))},{.7*sin((1.9*120))})  ({-1.5*cos((1.97* 120))},{1.5*sin((1.97*120))})  ({-2*cos((2 * 120)) },{2*sin((2*120))})  }; 

 \draw[hobby, black] plot [] coordinates {({-0.6*cos((1.25 * 120))},{0.6*sin((1.25*120))}) (0, 0.25) (-1.25,1.25) (-2.5,1.75) (-4.25,0) (-2.5,-1.75) (-1.25,-1.25) (0, -0.25) ({-0.6*cos((1.75 * 120))},{0.6*sin((1.75*120))})}; 
     
       \node at (-2.6,0.9) {$B_n$};
    \end{scope}





\end{tikzpicture}
\end{displaymath}
\caption{The arc $\delta_n$ on the left (blue) and the two connected components of $\Sigma^{(1)}$ on the right. The connected component on the right containing $B_n$ gives rise to the algebra $k[x_n]/(x_n^2)$.} \label{fig: cutting}
\end{figure}

\begin{lem}\label{dualnumberlem}
    Let $A$ be the dg algebra $k[x]/(x^2)$ with $|x|=n \in \mathbb{Z}$. Then $A$ is reflexive and the unique simple $A$-module is a $\pvd$-generator.
\end{lem}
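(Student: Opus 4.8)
The plan is to treat $A = k[x]/(x^2)$, with $x$ placed in cohomological degree $n$, by a case distinction on the sign of $n$. In every case $A$ is finite dimensional, hence proper, and carries an obvious augmentation onto $k$, so the only real work is to match the correct reflexivity criterion to the relevant connectivity range and to identify the corresponding $\pvd$-generator.

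First I would dispose of the case $n \leq 0$, where $A$ is connective and locally proper. Here one observes that $H^0(A)$ is a finite dimensional local $k$-algebra with residue field $k$ — it equals $k$ itself when $n<0$ and the ordinary local ring $k[x]/(x^2)$ when $n=0$ — so that $H^0(A)/\rad H^0(A)\cong k$ and \Cref{conncor} applies directly to give that $A$ is reflexive. (For the boundary value $n=0$ one may instead quote \Cref{noeththm}, since $A$ is then a complete Artinian local $k$-algebra with residue field $k$.) The unique simple $A$-module is the unique simple $H^0(A)$-module, namely the one-dimensional module $k=H^0(A)/\rad H^0(A)$, and \Cref{congen} shows this is a $\pvd$-generator.

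For $n\geq 1$ the dg algebra $A$ is strictly coconnective, proper, and formal, with $H^0(A)\cong A^0\cong k$ semisimple. Reflexivity is then immediate from \Cref{propcoconncor}, and the brutal truncation $\sigma_{\leq 0}A=A^0=k$ — which is again the (unique) simple $A$-module — is a $\pvd$-generator by \Cref{prop: keller-nicolas locality} together with the remark following it.

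The ``main obstacle'', such as it is, is purely bookkeeping: one must take care to handle $n=0$ with the connective criterion \Cref{conncor} rather than with \Cref{propcoconncor}, since in that case $H^0(A)=k[x]/(x^2)$ is local but not semisimple, and one should note that ``the unique simple module'' refers to the same one-dimensional object $k$ in both regimes — the unique simple of the heart $\mathrm{Mod}\text{-}H^0(A)$ of the standard $t$-structure when $A$ is connective, and $H^0(A)=k$ when $A$ is strictly coconnective — which is exactly the $\pvd$-generator produced above.
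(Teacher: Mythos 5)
Your argument is correct and follows essentially the paper's own proof: split on the sign of $n$, apply the connective, respectively coconnective, reflexivity criterion, and obtain the $\pvd$-generator from \Cref{congen}, respectively \Cref{prop: keller-nicolas locality} together with the brutal-truncation remark. The only divergence is in the connective case: the paper does not pass through the Koszul-duality corollary \Cref{conncor} — since $A$ is finite dimensional it is derived complete at $k$ by \Cref{proper is derived complete}, so \Cref{connectiveDC} applies directly — whereas your route via \Cref{conncor} (and hence the completion results of \cite{MBDDP}) is heavier but equally valid; your coconnective case via \Cref{propcoconncor} is literally the same combination of \Cref{proper is derived complete} and \Cref{lem: coconnective semisimple H^0 = reflexive} that the paper uses. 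One cosmetic remark: the paper's proof attaches the coconnective criterion to $n<0$ and the connective one to $n\ge 0$, i.e.\ it reads $|x|$ homologically there; with the paper's stated cohomological convention your assignment of the cases is the consistent one, and the mathematics is unaffected either way.
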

\begin{proof}It is easy to check that $A$ is derived complete at $k$. Hence for $n<0$, reflexivity follows from \Cref{lem: coconnective semisimple H^0 = reflexive} and the generation statement from \Cref{prop: keller-nicolas locality}. For $n\geq 0$, reflexivity follows from \Cref{connectiveDC} and the generation statement from \Cref{prop: Dfd generator connective case}.
\end{proof}

\begin{thm}\label{thm: proper graded gentle is reflexive}
For every proper graded gentle algebra $A$, $\per(A)$ and $\pvd{(A)}$ are reflexive dg categories. {In particular, if $\Sigma$ is a graded marked surface which has at least one marked interval, then $\Fuk^f(\Sigma)$ is reflexive.}
\end{thm}
\begin{proof}
  This follows from the repeated application of \Cref{thm:Gluing over Semiorthogonal decompositions} to \Cref{prop: semiorthogonal decomposition gentle}, combined with the fact that smooth proper dg categories are reflexive (\Cref{refexamples}) and the dual numbers are reflexive (\Cref{dualnumberlem}).
\end{proof}

The decomposition of \Cref{prop: semiorthogonal decomposition gentle} can also be used to show that the maximal semisimple quotient of any proper graded gentle algebra is a $\pvd$-generator:

\begin{thm}
Let $A$ be a proper graded gentle algebra. Then $A/\rad(A)$ is a $\pvd$-generator for $A$. In other words, the thick subcategory of $\cD(A)$ generated by the simple $A$-modules coincides with $\pvd(A)$.
\end{thm}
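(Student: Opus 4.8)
The plan is to feed the semiorthogonal decomposition of \Cref{prop: semiorthogonal decomposition gentle} into \cite[Lemma 3.7]{KS}, exactly as in the proof of \Cref{thm: proper graded gentle is reflexive}, but now keeping track of $\pvd$-generators. The inclusion $\thick_A(A/\rad(A))\subseteq\pvd(A)$ is clear, as $A/\rad(A)$ is finite-dimensional, so the content is the reverse inclusion; up to Morita equivalence (\Cref{fukmorita}) I would replace $A$ by the finite-dimensional graded algebra $B$ of \Cref{prop: semiorthogonal decomposition gentle}, equipped with
\[
\per(B)\simeq\langle\per(A^{(n)}),\per(k[x_1]/(x_1^2)),\dots,\per(k[x_n]/(x_n^2))\rangle ,
\]
where $A^{(n)}$ is smooth and proper and the components are cut out by idempotent subalgebras of $B$ (and, since no flow leaves a split-off vertex, each dual-numbers piece occurs as a quotient ring at its stage of the construction, so $B$ is an iterated upper-triangular gluing). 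The point of isolating these building blocks is that $\pvd$ is completely understood for each: one has $\pvd(A^{(n)})\simeq\per(A^{(n)})$ (\Cref{refexamples}), and the simple module is a $\pvd$-generator for each $k[x_i]/(x_i^2)$ (\Cref{dualnumberlem}).

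Applying \cite[Lemma 3.7]{KS} once per component produces a semiorthogonal decomposition
\[
\pvd(B)\simeq\langle\pvd(k[x_n]/(x_n^2)),\dots,\pvd(k[x_1]/(x_1^2)),\pvd(A^{(n)})\rangle ,
\]
so it suffices to show that each component, viewed as a subcategory of $\pvd(B)$, is contained in $\thick_B(B/\rad(B))$. For the smooth proper component this is immediate: $\pvd(A^{(n)})\simeq\per(A^{(n)})$ is generated by a single perfect object, whose image in $\per(B)$ is again perfect, hence --- $B$ being finite-dimensional --- a finite-dimensional $B$-module; such a module has a finite filtration by simple $B$-modules and so lies in $\thick_B(B/\rad(B))$, and therefore so does the whole component. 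For each dual-numbers component, \Cref{dualnumberlem} identifies $\pvd(k[x_i]/(x_i^2))$ with the thick closure of the (finite-dimensional) simple module $T_i$, so it is enough to show that the image of $T_i$ in $\pvd(B)$ is finite-dimensional; granting this, the component is the thick closure in $\pvd(B)$ of a finite-dimensional $B$-module and hence lies in $\thick_B(B/\rad(B))$. Combining the two cases, $\pvd(B)\subseteq\thick_B(B/\rad(B))$, so equality holds, and transporting back along \Cref{fukmorita} gives $\pvd(A)=\thick_A(A/\rad(A))$.

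I expect the main obstacle to be verifying that the component embeddings produced by the iterated use of \cite[Lemma 3.7]{KS} carry finite-dimensional modules to finite-dimensional $B$-modules --- concretely, that the embedding of each dual-numbers component is (built from) restriction along a surjection of rings. This should follow from the compatibility of \cite[Lemma 3.7]{KS} with the evaluation functors (the same input used in the proof of \Cref{thm:Gluing over Semiorthogonal decompositions}), together with the explicit upper-triangular structure of $B$ coming out of \Cref{prop: semiorthogonal decomposition gentle}. It is essential that $B$ is proper: $\pvd(B)$ is not contained in $\per(B)$ --- the dual numbers are not regular --- so a priori $\pvd(B)$ could contain objects with no finite-dimensional model, in the spirit of Efimov's example, and it is precisely the finiteness built into the decomposition that rules this out. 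The remaining ingredients are routine: the Morita reduction to $B$, the identification of the split-off pieces with idempotent subalgebras, and the elementary fact that a finite-dimensional module over a finite-dimensional graded algebra has a finite filtration by simple modules and so lies in $\thick_B(B/\rad(B))$ (alternatively, one may invoke \Cref{proper is derived complete}(2) once Orlov's dg radical of $B$ is identified with $\rad(B)$, but this is not needed).
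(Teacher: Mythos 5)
Your argument for the auxiliary algebra $B$ --- feeding the decomposition of \Cref{prop: semiorthogonal decomposition gentle} into \cite[Lemma 3.7]{KS} and using $\pvd(A^{(n)})\simeq\per(A^{(n)})$ together with \Cref{dualnumberlem} to get $\thick_B(B/\rad(B))=\pvd(B)$ --- is essentially the first half of the paper's proof, and the technical point you flag there (identifying the component embeddings concretely via the idempotent, upper-triangular structure of $B$) is manageable. The genuine gap is your last sentence, ``transporting back along \Cref{fukmorita}''. The statement you are proving is not Morita invariant: the equivalence $\per(A)\simeq\per(B)$ of \Cref{fukmorita} comes from a change of arc systems and is a tilting-type equivalence, not one induced by an algebra homomorphism, so while it does identify $\pvd(A)$ with $\pvd(B)$, it has no reason to identify $\thick_A(A/\rad(A))$ with $\thick_B(B/\rad(B))$: simple $A$-modules need not go to objects generating the simple $B$-modules. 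Knowing $\thick_B(B/\rad(B))=\pvd(B)$ only tells you that the image of $A/\rad(A)$ lies in $\thick_B(B/\rad(B))$, i.e.\ it gives the inclusion $\thick_A(A/\rad(A))\subseteq\pvd(A)$ you already had for free, and gives no control in the other direction; a priori $\thick_A(A/\rad(A))$ could still correspond to a proper thick subcategory of $\thick_B(B/\rad(B))$.

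This transfer is exactly where the paper does real work. Since $B$ is proper (hence derived complete) and reflexive by \Cref{thm: proper graded gentle is reflexive}, \Cref{lem: reflexive and Koszul dual} reduces the problem to producing an equivalence $\per(A^!)\simeq\per(B^!)$ matching $\thick_{A^!}(A/\rad(A))$ with $\thick_{B^!}(B/\rad(B))$. This is then achieved geometrically: by \Cref{prop: Koszul dual of Fukaya category}, $A$ and $B$ are Koszul dual to $\cF(\Gamma_A)$ and $\cF(\Gamma_B)$ for two full formal arc systems on the same surface, so $A^!$ and $B^!$ are the derived completions of these Fukaya categories along the subcategories generated by their canonical augmentations; the result \cite[Appendix C, Step 6 (3)]{OpperPlamondonSchroll} says that the change-of-arc-system equivalence of \Cref{fukmorita} matches those two subcategories, and Morita invariance of derived completion (\Cref{dercomprmk}) concludes. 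Your proof needs an argument of this kind --- or some other reason why the specific equivalence of \Cref{fukmorita} respects the thick closures of the semisimple quotients --- before the reduction from $A$ to $B$ is legitimate.
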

\begin{proof} We know from \Cref{prop: semiorthogonal decomposition gentle} and its proof that $A$ is Morita equivalent to a proper graded gentle algebra $B$ which admits a semiorthogonal decomposition of the form
$$\per(B) \simeq \langle \per(B'), \per(k[x_1]/(x_1^2)), \dots, \per(k[x_n]/(x_n^2)) \rangle,$$
so that $B'$ and all graded dual numbers correspond to idempotent subalgebras of $B$. Moreover, $B'$ is a smooth proper graded gentle algebra. We claim that this is sufficient to conclude that $\pvd(B)$ is generated by $B/\rad(B)$. Indeed, the above semiorthogonal decomposition induces a semiorthogonal decomposition of $\pvd(B)$ (with orders reversed). On the other hand, by \Cref{dualnumberlem}, $\pvd(k[x_i]/(x_i^2))$ is generated by the simple $k[x_i]/(x_i^2)$-module and, as a thick subcategory of $\pvd(B)$ is generated by the simple $A$-module corresponding to the idempotent of $A$ associated with the subalgebra $k[x_i]/(x_i^2)$. Because $B'$ is smooth and proper (and hence reflexive), $\pvd(B')\simeq \per(B')$ is generated by the simple $B'$-modules. As before, these can equally be considered as simple $A$-modules which generate the thick subcategory of $\pvd(B)$ corresponding to $\pvd(B')$. Altogether, this shows that $B/\rad(B)$ generates $\pvd(B)$. 

Since $B$ is reflexive by \Cref{thm: proper graded gentle is reflexive} and derived complete (it is proper), is thus follows from \Cref{lem: reflexive and Koszul dual} that $B^!$ is reflexive and $\thick_{B^!} \left(B/\rad(B)\right)=\pvd(B^!)$. Hence, in order to show that $A/\rad(A)$ is a generator of $\pvd(A)$, it suffices by \Cref{lem: reflexive and Koszul dual} to show that there is an equivalence $\per(A^!) \simeq \per(B^!)$ which identifies $\thick_{A^!} A/\rad(A)$ with $\thick_{B^!} B/\rad(B)$. We recall from \Cref{prop: Koszul dual of Fukaya category}, that, up to passing to a category algebra, $A$ and $B$ are the Koszul duals of $\cF_A=\cF(\Gamma_A)$ and $\cF_B=\cF(\Gamma_B)$ for full formal arc systems $\Gamma_A, \Gamma_B \subseteq \Sigma$ on a graded marked surface $\Sigma$. In other words, $A^!$ and $B^!$ are the derived completions of $\cF_A$ and $\cF_B$ at the thick subcategories $\cB_A$ and $\cB_B$ generated by the respective canonical augmentations. But now we recall from \cite[Appendix C, Step 6 (3)]{OpperPlamondonSchroll} that under the equivalence $\per(\cF(\Gamma_A)) \simeq \per(\cF(\Gamma_B))$ from \Cref{fukmorita}, $\cB_A$ and $\cB_B$ are mapped to each other. Our assertion therefore follows from the Morita invariance of derived completion, cf.~\Cref{dercomprmk}(2). 
\end{proof}
 By \Cref{cor: Fukaya category as proper graded gentle}, we also obtain the following:
\begin{cor} If $\Sigma$ is a graded marked surface with at least one marked interval and without fully marked components of vanishing winding number, then $\Fuk(\Sigma)$ is reflexive. 
\end{cor}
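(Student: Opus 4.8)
The plan is to obtain this as a direct consequence of the two results immediately preceding it together with \Cref{cor: Fukaya category as proper graded gentle}. First I would note that the hypothesis that $\Sigma$ has at least one marked interval guarantees that $\Sigma$ admits a full (and hence also a finitely-full) formal arc system, so that $\Fuk(\Sigma)$ is well defined, and by \Cref{fukmorita} its Morita equivalence class is independent of the chosen arc system. Since $\Sigma$ moreover contains no fully marked component of vanishing winding number, \Cref{cor: Fukaya category as proper graded gentle} supplies a proper graded gentle algebra $A$ with $\Sigma_A \cong \Sigma$ and a quasi-equivalence $\Fuk(\Sigma) \simeq \thick_A\left(A/\rad(A)\right)$.

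Next I would invoke the theorem immediately above, which states that for a proper graded gentle algebra $A$ the simple modules generate $\pvd(A)$, i.e. $\thick_{\mathcal{D}(A)}\left(A/\rad(A)\right) = \pvd(A)$. Combining this with the equivalence just obtained yields $\Fuk(\Sigma) \simeq \pvd(A)$. Finally, \Cref{thm: proper graded gentle is reflexive} asserts that $\pvd(A)$ is reflexive for every proper graded gentle algebra $A$; as reflexivity is a Morita-invariant property (it can be checked on idempotent-complete pretriangulated hulls, and $\Fuk(\Sigma)$ is already such by construction), we conclude that $\Fuk(\Sigma)$ is reflexive.

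I do not expect a genuine obstacle here: all of the substantive work has been carried out beforehand, namely the identification of $\Fuk(\Sigma)$ with the thick closure of the simples of a proper graded gentle algebra, the $\pvd$-generation statement, and the reflexivity of $\pvd(A)$ (which ultimately rests on applying \Cref{thm:Gluing over Semiorthogonal decompositions} to the semiorthogonal decomposition of \Cref{prop: semiorthogonal decomposition gentle}). The only point requiring a little care is to make sure that these inputs are applied to the same algebra $A$, so that the equivalence $\Fuk(\Sigma)\simeq\thick_A(A/\rad(A))$ is genuinely upgraded to $\Fuk(\Sigma)\simeq\pvd(A)$ rather than to a subcategory or summand of it.
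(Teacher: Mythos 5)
Your argument is correct and is exactly the intended one: combining \Cref{cor: Fukaya category as proper graded gentle} with the generation theorem ($\thick_A(A/\rad(A))=\pvd(A)$) and \Cref{thm: proper graded gentle is reflexive} is precisely how the paper deduces the corollary. No gaps; the care you note about using the same algebra $A$ throughout is indeed the only point to watch, and it is automatic from the statement of \Cref{cor: Fukaya category as proper graded gentle}.
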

\begin{ex}\label{exa: non reflexive Fukaya}
One cannot drop the assumption on the winding numbers. Indeed, the polynomial ring $k[x]$ with $x$ in degree zero is not reflexive by \Cref{noeththm}. The associated surface is an annulus with one marked interval and one fully marked component whose winding numbers vanish.
\end{ex}

\begin{footnotesize}
	\bibliographystyle{alpha}
	\bibliography{references.bib}
\end{footnotesize}

\end{document}